\numberwithin{equation}{section}
\newtheorem{lemma}{Lemma}[section]
\newtheorem{theorem}[lemma]{Theorem}
\newtheorem{proposition}[lemma]{Proposition}
\newtheorem{definition}[lemma]{Definition}
\newtheorem{remark}[lemma]{Remark}
\newtheorem{problem}[lemma]{{Problem}}
\newtheorem*{proclaim}{Proclamation}
\newtheorem*{question}{Question}
\theoremstyle{definition}
\def\beq#1\eeq{\begin{equation}#1\end{equation}}
\def\balign #1 #2 \ealign{\begin{aligned} #1 #2  \end{aligned} }
\DeclareMathOperator*{\esssup}{ess\,sup}
\newcommand \bvphi{\bar{\varphi}}
\newcommand \bPhi{\bar{\Phi}}
\begin{document}

\title[Supersonic flows of the Euler-Poisson system]{Supersonic flows of the Euler-Poisson system in three-dimensional cylinders}

\author{Myoungjean Bae}
\address{Department of Mathematical Sciences, KAIST, 291 Daehak-Ro, Yuseong-Gu, Daejeon 34141, Republic of Korea}
\email{mjbae@kaist.ac.kr}

\author{Hyangdong Park}
\address{School of Mathematics, Korea Institute for Advanced Study (KIAS), 85 Hoegiro, Dongdaemun-gu, Seoul, 02455, Republic of Korea}
\email{hyangdong@kias.re.kr}

\begin{abstract}{We prove the unique existence of three-dimensional supersonic solutions to the steady Euler-Poisson system in cylindrical nozzles when prescribing the velocity, entropy, and the strength of electric field at the entrance. We first establish the unique existence of irrotational solutions in a cylindrical nozzle with an arbitrary cross section by extending the results of \cite{bae2021three} with an aid of weighted Sobolev norms. Then, we establish the unique existence of axisymmetric solutions with nonzero vorticity in a circular cylinder. In particular, we construct a solution with a nonzero angular momentum density. Therefore this is truly a three-dimensional flow in the sense that the system cannot be reduced to a two-dimensional system via a stream function formulation. The main idea is to reformulate the system via the Helmholtz decomposition method, and to employ the iteration method. Several technical issues, including the issue of corner singularities in a Lipschitz domain, are carefully addressed.}
\end{abstract}

\keywords{angular momentum density, axisymmetric, Euler-Poisson system, hyperbolic-elliptic coupled system, supersonic flow, vorticity, weighted Sobolev space}


\subjclass[2020]{
 35G60, 35J66, 35L72, 35M32, 76J20, 76N10}

\maketitle

\tableofcontents
\newcommand {\R}{\mathbb{R}}
\newcommand {\gam}{\gamma}
\newcommand \Gam{\Gamma}
\newcommand \trho{\tilde{\rho}}
\newcommand \vphi{\varphi}
\newcommand \ol{\overline}
\newcommand \Om{\Omega}
\newcommand \om{\omega}
\newcommand \der{\partial}
\newcommand \tx{\text}
\newcommand \mcl{\mathcal}
\newcommand \eps{\varepsilon}
\newcommand \mfrak{\mathfrak}
\newcommand \Gamw{\Gamma_{{\rm w}}}
\newcommand \rx{{\rm x}}

\newcommand \rhos{\rho_s}
\newcommand \alp{\alpha}


\section{Introduction}\label{section:introduction}
Given a positive function $b:\R^3\rightarrow \R$, the steady Euler-Poisson system
\begin{equation}\label{E-S}
\left\{
\begin{split}
&\mbox{div}(\rho{\bf u})=0,\\
&\mbox{div}(\rho{\bf u}\otimes{\bf u})+\nabla p=\rho\nabla\Phi,\\
&\mbox{div}(\rho\mathcal{E}{\bf u}+p{\bf u})=\rho{\bf u}\cdot\nabla\Phi,\\
&\Delta\Phi=\rho-b,
\end{split}
\right.
\end{equation}
describes a hydrodynamic model of semiconductor devices or plasmas. The function $b$ represents the density of fixed, positively charged background ions.
And, the functions $\rho$, ${\bf u}$, $p$, and $\mathcal{E}$ represent the macroscopic particle electron density, velocity, pressure, and the total energy, respectively.
The function $\Phi$ represents the electric potential generated by the Coulomb force of particles.

In this paper, we assume that $p$ and $\mathcal{E}$ are given by
\begin{equation}
\label{expression of p in rho}
p(\rho, S)=S\rho^{\gamma}\quad\mbox{and}\quad
\mathcal{E}(\rho,\lvert{\bf u}\rvert,S)=\frac{\lvert{\bf u}\rvert^2}{2}+\frac{S\rho^{\gamma-1}}{\gamma-1}
\end{equation}
for a positive function $S$ called the {\emph {entropy}}, and a constant $\gamma>1$ called the {\emph {adiabatic exponent}}.
Then the system \eqref{E-S} can be rewritten as follows:
\begin{equation}\label{E-B}
\left\{
\begin{split}
&\mbox{div}(\rho{\bf u})=0,\\
&\mbox{div}(\rho{\bf u}\otimes{\bf u})+\nabla p=\rho\nabla\Phi,\\
&\mbox{div}(\rho{\bf u}B)=\rho{\bf u}\cdot\nabla\Phi,\\
&\Delta\Phi=\rho-b
\end{split}
\right.
\end{equation}
for the {\emph{Bernoulli function}} $B$ defined by
\begin{equation*}\label{Ber-f}
B(\rho,\lvert{\bf u}\rvert,S):=\frac{\lvert{\bf u}\rvert^2}{2}+\frac{\gamma }{\gamma-1}S\rho^{\gamma-1}.
\end{equation*}
If $(\rho,{\bf u}, p)\in C^1$ and $\Phi\in C^2$ solve \eqref{E-B}, then
\begin{equation*}
\rho{\bf u}\cdot\nabla S=\rho{\bf u}\cdot\nabla \mathcal{K}=0
\end{equation*}
for
\begin{equation}
\label{equation for K}
\mathcal{K}:=B-\Phi.
\end{equation}
We call the function $\mathcal{K}$ the {\emph{pseudo-Bernoulli invariant}}.
For simplicity, we assume that
\begin{equation*}
\mathcal{K}=0.
\end{equation*}

For an irrotational flow ($\nabla\times {\bf u}={\bf 0}$),
the system \eqref{E-B} can be simplified as
\begin{equation}
\label{potential flow model}
\left\{\begin{split}
&{\rm div}\left(\rho(\Phi, \nabla \vphi)\nabla\vphi\right)=0,\\
&\Delta \Phi=\rho(\Phi, \nabla\vphi)-b
\end{split}\right.
\end{equation}
with $\rho(\Phi, \nabla\vphi):=\left[\frac{\gam-1}{\gam S_0}(\Phi-\frac 12\lvert\nabla\vphi\rvert^2)\right]^{\frac{1}{\gam-1}}$ for a constant $S_0>0$.
In  \cite{bae2016subsonic} and \cite{bae2021three}, this potential flow system in three dimensional domains has been studied.  Especially, the very first result on the existence of a three-dimensional supersonic ($\lvert\nabla\vphi\rvert>\sqrt{\gamma S_0\rho^{\gam-1}(\Phi, \nabla\vphi)}$) solution to the system in a rectangular nozzle is proved in \cite{bae2021three}.
\smallskip

The main purpose of this paper is to generalize the result of \cite{bae2021three}. More precisely, the first part of this paper is devoted to prove the existence of a supersonic solution to the potential flow system in a three dimensional cylinder with an arbitrary cross-section.  For that purpose, we shall newly define weighted Sobolev spaces.

  The second part of this paper is devoted to extend the above result to the full Euler-Poisson system \eqref{E-B}.
  Namely, we construct three-dimensional axisymmetric supersonic ($\lvert{\bf u}\rvert>\sqrt{\gamma S\rho^{\gam-1}}$) solutions to \eqref{E-B} with $\nabla \times {\bf u}\neq {\bf 0}$. In particular, we shall consider a flow with nonzero angular momentum density(or equivalently, nonzero swirl). Clearly, this feature indicates that the flow is truly three dimensional in the sense that the system \eqref{E-B} cannot be simplified as a two dimensional system via a stream function formulation. In the spirit of a Helmholtz decomposition, we express the velocity vector field as ${\bf u}=\nabla\vphi+\nabla\times {\bf V}$ for a scalar function $\vphi$ and a vector field ${\bf V}$. And, we attempt to construct a three-dimensional supersonic solution to the system \eqref{E-B} as a small perturbation from the case of ${\bf V}={\bf 0}$, which is studied in the first part of the paper. The main idea is to reformulate the system into a second order hyperbolic-elliptic coupled system and two transport equations via the method of Helmholtz decomposition, and to employ the results obtained in the study of the potential flows.

 In \cite{bae2021structural, bae2014subsonic, bae20183}, the method of Helmholtz decomposition is used to construct various types of multi-dimensional solutions to the system \eqref{E-B}. In particular, an explicit representation for a velocity field ${\bf u}$ of an axisymmetric flow with nonzero angular momentum density is introduced in \cite{bae20183} for the first time to prove the existence of a three-dimensional subsonic ($\lvert{\bf u}\rvert<\sqrt{\gamma S\rho^{\gam-1}}$) solution to \eqref{E-B}. The representation is as follows:
 \begin{equation}
 \label{velocity in HD}
  {\bf u}=\nabla\vphi+\nabla\times (h{\bf e}_r+\phi {\bf e}_{\theta})
 \end{equation}
 for axisymmetric functions $\vphi$, $h$ and $\phi$.

The main difference of our work from \cite{bae20183} is that we seek for a solution that satisfies the inequality $\lvert{\bf u}\rvert>\sqrt{\gamma S\rho^{\gam-1}}$. In this case, the main challenge is to solve a quasi-linear system consisting of a second order hyperbolic differential equation and a second order elliptic differential equations weakly coupled in a three dimensional domain. To the best of our knowledge, this is the first result on the three-dimensional supersonic flows for the steady Euler-Poisson system with nonzero vorticity.

The rest of the paper is organized as follows.
In Section \ref{Sec-Main}, the main problems and the main theorems are addressed.
In Section \ref{Sec-Irr}, we prove the existence of a supersonic solution to the system \eqref{potential flow model} in a three dimensional cylindrical domain with an arbitrary smooth cross-section.
In Section \ref{Sec-HD}, we prove the
existence of a supersonic solution to the system \eqref{E-B} in a three dimensional cylindrical domain with a circular cross-section by using the Helmholtz decomposition and applying the result obtained from Section \ref{Sec-Irr}.

\section{Main Theorems}\label{Sec-Main}
Fix a constant $\gam>1$.
Suppose that $(\bar{\rho}, \bar u, \bar p,\bar \Phi)(x_1)$ solves the Euler-Poisson system \eqref{E-B} with the function $b$ being given by
\begin{equation*}
  b=b_0
\end{equation*}
for some constant $b_0>0$.
And, let us set $\bar E(x_1):=\bar{\Phi}'(x_1)$. If the inequalities $\bar\rho>0$, $\bar u>0$ and $\bar p>0$ hold, then we can rewrite the system \eqref{E-B}  for $(\bar{\rho}, \bar u, \bar p,\bar E)$ as
\begin{equation}
\label{one-re}
\bar\rho'=\frac{\bar E \bar \rho}{\gamma S_0\bar \rho^{\gamma-1}-\frac{J_0^2}{\bar \rho^2}},\quad \bar E'=\bar \rho-b_0,\quad
\bar u=\frac{J_0}{\bar \rho},\quad \bar p=\bar \rho^{\gamma}S_0
\end{equation}
for some constants $J_0>0$ and $S_0>0$. Let us set
\begin{equation*}
\rho_s:=\left(\frac{J_0^2}{\gamma S_0}\right)^{\frac{1}{\gamma+1}}.
\end{equation*}
Then it can be directly checked that the flow governed by the solution  $(\bar{\rho}, \bar u, \bar p,\bar \Phi)$ is {\emph{supersonic}}($\bar u>\sqrt{\gam S_0\bar{\rho}^{\gam-1}}$) if and only if $\bar{\rho}<\rho_s$, and {\emph{subsonic}}($\bar u<\sqrt{\gam S_0\bar{\rho}^{\gam-1}}$) if and only if $\bar{\rho}>\rho_s$. In this paper, we assume that
\begin{equation}
\label{condition of b0}
0<b_0<\rho_s.
\end{equation}
See Figure \ref{figurec} for a phase plane of solution curves under the assumption of \eqref{condition of b0} in Appendix \ref{appendix-C}.

\medskip
For the rest of the paper, we fix the constants $\gam>1$, $J_0>0$ and $S_0>0$.
Under the condition of \eqref{condition of b0}, let us consider the initial value problem
\begin{equation}\label{one-re-uE}
\left\{\begin{split}
&\bar{\rho}'=\frac{\bar E\bar{\rho}}{\gamma S_0\bar{\rho}^{\gamma-1}-\frac{J_0^2}{\bar{\rho}^2}}\\
& \bar E'=\bar{\rho}-b_0
\end{split}\right.\quad\mbox{with}\quad (\bar{\rho}, \bar E)(0)=(\rho_0, E_0).
\end{equation}
Since we are interested in supersonic flows, we shall fix the initial data $(\rho_0, E_0)$ in the set $(0, \rhos)\times \R$. The unique solvability of the initial value problem \eqref{one-re-uE} is already well known so we state the following lemma without a proof.

\begin{lemma}[One-dimensional supersonic solutions ({\cite[Lemma 1.1]{bae2021structural}, \cite[Section 3.1]{luo2012transonic}})] \label{Lem1}
For any given constant $\bar{\delta}>0$ sufficiently small, there exists a constant $\bar L>0$ depending on $(\gamma, J_0, S_0, b_0, \rho_0, E_0,\bar{\delta})$ so that the initial value problem \eqref{one-re-uE} has a unique smooth solution $(\bar \rho,\bar E)(x_1)$ on $[0,\bar L]$ with satisfying that
\begin{equation*}
\bar{\delta}\le \bar \rho(x_1)\le \rho_s-\bar{\delta}  \quad\tx{for $0\le x_1\le \bar{L}$}.
\end{equation*}
Note that the above inequality is equivalent to
\begin{equation*}
  1+\hat{\delta}\le \frac{\bar u(x_1)}{\sqrt{\gam S_0\bar{\rho}^{\gam-1}(x_1)}}<\infty\quad\tx{for $0\le x_1\le \bar L$}
\end{equation*}
for some constant $\hat{\delta}>0$.
\end{lemma}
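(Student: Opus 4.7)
The plan is to apply the standard Cauchy--Lipschitz (Picard--Lindel\"of) theorem on the open region where the vector field defining \eqref{one-re-uE} is smooth, and then to extract a uniform interval of existence by a simple a priori bound argument that prevents the trajectory from approaching the sonic state $\bar\rho=\rho_s$.

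First I would rewrite the denominator in \eqref{one-re-uE}, using the defining identity $\gamma S_0\rho_s^{\gamma+1}=J_0^2$, as
\[
\gamma S_0\bar\rho^{\gamma-1}-\frac{J_0^2}{\bar\rho^2}=\gamma S_0\,\bar\rho^{-2}\bigl(\bar\rho^{\gamma+1}-\rho_s^{\gamma+1}\bigr),
\]
which is strictly negative whenever $0<\bar\rho<\rho_s$. Hence the vector field on the right-hand side of \eqref{one-re-uE} is real-analytic on the open strip $\mathcal U:=\{(\rho,E)\in\R^2:0<\rho<\rho_s\}$. Choosing $\bar\delta>0$ small enough that $(\rho_0,E_0)$ lies in the compact rectangle
\[
K:=\bigl\{(\rho,E):\tfrac12\bar\delta\le\rho\le\rho_s-\tfrac12\bar\delta,\ |E-E_0|\le 1\bigr\}\subset\mathcal U,
\]
which is possible since $\rho_0\in(0,\rho_s)$, Picard--Lindel\"of yields a unique local $C^\infty$ solution $(\bar\rho,\bar E)$ emanating from $(\rho_0,E_0)$.

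Next I would upgrade this to a quantitative interval $[0,\bar L]$. Because the rewritten denominator is bounded away from zero on $K$, the vector field of \eqref{one-re-uE} is bounded in modulus by some constant $M=M(\gamma,J_0,S_0,b_0,\rho_0,E_0,\bar\delta)<\infty$ on $K$. Setting $\bar L:=\min\{\bar\delta/(2M),\,1/M\}$ and using the standard continuation argument, as long as the trajectory remains in $K$ one has $|\bar\rho(x_1)-\rho_0|+|\bar E(x_1)-E_0|\le 2Mx_1$; the choice of $\bar L$ guarantees that this increment keeps the trajectory inside the slightly smaller set $\{\bar\delta\le\rho\le\rho_s-\bar\delta,\ |E-E_0|\le 1\}$ throughout $[0,\bar L]$. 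The equivalent supersonic formulation $\bar u/\sqrt{\gamma S_0\bar\rho^{\gamma-1}}\ge 1+\hat\delta$ then follows directly from $\bar u=J_0/\bar\rho$ and strict monotonicity of $\rho\mapsto\gamma S_0\rho^{\gamma+1}/J_0^2$, with an explicit $\hat\delta=\hat\delta(\bar\delta)>0$.

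The only genuine obstacle is the singularity of the ODE at the sonic state $\bar\rho=\rho_s$, where the first equation in \eqref{one-re-uE} loses its Lipschitz character; keeping $\bar\delta>0$ fixed uniformly separates the trajectory from this sonic set, at the price of a finite life span $\bar L$ that shrinks as $\bar\delta\to 0$. A sharper statement on the maximal $x_1$-interval of existence would require a phase-plane analysis of the autonomous system, but is not needed for the purely local result stated in the lemma.
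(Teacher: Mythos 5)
Your proof is correct. Since the paper deliberately omits a proof of this lemma (it cites \cite[Lemma 1.1]{bae2021structural} and \cite[Section 3.1]{luo2012transonic}), there is no in-paper argument to compare against line by line, but your route is the natural elementary one: the identity $\gamma S_0\rho_s^{\gamma+1}=J_0^2$ shows the denominator equals $\gamma S_0\bar\rho^{-2}(\bar\rho^{\gamma+1}-\rho_s^{\gamma+1})$, hence the vector field is smooth and bounded on the compact set $K$, Picard--Lindel\"of gives local existence and uniqueness, and the increment bound $|\bar\rho(x_1)-\rho_0|\le M x_1$ with $\bar L\le\bar\delta/(2M)$ keeps the trajectory in the target band. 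The only implicit hypothesis you should make explicit is that ``$\bar\delta$ sufficiently small'' must also mean $\bar\delta\le\tfrac{2}{3}\min\{\rho_0,\rho_s-\rho_0\}$, so that the initial point itself satisfies $\bar\delta\le\rho_0\le\rho_s-\bar\delta$ and the half-$\bar\delta$ drift does not push $\bar\rho$ below $\bar\delta$; this is consistent with the lemma's phrasing. Your reduction of the Mach-number equivalence to the identity $\bar u^2/(\gamma S_0\bar\rho^{\gamma-1})=(\rho_s/\bar\rho)^{\gamma+1}$ is also exactly right.

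The cited sources (and Appendix C of this paper) instead analyze the autonomous system in the $(\bar u,\bar E)$ phase plane using the conserved quantity $\tfrac12\bar E^2-H(\bar\rho)$; that approach yields global structural information --- the shape of the orbits, periodicity in case (i) of Figure \ref{figurec}, and how large $\bar L$ can actually be taken --- which the paper uses later in Appendix \ref{appendix-C} when fixing $L^*$. Your argument buys a shorter, self-contained proof of the purely local statement at the cost of an $\bar L$ that degenerates as $\bar\delta\to 0$ and carries no information about the maximal supersonic interval; for the lemma as stated, that is all that is needed.
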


Hereafter, we shall fix constants $\gam>1$, $J_0>0$, $S_0>0$, $\rho_0\in(0,\rhos)$ and $E_0\in \R$. Let $(\bar{\rho}, \bar{E})$ be the solution to the initial value problem \eqref{one-re-uE}, and let $\bar u$ be given by
\begin{equation*}
  \bar u=\frac{J_0}{\bar{\rho}}.
\end{equation*}
And, let us define a function $\bar{\Phi}$ by
\begin{equation}
\label{def-Phi0}
  \bar{\Phi}(\rx)=\int_0^{x_1} \bar{E}(t)\,dt+B_0
  \quad\text{with $B_0= \frac{J_0^2}{2\rho_0^2}+\frac{\gam S_0\rho_0^{\gam-1}}{\gam-1}$}
\end{equation}
for $\rx=(x_1,x_2, x_3)\in \R^3$.
\begin{definition}[Background solutions]
\label{definition:background sol}
Given constants $\gam>1$, $J_0>0$ and $S_0>0$, we shall call $(\bar{\rho}, \bar u, S_0, \bar{\Phi})$ the background solution to the steady Euler-Poisson system \eqref{E-B} associated with $(\rho_0, E_0)\in(0, \rhos)\times \R$.
\end{definition}
Here, the pressure $p$ is given by $p=S_0\bar{\rho}^{\gam}$. Since we reformulate the system \eqref{E-B} later so that the new system contains a transport equation for the entropy function $S=\frac{p}{\rho^{\gam}}$, we include $S_0$ in Definition \ref{definition:background sol} as a part of a background solution.
\medskip

The main purpose of this work is to investigate the structural stability of the background solution in three dimensional nozzles by the following regimes:
\begin{itemize}
\item[(i)] First of all, we shall consider the structural stability of the background solution in a three dimensional cylindrical nozzle with an arbitrary cross-section for the potential flow model.

\item[(ii)] Secondly, we shall consider the structural stability in a three dimensional circular cylinder with respect to axisymmetric perturbations for the full Euler-Poisson system.
\end{itemize}

\subsection{Potential flows}
\label{subsection:intro-part 1}
In $\R^2$, fix an open, connected and bounded domain $\mcl{D}$ with a smooth boundary $\der \mcl{D}$.
For a constant $L\in(0,\bar{L}]$, define a three dimensional cylinder $\Omega_L$ by
\begin{equation}\label{def-Omega}
\Omega_L:=\left\{\rx=(x_1,\rx')\in\mathbb{R}^3:\mbox{ }  0<x_1<L,\, \rx'=(x_2,x_3)\in\mathcal{D}\right\}.
\end{equation}
We shall denote the entrance, wall, and the exit of $\Omega_L$ by
\begin{equation*}
	\Gamma_0:=\partial\Omega_L\cap\{x_1=0\},\quad \Gamma_{\rm w}:=(0,L)\times\partial\mathcal{D},\quad	\Gamma_L:=\partial\Omega_L\cap\{x_1=L\}.
	\end{equation*}
Finally, let ${\bf n}_w$ represent the inward unit normal vector field on $\Gamw$.
\medskip

If the velocity field ${\bf u}$ is given as
\begin{equation*}
{\bf u}=\nabla\varphi\quad\mbox{in}\quad \Omega_L
\end{equation*}
for a scalar function $\varphi=\varphi(\rx)$, and if $(\rho, {\bf u}, p, \Phi)$ is a classical solution to the system \eqref{E-B} in $\Om_L$, then it is well known that the entropy $S=\frac{p}{\rho^{\gam}}$ is globally a constant so that we can set as
\begin{equation*}
  S\equiv S_0\quad\tx{in $\Om_L$}
\end{equation*}
for some constant $S_0>0$. Moreover, the solution can be given by solving the following system, which is called {\emph{the potential flow model}} of the Euler-Poisson system:
\begin{equation}\label{back-HD}
\left\{\begin{split}
&\mbox{div}(\rho(\Phi,\nabla\varphi)\nabla\varphi)=0,\\
&\Delta\Phi=\rho(\Phi,\nabla\varphi)-b
\end{split}\right.
\end{equation}
for $\rho$ defined by
\begin{equation}\label{def-H}
\rho(z,{\bf q}):
=\left[\frac{\gamma-1}{\gamma S_0}\left(z-\frac{1}{2}\lvert{\bf q}\rvert^2\right)\right]^{\frac{1}{\gamma-1}}\quad\mbox{for}\quad z\in\mathbb{R},\, {\bf q}\in\mathbb{R}^3.
\end{equation}

Given constants $\gam>1$, $J_0>0$, $S_0>0$ and $(\rho_0, E_0)\in (0, \rhos)\times \R$, let $(\bar{\rho}, \bar u, S_0, \bar{\Phi})$ be the background solution associated with $(\rho_0, E_0)$ in the sense of Definition \ref{definition:background sol}. And, let us define
\begin{equation}\label{def-phi0}
\begin{split}
&\bar{\varphi}(\rx):=\int_0^{x_1}\bar u(t)dt\\
\end{split}
\end{equation}
for $\rx=(x_1, x_2, x_3)\in\Omega_L.$
Then $(\vphi, \Phi)=(\bvphi,\bPhi)$ solves the system \eqref{back-HD}, and satisfies the following boundary conditions:
\begin{equation}\label{back-com}
\begin{split}
\varphi=0,\quad \partial_{x_1}\varphi=\frac{J_0}{\rho_0}(=:u_0),\quad \partial_{x_1}\Phi=E_0\quad\mbox{on}\quad\Gamma_0,\\
\partial_{{\bf n}_w}\varphi=0,\quad\partial_{{\bf n}_w}\Phi=0\quad\mbox{on}\quad\Gamma_{\rm w}.
\end{split}
\end{equation}
In addition, it satisfies the inequality
\begin{equation}\label{back-super}
\lvert\nabla\bvphi\rvert^2> \gam S_0\rho^{\gam-1}(\bPhi, \nabla\bvphi)
\quad\tx{in $\ol{\Om_L}$}.
\end{equation}

First, we set up a problem to find a solution $(\vphi, \Phi)$ to the system \eqref{back-HD} as a small perturbation of $(\bvphi,\bPhi)$ in $\Om_L$.
\begin{problem}\label{Irr-EP-Prob}
Fix functions $b\in C^2(\overline{\Omega_L})$, $u_{\rm en}\in C^3(\overline{\Gamma_0})$, $E_{\rm en}\in C^4(\overline{\Gamma_0})$, and a function $E_{\rm ex}\in  C^4(\overline{\Gamma_L})$.
In addition, given a small constant $\bar{\epsilon}\in(0,\frac{1}{4}]$, let us set
\begin{equation}
\label{definition-bd-port-ent}
      \Gamma_{0}^{\bar{\epsilon}}:=\{(0, \rx')\in\overline{\Gamma_0}:\mbox{dist}(\rx',\der\mcl{D})\le\bar{\epsilon}\}.
    \end{equation}

Let us assume that the functions $(b, E_{\rm en}, E_{\rm ex})$ satisfy the  compatibility conditions:
\begin{equation}\label{Thm-com-Irr}
\begin{split}
&\partial_{x_1}b=0,\quad E_{\rm en}-E_0=0\quad\mbox{on}\quad\Gamma_{0}^{\bar{\epsilon}},\\
&\partial_{{\bf n}_w}(b, u_{\rm en}, E_{\rm en}, E_{\rm ex})={\bf 0}\quad\mbox{on}\quad\Gamma_{\rm w}.
\end{split}
\end{equation}
Under this assumption, find a solution $(\varphi,\Phi)$ to the Euler-Poisson system in $\Omega_L$ with satisfying the following properties:
\begin{itemize}
\item[(i)] (The boundary conditions)
	\begin{equation}\label{Irr-Phy-bd}
	\begin{split}
	\varphi=0,\quad \partial_{x_1}\varphi=u_{\rm en},\quad\partial_{x_1}\Phi=E_{\rm en}\quad&\mbox{on}\quad\Gamma_0,\\
	\partial_{{\bf n}_w}\varphi=0,\quad\partial_{{\bf n}_w}\Phi=0\quad&\mbox{on}\quad\Gamma_{\rm w},\\
	\der_{x_1}\Phi=E_{\rm ex}\quad&\mbox{on}\quad\Gamma_L.
	\end{split}
	\end{equation}	
\item[(ii)] (Positivity of the density and the velocity along $x_1$-direction)
\begin{equation}\label{positive-phi}
\rho(\Phi,\nabla\varphi)>0\quad\mbox{and}\quad \der_{x_1}\varphi>0\quad \mbox{ in}\quad\overline{\Omega_L}.
\end{equation}
\item[(iii)] (Supersonic speed) ${\lvert\nabla\varphi\rvert}>c(\Phi,\nabla\varphi)$ in $\overline{\Omega_L}$ for the sound speed  $c(z,{\bf q})$ defined by
 \begin{equation}\label{Sound1}
 c(z,{\bf q}):=\sqrt{(\gamma-1)\left(z-\frac{1}{2}\lvert{\bf q}\rvert^2\right)}
  \end{equation}
  for $(z,{\bf q})\in\mathbb{R}\times\mathbb{R}^3$ with $z-\frac{1}{2}\lvert{\bf q}\rvert^2>0$.
\end{itemize}
\end{problem}

Before we state our main theorem, we first introduce two {\emph{weighted Sobolev norms}}.
\begin{definition}[A weighted Sobolev norm]
\label{definition-weighted Sobolev norm}
For each $t\in(0,L)$, let us define $\Om_{t}$ by
\begin{equation*}
  \Omega_t:=\left\{\rx=(x_1,\rx')\in\mathbb{R}^3:\mbox{ }  0<x_1<t,\, \rx'\in\mathcal{D}\right\}(=\Om_L\cap\{x_1<t\}).
\end{equation*}
For each $k\in \mathbb{N}$, we define a weighted $H^k$-norm by
\begin{equation*}
  \|\phi\|_{H^k_*(\Om_L)}:=\|\phi\|_{H^{k-1}(\Om_L)}+\sup_{0<d<L}d^{\frac 12}\|D^k\phi\|_{L^2(\Om_{L-d})}.
\end{equation*}
And, we define $H^k_*(\Om_L)$ to be
\begin{equation*}
  H^k_*(\Om_L):=\{\phi\in H^{k-1}(\Om_L)\cap H^k_{\rm loc}(\Om_L): \|\phi\|_{H^k_*(\Om_L)}<\infty\}.
\end{equation*}
\end{definition}


\begin{definition}[A weighted Sobolev norm with involving a time-like variable ]
\label{definition:space involving time}
\quad\\

(1) For a function $\phi:\ol{\Om_L}\longrightarrow \R$, we define two weighted norms involving a time-like variable:
\begin{itemize}
\item[(i)] Regarding the function $\phi$ as a map $x_1\in (0, L)\mapsto \phi(x_1, \cdot)\in L^2(\mcl{D})$, we define
\begin{equation*}
\|\phi\|_{L_*^{\infty}((0,L);L^2(\mcl{D}))}:=\sup_{0<d<L}d^{\frac 12} \esssup_{0<s<L-d}\|\phi(s,\cdot)\|_{L^2(\mcl{D})}.
\end{equation*}

\item[(ii)] For each $k\in \mathbb{N}$, we define
\begin{equation*}
\|\phi\|_{L_*^{\infty}((0,L);H^k(\mcl{D}))}:=
\sum_{j=0}^{k-1} \esssup_{0<s<L}\|D_{\rx'}^j\phi(s,\cdot)\|_{L^2(\mcl{D})}
+\|D_{\rx'}^k\phi\|_{L_*^{\infty}((0,L);L^2(\mcl{D}))}.
\end{equation*}
\end{itemize}
\medskip

(2) For a fixed constant $m\in \mathbb{N}$, we define
    \begin{equation*}
    \begin{split}
    &\|\phi\|_{\mcl{W}^{m,\infty}_{\mcl{D}}(0,L)}
      :=\sum_{j=0}^m \|\der_1^j\phi\|_{L^{\infty}((0,L);H^{m-j}(\mcl{D}))};\\
    &\|\phi\|_{\mcl{W}^{m,\infty}_{*, \mcl{D}}(0,L)}
      :=\sum_{j=0}^m \|\der_1^j\phi\|_{L_*^{\infty}((0,L);H^{m-j}(\mcl{D}))}.
     \end{split}
    \end{equation*}
Finally, we define two vector spaces $\mcl{W}^{m,\infty}_{\mcl{D}}(0,L)$ and $\mcl{W}^{m,\infty}_{*, \mcl{D}}(0,L)$ by
\begin{equation*}
  \begin{split}
  &\mcl{W}^{m,\infty}_{\mcl{D}}(0,L):=\left\{\phi:\Om_L\rightarrow \R\;\middle\vert\; \begin{split}&D^j\phi\in L^{\infty}((0,L);L^2(\mcl{D}))\,\,\tx{for $j=0,1,\cdots, m$},\\
  &\|\phi\|_{\mcl{W}^{m,\infty}_{\mcl{D}}(0,L)}<\infty\end{split}\right\},\\
&\mcl{W}^{m,\infty}_{*,\mcl{D}}(0,L):=\left\{\phi:\Om_L\rightarrow \R\;\middle\vert\; \begin{split}&D^j\phi\in L^{\infty}((0,L);L^2(\mcl{D}))\,\,\tx{for $j=0,1,\cdots, m-1$},\\
&D^m\phi\in L^{\infty}_{\rm loc}((0,L);L^2(\mcl{D})),\\
  &\|\phi\|_{\mcl{W}^{m,\infty}_{*,\mcl{D}}(0,L)}<\infty\end{split}\right\}.
  \end{split}
\end{equation*}
Clearly, $\|\cdot\|_{\mcl{W}^{m,\infty}_{\mcl{D}}(0,L)}$ and $\|\cdot\|_{\mcl{W}^{m,\infty}_{*, \mcl{D}}(0,L)}$ are norms, thus $\mcl{W}^{m,\infty}_{\mcl{D}}(0,L)$ and $\mcl{W}^{m,\infty}_{*,\mcl{D}}(0,L)$ are normed vector spaces.
\end{definition}

\begin{theorem}[Potential flows]
\label{Irr-MainThm}
Fix constants $\gam>1$, $J_0>0$, $S_0>0$. And, fix $E_0$ as
\begin{equation}
\label{special condition}
E_0=0.
\end{equation}
 For fixed constants $b_0$ and $\rho_0$ satisfying the condition \eqref{condition of b0} and the inequality $0<\rho_0<\rho_s$, respectively, let $(\bar\rho, \bar E, \bar u, \bar p)$ be the solution to \eqref{one-re} with the initial condition $(\bar\rho, \bar E)(0)=(\rho_0, E_0)$. And, let $(\bvphi, \bPhi)$ be given by \eqref{def-phi0} and \eqref{def-Phi0}, respectively.
 \medskip

For a fixed small constant $\bar{\epsilon}\in(0, \frac 14]$, let functions $(b, u_{\rm en}, E_{\rm en},E_{\rm ex})$ be given with satisfying the compatibility conditions \eqref{Thm-com-Irr} stated in Problem \ref{Irr-EP-Prob}. And,
let us set
\begin{equation}
\label{definition of sigma}
\begin{split}
\sigma(b,u_{\rm en},E_{\rm en},E_{\rm ex})
&:=\|b-b_0\|_{C^2(\overline{\Omega_L})}+\|u_{\rm en}-u_0\|_{C^3(\overline{\Gamma_0})}\\&\quad+\|E_{\rm en}\|_{C^4(\overline{\Gamma_0})}+\|E_{\rm ex}-\bar E(L)\|_{C^4(\overline{\Gamma_L})}.
\end{split}
\end{equation}
For a fixed constant $\bar{\delta}>0$, let $\bar L$ be from Lemma \ref{Lem1}.
Then, there exists a constant $\hat{L}\in(0,\bar L]$ depending on $(\gamma, J_0, S_0, b_0, \rho_0, \bar{\delta})$  so that the following properties hold:
For any given $L\in(0, \hat{L}]$, one can fix a constant $\sigma_p>0$ sufficiently small depending on $(\gamma, J_0,S_0,b_0,\rho_0, \bar{\delta},L,\bar{\epsilon})$ so that if the inequality
\begin{equation*}
\sigma(b,u_{\rm en},E_{\rm en},E_{\rm ex})\le\sigma_p
\end{equation*}
holds,
then Problem \ref{Irr-EP-Prob} has a unique  solution $U=(\vphi, \Phi)\in [H^4_{*}(\Om_L)\cap \mcl{W}^{4,\infty}_{*,\mcl{D}}(0,L)]\times H^4_{*}(\Om_L)$ that satisfies the estimate
 \begin{equation}\label{up-est}
 \begin{split}
&\|\Phi-{\bPhi}\|_{H^4_*(\Omega_L)}
+\|\varphi-\bvphi\|_{H^4_*(\Omega_L)}
+\|\varphi-\bvphi\|_{\mcl{W}^{4,\infty}_{*,\mcl{D}}(0,L)}\\
&\le C\sigma(b,u_{\rm en},E_{\rm en},E_{\rm ex})
\end{split}
\end{equation}
for a constant $C>0$, fixed depending only on $(\gamma, J_0,S_0,b_0,\rho_0,\bar{\delta},L,\bar{\epsilon})$.
\end{theorem}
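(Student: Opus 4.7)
The plan is to construct $(\varphi, \Phi)$ as a perturbation $(\bar\varphi + \psi, \bar\Phi + \Psi)$ of the background solution, and to produce $(\psi, \Psi)$ by iteration. Direct linearization of \eqref{back-HD} at the background shows that $\psi$ satisfies a second-order equation whose principal part is
\begin{equation*}
(\bar c^2 - \bar u^2)\partial_1^2\psi + \bar c^2\bigl(\partial_2^2 + \partial_3^2\bigr)\psi + \mbox{l.o.t.},\quad \bar c^2 := \gamma S_0\bar\rho^{\gamma-1},
\end{equation*}
so that the supersonic condition \eqref{back-super} makes this equation \emph{hyperbolic} with $x_1$ acting as time, while $\Psi$ satisfies an \emph{elliptic} Poisson equation with Neumann data on all of $\partial\Omega_L$. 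The assumption $E_0 = 0$ in \eqref{special condition}, combined with the compatibility \eqref{Thm-com-Irr} giving $E_{\rm en} = E_0$ on $\Gamma_0^{\bar\epsilon}$, guarantees that the normal trace of $\nabla\bPhi$ matches the Neumann data along the edge $\overline{\Gamma_0} \cap \overline{\Gamma_{\rm w}}$, which is what allows a standard $H^4$-elliptic regularity theory to apply on the Lipschitz cylinder $\Omega_L$ despite its non-smooth corner edges.

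I would then iterate on a closed ball $\mathcal{B}_M$ of radius $M\sigma$ in the space $[H^4_*(\Omega_L) \cap \mathcal{W}^{4,\infty}_{*,\mathcal{D}}(0,L)] \times H^4_*(\Omega_L)$, sending $(\tilde\psi, \tilde\Psi)$ to the pair $(\psi, \Psi)$ in which $\Psi$ is obtained by solving the linear elliptic Neumann problem with source $\rho(\bar\Phi + \tilde\Psi, \nabla\bar\varphi + \nabla\tilde\psi) - b$, and $\psi$ is obtained by solving the linear hyperbolic initial-boundary value problem whose principal matrix $A_{ij}$ is frozen at $(\tilde\psi, \tilde\Psi)$, with Cauchy-type data $\psi = 0$, $\partial_{x_1}\psi = u_{\rm en} - u_0$ on $\Gamma_0$ and slip condition $\partial_{{\bf n}_w}\psi = 0$ on $\Gamma_{\rm w}$. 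The hyperbolic step uses the standard energy method: multiply by $\partial_1\psi$, integrate over $\Omega_t := \Omega_L\cap\{x_1 < t\}$, and apply Gronwall in $t$; the resulting bounds blow up as $t \to L$ at the rate $d^{-1/2}$, which is exactly the loss absorbed by the weights built into Definitions \ref{definition-weighted Sobolev norm} and \ref{definition:space involving time}. Choosing $\hat L$ small enough so that a single pass contracts by a factor $< 1/2$ in the weaker $H^3_*$-norm then produces a unique fixed point in $\mathcal{B}_M$, yielding the estimate \eqref{up-est}. The open conditions \eqref{positive-phi} and the supersonic inequality are preserved since the perturbation is small in $C^1$ via Sobolev embedding (applied away from the exit, where the weighted norms still give uniform $C^1$ control).

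The principal obstacle, and the point where this work genuinely extends \cite{bae2021three} from rectangular to arbitrary smooth cross-section, is the derivation of higher-order tangential energy estimates for the hyperbolic equation when $\partial\mathcal{D}$ is curved. In the rectangular setting, the vector fields $\partial_{x_2}$ and $\partial_{x_3}$ commute with the slip condition, so tangential differentiation of the equation is immediate. For general $\mathcal{D}$, the outer normal ${\bf n}_w$ rotates along $\partial\mathcal{D}$, producing curvature commutators whenever one differentiates tangentially, and these must be absorbed into the energy identity order by order. My plan is to fix a smooth finite family of vector fields on $\overline{\Omega_L}$ that are tangent to $\Gamma_{\rm w}$ and, together with $\partial_{x_1}$, span the tangent bundle; to iterate the energy method using compositions of these vector fields up to total order four; and to recover the remaining normal derivatives directly from the equation, which is permissible since the strict supersonic condition makes $A_{ij}$ invertible. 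The mixed norm $\mathcal{W}^{4,\infty}_{*,\mathcal{D}}(0,L)$ is included precisely so that $\nabla\psi$ can be controlled uniformly in $x_1$ slicewise, which is needed in the nonlinear bootstrap for the coefficients of the principal part when closing the iteration.
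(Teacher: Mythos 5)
Your skeleton --- perturb off the background, decouple into a hyperbolic equation for $\psi$ and an elliptic one for $\Psi$, freeze coefficients, and iterate in the weighted spaces --- matches the paper's. But there are genuine gaps. First, you never construct solutions of the linearized hyperbolic problem. For a second-order hyperbolic equation whose coefficients $a_{ij}(\tilde\Psi,\nabla\tilde\psi)$ are only in $H^4_*$ (hence merely $C^1$ up to $\overline{\Omega_L}$, with top derivatives only in weighted $L^2$), posed with a Neumann condition on a \emph{curved} lateral wall, existence is not off-the-shelf, and this is exactly where the paper's new ideas live: it runs a Galerkin scheme in the Neumann eigenfunctions of $-\Delta_{\rx'}$ on the cross-section $\mathcal{D}$, and, to make the resulting ODE system have smooth coefficients in $x_1$, first replaces the iterate by a ``partially smooth approximation'' that is $C^\infty$ in $x_1$ only (Lemma \ref{lemma-main section-smooth approx}), then passes to the limit using the weak-$*$ compactness of $\mathcal{W}^{4,\infty}_{*,\mathcal{D}}(0,L)$ (Lemma \ref{lemma-weak star limit}). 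Without some such device your ``solve the linear hyperbolic IBVP'' step is unsupported. Relatedly, your plan for the fourth-order estimates --- tangent vector fields adapted to $\partial\mathcal{D}$ and curvature commutators --- would be hard to close at top order, since the commutators carry top derivatives of $a_{ij}$, which live only in weighted $L^2$. The paper instead commutes only $\partial_1$ through the equation and recovers all cross-sectional derivatives $D_{\rx'}^kV(s,\cdot)$ slice by slice from the two-dimensional \emph{elliptic} problem that \eqref{gronwall-7} defines on each $\{x_1=s\}$ (ellipticity of $-[a_{ij}]_{i,j=2}^3$ being precisely the supersonic condition), with Neumann data on $\partial\mathcal{D}$; the algebraic input that kills the boundary terms in the energy identity is the compatibility ${\bf n}_w\cdot\sum_j a_{ij}^P{\bf e}_j=0$ on $\Gamma_{\rm w}$ (Lemma \ref{lemma-properties of coeffs}(e)), which you do not mention.

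Second, your closing mechanism is off: you propose to obtain a contraction factor $<1/2$ by shrinking $\hat L$. Shrinking $L$ does not shrink the Lipschitz dependence of the frozen coefficients on the iterate; the difference estimate has the form $\|\mathcal{I}(P_1)-\mathcal{I}(P_2)\|\le C\,\bigl(\sum_j\|P_j\|\bigr)\|P_1-P_2\|$ with $C$ not small as $L\to0$, so the contraction comes from the smallness of the iterates, i.e. from $\sigma\le\sigma_p$ (the paper in fact uses Schauder for existence and a separate $H^1$-contraction for uniqueness). In the paper $\hat L$ small serves a different purpose: it makes the quadratic form $\mathcal{I}_{\rm main}$ in the basic $H^1$ energy identity coercive via the multiplier weight $\mathfrak{M}(x_1)$ (Lemma \ref{lemma-coercivity}). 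Finally, your attribution of the $d^{-1/2}$ weight to ``Gronwall blowing up as $t\to L$'' is not the actual mechanism: $\int_0^L\kappa(s)\,ds=\tfrac52\sqrt{L}$ is finite, so Gronwall is uniform up to $x_1=L$; the weight is forced by the loss of $H^4$ regularity of $\Psi$ at the exit $\Gamma_L$ (no compatibility condition there permits a top-order reflection) and by the $(L-s)^{-1/2}$ blow-up of $\|Da_{ij}\|_{C^0}$ near $\Gamma_L$, which comes from the iterate itself lying only in $H^4_*$.
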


\begin{remark}\label{Rem}
The main difference of this work from \cite{bae2021three} is that the domain that we consider is a three dimensional nozzle with an arbitrary cross-section while a rectangular nozzle is considered in \cite{bae2021three}. But instead, we add the condition \eqref{special condition} for the following reason.
As we shall see later, a priori $H^3$ estimates of $(\vphi, \Phi)$ can be given by the arguments given in \cite{bae2021structural, bae2021three}. But we need more careful approach to establish a global $H^4$ estimate of $(\vphi, \Phi)$ in this work.

In \cite{bae2021three}, we establish an a priori $H^4$-estimate of $\Phi$ by applying the method of reflection to the elliptic equation $\Delta \Phi=\rho(\Phi, \nabla\vphi)-b$. To put it simply,
one can achieve a global $H^4$-estimate of $\Phi$ by using its local even reflection about the wall boundary $\Gamw$, which is a rectangular shell in the Euclidean coordinates. Then, a global $L^2$ estimate of $D^4\vphi$ is given by a bootstrap argument and the standard hyperbolic estimate method. This is a direct extension of the result from \cite{bae2021structural}. In this paper, however, we consider the case where the wall boundary $\Gam_w$ is non-flat thus the reflection method used in [5] is no longer applicable. To overcome this difficulty, we add the condition \eqref{special condition} and the compatibility condition for $E_{\rm en}$ on $\Gam_0$ as stated in \eqref{Thm-com-Irr}, which enables us to use a local even reflection of $\Phi$ about $\Gam_0$ so that we can establish an a priori $H^4$-estimate of $\Phi$ up to $\Gam_0$. And, a global $H^4$-estimate of $\Phi$ up to the exit boundary $\Gam_L$ is achieved by using the weighted Sobolev norm $\|\cdot\|_{H^4_*(\Om_L)}$ given by Definition \ref{definition-weighted Sobolev norm}. Using a weighted Sobolev norm is one of the new features in this paper. Then the remaining challenge is to show that the a priori estimate of $\|\Phi\|_{H^4_*(\Om_L)}$ yields an estimate of $\|\vphi\|_{H^4_*(\Om_L)}+\|\vphi\|_{\mcl{W}^{4,\infty}_{*,\mcl{D}}(0,L)}$.
\end{remark}


\subsection{Nonzero vorticity flows}

\begin{definition}
\label{definition-axixymmetry}
Let $(x_1,r,\theta)$ denote the cylindrical coordinates of $\rx=(x_1,x_2,x_3)\in\mathbb{R}^3$, i.e.,
\begin{equation*}
(x_1,x_2,x_3)=(x_1,r\cos\theta,r\sin\theta),\quad r\ge0,\quad\theta\in\mathbb{T}
\end{equation*}
for a one-dimensional torus  $\mathbb{T}$ with period $2\pi$.
\begin{itemize}
\item[(i)] A function $f(\rx)$ is said to be {\emph{axisymmetric}} if its value is independent of $\theta$, i.e., $f(\rx)=g(x_1,r)$ for some function $g:\R^2\rightarrow \R$.
\item[(ii)] A vector-valued function ${\bf F}(\rx)$ is said to be {\emph{axisymmetric}} if ${\bf F}$ can be represented as $${\bf F}(\rx)=F_{x_1}{\bf e}_{x_1}+F_r{\bf e}_r+F_{\theta}{\bf e}_{\theta}$$
for axisymmetric functions $F_{x_1}, F_r$ and $F_{\theta}$. In the above, the vectors ${\bf e}_{x_1}$, ${\bf e}_r$ and ${\bf e}_{\theta}$ are given by
\begin{equation*}
{\bf e}_{x_1}=(1,0,0),\quad {\bf e}_r=(0,\cos\theta,\sin\theta),\quad{\bf e}_{\theta}=(0,-\sin\theta, \cos\theta)
\end{equation*}
in the Euclidean coordinate system.
\end{itemize}
\end{definition}

\begin{problem}\label{EP-Prob}
Let $\Om_L$ be given by Definition \ref{def-Omega} for
\begin{equation*}
\mcl{D}:=\{\rx'=(x_2, x_3)\in \R^2: |\rx'|<1\}.
\end{equation*}
Fix an axisymmetric function $b\in C^2(\overline{\Omega_L})$. And, fix functions $u_{\rm en}\in C^3(\overline{\Gamma_0})$, $v_{\rm en}, w_{\rm en}, S_{\rm en}, E_{\rm en}\in C^4(\overline{\Gamma_0})$, and a function $E_{\rm ex}\in  C^4(\overline{\Gamma_L})$ so that they are all independent of $\theta\in \mathbb{T}$. In other words, they are all radial functions.
\smallskip

For a fixed constant $\bar{\epsilon}\in(0,\frac{1}{4}]$, let $\Gam_0^{\bar{\epsilon}}$ be given by \eqref{definition-bd-port-ent}. In this case, we can simply write as
\begin{equation*}
  \Gam_0^{\bar{\epsilon}}=\{(0, \rx'): 1-\bar{\epsilon}\le |\rx'|\le 1\}.
\end{equation*}
And, let us assume that the following compatibility conditions hold:
\begin{itemize}
\item[-] $\displaystyle{\der_r b=0\quad\tx{on $\ol{\Gam_0}\cap\ol{\Gamw}$},\quad \der_{x_1} b=0\quad\tx{on $\Gam_0^{\bar{\epsilon}}$ }}$;
 \smallskip
\item[-] $\displaystyle{\der_r u_{\rm en}=0\quad\tx{on $\ol{\Gam_0}\cap\ol{\Gamw}$}}$;
\item[-] $\displaystyle{v_{\rm en}=0\quad\tx{on $\Gam_0^{\bar{\epsilon}}\cup \{{\bf 0}\} $}};$
  \smallskip
\item[-] $\displaystyle{\der_r E_{\rm en}=0\quad\tx{on $\ol{\Gam_0}\cup\ol{\Gamw}$},\quad E_{\rm en}=E_0\quad\tx{on $\Gam_0^{\bar{\epsilon}}$}};$
  \smallskip
\item[-] $\displaystyle{\der_r E_{\rm ex}=0\quad\tx{on $\ol{\Gam_0}\cup\ol{\Gamw}$}};$
  \smallskip
\item[-] $\displaystyle{w_{\rm en}=0\quad\tx{on $ \Gam_0^{\bar{\epsilon}} $},\quad \der_r^k w_{\rm en}({\bf 0})=0\quad\tx{for $k=0,1,2,3$}};$
    \smallskip
   \item[-] $\displaystyle{S_{\rm en}=S_0\quad\tx{on $ \Gam_0^{\bar{\epsilon}} $},\quad \der_r^kS_{\rm en}({\bf 0})=0\quad\tx{for $k=1,2,3$}}$.
\end{itemize}

\medskip

Under the conditions stated in the above, find a solution $U=({\bf u},\rho, p,\Phi)$ to the full Euler-Poisson system \eqref{E-B} in $\Omega_L$ with satisfying the following properties:
\begin{itemize}
\item[(i)] (The boundary conditions for $U$)
	\begin{equation}\label{Phy-bd}
	\begin{split}
	{\bf u}=u_{\rm en}{\bf e}_{x_1}+v_{\rm en}{\bf e}_r+w_{\rm en}{\bf e}_{\theta},\quad \frac{p}{\rho^{\gamma}}=S_{\rm en},\quad\partial_{x_1}\Phi=E_{\rm en}\quad&\mbox{on}\quad\Gamma_0,\\
	{\bf u}\cdot{\bf e}_r=0,\quad\partial_{r}\Phi=0\quad&\mbox{on}\quad\Gamma_{\rm w},\\
	\der_{x_1}\Phi=E_{\rm ex}\quad&\mbox{on}\quad\Gamma_L.
	\end{split}
	\end{equation}	
\item[(ii)] 
The inequalities
$\displaystyle{\rho>0\,\,\mbox{and}\,\, {\bf u}\cdot{\bf e}_{x_1}>0\,\,\mbox{hold in}\,\,\overline{\Omega_L}}$.
\item[(iii)] For the sound speed  $c(z,{\bf q})$ defined by \eqref{Sound1}, it holds that
$${\lvert{\bf u}\rvert}>c(\Phi,{\bf u})\quad\tx{in}\quad \overline{\Omega_L}.$$
\end{itemize}
\end{problem}

\begin{remark}
If ${\bf u}$ is $C^1$ and axisymmetric in $\Omega_L$, then it must hold that
\begin{equation*}
{\bf u}\cdot{\bf e}_r={\bf u}\cdot{\bf e}_\theta\equiv0\quad\mbox{on}\quad\overline{\Omega_L}\cap\{r=0\}.
\end{equation*}
Therefore we require for the boundary data $(v_{\rm en},w_{\rm en})$ to satisfy the compatibility conditions $v_{\rm en}({\bf 0})=0$ and $w_{\rm en}({\bf 0})=0$ as stated in Problem \ref{EP-Prob}.
\end{remark}


\begin{theorem}[Nonzero vorticity flows]\label{MainThm}
Fix constants $\gam>1$, $J_0>0$, $S_0>0$, and fix $E_0$ as \begin{equation*}
  E_0=0.
\end{equation*}

For fixed constants $b_0$ and $\rho_0$ satisfying the condition \eqref{condition of b0} and the inequality $0<\rho_0<\rho_s$, respectively, let $(\bar\rho, \bar E, \bar u, \bar p)$ be the solution to \eqref{one-re} with the initial condition $(\bar\rho, \bar E)(0)=(\rho_0, E_0)$. And, let $(\bar\rho, \bar u, S_0, \bPhi)$ be the background solution associated with $(\rho_0, E_0)$ in the sense of Definition \ref{definition:background sol}.
Given functions $(b, u_{\rm en}, v_{\rm en},w_{\rm en},S_{\rm en}, E_{\rm en},E_{\rm ex})$ with satisfying all the compatibility conditions stated in Problem \ref{EP-Prob},
let us set
\begin{equation*}
\label{definition of sigma2}
\begin{split}
&\sigma(b,u_{\rm en},v_{\rm en},w_{\rm en},S_{\rm en},E_{\rm en},E_{\rm ex})\\
&:=\|b-b_0\|_{C^2(\overline{\Omega_L})}+\|u_{\rm en}-u_0\|_{C^3(\overline{\Gamma_0})}\\&\quad+\|(v_{\rm en},w_{\rm en},E_{\rm en}, S_{\rm en}-S_0)\|_{C^4(\overline{\Gamma_0})}+\|E_{\rm ex}-\bar E(L) \|_{C^4(\overline{\Gamma_L})}.
\end{split}
\end{equation*}

Fix a constant $\bar{\delta}>0$, and let $\bar{L}$ be given from Lemma \ref{Lem1}. Then, there exists a constant $L^{\ast}\in(0,\bar{L}]$ depending on $(\gamma, J_0, S_0, b_0, \rho_0, \bar{\delta})$  so that, for any $L\in(0, L^{\ast}]$, Problem \ref{EP-Prob} is well-posed provided that $\displaystyle{\sigma(b,u_{\rm en},v_{\rm en},w_{\rm en},S_{\rm en},E_{\rm en},E_{\rm ex})}$ is sufficiently small. More precisely, for any fixed $L\in (0, L^{\ast}]$, one can fix a constant $\sigma_1>0$ sufficiently small depending on $(\gamma, J_0,S_0,b_0,\rho_0, \bar{\delta},L,\bar{\epsilon})$ so that if the inequality
\begin{equation*}
\sigma(b,u_{\rm en},v_{\rm en},w_{\rm en},S_{\rm en},E_{\rm en},E_{\rm ex})\le\sigma_1
\end{equation*}
holds, then Problem \ref{EP-Prob} has a unique axisymmetric solution $({\bf U}, \Phi)$ with
${\bf U}=({\bf u},\rho,p)$ that satisfies the estimate

 \begin{equation}\label{up-est-vol}
 \begin{split}
&\|{\bf U}-(\bar{u}{\bf e}_1,\bar{\rho},S_0\bar{\rho}^{\gam})\|_{H^3_*(\Omega_L)}
+\|{\bf U}-(\bar{u}{\bf e}_1,\bar{\rho},S_0\bar{\rho}^{\gam})\|_{\mcl{W}^{3,\infty}_{*, \mcl{D}}(0,L)}+\|\Phi-{\bPhi}\|_{H^4_*(\Omega_L)}\\
&\le C\sigma(b,u_{\rm en},v_{\rm en},w_{\rm en},S_{\rm en},E_{\rm en},E_{\rm ex})
\end{split}
\end{equation}
for a constant $C>0$ depending only on $(\gamma, J_0,S_0,b_0,\rho_0, \bar{\delta},L,\bar{\epsilon})$.
\end{theorem}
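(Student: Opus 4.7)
The plan is to follow the Helmholtz decomposition strategy announced in the introduction, reducing the axisymmetric full Euler-Poisson system to a coupled system consisting of a second order hyperbolic equation for a potential $\vphi$, a second order elliptic equation for $\Phi$, and a collection of transport equations, and then to invoke Theorem \ref{Irr-MainThm} inside a fixed point iteration.

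First, writing the velocity field in the form \eqref{velocity in HD} with axisymmetric $\vphi$, $h$, and $\phi$, one derives the reformulated system. The continuity equation becomes a quasilinear second order equation for $\vphi$ whose principal part is hyperbolic in the supersonic regime, with lower order coefficients and source terms depending on $(\Phi, S, h, \phi)$. The Poisson equation $\Delta\Phi=\rho-b$ keeps its elliptic form, where $\rho$ is recovered from the pseudo-Bernoulli identity $\mcl{K}=0$ applied to the full velocity ${\bf u}$. Two transport equations arise directly: the entropy equation $\rho{\bf u}\cdot\nabla S=0$, and the ${\bf e}_{\theta}$-component of the momentum equation, which in axisymmetry yields $\rho{\bf u}\cdot\nabla(rw)=0$ for the swirl $w={\bf u}\cdot{\bf e}_{\theta}$. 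A third transport equation for $h$ (or an equivalent component of the vorticity) follows from the remaining momentum equations and ensures that the Helmholtz decomposition is self-consistent; the condition $\mcl{K}=0$ together with the entropy transport then recovers the original energy equation.

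Next, I would set up a fixed point iteration. Let $\Psi:=(S-S_0,\,rw,\,h,\,\phi)$ denote the transported quantities and fix $\Psi$ in a ball of radius $M\sigma$ in a product of weighted Sobolev spaces compatible with \eqref{up-est-vol}. For frozen $\Psi$, the subsystem for $(\vphi,\Phi)$ is a perturbation of the potential flow model \eqref{back-HD}: the constant $S_0$ in the expression \eqref{def-H} for $\rho$ is replaced by a variable entropy, and additional source terms depending on $\Psi$ appear on the right-hand sides. By adapting the a priori estimates of Section \ref{Sec-Irr} to accommodate these $\Psi$-dependent lower order terms, and re-running the iteration behind Theorem \ref{Irr-MainThm}, one obtains a unique supersonic $(\vphi,\Phi)$ satisfying an estimate of the form \eqref{up-est} with $\sigma$ replaced by $C(\sigma+\|\Psi\|)$. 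Then $\Psi$ is updated by integrating the transport equations along the streamlines of the new ${\bf u}=\nabla\vphi+\nabla\times(h{\bf e}_r+\phi{\bf e}_{\theta})$, using the Cauchy data on $\Gam_0$ prescribed in \eqref{Phy-bd} together with the positivity $\der_{x_1}\vphi>0$, which guarantees that every streamline enters through $\Gam_0$ and leaves through $\Gam_L$.

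The principal obstacle is twofold. First, the singularity of cylindrical coordinates at the axis $r=0$ makes the representation $\nabla\times(h{\bf e}_r+\phi{\bf e}_{\theta})$ and the transport equation for $rw$ degenerate unless the boundary data vanish to the appropriate order there. The conditions $v_{\rm en}({\bf 0})=0$, $\der_r^k w_{\rm en}({\bf 0})=0$ for $k=0,1,2,3$, and $\der_r^k S_{\rm en}({\bf 0})=0$ for $k=1,2,3$ imposed in Problem \ref{EP-Prob} are precisely the compatibility conditions needed so that the transported quantities and their Helmholtz reconstruction remain axisymmetric and $H^3_*$-regular across the axis; propagating this regularity through each transport step is technically delicate. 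Second, transport estimates lose one derivative with respect to the streamline geometry, so the iteration cannot contract in the norm in which existence is established. The remedy is to prove existence in the top norm $H^3_*\cap \mcl{W}^{3,\infty}_{*,\mcl{D}}$ for the fluid unknowns and $H^4_*$ for $\Phi$ via uniform a priori bounds and weak compactness, and to establish contraction in a weaker weighted norm (e.g.\ an $H^2_*$-type norm) where the derivative loss is absorbed; uniqueness in the higher class then follows by interpolation and weak lower semicontinuity. The compatibility conditions on $\Gam_0^{\bar{\epsilon}}$ and the special choice $E_0=0$ inherited from Theorem \ref{Irr-MainThm} are what allow the $H^4_*$ estimate for $\Phi$ to persist in the variable-entropy, nonzero-swirl regime; this is the key technical point that makes the iteration close.
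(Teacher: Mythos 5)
Your overall strategy is the one the paper follows: decompose ${\bf u}=\nabla\vphi+\nabla\times(h{\bf e}_r+\phi{\bf e}_{\theta})$, reduce the axisymmetric system to a hyperbolic--elliptic problem for $(\vphi,\Phi)$ plus transport of the entropy and of the angular momentum density $\Lambda=r\,{\bf u}\cdot{\bf e}_{\theta}$, solve the potential part with the machinery behind Theorem \ref{Irr-MainThm}, close the loop by a Schauder fixed point, and prove uniqueness by a contraction estimate in a weaker norm (the paper contracts directly in $H^1$ for the potentials and $H^2$ for $\phi{\bf e}_{\theta}$ rather than interpolating, but that is a cosmetic difference). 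The two obstacles you single out --- the axis singularity and the derivative loss in the transport step --- are exactly the ones the paper confronts.

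There is, however, one concrete misstep: you list $h$ and $\phi$ among the ``transported quantities'' $\Psi$ and propose to update them ``by integrating the transport equations along the streamlines.'' Neither function satisfies a transport equation. The function $h$ is slaved to $\Lambda$ through $\Lambda=r\der_{x_1}h$, so only $\Lambda$ is transported; and $\phi$ is determined by the \emph{elliptic} vector Poisson problem
\begin{equation*}
-\Delta(\phi{\bf e}_{\theta})=\frac{1}{{\bf u}\cdot {\bf e}_{x_1}}\left(\frac{\rho^{\gam-1}}{\gam-1}\der_rS+\frac{\Lambda}{r^2}\der_r\Lambda\right){\bf e}_{\theta}
\quad\tx{in $\Om_L$},
\end{equation*}
whose right-hand side is the azimuthal vorticity produced, via the Crocco-type relation coming from the momentum equation and $\mcl{K}=0$, by the radial gradients of the transported invariants $S$ and $\Lambda$. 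Integrating along streamlines cannot recover $\phi$; without this elliptic solve the Helmholtz decomposition does not close, since $\nabla\times(\phi{\bf e}_{\theta})$ is precisely the part of the velocity that carries the generated vorticity. In the paper this forces a nested iteration (an inner fixed point over ${\bf W}=\phi{\bf e}_{\theta}$ coupled to the $(\vphi,\Phi)$ solve, Lemmas \ref{lemma:wp of nlbvp for potentials} and \ref{pro-1}, sitting inside the outer fixed point over $(S,\Lambda)$), together with the verification that the solution of the vector Poisson problem is again of the form $\phi{\bf e}_{\theta}$ and satisfies the wall compatibility conditions $\der_r(\nabla\times{\bf W})\cdot{\bf e}_{x_1}=\der_r(\nabla\times{\bf W})\cdot{\bf e}_{\theta}=0$ needed to re-enter the potential-flow estimates. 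Your proposal as written omits this entire layer.
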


\begin{remark}
\label{remark-embedding}
By applying the generalized Sobolev inequality, it can be directly checked that
\begin{equation*}
H^{3+k}_*(\Om_L)\subset C^{k}(\ol{\Om_L})\cap C^{k+1}(\ol{\Om_L\cap\{x_1< L-d\}})\quad\tx{for any $d\in(0,L)$ with $k=0,1$}.
\end{equation*}
Therefore it follows from the estimate \eqref{up-est-vol} that the solution $({\bf U}, \Phi)$ given in Theorem \ref{MainThm}  is a classical solution of Problem \ref{EP-Prob}.
\end{remark}

Once we prove Theorem \ref{Irr-MainThm}, we can prove Theorem \ref{MainThm} via the method of Helmholtz decomposition. So we first prove Theorem \ref{Irr-MainThm} in the next section, which is the main part of this paper, then prove Theorem \ref{MainThm} in Section \ref{Sec-HD}.

\section{Potential flows (Proof of Theorem \ref{Irr-MainThm})}\label{Sec-Irr}
Throughout Section \ref{Sec-Irr}, we assume that $\Om_L$ is given by \eqref{def-Omega} with an arbitrary cross-section $\mcl{D}$ with its boundary $\der\mcl{D}$ being smooth.


\subsection{Outline of the proof}
Suppose that $(\vphi, \Phi)$ solves the system \eqref{back-HD}, and set
\begin{equation*}
  (\psi, \Psi):=(\vphi, \Phi)-(\bvphi, \bPhi)\quad\tx{in $\ol{\Om_L}$}.
\end{equation*}
First of all, we rewrite Problem \ref{Irr-EP-Prob} in terms of $(\psi, \Psi)$.

\begin{definition}
\label{definition:iteration}
Let $z\in \R$, ${\bf p}=(p_1, p_2,p_3)\in \R^3$ and ${\bf q}=(q_1, q_2,q_3)\in \R^3$.
\begin{itemize}
\item[(i)] Principal coefficients $a_{ij}$ for $i,j=1,2,3$:
\begin{itemize}
\item[-] Define the sound speed $c(z, {\bf q})$ by
\begin{equation*}
  c( z, {\bf q}):=\sqrt{(\gam-1)\left(\bPhi+z-\frac 12\lvert\nabla\bvphi+{\bf q}\rvert^2\right)}.
\end{equation*}
\item[-] Let $\delta_{ij}$ denote the Kronecker delta, that is, $\delta_{ij}=0$ if $i\neq j$, and $\delta_{ij}=1$ if $i=j$.
\end{itemize}
For each $i,j=1,2,3$, we define $a_{ij}(z, {\bf q})$ by
\begin{equation}
\label{coeff aij for potential}
  a_{ij}( z, {\bf q})=\frac{c^2(z, {\bf q})\delta_{ij}-(\der_i\bvphi+q_i)(\der_j\bvphi+q_j)}{c^2( z, {\bf q})-(\der_1\bvphi+q_1)^2}
\end{equation}
for $\bvphi$ given by \eqref{def-phi0}.\\

Note that the value of each $a_{ij}(z, {\bf q})$ varies depending on $\rx\in\Om_L$ as well as $(z, {\bf q})$. Later, we shall fix $(z, {\bf q})$ as functions of $\rx\in\Om_L$, and regard that $a_{ij}$ is evaluated at $\rx\in\Om_L$. Nonetheless, we write as $a_{ij}(z, {\bf q})$ without indication of its dependence on $\rx\in\Om_L$ for the purpose of simplicity.\\

Note that $a_{11}\equiv1$. Furthermore, due to \eqref{back-super}, there exists a constant $\lambda_0>0$ such that
  \begin{equation}\label{sign-aii}
  -a_{ii}(0, {\bf 0})>\lambda_0 \quad\tx{in $\ol{\Om_L}$ for  $i=2,3$}.
  \end{equation}

\item[(ii)] Smooth coefficients for lower order derivative terms:

Define $B(z, {\bf p}, {\bf q})$ by
\begin{equation*}
B(z,{\bf p},{\bf q})=\frac{(\nabla\bPhi+{\bf p})\cdot(\nabla\bvphi+{\bf q})}{c^2(z, {\bf q})-(\partial_1\bvphi+q_1)^2}.
\end{equation*}
We define $\bar a_1$, $\bar b_1$ and $\bar b_2$ by
\begin{align}
\label{def-a-bar}
\bar a_1&:=\partial_{q_1}B(0,{\bf 0},{\bf 0})=
\frac{(\partial_1\bPhi)\left[\gamma(\partial_1\bvphi)^2+c^2(0,{\bf 0})\right]}{[c^2(0,{\bf 0})-(\partial_1\bvphi)^2]^2},\\
\label{def-bar-b12}
\begin{split}
\bar b_1&:=\partial_{p_1}B(0,{\bf 0},{\bf 0})=\frac{\partial_1\bvphi}{(\partial_1\bvphi)^2-c^2(0,{\bf 0})},\\
\bar b_2&:=\partial_{z}B(0,{\bf 0},{\bf 0})=
 \frac{-(\gamma-1)(\partial_1\bPhi)(\partial_1\bvphi)}{[c^2(0,{\bf 0})-(\partial_1\bvphi)^2]^2}.
  \end{split}
\end{align}

For $\rho$ given by \eqref{def-H}, we define $\bar h_1$ and $\bar h_2$ by
\begin{equation}\label{def-bar-h}
\begin{split}
&\bar h_1:=\partial_z\rho(\bPhi,\nabla\bvphi)=\frac{1}{\gam S_0}\bar{\rho}^{2-\gam},\\
&\bar h_2:=\partial_{q_1}\rho(\bPhi,\nabla\bvphi)
=-\frac{\bar u}{\gam S_0}\bar{\rho}^{2-\gam}.
\end{split}
\end{equation}
Note that $\bar{a}_1$, $\bar{b}_1$, $\bar b_2$, $\bar{h}_1$ and $\bar{h}_2$ are smooth in $\ol{\Om_L}$ due to Lemma \ref{Lem1}.

\item[(iii)] Non-homogeneous terms $(f_1,f_2)$:

 For $i=1$ and $2$, we define $f_1(z, {\bf p} ,{\bf q})$ and $f_2(z, {\bf q})$ by
\begin{equation}\label{fs for potential}
  \begin{split}
  &f_1(z, {\bf p} ,{\bf q})=-\left[B(tz,t{\bf p},t{\bf q})\right]_{t=0}^1+\bar a_1q_1+\bar b_1p_1+\bar b_2 z,\\
  &f_2(z, {\bf q})=[\rho(\bPhi+tz, \nabla\bvphi+t{\bf q})]_{t=0}^1-(b-b_0)-\bar h_1z-\bar h_2q_1
  \end{split}
\end{equation}
with $[g(t)]_{t=0}^1:=g(1)-g(0)$.
\end{itemize}
\end{definition}
A straightforward computation shows that
if the condition
\begin{equation*}
  (\gamma-1)(\Phi-\frac{1}{2}\lvert\nabla\varphi\rvert^2)-\lvert\nabla\varphi\rvert^2\ne 0
\end{equation*}
holds in $\Om_L$, then we can rewrite Problem \ref{Irr-EP-Prob} as a nonlinear boundary value problem for $(\psi, \Psi)$ as follows:
\begin{equation}\label{system for potential perturbation}
  \begin{cases}
  &\displaystyle\sum_{i,j=1}^3 {a_{ij}(\Psi, \nabla\psi)}\der_{ij}\psi+\bar a_1\der_1\psi+{\bar b_1}\der_1\Psi+{\bar b}_2\Psi=f_1(\Psi, \nabla\Psi, \nabla\psi)\\
  &\displaystyle\Delta \Psi-\bar h_1\Psi-\bar h_2\der_1\psi=f_2(\Psi, \nabla\psi)
  \end{cases}\quad\tx{in $\Om_L$},
\end{equation}

\begin{equation}
\label{bcs for potential perturbation}
  \begin{split}
  \psi=0,\quad\partial_{x_1}\psi=u_{\rm en}-u_0,\quad
	\partial_{x_1}\Psi=E_{\rm en}\quad&\mbox{on}\quad\Gamma_0,\\
\partial_{{\bf n}_w}\psi=0,\quad\partial_{{\bf n}_w}\Psi=0\quad&\mbox{on}\quad\Gamma_{{\rm w}},\\
\der_{x_1}\Psi=E_{\rm{ex}}-\bar{E}(L)\quad&\mbox{on}\quad\Gamma_L.
  \end{split}
\end{equation}
Therefore, it suffices to solve the boundary value problem of \eqref{system for potential perturbation} and \eqref{bcs for potential perturbation} to prove Theorem \ref{Irr-MainThm}. To solve this boundary value problem by iterations, we introduce an iteration set by using the weighted norms introduced in Definitions \ref{definition-weighted Sobolev norm} and \ref{definition:space involving time}.
\medskip

For a constant $\delta>0$ to be fixed later, we define two sets $\mcl{H}_{\delta}^P$ and $\mcl{I}_{\delta}^E$ by
\begin{equation}
\label{definition-potentials}
\begin{split}
  \mcl{H}_{\delta}^P&:=\left\{\psi\in H^4_{\ast}(\Om_L)\cap \mcl{W}^{4,\infty}_{*, \mcl{D}}(0,L)\;\middle\vert\;\begin{split}
							&\|\psi\|_{H^4_*(\Om_L)}+\|\psi\|_{\mcl{W}^{4,\infty}_{*, \mcl{D}}(0,L)}\le \delta,\\
  							&\der_{{\bf n}_w}\psi=0\tx{ on $\Gamw$,}\\
							&\der_{x_1}^{k-1}\psi=0\tx{ on $\Gamma_0^{\bar{\epsilon} }$}\\
							&\tx{in the trace sense for $k=1,3$}\end{split}\right\},\\
 \mcl{I}_{\delta}^E&:=\left\{\Psi\in H^4_{\ast}(\Om_L)\;\middle\vert\;\begin{split}
  								&\|\Psi\|_{H^4_*(\Om_L)}\le \delta,\\
  &\der_{{\bf n}_w}\Psi=0\tx{ on $\Gamw$},\\
								&\partial^k_{x_1}\Psi=0\tx{ on $\Gamma_0^{\bar{\epsilon}}$ in the trace sense for $k=1,3$}
								\end{split}\right\}.				
\end{split}				
\end{equation}
And, we define an iteration set $ \mcl{J}_{\delta}$ by
\begin{equation}
\label{definition of iteration set}		
  \mcl{J}_{\delta}:=\mcl{H}^P_{\delta}\times \mcl{I}^E_{\delta}.
\end{equation}
For each $P=(\psi, \Psi)\in \mcl{J}_{\delta}$, let us set
\begin{equation}
\label{definition-P-norms}
\begin{split}
&\|P\|_{H^4_*(\Om_L)}:=\|\psi\|_{H^4_*(\Om_L)}+\|\Psi\|_{H^4_*(\Om_L)},\\
&\|P\|_{*}:=\|\psi\|_{H^4_*(\Om_L)}+\|\psi\|_{\mcl{W}^{4,\infty}_{*, \mcl{D}}(0,L)}+\|\Psi\|_{H^4_*(\Om_L)}.
  \end{split}
\end{equation}

By applying Sobolev inequality, Morrey's inequality, Arzel\`{a}-Ascoli theorem, Rellich's theorem and the weak compactness property of a Hilbert space, we can prove the following lemma, which is used in Section \ref{subsection-proof of theorem-potential}.
\begin{lemma}\label{lemma-general property of iter set-Irr}
For any fixed constant $\delta>0$, the iteration set $\mcl{J}_{\delta}$ satisfies the following properties:
\begin{itemize}
\item[(a)] The set $\mcl{J}_{\delta}$ is convex.
\item[(b)] The set $\mcl{J}_{\delta}$ is compact in $[(H^2(\Om_L)\cap C^{1,\frac 14}(\ol{\Om_L}))]^2$.
\item[(c)] The set $\mcl{J}_{\delta}$ is weakly compact in $[H^4_*(\Om_L)]^2$ in the following sense: For any given sequence $\{(\psi_j, \Psi_j)\}_{j\in \mathbb{N}}$, one can take a subsequence $\{(\psi_{j_k}, \Psi_{j_k})\}$ and an element $(\psi_{\infty}, \Psi_{\infty})\in [H^4_*(\Om_L)]^2$ so that
    \begin{itemize}
    \item[-] For each $m=0, 1, 2, 3$, the sequence $\{D^m(\psi_{j_k}, \Psi_{j_k})\}$ weakly converges to $D^m(\psi_{\infty}, \Psi_{\infty})$ in $L^2(\Om_L)$;
    \item[-] For any $d\in(0,L)$, the sequence $\{D^4(\psi_{j_k}, \Psi_{j_k})\}$ weakly converges to $D^4(\psi_{\infty}, \Psi_{\infty})$ in $L^2(\Om_L\cap\{x_1<L-d\})$;
    \item[-] $\displaystyle{\|(\psi_{\infty}, \Psi_{\infty})\|_{H^4_*(\Om_L)}\le \sup_{j\in \mathbb{N}}\|(\psi_{j}, \Psi_{j})\|_{H^4_*(\Om_L)}}$.
    \end{itemize}
\end{itemize}
\end{lemma}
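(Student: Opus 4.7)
Each defining condition on $\mcl{J}_\delta$ is either a norm-bound inequality (convex, since the weighted norms $\|\cdot\|_{H^4_*(\Om_L)}$ and $\|\cdot\|_{\mcl{W}^{4,\infty}_{*,\mcl{D}}(0,L)}$ obey the triangle inequality) or a linear homogeneous trace condition. Because traces and differentiation commute with convex combinations, convex combinations of elements of $\mcl{J}_\delta$ remain in $\mcl{J}_\delta$.

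\textbf{Plan for (c), treated before (b).} Let $\{P_j\}=\{(\psi_j,\Psi_j)\}\subset\mcl{J}_\delta$. By the definition of $\|\cdot\|_{H^4_*(\Om_L)}$, the sequence is bounded in $[H^3(\Om_L)]^2$, so weak compactness of balls in that Hilbert space yields a subsequence $P_{j_k}\rightharpoonup P_\infty$ in $[H^3(\Om_L)]^2$; this gives $D^m P_{j_k}\rightharpoonup D^m P_\infty$ in $[L^2(\Om_L)]^2$ for $m=0,1,2,3$. For the fourth derivatives, on each subdomain $\Om_{L-d}$ one has $\|D^4 P_{j_k}\|_{L^2(\Om_{L-d})}\le\delta d^{-1/2}$. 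Taking $d_n=L/n$ and performing a Cantor diagonal extraction, I pass to a further subsequence with $D^4 P_{j_k}\rightharpoonup D^4 P_\infty$ in $[L^2(\Om_{L-d_n})]^2$ for every $n$; restriction then gives weak convergence on every $\Om_{L-d}$. Weak lower semicontinuity of the $L^2$-norm delivers
\[
d^{\frac 12}\|D^4 P_\infty\|_{L^2(\Om_{L-d})}\le \liminf_{k\to\infty}d^{\frac 12}\|D^4 P_{j_k}\|_{L^2(\Om_{L-d})}\le \sup_k\|P_{j_k}\|_{H^4_*(\Om_L)},
\]
and taking the supremum in $d\in(0,L)$ closes the $H^4_*$-bound on $P_\infty$.

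\textbf{Plan for (b).} Since $\Om_L$ is a bounded Lipschitz domain in $\R^3$, Morrey's inequality gives $H^3(\Om_L)\hookrightarrow C^{1,\frac 12}(\ol{\Om_L})$, Rellich's theorem gives compactness of $H^3(\Om_L)\hookrightarrow H^2(\Om_L)$, and Arzelà--Ascoli combined with a standard interpolation in the Hölder index upgrades this to compact embedding $H^3(\Om_L)\hookrightarrow C^{1,\frac 14}(\ol{\Om_L})$. From the subsequence already chosen in (c), I extract a further subsequence converging strongly in $[H^2(\Om_L)\cap C^{1,\frac 14}(\ol{\Om_L})]^2$; by uniqueness of limits, its strong limit coincides with $P_\infty$. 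The trace conditions on $\Gamw$ and $\Gam_0^{\bar\epsilon}$ pass to the limit via continuity of the trace operator in $H^2$. Part (c) supplies the $H^4_*$-bound on $P_\infty$, while the $\mcl{W}^{4,\infty}_{*,\mcl{D}}$-bound on $\psi_\infty$ is recovered by a lower-semicontinuity argument (using weak-$\ast$ compactness in the relevant $L^\infty$-dual framework, complemented by a pointwise-a.e.\ extraction along $x_1$-slices where necessary). Hence $P_\infty\in\mcl{J}_\delta$, proving sequential compactness.

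\textbf{Principal obstacle.} The non-Hilbertian weighted structure of $\|\cdot\|_{H^4_*(\Om_L)}$ and $\|\cdot\|_{\mcl{W}^{4,\infty}_{*,\mcl{D}}(0,L)}$ prevents a direct appeal to weak compactness of a single Hilbert-space ball for the top-order derivatives; the Cantor diagonal extraction over $d_n\downarrow 0$ together with the lower-semicontinuity step is what closes the argument. The $L^\infty$-in-$x_1$ part of $\mcl{W}^{4,\infty}_{*,\mcl{D}}$ is the most delicate point, since one must pass an essential supremum to the limit under weak-type convergence; choosing the correct functional-analytic framework (weak-$\ast$ convergence in a suitable Banach dual, or an a.e.\ extraction combined with Fatou) is the main care required.
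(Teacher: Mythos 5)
Your proposal is correct and follows essentially the same route the paper indicates for this lemma (the paper only cites the toolbox — Sobolev/Morrey, Arzelà--Ascoli, Rellich, weak compactness of Hilbert-space balls — and your diagonal extraction over $d_n\downarrow 0$ plus lower semicontinuity is the standard way to close the weighted top-order estimate; note that to get $\|P_\infty\|_{H^4_*}\le\sup_j\|P_j\|_{H^4_*}$ one should apply weak lower semicontinuity to the combined functional $P\mapsto\|P\|_{H^3(\Om_L)}+d^{1/2}\|D^4P\|_{L^2(\Om_{L-d})}$ for each fixed $d$ before taking the supremum in $d$). The delicate $L^\infty$-in-$x_1$ point you flag is handled in the paper exactly as you suggest, via the weak-$*$ compactness of $L^{\infty}(0,L;H)=(L^1(0,L;H))^*$ recorded in Lemma \ref{lemma-weak star limit} and Appendix \ref{appendix-B}.
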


Since the definition of the iteration set $\mcl{J}_{\delta}$ involves the norm $\|\cdot\|_{\mcl{W}^{4,\infty}_{*, \mcl{D}}(0,L)}$, we need to establish a compactness property of the space $\mcl{W}^{4,\infty}_{*, \mcl{D}}(0, L)$. Accordingly, we need the following lemma.
\begin{lemma}
\label{lemma-weak star limit}
If a sequence of functions $\{\psi_n\}_{n\in \mathbb{N}}$ is bounded in $H^4_*(\Om_L)\cap \mcl{W}^{4,\infty}_{*, \mcl{D}}(0,L)$, then it has a subsequence that converges to a function $\psi_{\infty}\in H^4_*(\Om_L)\cap \mcl{W}^{4,\infty}_{*, \mcl{D}}(0,L)$ in $C^1(\ol{\Om_L})\cap C^{2}(\ol{\Om_L}\setminus \Gam_L)$. Furthermore, the limit function $\psi_{\infty}$ satisfies the estimate
\begin{align}
\label{weak_cpt_H4}
  &\|\psi_{\infty}\|_{H^4_*(\Om_L)}\le \liminf_{n\to \infty} \|\psi_n\|_{H^4_*(\Om_L)},\\
  \label{weak_*-cpt_W4}
  &\|\psi_{\infty}\|_{\mcl{W}^{4, \infty}_{*, \mcl{D}}(\Om_L)}\le \liminf_{n\to \infty} \|\psi_n\|_{\mcl{W}^{4, \infty}_{*, \mcl{D}}(\Om_L)}.
\end{align}
\begin{proof} This lemma is established based on the weak-$*$ compactness of the space $\mcl{W}^{4,\infty}_{*, \mcl{D}}(0, L)$.

Suppose that a sequence of functions $\{\psi_n\}_{n\in \mathbb{N}}$ is bounded in $H^4_*(\Om_L)\cap \mcl{W}^{4,\infty}_{*, \mcl{D}}(0,L)$. Since the sequence is bounded in $H^3(\Om_L)$, it follows from the generalized Sobolev inequality and the Arzel\`{a}-Ascoli theorem that there exists a subsequence of $\{\psi_n\}$ that converges in $C^1(\ol{\Om_L})$. For every small constant $d>0$, the sequence $\{\psi_n\}$ is bounded in $H^4(\Om_{L}\cap\{x_1<L-d\})$. Therefore, we can apply the generalized Sobolev inequality and the Arzel\`{a}-Ascoli theorem again to extract a subsequence of $\{\psi_n\}$ that converges in $C^2(\ol{\Om_{L}\cap\{x_1<L-d\}})$. By a diagonal argument, one can finally extract a subsequence of $\{\psi_n\}$ that converges in $C^2$ away from $\Gam_L$.
\smallskip

Let $\psi_{\infty}$ be the limit function of the convergent subsequence of $\{\psi_n\}$. Then, the estimate \eqref{weak_cpt_H4} is obtained due to the weak compactness property of the space $H^4_*(\Om_L)$.
\smallskip

Given a Hilbert space $H$, note that $L^{\infty}(0,L;H)$ is a Banach space. Furthermore, $L^{\infty}(0, L;H)$ is the dual space of $L^1(0,L;H)$, which is also a Banach space (see \cite{temam2001navier} or Appendix \ref{appendix-B}). Therefore, the Banach-Alaoglu theorem implies that $L^{\infty}(0,L;H)$ is weak-$*$ compact. Then the estimate \eqref{weak_*-cpt_W4} can be directly obtained by using the lower semi-continuity property of a weak-$*$ limit and a diagonal argument.

\end{proof}
\end{lemma}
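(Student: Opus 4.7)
The plan is to split the argument into two parts: first extract subsequences of $\{\psi_n\}$ converging strongly in classical norms on $\ol{\Om_L}$ and on nested subdomains away from $\Gam_L$, and then establish the norm bounds on the limit via weak and weak-$*$ compactness arguments. Since $\{\psi_n\}$ is bounded in $H^4_*(\Om_L)$, it is in particular bounded in $H^3(\Om_L)$, so the Sobolev embedding $H^3(\Om_L)\hookrightarrow C^{1,\alpha}(\ol{\Om_L})$ combined with the Arzel\`{a}--Ascoli theorem yields a subsequence converging in $C^1(\ol{\Om_L})$ to some limit $\psi_\infty$. On each subdomain $\Om_L\cap\{x_1<L-1/k\}$ the sequence is bounded in $H^4$, so the same argument produces convergence in $C^2$ on that subdomain; a diagonal extraction then upgrades this to convergence in $C^2(\ol{\Om_L}\setminus\Gam_L)$.

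For the bound \eqref{weak_cpt_H4}, I would exploit weak compactness of balls in $L^2$: after passing to a further subsequence, $D^m\psi_n\rightharpoonup D^m\psi_\infty$ in $L^2(\Om_L)$ for $m=0,1,2,3$ (uniqueness of weak limits when tested against $C^\infty_c$ functions identifies the weak limit with the $\psi_\infty$ constructed in the first step), and for $m=4$ one extracts weak $L^2(\Om_{L-d})$ limits on each $\Om_{L-d}$ with $d=1/k$ by yet another diagonalization. The standard lower semi-continuity of the $L^2$ norm under weak convergence then gives $d^{1/2}\|D^4\psi_\infty\|_{L^2(\Om_{L-d})}\le\liminf_n d^{1/2}\|D^4\psi_n\|_{L^2(\Om_{L-d})}$ for every such $d$; combining this with the corresponding $H^3(\Om_L)$ estimate via superadditivity of $\liminf$ and then taking the supremum over $d$ yields \eqref{weak_cpt_H4}.

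The hard part is the bound \eqref{weak_*-cpt_W4}, because the norm $\|\cdot\|_{\mcl{W}^{4,\infty}_{*,\mcl{D}}(0,L)}$ involves an $L^\infty$ norm in the $x_1$-variable and $L^\infty$ spaces are not reflexive, so closed balls are not weakly compact. The way to overcome this is the standard observation that, for each separable Hilbert space $H=H^{m-j}(\mcl{D})$ appearing in Definition \ref{definition:space involving time}, the Bochner space $L^\infty((0,L);H)$ is isometrically isomorphic to the dual of $L^1((0,L);H)$; the Banach--Alaoglu theorem then furnishes weak-$*$ convergent subsequences of each $\der_1^j\psi_n$ in the relevant space. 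I would identify each weak-$*$ limit with the corresponding derivative of $\psi_\infty$ by testing against elements of $L^1((0,L);H)$ and invoking the strong $C^1$/$C^2$ convergence from the first step; applying the weak-$*$ lower semi-continuity of the dual norm, together with a final diagonal extraction over $d=1/k$ to handle the starred variant of the norm, then yields \eqref{weak_*-cpt_W4} and shows that $\psi_\infty\in\mcl{W}^{4,\infty}_{*,\mcl{D}}(0,L)$.
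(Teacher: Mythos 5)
Your proposal is correct and follows essentially the same route as the paper's proof: Sobolev embedding plus Arzel\`{a}--Ascoli with a diagonal extraction for the strong $C^1$/$C^2$ convergence, weak $L^2$ compactness for \eqref{weak_cpt_H4}, and the identification $L^{\infty}(0,L;H)=(L^1(0,L;H))^*$ together with Banach--Alaoglu and weak-$*$ lower semi-continuity for \eqref{weak_*-cpt_W4}. Your write-up merely supplies a few details (identification of weak limits, the diagonalization over $d=1/k$) that the paper leaves implicit.
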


\newcommand \tpsi{\tilde{\psi}}
\newcommand \tPsi{\tilde{\Psi}}
For a fixed $P=(\tpsi, \tPsi)\in \mcl{J}_{\delta}$, let us set
\begin{equation}
\label{definition of coeff}
\begin{split}
 &a_{ij}^P:=a_{ij}( \tPsi, \nabla\tpsi),\quad \tilde{f}_1^P:=f_1(\tPsi, \nabla\tPsi, \nabla\tpsi),\quad
 \tilde{f}_2^P:=f_2(\tPsi, \nabla\tpsi)
 \end{split}
\end{equation}
for $a_{ij}(z, {\bf q})$, $f_1(z, {\bf p}, {\bf q})$ and $f_2(z, {\bf q})$ given by Definition \ref{definition:iteration}. And, we define a bilinear differential operator $\mcl{L}^P$ by
\begin{equation}
\label{definition of L operator}
  \begin{split}
  \mcl{L}^P(v,w):=\sum_{i,j=1}^3 a_{ij}^P\der_{ij}v+\bar{a}_1\der_1v+\bar b_1\der_1w+\bar b_2w.
  \end{split}
\end{equation}
Next, we set up a linear boundary value problem associated with $P$ as follows:
\begin{equation}
\label{lbvp associated with P}
\left\{  \begin{split}
   \mcl{L}^P(v,w)=\tilde{f}_1^P,\quad
   \Delta w-\bar h_1w-\bar h_2\der_1v=\tilde{f}_2^P\quad&\tx{in\,\,$\Om_L$},\\
   v=0,\quad\partial_{x_1}v=u_{\rm en}-u_0,\quad
	\partial_{x_1}w=E_{\rm en}\quad&\mbox{on}\,\,\Gamma_0,\\
\partial_{{\bf n}_w}v=0,\quad\partial_{{\bf n}_w}w=0\quad&\mbox{on}\,\,\Gamma_{{\rm w}},\\
\der_{x_1}w=E_{\rm{ex}}-\bar{E}(L)\quad&\mbox{on}\,\,\Gamma_L.
  \end{split}\right.
\end{equation}
For $\rx=(x, \rx')\in (0, L)\times \mcl{D}(=\Om_L)$, let us set
\begin{equation}\label{def-Wbd}
\begin{split}
&w_{\rm bd}(\rx):=
\int_0^{x_1} (1-\frac yL)E_{\rm en}(\rx')+\frac yL(E_{\rm ex}(\rx')-\bar E(L))\,dy,\\
&g_1:=u_{\rm en}-u_0,\\
&f_1^P:=\tilde{f}_1^P-\mcl{L}^P(0,w_{\rm bd}),\\ &f_2^P:=\tilde{f}_2^P-\Delta w_{\rm bd}+\bar h_1w_{\rm bd}.
\end{split}
\end{equation}
It is clear that $(v,w)$ solves the problem \eqref{lbvp associated with P} if and only if $(V,W):=(v,w)-(0, w_{\rm bd})$ solves the following problem:
\begin{equation}
\label{simple-lbvp}
\left\{\begin{split}
   \mcl{L}^P(V,W)=f_1^P,\quad
   \Delta W-\bar h_1W-\bar h_2\der_1V=f_2^P\quad&\tx{in\quad$\Om_L$},\\
   V=0,\quad\partial_{x_1}V=g_1,\quad
	\partial_{x_1}{W}=0\quad&\mbox{on}\quad\Gamma_0,\\
\partial_{{\bf n}_w}V=0,\quad\partial_{{\bf n}_w}W=0\quad&\mbox{on}\quad\Gamma_{{\rm w}},\\
\der_{x_1}{W}=0\quad&\mbox{on}\quad\Gamma_L.
  \end{split}\right.
\end{equation}

\begin{proclaim}
For the rest of the paper, we shall state that {\emph{a constant is fixed depending on the date}} if the constant is fixed depending on $(\gam, J_0, S_0, \rho_0, b_0, \bar{\delta})$ unless otherwise specified. Also, any estimate constant $C$ appearing hereafter is presumed to be fixed depending only on the data unless otherwise specified.
\end{proclaim}

\newcommand \til{\tilde}
By using \eqref{back-com}-\eqref{Thm-com-Irr}, \eqref{coeff aij for potential}, \eqref{fs for potential}, and the compatibility conditions prescribed in the definition \eqref{definition of iteration set} of $\mcl{J}_{\delta}$, one can directly prove the following lemma (see \cite[Section 2.3.2]{bae2021three} for a detailed proof):
\begin{lemma}
\label{lemma-properties of coeffs}
There exist a small constant $\eps_0>0$, and a constant $C>0$ depending only on the data so that if $L$ satisfies $L\le \bar{L}$ for $\bar{L}$ from Lemma \ref{Lem1} then the following properties hold for all $P\in \mcl{J}_{2\eps_0}$:
\begin{itemize}
\item[(a)] For $P_0:=(0,0)$, we have
\begin{equation*}
  a_{ii}^{P_0}(\rx)=\begin{cases}
  1\quad&\mbox{for $i=1$}\\
  -\frac{1}{\frac{\bar u^2(x)}{\gam S_0\bar{\rho}^{\gam-1}}(x)-1}\quad&\mbox{for $i\neq 1$}
  \end{cases},\quad a_{ij}^{P_0}=0\quad\tx{for $i\neq j$};
\end{equation*}

\item[(b)] Let us set $P_0:=(0,0)$. For each $i,j=1,2,3$, we have
\begin{equation*}
\|a_{ij}^P-a_{ij}^{P_0}\|_{H^3_*(\Om_L)}+
\|a_{ij}^P-a_{ij}^{P_0}\|_{\mcl{W}^{3, \infty}_{*, \mcl{D}}(0,L)}\le C\|P\|_{*};
\end{equation*}

\item[(c)] The differential operator $\mcl{L}^P$ is hyperbolic with respect to $V$. Furthermore, the matrix $[a_{ij}^P]_{i,j=1}^3$ satisfies the following properties:
    \begin{itemize}
    \item[-] the matrix $[a_{ij}^P]_{i,j=1}^3$ is symmetric;
    \item[-] $\displaystyle{a_{11}^P\equiv 1\quad\tx{in}\quad \Om_L}$;
    \item[-] the sub-matrix $[a_{ij}^P]_{i,j=2}^3$ is negative definite with
        \begin{equation*}
          \frac{1}{2(\frac{\bar u^2(x)}{\gam S_0\bar{\rho}^{\gam-1}}(x)-1)}\le  -[a_{ij}^P]_{i,j=2}^3\le \frac{2}{\frac{\bar u^2(x)}{\gam S_0\bar{\rho}^{\gam-1}}(x)-1}\quad\tx{in $\Om_L$};
        \end{equation*}

    \end{itemize}

\item[(d)]
The functions $f_1^P$ and $f_2^P$ satisfy the estimates
\begin{equation*}
\begin{split}
&\|f_1^P\|_{H^3_*(\Om_L)}\le C\left(\|P\|_{*}^2
+\|E_{\rm en}\|_{C^3(\overline{\Gamma_0})}
+\|E_{\rm ex}-\bar{E}(L)\|_{C^3(\overline{\Gamma_L})}\right),\\
&\|f_2^P\|_{H^2(\Om_L)}\le C\Bigl(\|P\|_{H^4_*(\Om_L)}^2+\|b-b_0\|_{C^2(\overline{\Omega_L})}
+\|E_{\rm en}\|_{C^4(\overline{\Gamma_0})}+\|E_{\rm ex}-\bar{E}(L)\|_{C^4(\overline{\Gamma_L})}\Bigr);
\end{split}
\end{equation*}

\item[(e)] On the wall boundary $\Gamw$, the following compatibility conditions hold
\begin{equation*}
{{\bf n}_w}\cdot \sum_{j=1}^3a_{ij}^P{{\bf e}}_j=0\quad\tx{for $i=1,2,3$},\quad
\der_{{\bf n}_w}f_k^P=0  \quad\tx{for $k=1,2$};
\end{equation*}

\item[(f)] On the portion $\Gamma_0^{\bar{\epsilon}}$ of the entrance boundary $\Gam_0$, the functions $f_1^P$ and $f_2^P$ satisfy the compatibility conditions:
    \begin{equation*}
      f_1^P=0\quad\tx{and}\quad \partial_{x_1}f_2^P=0;
    \end{equation*}

\item[(g)] For any given $P_1, P_2\in \mcl{J}_{\delta}$, we have
\begin{equation*}
  \begin{split}
  &\|a_{ij}^{P_1}-a_{ij}^{P_2}\|_{L^2(\Om_L)}\le C\|P_1-P_2\|_{H^{1}(\Om_L)},\\
  &\|(\til{f}_1^{P_1}, \til{f}_2^{P_1})-(\til{f}_1^{P_2}, \til{f}_2^{P_2})\|_{L^2(\Om_L)}\le C\|P_1-P_2\|_{H^{1}(\Om_L)}\sum_{j=1}^2\|P_j\|_{H^4_*(\Om_L)} ;
  \end{split}
\end{equation*}

\item[(h)] For $j=1$ and $2$, let us write as $P_j=(\tpsi_j, \tPsi_j)$. Then, we have
    \begin{equation*}
      \|a_{ij}^{P_1}-a_{ij}^{P_2}\|_{L^{\infty}((0,L);L^2(\mcl{D}))}\le C\left(\|\tpsi_1-\tpsi_2\|_{\mcl{W}^{1,\infty}_{\mcl{D}}(0,L)}
      +\|\tPsi_1-\tPsi_2\|_{H^1(\Om_L)}\right).
    \end{equation*}
\end{itemize}
\end{lemma}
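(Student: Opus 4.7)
The plan is to dispatch items (a)--(h) as a sequence of smooth-composition estimates, Taylor-remainder estimates, and algebraic compatibility checks, organized around the uniform bounds on the background solution guaranteed by Lemma \ref{Lem1} and the weighted Sobolev norms of Definitions \ref{definition-weighted Sobolev norm} and \ref{definition:space involving time}. Items (a) and (c) come first: substituting $z = 0$, $\mathbf{q} = \mathbf{0}$ into \eqref{coeff aij for potential} and using $\nabla\bvphi = \bar u\,\mathbf{e}_{x_1}$ together with $c^2(0, \mathbf{0}) = \gamma S_0 \bar\rho^{\gamma-1}$ immediately yields the explicit diagonal form in (a); the identity $a_{11}^P \equiv 1$ and symmetry of $[a_{ij}^P]$ hold for every $P$ by direct inspection of \eqref{coeff aij for potential}; and the sign structure with two-sided bounds on the $2 \times 2$ block in (c) then follows from \eqref{sign-aii} together with the $L^\infty$-smallness of $a_{ij}^P - a_{ij}^{P_0}$, once (b) is in hand.

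For (b) and (d) the crucial observation is that the maps $(z, \mathbf{q}) \mapsto a_{ij}(z, \mathbf{q})$, $(z, \mathbf{p}, \mathbf{q}) \mapsto f_1(z, \mathbf{p}, \mathbf{q})$, and $(z, \mathbf{q}) \mapsto f_2(z, \mathbf{q})$ are all smooth in a fixed ball around the origin, since Lemma \ref{Lem1} keeps the denominator $c^2(0, \mathbf{0}) - (\partial_1 \bvphi)^2$ uniformly away from zero. The Sobolev embedding $H^3_*(\Om_L) \hookrightarrow C^1(\ol{\Om_L})$ combined with $\|P\|_* \le 2\eps_0$ keeps the pointwise arguments $(\tPsi, \nabla\tPsi, \nabla\tpsi)$ inside this ball. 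Moser-type composition estimates in the weighted spaces $H^k_*(\Om_L)$ and $\mcl{W}^{k, \infty}_{*, \mcl{D}}(0, L)$ then give (b) and provide quadratic control of $\tilde f_1^P$ and $\tilde f_2^P$. The asymmetry in (d) between the weighted estimate on $f_1^P$ and the unweighted $H^2$ estimate on $f_2^P$ reflects the different roles played by the hyperbolic and elliptic components of the coupled iteration; the passage from $\tilde f_i^P$ to $f_i^P$ is handled by direct estimation of the explicit linear lift $w_{\rm bd}$ in \eqref{def-Wbd} in terms of $\|E_{\rm en}\|_{C^k}$ and $\|E_{\rm ex} - \bar E(L)\|_{C^k}$.

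The compatibility items (e) and (f) reduce to algebraic verifications. On $\Gamma_{\rm w}$ every $P \in \mcl{J}_\delta$ satisfies $\partial_{\mathbf{n}_w} \tpsi = \partial_{\mathbf{n}_w} \tPsi = 0$ by definition of $\mcl{H}^P_\delta$ and $\mcl{I}^E_\delta$, while $\partial_{\mathbf{n}_w}(b, u_{\rm en}, E_{\rm en}, E_{\rm ex}) = 0$ by \eqref{Thm-com-Irr}; feeding these vanishings into \eqref{coeff aij for potential} and \eqref{fs for potential} through one application of the chain rule yields $\mathbf{n}_w \cdot \sum_j a_{ij}^P \mathbf{e}_j = 0$ and $\partial_{\mathbf{n}_w} f_k^P = 0$. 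For (f), the higher-order vanishing $\partial_{x_1}^{k-1} \tpsi = \partial_{x_1}^k \tPsi = 0$ for $k = 1, 3$ on $\Gamma_0^{\bar{\eps}}$ together with $E_{\rm en} \equiv E_0 = 0$ (by \eqref{special condition} and \eqref{Thm-com-Irr}) and $\partial_{x_1} b = 0$ on $\Gamma_0^{\bar{\eps}}$ forces both $\tilde f_1^P$ and $\partial_{x_1} \tilde f_2^P$, which are Taylor remainders beginning at second order in quantities that vanish to at least the required order there, to vanish on $\Gamma_0^{\bar{\eps}}$; the construction of $w_{\rm bd}$ respects the same profile and therefore does not disrupt these compatibilities.

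The two-point Lipschitz items (g) and (h) are treated uniformly via the fundamental theorem of calculus
$$a_{ij}^{P_1} - a_{ij}^{P_2} = \int_0^1 \frac{d}{dt}\, a_{ij}\bigl(P_2 + t(P_1 - P_2)\bigr)\,dt,$$
with the analogous identity for $\tilde f_i^{P_1} - \tilde f_i^{P_2}$. For (g), the smooth bounded factor from this integral is paired with the linear difference $P_1 - P_2$ in $L^2$, using the Sobolev embedding to control $\|P_j\|_{H^4_*}$ in $L^\infty$. For (h), the same identity is read slice-by-slice in $x_1$: the $L^\infty_{x_1}$ control of $\tpsi_1 - \tpsi_2$ requires exactly the $\mcl{W}^{1, \infty}_{\mcl{D}}$ norm, while the $\tPsi_j$-differences can be handled by trace embedding from $H^1(\Om_L)$. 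I expect the main obstacle throughout to be organizational rather than analytical: one must track carefully where each $x_1$-derivative falls in the chain rule so that the $d^{1/2}$ weight appearing in the definition of $\mcl{W}^{m, \infty}_{*, \mcl{D}}(0, L)$ is always absorbed by at most one top-order $x_1$-derivative factor. Once this bookkeeping is in place, everything else follows the template of \cite[Section 2.3.2]{bae2021three}.
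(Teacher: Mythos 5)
Your outline follows the same route as the paper, which itself offers no proof beyond ``direct computation using \eqref{back-com}--\eqref{Thm-com-Irr}, \eqref{coeff aij for potential}, \eqref{fs for potential} and the conditions defining $\mcl{J}_{\delta}$, cf.\ \cite[Section 2.3.2]{bae2021three}'': items (a), (c) by substitution, (b), (d) by composition/Moser estimates in the weighted spaces, (g), (h) by the fundamental theorem of calculus with the trace and $\mcl{W}^{1,\infty}_{\mcl{D}}$ embeddings placed exactly where you place them. Those parts are fine.

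The justifications you give for (e) and (f), however, would not go through as written. For (f), you describe $\til{f}_1^P$ as a ``Taylor remainder beginning at second order in quantities that vanish to at least the required order'' on $\Gamma_0^{\bar{\epsilon}}$ --- but $\tPsi$ and $\der_{x_1}\tpsi$ do \emph{not} vanish there (only $\tpsi$, $\der_{x_1}^2\tpsi$, $\der_{x_1}\tPsi$, $\der_{x_1}^3\tPsi$ do), so the remainder is not evaluated at the origin and does not vanish for that reason. The actual mechanism is structural: on $\Gamma_0^{\bar{\epsilon}}$ the vector $\nabla\bPhi+\nabla\tPsi$ is tangential to $\Gamma_0$ (since $\bar E(0)=E_0=0$ and $\der_{x_1}\tPsi=0$) while $\nabla\bvphi+\nabla\tpsi$ is normal to it (since $\tpsi=0$ kills the tangential derivatives), so the numerator of $B$ vanishes identically, and the surviving linear terms die because $\bar a_1=\bar b_2=0$ on $\Gamma_0$ (again from $E_0=0$) and $p_1=\der_{x_1}\tPsi=0$; a parallel cancellation, using $\bar E=0\Rightarrow\bar\rho'=\bar u'=\bar h_i'=0$ on $\Gamma_0$ and $\rho_{q_j}(\bPhi,\nabla\bvphi+\nabla\tpsi)\propto\der_j\tpsi=0$ for $j=2,3$, is what makes $\der_{x_1}f_2^P=0$. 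For (e), ``one application of the chain rule'' is not sufficient on a \emph{curved} wall: computing $\der_{{\bf n}_w}$ of $\rho(\tPsi,\nabla\bvphi+\nabla\tpsi)$ produces terms of the form $(\nabla\bvphi+\nabla\tpsi)^T D^2\tpsi\,{\bf n}_w$, and since ${\bf t}^TD^2\tpsi\,{\bf n}_w=\der_{\bf t}(\der_{{\bf n}_w}\tpsi)-(\der_{\bf t}{\bf n}_w)\cdot\nabla\tpsi$ the commutation of normal and tangential derivatives leaves curvature terms proportional to $\kappa(\der_{\bf t}\tpsi)^2$ that do not cancel by the slip conditions alone. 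This is precisely the point where the rectangular-nozzle argument of \cite{bae2021three} does not transfer verbatim, and it is why the paper spends a full page in Step 2 of the proof of Lemma \ref{lemma:wp of nlbvp for potentials} verifying the analogous conditions \eqref{comp for F-1} term by term in the axisymmetric setting; your sketch needs the corresponding explicit cancellation computation (or, for the first identity in (e), the observation that what is actually used in the energy estimate is the combination $\sum_{i,j}a_{ij}^P\der_iV\,({\bf n}_w)_j$, which vanishes by the two slip conditions $\der_{{\bf n}_w}(\bvphi+\tpsi)=\der_{{\bf n}_w}V=0$ together).
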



According to the statement (a) in  Lemma \ref{lemma-properties of coeffs}, the two equations given in \eqref{simple-lbvp} form a mixed-type system consisting of a hyperbolic equation and an elliptic equation. The main feature of this paper is to establish the well-posedness of the boundary value problem \eqref{simple-lbvp} in a three dimensional cylindrical domain with an arbitrary cross-section. The well-posedness of the same problem in a three dimensional rectangular nozzle is proved in \cite{bae2021three} by the method of reflections. In a rectangular nozzle, one can locally extend the boundary value problem by an even reflection about a flat portion on the wall boundary so that any corner point on the wall boundary can be dealt as an interior point. By this approach, one can establish a unique existence of a solution which is globally in $H^4$ up to the boundary. But if the cross-section is not rectangular, then this approach fails. To resolve this issue, we establish the well-posedness of the problem \eqref{simple-lbvp} in the weighted Sobolev space $H^4_*(\Om_L)\times H^4_*(\Om_L)$ introduced in Definition \ref{definition-weighted Sobolev norm}. The following proposition is a generalization of {\cite[Proposition 2.1]{bae2021three}}.

\begin{proposition}[The well-posedness of \eqref{simple-lbvp}]
\label{lemma-wp of lbvp for potential}
Fix a constant $\bar{\delta}\in (0,\rhos)$, and let $\bar L$ be fixed according to Lemma \ref{Lem1}.
And, let the constant $\eps_0>0$ be fixed according to Lemma \ref{lemma-properties of coeffs}.
Under the assumptions same as Theorem \ref{Irr-MainThm}, there exists a constant $L^*\in(0, \bar L]$ and a sufficiently small constant $\eps_1\in(0, \eps_0]$ so that
\begin{itemize}
\item[-] if $L$ satisfies $L\le L^*$,
\item[-] and if $\delta$ from the definition \eqref{definition of iteration set} of $\mcl{J}_{\delta}$ satisfies the inequality $0<\delta\le \eps_1$,
\end{itemize}
then the linear boundary value problem \eqref{simple-lbvp} associated with $P\in \mcl{J}_{\delta}$ has a unique solution $(V, W)\in[C^1(\ol{\Om_L})\cap C^2(\Om_L)]^2$ that satisfies the following estimates:
\begin{equation}
          \label{VW-weight=est}
          \begin{split}
  &\|(V, W)\|_{*}\le \kappa_0\left(\|P\|_*^2+\sigma(b, u_{\rm en}, E_{\rm en}, E_{\rm ex})\right)
  \end{split}
         \end{equation}
for the terms $\|P\|_*$ and $\sigma(b, u_{\rm en}, E_{\rm en}, E_{\rm ex})$ defined by \eqref{definition-P-norms} and \eqref{definition of sigma}, respectively.
Furthermore, the solution satisfies the compatibility conditions
\begin{equation}
\label{Comp-VW}
  \der_{x_1}^{k-1}V=0\quad\tx{and}\quad
  \der_{x_1}^k W=0\quad\tx{on $\Gamma_0^{\bar{\epsilon}}$ for $k=1,3$}.
\end{equation}

Finally, the constants $L^*$ and $\eps_1$ are fixed depending only on the data, and the constant $\kappa_0$ in the estimate \eqref{VW-weight=est} is fixed depending only on the data and $\bar{\epsilon}$.

\end{proposition}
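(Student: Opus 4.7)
My plan is to construct the solution of \eqref{simple-lbvp} by a decoupling iteration. Since the coupling between the hyperbolic and elliptic equations is only through lower-order terms, given approximations $(V_n, W_n)$ I first solve the elliptic Neumann problem for $W_{n+1}$ using $V_n$, and then solve the linear hyperbolic equation $\mcl{L}^P(V_{n+1}, W_{n+1}) = f_1^P$ for $V_{n+1}$ using $W_{n+1}$, starting from $(V_0, W_0) = (0, 0)$. The target is to show that for $L$ and $\delta$ sufficiently small this iteration preserves a ball of the form $\{(V, W) : \|(V, W)\|_* \le \kappa_0 (\|P\|_*^2 + \sigma(b, u_{\rm en}, E_{\rm en}, E_{\rm ex}))\}$ and is contractive in a weaker norm.

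For the elliptic step the condition $\bar{h}_1 > 0$ from \eqref{def-bar-h} makes the equation coercive, so Lax--Milgram produces a unique $H^1$-weak solution, and standard elliptic regularity yields $H^4$-regularity in the interior and up to the smooth part of the boundary. Near the entrance corner $\Gamma_0 \cap \Gamw$, the compatibility conditions built into $\mcl{I}_{\delta}^E$, together with Lemma \ref{lemma-properties of coeffs}(e)(f) and the condition $E_0 = 0$ of Theorem \ref{Irr-MainThm}, allow an even extension of $W_{n+1}$ across $\Gamma_0$; the reflected right-hand side retains $H^2$-regularity, so elliptic regularity on the extended smooth subdomain yields $H^4$-control up to and across $\Gamma_0^{\bar{\epsilon}}$, which automatically produces the compatibility $\der_{x_1}^k W_{n+1} = 0$ on $\Gamma_0^{\bar{\epsilon}}$ for $k = 1, 3$. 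Near the exit corner $\Gamma_L \cap \Gamw$, reflection is unavailable, so I instead apply $H^4$-estimates on $\Om_{L-d}$ via a rescaled cut-off argument, which loses exactly a factor of $d^{-1/2}$, matching the weight in $H^4_*$.

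For the hyperbolic step I exploit $a_{11}^P \equiv 1$ and the negative definiteness of $[a_{ij}^P]_{i,j=2,3}$ from Lemma \ref{lemma-properties of coeffs}(c), which makes $x_1$ play the role of time. Multiplying the equation by $\der_1 V_{n+1}$ and integrating on $(0, s) \times \mcl{D}$ gives the basic energy identity for $\|V_{n+1}\|_{L^\infty_{x_1}((0, L); H^1(\mcl{D}))}$, with the boundary integrals on $\Gamw$ vanishing thanks to the orthogonality ${\bf n}_w \cdot \sum_j a_{ij}^P {\bf e}_j = 0$ and the Neumann condition on $V_{n+1}$. Differentiating tangentially in $\rx'$ up to four times (using vector fields tangent to $\der \mcl{D}$ in a tubular neighbourhood of $\Gamw$) and repeating the identity inductively controls $\|V_{n+1}\|_{L^\infty_{x_1}((0, L); H^k(\mcl{D}))}$ for $k = 1, \ldots, 4$; full and mixed $x_1$-derivatives are then recovered from the equation itself, with $\der_1^2 V_{n+1}$ expressed in terms of $\rx'$-derivatives and lower-order terms controlled by $W_{n+1}$. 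Taking $L$ small keeps all Gr\"onwall-type factors bounded, and combining the $L^\infty_{x_1}$-estimates with their $x_1$-integrals yields both $\|V_{n+1}\|_{\mcl{W}^{4,\infty}_{*, \mcl{D}}(0, L)}$ and $\|V_{n+1}\|_{H^4_*(\Om_L)}$; the compatibility $\der_{x_1}^{k-1} V_{n+1} = 0$ on $\Gamma_0^{\bar{\epsilon}}$ for $k = 1, 3$ follows from the equation evaluated on $\Gamma_0^{\bar{\epsilon}}$ together with the vanishing of $V_{n+1}$ and $\der_{x_1} W_{n+1}$ there.

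Finally, to extract a solution I prove contraction in the weaker norm $H^2(\Om_L) \times H^2(\Om_L)$: the difference of consecutive iterates satisfies \eqref{simple-lbvp} with zero boundary data and right-hand sides controlled by $\|(V_n - V_{n-1}, W_n - W_{n-1})\|_{H^2}$ times a small prefactor $C(\|P\|_* + L^{1/2})$, yielding a contraction for small $L$ and $\|P\|_*$. The limit lies in $H^4_* \times H^4_*$ by the weak/weak-$*$ compactness of Lemma \ref{lemma-weak star limit}, and uniqueness follows from the same contraction applied to the difference of two solutions. The main technical obstacle will be the elliptic estimate near the exit corner $\Gamma_L \cap \Gamw$: the Lipschitz geometry does not allow $H^4$-regularity up to the corner, and I need a quantitative cut-off/rescaling argument producing the precise weight $d^{-1/2}$ that matches $H^4_*$ and does not overwhelm the contraction, especially since the coefficients $a_{ij}^P$ themselves are only controlled in $H^4_*$ and $\mcl{W}^{4,\infty}_{*,\mcl{D}}$, not uniformly in $H^4$ up to $\Gamma_L$.
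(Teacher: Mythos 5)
Your alternating (elliptic $\to$ hyperbolic) iteration with an $H^2$-contraction for small $L$ is a reasonable architecture, and your a priori estimates largely coincide with the paper's (energy method with $x_1$ as time, slice-wise elliptic regularity on $\mcl{D}$, even reflection about $\Gamma_0$ enabled by $E_0=0$, and a cut-off/rescaling loss of $d^{-1/2}$ near $\Gamma_L$). The genuine gap is in the hyperbolic step: you describe how to \emph{estimate} $V_{n+1}$ but never construct it. The sub-problem $\mfrak{L}^P_{\rm h}(V_{n+1})=F_1$ with Cauchy data on $\Gamma_0$ and a Neumann condition on the curved lateral boundary $\Gamw$ is a second-order hyperbolic IBVP whose coefficients $a_{ij}^P$ are only controlled in $H^3_*(\Om_L)\cap\mcl{W}^{3,\infty}_{*,\mcl{D}}(0,L)$ — in particular they are not smooth, not uniformly $H^3$ up to $\Gamma_L$, and their first derivatives blow up like $(L-x_1)^{-1/2}$. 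Existence for such a problem is not an off-the-shelf result, and it is precisely the point of the proposition. The paper resolves it by (i) replacing $P$ with a \emph{partially smooth approximation} $P_m$ that is $C^\infty$ in $x_1$ (Lemma \ref{lemma-main section-smooth approx}), (ii) running a Galerkin scheme in the cross-sectional variables using the Neumann eigenfunctions of $\mcl{D}$, which reduces the problem to a second-order ODE system in $x_1$ with \emph{smooth} coefficients solvable by the Fredholm alternative, and (iii) passing to the limit twice (in $n$ and in $m$) using the a priori estimates and the weak-$*$ compactness of Lemma \ref{lemma-weak star limit}. Without some such construction, your scheme does not produce the iterates whose convergence you then argue.

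Two secondary points. First, your plan to get $H^4$-control of $V$ by commuting tangential vector fields along $\der\mcl{D}$ with the operator generates commutators containing $Da_{ij}^P$ acting on top-order derivatives of $V$; near $\Gamma_L$ these coefficients are only $O((L-x_1)^{-1/2})$, so the resulting Gr\"onwall kernel is the singular weight $\kappa(s)$ of \eqref{definition of kappa} — integrable, hence workable, but this must be tracked explicitly (the paper instead differentiates only in $x_1$ and recovers $\rx'$-derivatives slice-wise from the equation, which confines the curved-boundary issue to the 2D elliptic estimate on $\mcl{D}$). Second, for the compatibility condition $\der_{x_1}^2V=0$ on $\Gamma_0^{\bar{\epsilon}}$ you need not only $V=\der_{x_1}W=0$ there but also $\bar a_1=\bar b_2=0$ on $\Gamma_0$ (this is where $E_0=0$ enters), $f_1^P=0$ on $\Gamma_0^{\bar{\epsilon}}$, and $a_{1k}^P=0$ on $\Gamma_0^{\bar{\epsilon}}$ (since $\der_{x_1}V=g_1\not\equiv 0$ on $\Gamma_0$); these should be stated rather than absorbed into ``the equation evaluated on $\Gamma_0^{\bar{\epsilon}}$.''
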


In Section \ref{subsection:proof of main proposition-smooth}, we first establish a priori estimate \eqref{VW-weight=est} for smooth solutions to the boundary value problem \eqref{simple-lbvp}, then we shall prove Proposition \ref{lemma-wp of lbvp for potential} in Section \ref{subsection:proof of main proposition} by the method of Galerkin's approximations and a limiting argument.


\subsection{A priori estimates of a smooth solution} \label{subsection:proof of main proposition-smooth}
For a fixed $P=(\tpsi, \tPsi)\in \mcl{J}_{\delta}$, suppose that $(V,W)$ is a smooth solution to the linear boundary value problem \eqref{simple-lbvp} associated with $P$. We first establish a prior estimate of the solution $(V, W)$ in $[H^4_*(\Om_L)\cap \mcl{W}^{4,\infty}_{*, \mcl{D}}(0, L)]\times H^4_*(\Om_L)$.
\begin{proposition}
\label{proposition-apriori estimate of smooth solution}
Let two constants $\bar{L}$ and $\eps_0$ be from Lemma \ref{Lem1} and Lemma \ref{lemma-properties of coeffs}, respectively.
There exist a constant $L^*\in(0, \bar{L}]$ and a sufficiently small constant $\bar{\eps}\in(0, \eps_0]$ depending only on the data so that
\begin{itemize}
\item[-] if $L$ satisfies $L\le L^*$,
\item[-] and if $\delta$ from the definition \eqref{definition of iteration set} of $\mcl{J}_{\delta}$ satisfies the inequality $0<\delta\le \bar{\eps}$,
\end{itemize}
then we have the estimate
\begin{equation}
          \label{VW-weight=est-smooth}
          \|(V, W)\|_{*}\le C\left(\|f_1^P\|_{H^3_*(\Om_L)}+\|f_2^P\|_{H^2(\Om_L)}
  +\|g_1\|_{C^3(\ol{\Gam_0})}\right).
         \end{equation}

\end{proposition}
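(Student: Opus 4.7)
The plan is to establish \eqref{VW-weight=est-smooth} by decoupling the hyperbolic--elliptic pair \eqref{simple-lbvp}, treating the two equations with techniques tailored to their type, and chaining them together via a smallness-in-$L$ argument. First I would carry out a standard $H^3$ energy estimate for the full coupled system, following the lines of \cite[Section 2]{bae2021three}. For the hyperbolic equation $\mcl{L}^P(V,W)=f_1^P$, I would use $\der_{x_1}$ as the time-like multiplier: multiply by $\der_{x_1} V$ and by $\der^\alpha \der_{x_1} V$ for tangential multi-indices $\alpha$ of order at most $2$, integrate by parts over $\Om_L$, and exploit the symmetry and definiteness of $[a^P_{ij}]$ from Lemma \ref{lemma-properties of coeffs}(c) together with the wall compatibility from Lemma \ref{lemma-properties of coeffs}(e). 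The coupling term $\bar{b}_1 \der_1 W + \bar{b}_2 W$ is handled against the standard $H^3$-elliptic estimate for $\Delta W-\bar h_1 W = f_2^P+\bar h_2 \der_1 V$ with homogeneous Neumann data, the cross term picking up an $L^{1/2}$ factor from Poincar\'e-type inequalities along $x_1$, which lets smallness of $L$ absorb it.

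The upgrade to $H^4$ for $W$ is the core new point and must be carried out in two pieces. Near $\Gam_0$, I would apply the even-reflection argument explained in Remark \ref{Rem}: because of the special condition $E_0=0$, the boundary compatibility $E_{\rm en}=E_0$ on $\Gam_0^{\bar{\epsilon}}$, and the compatibility $\der_{x_1}f_2^P=0$ on $\Gam_0^{\bar\epsilon}$ from Lemma \ref{lemma-properties of coeffs}(f), the even extension of $W$ through the flat portion of $\Gam_0$ is $H^4$ in a one-sided neighborhood, and standard elliptic regularity then yields an $H^4$-bound up to $\Gam_0\cup\Gamw$ away from $\Gam_L$. On each subdomain $\Om_{L-d}$ I would use interior-in-$x_1$ elliptic estimates with a cutoff $\eta(x_1)$ supported in $\{x_1<L-d/2\}$; since $|\eta'|\lesssim d^{-1}$, the resulting commutator error contributes $d^{-1/2}\|W\|_{H^3}$, which is precisely the weight built into $\|\cdot\|_{H^4_*(\Om_L)}$. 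Multiplying by $d^{1/2}$ and taking the supremum in $d$ closes the estimate for $W$ in $H^4_*(\Om_L)$.

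For $V$, I would run the analogous higher-order hyperbolic energy estimate localized by the same cutoffs. Differentiating $\mcl{L}^P(V,W)=f_1^P$ in tangential directions $(\der_{x_2},\der_{x_3})$ up to order $3$, multiplying by $\der_{x_1}$ of the differentiated unknown, and integrating over $\Om_{L-d}$, I track the commutator terms generated by the non-constant coefficients $a^P_{ij}$ using the bounds from Lemma \ref{lemma-properties of coeffs}(b). This controls $d^{1/2}\|V\|_{H^4(\Om_{L-d})}$ by the right-hand side plus $d^{1/2}\|W\|_{H^4(\Om_{L-d})}$, already bounded by the previous step. The remaining pure $x_1$-derivatives of $V$ up to order $4$ are recovered algebraically by solving
\begin{equation*}
\der_{11}V=f_1^P-\sum_{(i,j)\ne(1,1)}a^P_{ij}\der_{ij}V-\bar{a}_1\der_1 V-\bar{b}_1\der_1 W-\bar{b}_2 W
\end{equation*}
for $\der_{11}V$ (using $a^P_{11}\equiv 1$ from Lemma \ref{lemma-properties of coeffs}(c)) and differentiating successively.

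Finally, for the $\mcl{W}^{4,\infty}_{*,\mcl{D}}(0,L)$ part of $\|V\|_*$, I would view the hyperbolic equation as an evolution in the time-like variable $x_1$. Fixing a slice $\{x_1=s\}$ and differentiating the slice energy $E(s):=\sum_{j+|\alpha|\le 4}\|\der_1^j\der_{\rx'}^\alpha V(s,\cdot)\|_{L^2(\mcl{D})}^2$ in $s$, one obtains $E'(s)\lesssim E(s)+(\tx{slice norms of }f_1^P\tx{ and }W)$. A Gronwall argument then gives $\sup_{s<L-d}E(s)^{1/2}\lesssim \|g_1\|_{C^3(\ol{\Gam_0})}+\|f_1^P\|_{\mcl{W}^{3,\infty}_{*,\mcl{D}}(0,L)}+d^{-1/2}(\tx{lower order})$, and multiplying the top-order term by $d^{1/2}$ and taking the supremum over $d\in(0,L)$ yields the $\mcl{W}^{4,\infty}_{*,\mcl{D}}$ bound. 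The main obstacle I anticipate is the bookkeeping of the weight $d^{1/2}$ through the commutator and coupling terms near $\Gam_L$, and in particular ensuring that the cutoff errors really enter only at the $H^3$ level so that the global $H^3$-bound from the first step suffices; this, together with the need to absorb the hyperbolic-elliptic cross terms, is precisely what forces the choice of $L^*$ small depending only on the data.
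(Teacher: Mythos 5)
Your overall architecture (energy estimates with the multiplier $\der_1 V$ for the hyperbolic part, elliptic regularity with even reflection about $\Gam_0$ and a cutoff of size $d$ near $\Gam_L$ for $W$, and a Gr\"onwall argument in the time-like variable $x_1$ for the $\mcl{W}^{4,\infty}_{*,\mcl{D}}$ component) matches the paper's. But there is a genuine gap in your treatment of the higher derivatives of $V$, and it is precisely the difficulty this proposition is designed to overcome. You propose to differentiate $\mcl{L}^P(V,W)=f_1^P$ in the tangential directions $(\der_{x_2},\der_{x_3})$ up to order $3$ and run energy estimates on the differentiated equations. On a cylinder with an arbitrary smooth cross-section $\mcl{D}$, the coordinate fields $\der_{x_2},\der_{x_3}$ are \emph{not} tangential to the curved wall $\Gamw$, so the differentiated unknowns no longer satisfy the conormal condition ${\bf n}_w\cdot\sum_j a_{ij}^P\der_j(\cdot)=0$ that kills the boundary integrals over $\Gamw$ in the integration by parts; the resulting boundary terms are not controllable. (This is exactly why the reflection method of \cite{bae2021three} works only for rectangular cross-sections.) The paper avoids this entirely: it differentiates the hyperbolic equation \emph{only} in $x_1$ (setting $q=\der_1V$, $\zeta=\der_{11}V$, $\xi=\der_1^3V$), and recovers the missing $\rx'$-derivatives slice by slice by viewing \eqref{gronwall-7} as a two-dimensional elliptic equation in $\mcl{D}$ with Neumann data on $\der\mcl{D}$ and invoking \cite[Theorems 8.12--8.13]{GilbargTrudinger} (Lemmas \ref{lemma-intermediate H2 est}, \ref{lemma-V-H3-intermediate1}, \ref{lemma-V-H3-intermediate2}, \ref{lemma-intermediate H4 est}). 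Without this slice-wise elliptic step your scheme does not close.

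A second, related under-specification: you acknowledge that the $d^{1/2}$-bookkeeping near $\Gam_L$ is delicate, but the real obstruction is that $\|Da_{ij}^P\|_{C^0}$ is \emph{not} globally bounded — by Lemma \ref{lemma-properties of coeffs}(b) and the weighted norm one only has $\|Da_{ij}^P\|_{C^0([0,s]\times\ol{\mcl{D}})}\le C\delta\,\kappa(s)$ with $\kappa(s)=(\min\{L/4,L-s\})^{-1/2}$, which blows up as $s\to L$. Smallness of $L$ alone does not absorb the resulting commutator terms in a naive energy estimate. The paper's mechanism is a $\kappa$-weighted Gr\"onwall inequality, made legitimate by $\int_0^L\kappa(s)\,ds=\tfrac52\sqrt{L}<\infty$ (see \eqref{est-kappa}), combined at the $H^1$ level with a two-step argument: an intermediate estimate carrying a residual term $\delta\|DV\|_{L^\infty((0,L);L^2(\mcl{D}))}$ (Lemma \ref{lemma-intermediate H1 est}), closed afterwards by a separate Gr\"onwall bound on $\|DV\|_{L^\infty((0,L);L^2(\mcl{D}))}$. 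You would need to incorporate both of these devices for the argument to go through.
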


\subsubsection{A priori estimates of the first order derivatives} Differently from the works from \cite{bae2021structural, bae2021three}, we shall establish a priori $H^1$-estimate of $(V,W)$ in two steps.
\medskip

{\textbf{1.}} For a smooth function $\mfrak{W}(x_1)$ to be determined, let us consider the following integral expression:
\begin{equation}
\label{integral-expression}
  \begin{split}
  &\int_{\Om_L} \mfrak{W}\der_1 V\mcl{L}^P(V, W)-W(\Delta W-\bar{h}_1W-\bar{h}_2\der_1 V)\,d\rx=\int_{\Om_L} f_1^P \mfrak{W}\der_1 V-f_2^PW\,d\rx.
  \end{split}
\end{equation}
Hereafter, we let $a_{ij}$ and $\bar{a}_{ij}$ denote $a_{ij}^P$ and $a_{ij}^{P_0}$(for $P_0=(0,0)$), respectively.

By repeating the argument in \cite[Section 2.3.2]{bae2021three} with using the compatibility condition of the coefficient matrix $[a_{ij}]_{i,j=1}^3$ stated in Lemma \ref{lemma-properties of coeffs}(e), we get
\begin{equation*}
  \tx{LHS of \eqref{integral-expression}}=\mcl{I}_{\rm main}+\mcl{I}_{\rm wgt}
\end{equation*}
for $\mcl{I}_{\rm main}$ and $\mcl{I}_{\rm wgt}$ given as follows:
\begin{equation}\label{I-main}
  \begin{split}
  \mcl{I}_{\rm main}:=&\int_{\Gam_L}((\der_1V)^2-\sum_{i,j=2}^3 a_{ij}\der_iV\der_jV)\frac{\mfrak M}{2}\,d\rx'-\frac 12\int_{\Gam_0}g_1^2\mfrak M\,d\rx'\\
  &+\int_{\Om_L} (\bar a_1\mfrak M-\frac 12 \mfrak M')(\der_1 V)^2
  +\frac 12 \der_1(\bar a_{ij}\mfrak M)\der_iV\der_j V+ (\bar b_1\der_1 W+\bar b_2 W)\mfrak M \der_1 V\,d\rx\\
  &+\int_{\Om_L} \lvert \nabla W\rvert ^2+\bar h_1 W^2+\bar h_2W\der_1 V\,d\rx,
  \end{split}
\end{equation}
\begin{equation*}
\begin{split}
\mcl{I}_{\rm wgt}:=\int_{\Om_L}\frac 12\sum_{i,j=2}^3 \der_1 \left((a_{ij}-\bar a_{ij})\mfrak M\right)\der_i V\der_j V-\sum_{i=2}^3\sum_{j=1}^3\mfrak M \der_i(a_{ij}-\bar a_{ij})\der_1 V\der_j V\,d\rx.
\end{split}
\end{equation*}
By following the argument of step 3 in the proof of \cite[Proposition 2.4]{bae2021structural}, or referring the argument given in \cite[Section 2.3.2]{bae2021three}, we have the following lemma:
\begin{lemma}
\label{lemma-coercivity}
There exist constants $L^*\in(0, \bar L]$, $\mu_0>0$, $\mu_1>0$ and a small constant $\eps_1\in(0, \eps_0]$ (for the constant $\eps_0>0$ from Lemma \ref{lemma-properties of coeffs}) depending only on the data so that
\begin{itemize}
\item[-] if $L$ satisfies $L\le L^*$,
\item[-] and if $\delta$ from the definition \eqref{definition of iteration set} of $\mcl{J}_{\delta}$ satisfies $\delta\le \eps_1$,
\end{itemize}
then we have the inequality
\begin{equation}
\label{estimate-I main}
  \mcl{I}_{\rm main}\ge \mu_0
  \left(\int_{\Om_L}\lvert\nabla V\rvert^2+\lvert\nabla W\rvert^2+W^2\,d\rx+\int_{\Gam_L} \lvert\nabla V\rvert^2\,d\rx'\right)-\mu_1\int_{\Gam_0} g_1^2\,d\rx'.
\end{equation}
\end{lemma}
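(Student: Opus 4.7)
The plan is to produce coercivity by choosing an exponentially decreasing weight $\mfrak{M}(x_1) := e^{-\lambda x_1}$ for a large constant $\lambda > 0$ to be fixed depending only on the data. Since $\mfrak{M}' = -\lambda \mfrak{M} < 0$, the integrand $\bar{a}_1 \mfrak{M} - \tfrac{1}{2}\mfrak{M}'$ equals $(\bar{a}_1 + \tfrac{\lambda}{2})\mfrak{M}$, which is bounded below by $\tfrac{\lambda}{4}\mfrak{M}$ once $\lambda \ge 2\|\bar{a}_1\|_{L^{\infty}(\ol{\Om_{\bar{L}}})}$; this delivers coercivity of $(\der_1 V)^2$ independent of the sign of $\bar{a}_1$. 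Because the only nonzero entries of $[\bar{a}_{ij}]$ outside row and column $1$ are the diagonal ones $\bar{a}_{22}, \bar{a}_{33}$ and each satisfies $\bar{a}_{ii} < -\lambda_0$ by \eqref{sign-aii}, one computes
\[
\tfrac{1}{2}\der_1(\bar{a}_{ii}\mfrak{M}) = \tfrac{1}{2}\bigl[(\der_1 \bar{a}_{ii}) - \lambda \bar{a}_{ii}\bigr]\mfrak{M} \ge \tfrac{\lambda \lambda_0}{4}\mfrak{M},
\]
once $\lambda$ additionally exceeds $2\lambda_0^{-1}\|\der_1 \bar{a}_{ii}\|_{L^{\infty}}$, giving coercivity of $(\der_i V)^2$ for $i=2,3$.

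\textbf{Boundary control, elliptic block, and coupling.} The boundary integrand on $\Gam_L$ is bounded below by Lemma \ref{lemma-properties of coeffs}(c), using the uniform positive definiteness of $[-a_{ij}]_{i,j=2}^3$ and the unit coefficient of $(\der_1 V)^2$; the weight factor $\mfrak{M}(L) = e^{-\lambda L^*}$ stays bounded below once $\lambda L^*$ is kept bounded. The elliptic term $\int_{\Om_L}|\nabla W|^2 + \bar{h}_1 W^2\, d\rx$ yields $\mu_W(\|\nabla W\|_{L^2}^2 + \|W\|_{L^2}^2)$ with $\mu_W > 0$ fixed by the data, because $\bar{h}_1 = \bar{\rho}^{2-\gamma}/(\gamma S_0)$ is uniformly positive on $[0, \bar{L}]$ by Lemma \ref{Lem1}. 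The mixed contributions $(\bar{b}_1 \der_1 W + \bar{b}_2 W)\mfrak{M}\der_1 V$ and $\bar{h}_2 W \der_1 V$ are absorbed by Young's inequality $|ab|\le \eta a^2 + \tfrac{1}{4\eta}b^2$ with $a = \der_1 V$: the $\eta (\der_1 V)^2$ part is swallowed by the $\tfrac{\lambda}{4}\mfrak{M}(\der_1 V)^2$ produced above, and the $\tfrac{C}{\eta}(|\der_1 W|^2 + W^2)$ part is swallowed by the elliptic block.

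\textbf{Conclusion and main obstacle.} The boundary term $-\tfrac{1}{2}\int_{\Gam_0}g_1^2 \mfrak{M}\, d\rx'$ directly supplies the $-\mu_1\int_{\Gam_0} g_1^2 \, d\rx'$ contribution on the right of \eqref{estimate-I main}, while the steps above produce the positive bulk and $\Gam_L$ contributions. The most delicate point is the ordering of parameter choices: first $\lambda$ is fixed depending only on the data so as to dominate $|\bar{a}_1|$, $|\der_1 \bar{a}_{ii}|$ and $\lambda_0^{-1}$; then $L^*$ is chosen so small that $\mfrak{M}\asymp 1$ on $[0,L^*]$ and the $O(L^*)$ perturbations of the smooth coefficients $\bar{a}_{ii}$ and $\bar{h}_1$ do not destroy positivity; finally $\eta$ is chosen small enough that the absorbed $W$-contribution stays strictly below $\mu_W$. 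Because $\mu_W$ is independent of $\lambda$, this hierarchy is consistent and yields final coercivity constants $\mu_0, \mu_1 > 0$ depending only on the data.
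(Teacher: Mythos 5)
Your weight $\mfrak{M}(x_1)=e^{-\lambda x_1}$ is a genuinely different choice from the one the paper uses: the paper (see Appendix~\ref{appendix-C}) takes $\mfrak{M}=(M(x_1+\mfrak{a})/K)^{\eta}$, a power of the translated background Mach number, so that $\mfrak{M}'<0$ is inherited from the deceleration of the background flow (a consequence of $E_0=0$), and the positivity of $\bar a_1\mfrak{M}-\tfrac12\mfrak{M}'$ and of $\der_1(\bar a_{22}\mfrak{M})$ is extracted from the explicit ODE structure of $(\bar\rho,\bar E)$. Your exponential multiplier is more elementary and does not use the structure of the background at all beyond the uniform bounds of Lemma~\ref{Lem1} and \eqref{sign-aii}; the price is that the smallness of $L^*$ enters through the competition $\lambda e^{-\lambda L}\gtrsim\tx{const}$, whereas the paper's choice ties $L^*$ to the monotonicity interval of the background and can in some regimes avoid shrinking $L^*$ below $\bar L$. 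Both routes correctly reduce the lemma to: (i) positivity of the coefficients of $(\der_1V)^2$ and $\der_iV\der_jV$, $i,j=2,3$, in the bulk of \eqref{I-main}; (ii) positivity of the $\Gam_L$ boundary quadratic form via Lemma~\ref{lemma-properties of coeffs}(c); (iii) absorption of the mixed terms into the $V$-coercivity and the elliptic block. Your treatment of (i) and (ii) is correct.

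There is, however, a concrete error in step (iii): the parameter hierarchy is inverted. With your splitting $\lvert ab\rvert\le\eta a^2+\tfrac{1}{4\eta}b^2$, $a=\der_1V$, the portion transferred to the elliptic block is $\tfrac{C}{4\eta}(\lvert\der_1W\rvert^2+W^2)$, which \emph{grows} as $\eta\to0$; choosing "$\eta$ small enough that the absorbed $W$-contribution stays strictly below $\mu_W$" is therefore impossible. The correct requirement is $\tfrac{C}{4\eta}\le\tfrac12\min\{1,\inf\bar h_1\}$, i.e.\ $\eta$ must be bounded \emph{below} by a data constant, and consequently $\eta$ must be fixed \emph{before} $\lambda$: one first fixes $\eta$ from the elliptic block, then chooses $\lambda$ large enough that $\bar a_1+\tfrac{\lambda}{2}-\eta\ge\tfrac{\lambda}{4}$ together with the conditions dominating $\lvert\der_1\bar a_{ii}\rvert$ and $\lambda_0^{-1}$, and finally chooses $L^*$ so small that $e^{-\lambda L^*}\ge\tfrac12$, which guarantees $\tfrac{\lambda}{4}\mfrak{M}\ge\tfrac{\lambda}{8}$ on $[0,L^*]$ and closes the absorption of the term $\bar h_2W\der_1V$ (which carries no factor $\mfrak{M}$). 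With this reordering your argument goes through and yields \eqref{estimate-I main} with $\mu_0,\mu_1$ depending only on the data; as written, the final sentence of your proof does not close.
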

\smallskip

{\textbf{2.}}
By applying the Morrey's inequality, Lemma \ref{lemma-properties of coeffs}(b) and Definition \ref{definition-weighted Sobolev norm}, we can check the following estimate
\begin{equation}
\label{estimate-coeff}
  \|D(a_{ij}-\bar a_{ij})\|_{C^0([0, s]\times \ol{\mcl{D}})}\le
  \begin{cases}
 C\delta\quad&\tx{if $s\le \frac 34 L$}\\
  C\delta(L-s)^{-1/2} \quad&\tx{if $s> \frac 34 L$}
\end{cases}.
\end{equation}
Let us define a function $\kappa:(0,L)\rightarrow \R$ by
\begin{equation}
\label{definition of kappa}
  \kappa(s):=\left(\min\left\{\frac L4, L-s\right\}\right)^{-\frac 12}.
\end{equation}
Then it directly follows from \eqref{estimate-coeff} that one can fix a constant $\mu_L>0$ depending only on the data and $L$ so that the following estimate holds:
\begin{equation}
\label{estimate-coeff-C0}
  \|D a_{ij}\|_{C^0([0, s]\times \ol{\mcl{D}})}\le \mu_L\kappa(s)\quad\tx{for $0<s<L$}.
\end{equation}

For a fixed constant $t\in[0,L]$, let us set
\begin{equation*}
  \Om_t:=(0,t)\times \mcl{D},\quad \Gam_t:=\{t\}\times \mcl{D}.
\end{equation*}
Differently from the works in \cite{bae2021structural, bae2021three}, we shall estimate the term $ |\mcl{I}_{\rm wgt}|$ as
\begin{equation}
\label{estimate-I wgt}
\begin{split}
  |\mcl{I}_{\rm wgt}|
  &\le \int_0^L \|D(a_{ij}-D\bar{a}_{ij})\|_{C^{0}(\ol{\Gam_t})}
  \int_{\mcl{D}}|DV(x_1)| ^2\,d\rx'dx_1\\
  &\le \mu_L\int_0^L \kappa(x_1) \int_{\mcl{D}}|DV(x_1)| ^2\,d\rx'dx_1 \\
  &\le \mu_L\|V\|^2_{\mcl{W}^{1,\infty}_{\mcl{D}}(0,L)}\int_0^L \kappa(x_1)\,dx_1.
  \end{split}
\end{equation}

By combining the two estimates \eqref{estimate-I main} and \eqref{estimate-I wgt}, we easily derive the following lemma from \eqref{integral-expression}:
\begin{lemma}[Intermediate $H^1$ estimate]
\label{lemma-intermediate H1 est}
There exists a small constant $\hat{\eps}_1\in(0,\eps_1]$ depending only on the data so that
\begin{itemize}
\item[-] if $L$ satisfies $L\le L^*$,
\item[-] and if $\delta$ from the definition \eqref{definition of iteration set} of $\mcl{J}_{\delta}$ satisfies $\delta\le \hat{\eps}_1$,
\end{itemize}
then
 $(V, W)$ satisfies the estimate
\begin{equation}
\label{estimate-pre-H1}
\begin{split}
  &\|V\|_{H^1(\Om_L)}+\|W\|_{H^1(\Om_L)}\\
           &\le \kappa_0\left(\|f_1^P\|_{L^2(\Om_L)}
           +\|f_2^P\|_{L^2(\Om_L)}
           +\|g_1\|_{C^0(\overline{\Gamma_0})}+\delta \|DV\|_{L^{\infty}((0,L);L^2(\mcl{D}))}\right)
           \end{split}
\end{equation}
for a constant $\kappa_0>0$ fixed depending only on the data.
\end{lemma}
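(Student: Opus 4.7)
The plan is to combine the coercivity of $\mcl{I}_{\rm main}$ provided by Lemma \ref{lemma-coercivity}, the weighted bound on $\mcl{I}_{\rm wgt}$ already recorded in \eqref{estimate-I wgt}, and a Cauchy--Schwarz estimate of the right hand side of \eqref{integral-expression}. Substituting the decomposition $\text{LHS}=\mcl{I}_{\rm main}+\mcl{I}_{\rm wgt}$ into \eqref{integral-expression} and applying \eqref{estimate-I main} gives
\begin{equation*}
\mu_0\bigl(\|V\|_{H^1(\Om_L)}^2+\|W\|_{H^1(\Om_L)}^2\bigr)\le \mu_1\|g_1\|_{L^2(\Gam_0)}^2+|\mcl{I}_{\rm wgt}|+\int_{\Om_L}\bigl(f_1^P\mfrak M\der_1V-f_2^PW\bigr)\,d\rx.
\end{equation*}
For the integral on the right, Cauchy--Schwarz and Young's inequality with a small parameter yield an upper bound of the form $\frac{\mu_0}{2}(\|V\|_{H^1}^2+\|W\|_{H^1}^2)+C(\|f_1^P\|_{L^2(\Om_L)}^2+\|f_2^P\|_{L^2(\Om_L)}^2)$, the first part of which is absorbed into the left hand side. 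Using the trace inequality to replace $\|g_1\|_{L^2(\Gam_0)}$ by $\|g_1\|_{C^0(\ol{\Gam_0})}$ (since $\Gam_0$ is bounded) leaves
\begin{equation*}
\|V\|_{H^1(\Om_L)}^2+\|W\|_{H^1(\Om_L)}^2\le C\bigl(\|f_1^P\|_{L^2}^2+\|f_2^P\|_{L^2}^2+\|g_1\|_{C^0(\ol{\Gam_0})}^2\bigr)+C|\mcl{I}_{\rm wgt}|.
\end{equation*}

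The crux is then the treatment of $|\mcl{I}_{\rm wgt}|$. The key analytic fact I would exploit is that the weight $\kappa(x_1)=(\min\{L/4,L-x_1\})^{-1/2}$ is integrable on $(0,L)$ with $\int_0^L\kappa(x_1)\,dx_1=O(\sqrt L)$, even though $\kappa$ blows up at the exit. Combined with Lemma \ref{lemma-properties of coeffs}(b), the coefficient estimate \eqref{estimate-coeff} carries an overall factor $\delta$ coming from $\|P\|_*\le\delta$ on the iteration set $\mcl{J}_\delta$, so refining \eqref{estimate-I wgt} gives
\begin{equation*}
|\mcl{I}_{\rm wgt}|\le C\delta\int_0^L\kappa(x_1)\|DV(x_1,\cdot)\|_{L^2(\mcl{D})}^2\,dx_1\le C\sqrt{L}\,\delta\,\|DV\|_{L^\infty((0,L);L^2(\mcl{D}))}^2.
\end{equation*}
Inserting this into the previous display and taking square roots (using $\sqrt{a+b}\le\sqrt{a}+\sqrt{b}$) produces the desired inequality \eqref{estimate-pre-H1}, with $\|g_1\|_{C^0}$ appearing on the right thanks to the $C^0$-to-$L^2$ trace bound on $\Gam_0$. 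The constants $L^\ast$ and $\hat\eps_1$ are then fixed small enough, depending only on the data, to ensure that the Young's absorption and the smallness condition on $\delta$ required by Lemma \ref{lemma-coercivity} are simultaneously satisfied.

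The main obstacle I anticipate is precisely the reason the norm $\|DV\|_{L^\infty((0,L);L^2(\mcl{D}))}$ is unavoidable on the right hand side: because the cross-section $\mcl{D}$ is arbitrary, the reflection method of \cite{bae2021three} cannot be used to smooth out the corner $\ol{\Gamw}\cap\ol{\Gam_L}$, so the $C^0$-bound on $Da_{ij}$ degenerates like $(L-x_1)^{-1/2}$ near the exit. The integrable-but-unbounded weight $\kappa$ forces a pointwise-in-$x_1$ control on $DV$ rather than the global $L^2$ control one would get in the rectangular case, which is exactly why the bootstrap term $\delta\|DV\|_{L^\infty((0,L);L^2(\mcl{D}))}$ shows up and why this estimate is termed \emph{intermediate}: upgrading it to a genuine $H^1$-estimate (with no $DV$ on the right) will require coupling it with a separate bound for $\|DV\|_{L^\infty((0,L);L^2(\mcl{D}))}$ obtained from the hyperbolic equation for $V$, which is the content of the subsequent steps of Proposition \ref{proposition-apriori estimate of smooth solution}.
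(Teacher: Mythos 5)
Your proposal is correct and follows the paper's own (very brief) argument exactly: combine the coercivity \eqref{estimate-I main} with the weighted bound \eqref{estimate-I wgt} in the identity \eqref{integral-expression}, absorb the right-hand side via Cauchy--Schwarz and Young, and keep the integrable-weight term as the bootstrap term $\delta\|DV\|_{L^{\infty}((0,L);L^2(\mcl{D}))}$. The only cosmetic point is that taking square roots of $|\mcl{I}_{\rm wgt}|\le C\delta\sqrt{L}\,\|DV\|_{L^{\infty}((0,L);L^2(\mcl{D}))}^2$ literally produces a factor $\sqrt{\delta}$ rather than $\delta$, which is immaterial for the absorption in the subsequent Gr\"onwall step.
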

We note that the upper bound $L^*$ of the nozzle length $L$ is given for the sole purpose the $H^1$-estimate given in the above lemma.
\smallskip

{\textbf{3.}} Let us define a linear differential operator $\mfrak{L}^P_{\rm h}$ by 	
	\begin{equation*}
\mfrak{L}^P_{\rm h}(V):=\mcl{L}^P(V,0)(=\sum_{i,j=1}^3a_{ij}^{P}\partial_{ij}V+\bar{a}_1\partial_1V).
	\end{equation*}
It follows from the statement (c) in Lemma \ref{lemma-properties of coeffs} that the operator $\mfrak{L}^P_{\rm h}$ is hyperbolic in $\Om_L$. Next, we define a function $F_1$ by
\begin{equation}
\label{definition of F1}
F_1:=f_1^P-\mcl{L}^P(0, W).
\end{equation}
Then we regard $V$ as a solution to the linear boundary value problem:
\begin{equation*}
\mfrak{L}^P_{\rm h}(V)=F_1\tx{ in $\Om_L$}, \quad
  \begin{cases}V=0\\
  \der_{x_1} V=g_1\end{cases}
\tx{ on $\Gam_0$},\quad
\der_{{\bf n}_w}V=0\tx{ on $\Gam_{\rm w}$}.
\end{equation*}
In order to close the estimate given in Lemma \ref{lemma-intermediate H1 est}, we shall apply the method of an energy estimate for a linear hyperbolic equation for which we employ the Gr\"{o}nwall's inequality.

Fix $t\in(0,L)$. By integrating by parts the expression
\begin{equation}
\label{hyperbolic-integral-H1}
  \int_{\Om_t} \mfrak{L}^P_{\rm h}(V) \der_1 V\,d\rx=\int_{\Om_t} F_1\der_1 V\,d\rx,
\end{equation}
we get
\begin{equation*}
  \begin{split}
  {\tx{LHS of \eqref{hyperbolic-integral-H1}}}=&\frac 12(\int_{\Gam_t}-\int_{\Gam_0})(\der_1 V)^2-\sum_{i,j=2}^3 a_{ij}\der_i V\der_j V\,d\rx'+\mcl{H}(t)
  \end{split}\end{equation*}
for
\begin{equation*}
 \mcl{H}(t)= \int_{\Om_t}\bar a_1(\der_1 V)^2 +\sum_{i,j=2}^3 \frac 12 \der_1(a_{ij})\der_i q\der_j V-\sum_{i=1}^3\sum_{j=2}^3 \der_j a_{ij}\der_1 V\der_i V\,d\rx.
\end{equation*}
Next, let us define a function $\mcl{X}:[0,L]\rightarrow \R$ by
\begin{equation*}
  \mcl{X}(t):=\int_{\mcl{D}}|D V(t, \rx')|^2\, d\rx'.
\end{equation*}
By applying \eqref{estimate-coeff-C0}, we can estimate $\mcl{H}(t)$ as
\begin{equation*}
  |\mcl{H}(t)|\le C\int_0^t \kappa(s) \mcl{X}(s)\,ds\quad\tx{for any $t\in(0, L)$}.
\end{equation*}
By applying the estimate right in the above, we get a differential inequality for $\mcl{X}(t)$ as follows:
\begin{equation*}
  \mcl{X}(t)\le \mcl{X}(0)+C\left(\int_0^t \kappa(s)\mcl{X}(s)\,dx+ \left(\|f_1^P\|_{L^2(\Om_L)}
  +\|W\|_{H_1(\Om_L)}\right)^2\right).
\end{equation*}
Since we have
\begin{equation}
\label{est-kappa}
\int_0^L \kappa(s)\,ds=\frac 52\sqrt{L}<\infty,
\end{equation}
we can apply the Gr\"{o}nwall's inequality to obtain the estimate
\begin{equation*}
\|DV\|_{L^{\infty}((0,L);L^2(\mcl{D}))}\le C(\|g_1\|_{L^2(\mcl{D})}+\|f_1^P\|_{L^2(\Om_L)}
  +\|W\|_{H_1(\Om_L)}).
\end{equation*}
We substitute this estimate into \eqref{estimate-pre-H1} to get
\begin{equation*}
\begin{split}
&\|V\|_{H^1(\Om_L)}+\|W\|_{H^1(\Om_L)}\\
&\le \kappa_0\left(\|f_1^P\|_{L^2(\Om_L)}
           +\|f_2^P\|_{L^2(\Om_L)}
           +\|g_1\|_{C^0(\overline{\Gamma_0})}+\delta \|W\|_{H^1(\Om_L)}\right).
           \end{split}
\end{equation*}
Then, the following essential proposition is obtained:
\begin{proposition}[A priori estimates of the first order derivatives]
\label{proposition-H1-new}
One can reduce the constant $\hat{\eps}_1$ further from the one given in Lemma \ref{lemma-intermediate H1 est} depending only on the data so that if the constant $\delta>0$ from \eqref{definition of iteration set} satisfies the inequality
    \begin{equation*}
      0<\delta\le \hat{\eps}_1,
    \end{equation*}
    then we have the estimate
    \begin{equation}
    \label{estimate-H1-final}
    \begin{split}
&\|(V,W)\|_{H^1(\Om_L)}+\|V\|_{\mcl{W}^{1,\infty}_{\mcl{D}}(0,L)}\\
&\le C\left(\|f_1^P\|_{L^2(\Om_L)}+\|f_2^P\|_{L^2(\Om_L)}
  +\|g_1\|_{C^0(\ol{\Gam_0})}\right).
  \end{split}
\end{equation}
\end{proposition}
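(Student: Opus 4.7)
The plan is to simply close the loop between the two ingredients that have just been assembled: the intermediate $H^1$--bound \eqref{estimate-pre-H1} from Lemma \ref{lemma-intermediate H1 est} (which loses a factor $\delta\|DV\|_{L^\infty((0,L);L^2(\mcl{D}))}$) and the Gr\"onwall--based estimate
\begin{equation*}
\|DV\|_{L^\infty((0,L);L^2(\mcl{D}))}\le C\bigl(\|g_1\|_{L^2(\mcl{D})}+\|f_1^P\|_{L^2(\Om_L)}+\|W\|_{H^1(\Om_L)}\bigr)
\end{equation*}
derived in step 3 of Section \ref{subsection:proof of main proposition-smooth} using the hyperbolicity of $\mfrak{L}^P_{\rm h}$, the pointwise bound \eqref{estimate-coeff-C0} and the integrability $\int_0^L\kappa(s)\,ds<\infty$ from \eqref{est-kappa}. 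Substituting the Gr\"onwall estimate into \eqref{estimate-pre-H1} yields
\begin{equation*}
\|V\|_{H^1(\Om_L)}+\|W\|_{H^1(\Om_L)}\le \kappa_0\bigl(\|f_1^P\|_{L^2(\Om_L)}+\|f_2^P\|_{L^2(\Om_L)}+\|g_1\|_{C^0(\ol{\Gam_0})}+\delta\|W\|_{H^1(\Om_L)}\bigr),
\end{equation*}
where $\kappa_0>0$ is the (redefined) product of the two constants appearing along the way and depends only on the data.

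Next, I would shrink $\hat{\eps}_1$ from the value given in Lemma \ref{lemma-intermediate H1 est} so that $\kappa_0\hat{\eps}_1\le \tfrac12$. Under the assumption $\delta\le \hat{\eps}_1$, the term $\kappa_0\delta\|W\|_{H^1(\Om_L)}$ is absorbed into the left-hand side, producing the desired $H^1$ estimate for $(V,W)$.

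Finally, to bound $\|V\|_{\mcl{W}^{1,\infty}_{\mcl{D}}(0,L)}=\|V\|_{L^\infty((0,L);H^1(\mcl{D}))}+\|\der_1 V\|_{L^\infty((0,L);L^2(\mcl{D}))}$, I feed the new $H^1$ bound on $W$ back into the Gr\"onwall-based estimate to control $\|DV\|_{L^\infty((0,L);L^2(\mcl{D}))}$; the zero--order piece $\|V\|_{L^\infty((0,L);L^2(\mcl{D}))}$ is then recovered from the Dirichlet condition $V|_{\Gamma_0}=0$ via $V(x_1,\rx')=\int_0^{x_1}\der_1V(t,\rx')\,dt$ and Cauchy--Schwarz in $t$, which costs an additional factor bounded by $\sqrt{L}$.

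There is no real obstacle here beyond careful bookkeeping, since the two analytic tools (the weighted coercivity of $\mcl{I}_{\rm main}$ controlling the mixed-type coupling and the Gr\"onwall estimate handling the non-closable $\|DV\|_{L^\infty((0,L);L^2(\mcl{D}))}$ term) have already been established. The only genuinely delicate point is the compatibility of the smallness of $\delta$ with the previously fixed $\eps_1$ and $L^*$; making $\hat{\eps}_1$ smaller if necessary preserves all previous structural assumptions on the iteration set $\mcl{J}_\delta$ and the coefficients in Lemma \ref{lemma-properties of coeffs}, which is why the statement merely says one can reduce $\hat{\eps}_1$ further.
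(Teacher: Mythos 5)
Your proposal is correct and follows essentially the same route as the paper: the paper likewise substitutes the Gr\"onwall-based bound on $\|DV\|_{L^{\infty}((0,L);L^2(\mcl{D}))}$ into \eqref{estimate-pre-H1}, absorbs the resulting $\kappa_0\delta\|W\|_{H^1(\Om_L)}$ term by shrinking $\hat{\eps}_1$, and reads off the $\mcl{W}^{1,\infty}_{\mcl{D}}(0,L)$ bound from the Gr\"onwall estimate together with the vanishing of $V$ on $\Gam_0$. Your bookkeeping of the zero-order piece via integration from $\Gamma_0$ is a valid (and explicit) way to finish that last step.
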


\subsubsection{Second order derivative estimates}
\label{subsubsection-H2 estimate}
For higher order derivative estimates of $(V, W)$, we shall use a bootstrap argument.
\medskip

{\textbf{A priori estimate of $D^2W$:}}
First of all, we rewrite \eqref{simple-lbvp} as two separate boundary value problems for $W$ and $V$, respectively. First of all, let us define a function $F_2$ by
\begin{equation}
\label{definition of F2}
  F_2:=\bar h_1W+\bar h_2\der_1V+f_2^P.
\end{equation}
Then we regard $W$ as a solution to the linear boundary value problem:
\begin{equation}
\label{bvp for W}
   \Delta W=F_2\tx{ in\quad$\Om_L$}, \quad
	\partial_{x_1}{W}=0 \tx{ on $\Gamma_0\cup \Gam_L$}, \quad
\partial_{{\bf n}_w}W=0 \tx{ on $\Gamma_{{\rm w}}$}.
\end{equation}
We locally extend the above problem about $\Gam_0$ and $\Gam_L$ by even extensions, respectively, then apply \cite[Theorem 8.12]{GilbargTrudinger} and Proposition \ref{proposition-H1-new} to obtain the estimate
\begin{equation}
\label{estimate-H2-W}
  \|W\|_{H^2(\Om_L)}\le C\left(\|f_1^P\|_{L^2(\Om_L)}
           +\|f_2^P\|_{L^2(\Om_L)}
           +\|g_1\|_{C^0(\overline{\Gamma_0})}\right).
\end{equation}

\medskip

{\textbf{A priori estimate of $D^2V$:}}
{\textbf{1.}}
On the whole, we shall estimate the $H^2$-norm of $V$ by following the idea given in \cite[Appendix A: Step 2 in the proof of Lemma 2.8]{bae2021structural}. In other words, we apply the method of an energy estimate for a linear hyperbolic equation, which employs the Gr\"{o}nwall's inequality. But, differently from the cases considered in \cite{bae2021structural, bae2021three}, we need to be more careful in treating the derivatives of the coefficients $\{a_{ij}\}_{i,j=1}^3$.

We differentiate the equation $\mfrak{L}^P_{\rm h}(V)=F_1$ with respect to $x_1$, and rewrite the result in terms of $q:=\der_1V$ as follows:
\begin{equation*}
  \mfrak{L}^P_{\rm h}(q)=\der_1F_1-(\der_1\mfrak{L}^P_{\rm h})(V)\quad\tx{in $\Om_L$}
\end{equation*}
for $\displaystyle{\der_1\mfrak{L}^P_{\rm h}:=\sum_{(i,j)\neq (1,1)}}\der_1a_{ij}\der_{ij}+\bar a_1'\der_1$. Then we get the following integral:
\begin{equation}\label{integral-H2}
  \int_{\Om_t}\mfrak{L}^P_{\rm h}(q)\der_1q\,d\rx=\int_{\Om_t}(\der_1F_1-(\der_1\mfrak{L}^P_{\rm h})(V))\der_1 q\,d\rx.
\end{equation}
\smallskip

{\textbf{2.}} By integrating by parts with using the compatibility condition for the coefficient matrix $[a_{ij}]_{i,j=1}^3$, given in the statement (e) in Lemma \ref{lemma-properties of coeffs}, we get
\begin{equation*}
  \begin{split}
  {\tx{LHS of \eqref{integral-H2}}}=&\frac 12(\int_{\Gam_t}-\int_{\Gam_0})(\der_1 q)^2-\sum_{i,j=2}^3 a_{ij}\der_i q\der_j q\,d\rx'+ \widetilde{\mcl{H}}(t)
  \end{split}\end{equation*}
for
\begin{equation*}
 \widetilde{\mcl{H}}(t)= \int_{\Om_t}\bar a_1(\der_1 q)^2 + \sum_{i,j=2}^3 \frac 12 \der_1(a_{ij})\der_i q\der_j q\,d\rx-\sum_{i=1}^3\sum_{j=2}^3 \der_j a_{ij}\der_1 q\der_i q\,d\rx.
\end{equation*}
Next, let us define a function $X:[0,L]\rightarrow \R$ by
\begin{equation*}
  X(t):=\int_{\mcl{D}}|\nabla q(t, \rx')|^2\, d\rx'.
\end{equation*}
By applying \eqref{estimate-coeff-C0}, we can estimate the term $\widetilde{\mcl{H}}(t)$ as
\begin{equation*}
  |\widetilde{\mcl{H}}(t)|\le C\int_0^t \kappa(s) X(s)\,ds\quad\tx{for any $t\in[0, L]$}.
\end{equation*}

Next, we rewrite the term $(\der_1\mfrak{L}^P_{\rm h})(V)$  as
\begin{equation*}
(\der_1\mfrak{L}^P_{\rm h})(V)=
2\sum_{j=2}^3\der_1 a_{1j}\der_jq+\sum_{i,j=2}^3 \der_1 a_{ij}\der_{ij}V=:I_1+I_2
\quad\tx{in $\Om_L$}.
\end{equation*}
By applying the estimate \eqref{estimate-coeff-C0}, we have
\begin{equation*}
  \begin{split}
  \int_{\Om_t}|I_1\der_{1}q|\,d\rx
  \le C\int_{0}^t \kappa(s)X(s)\,ds.
  \end{split}
\end{equation*}

In order to estimate the term $\int_{\Om_t} |I_2\der_1 q|\,d\rx$, we rewrite the equation $\mfrak{L}^P_{\rm h}(V)=F_1$ as
\begin{equation}
\label{gronwall-7}
  \sum_{i,j=2}^3 \bar a_{ij}\der_{ij}V+\bar{a}_1\der_1 V=F_1-a_{11}\der_{11}V-2\sum_{j=2}^3a_{1j}\der_j q+\sum_{i,j=2}^3 (\bar a_{ij}-a_{ij})\der_{ij} V\quad\tx{in $\Om_L$}.
\end{equation}
Given a constant $s\in(0, L)$, we regard $V(s,\cdot)$ as a solution to the two dimensional elliptic equation \eqref{gronwall-7} in $\mcl{D}$ with the Neumann boundary condition $\der_{\bf n}V=0$ on $\der \mcl{D}$. Then, it follows from \cite[Theorem 8.12]{GilbargTrudinger} and Lemma \ref{lemma-properties of coeffs}(b) that
\begin{equation*}
\begin{split}
&\|D_{\rx'}^2V(s,\cdot)\|_{L^2(\mcl{D})}\\
& \le C\left(\|V(s,\cdot)\|_{H^1(\mcl{D})}+\|F_1(s,\cdot)\|_{L^2(\mcl{D})}
  +\|\nabla q(s,\cdot)\|_{L^2(\mcl{D})}+\delta\|D_{\rx'}^2V(s,\cdot)\|_{L^2(\mcl{D})}\right).
  \end{split}
\end{equation*}
This directly yields the following lemma:
\begin{lemma}
\label{lemma-intermediate H2 est}
Suppose that the constants $L$ and $\delta$ satisfy all the assumptions stated in Lemma \ref{lemma-coercivity}. Then  one can fix a small constant $\eps_2\in(0,\eps_1]$ depending only on the data so that if $\delta$ satisfies the inequality $\delta\le \eps_2$, then $V$ satisfies the estimate
  \begin{equation*}
\begin{split}
&\|D_{\rx'}^2V(s,\cdot)\|_{L^2(\mcl{D})}\le C\left(\|V(s,\cdot)\|_{H^1(\mcl{D})}+\|F_1(s,\cdot)\|_{L^2(\mcl{D})}
  +\|\nabla q(s,\cdot)\|_{L^2(\mcl{D})}\right)
  \end{split}
\end{equation*}
for all $0<s<L$.
\end{lemma}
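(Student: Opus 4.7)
The plan is to derive Lemma \ref{lemma-intermediate H2 est} by a straightforward absorption of the $\delta$-dependent term on the right-hand side of the intermediate inequality
\begin{equation*}
\|D_{\rx'}^2V(s,\cdot)\|_{L^2(\mcl{D})}\le C_0\Bigl(\|V(s,\cdot)\|_{H^1(\mcl{D})}+\|F_1(s,\cdot)\|_{L^2(\mcl{D})}+\|\nabla q(s,\cdot)\|_{L^2(\mcl{D})}+\delta\|D_{\rx'}^2V(s,\cdot)\|_{L^2(\mcl{D})}\Bigr)
\end{equation*}
established immediately before the lemma. That inequality itself comes from reading \eqref{gronwall-7} as a two-dimensional uniformly elliptic Neumann problem for $V(s,\cdot)$ on $\mcl{D}$, applying the standard $L^2$-regularity bound \cite[Theorem 8.12]{GilbargTrudinger}, and using Lemma \ref{lemma-properties of coeffs}(b) to bound $\|(a_{ij}-\bar a_{ij})\partial_{ij}V(s,\cdot)\|_{L^2(\mcl{D})}$ by a multiple of $\delta\|D_{\rx'}^2V(s,\cdot)\|_{L^2(\mcl{D})}$.

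Concretely, I would set $\eps_2:=\min\{\eps_1,\,\tfrac{1}{2C_0}\}$, where $C_0$ is the constant appearing in the intermediate estimate above. Since $\eps_1$ and $C_0$ both depend only on the data, so does $\eps_2$. For $\delta\in(0,\eps_2]$, the term $C_0\delta\|D_{\rx'}^2V(s,\cdot)\|_{L^2(\mcl{D})}$ is at most $\tfrac12\|D_{\rx'}^2V(s,\cdot)\|_{L^2(\mcl{D})}$, which can be subtracted from both sides to yield the conclusion with constant $C=2C_0$.

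There is essentially no obstacle at this step; the substance of the argument is in the intermediate estimate. The one technical point worth checking is that $C_0$ can be chosen uniformly in $s\in(0,L)$. This is true because (i) by Lemma \ref{lemma-properties of coeffs}(a) and (c) the sub-matrix $[\bar a_{ij}]_{i,j=2}^3$ is uniformly negative-definite on $\overline{\Om_L}$, so the two-dimensional operator in \eqref{gronwall-7} has ellipticity constants bounded away from zero independently of $s$, and (ii) the cross-section $\mcl{D}$ is fixed with smooth boundary, so the Neumann $L^2$-regularity constant provided by \cite[Theorem 8.12]{GilbargTrudinger} does not depend on $s$. Consequently a single constant $C$, depending only on the data, serves every $s\in(0,L)$, and the claimed pointwise-in-$s$ estimate follows.
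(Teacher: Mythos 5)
Your proposal is correct and follows exactly the paper's route: the paper derives the same intermediate inequality with the extra term $\delta\|D_{\rx'}^2V(s,\cdot)\|_{L^2(\mcl{D})}$ from \eqref{gronwall-7}, \cite[Theorem 8.12]{GilbargTrudinger} and Lemma \ref{lemma-properties of coeffs}(b), and then obtains the lemma by absorbing that term for $\delta$ small. Your added remark on the uniformity of the constant in $s$ (uniform ellipticity of $[\bar a_{ij}]_{i,j=2}^3$ and the fixed smooth cross-section $\mcl{D}$) is a correct and worthwhile justification of a point the paper leaves implicit.
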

By applying this lemma and the estimate \eqref{estimate-coeff-C0}, we get
\begin{equation*}
  \begin{split}
  \int_{\Om_t}|I_2\der_1 q|\,d\rx
  \le \int_{0}^t\kappa(s)(\|V(s,\cdot)\|_{H^1(\mcl{D})}+\|F_1(s,\cdot)\|_{L^2(\mcl{D})}
  +\|\nabla q(s,\cdot)\|_{L^2(\mcl{D})})^2\, ds.
  \end{split}
\end{equation*}
Then it is derived from \eqref{integral-H2} that
\begin{equation}
\label{equation for X}
  X(t)\le X(0)+ C\left(\int_0^t \kappa(s)\left(X(s)+\|V(s,\cdot)\|^2_{H^1(\mcl{D})}\right)\,ds
  +\|F_1\|_{H^1(\Om_L)}^2\right)
\end{equation}
provided that the constant $\delta$ satisfies the condition $\delta\le \eps_2$ for the constant $\eps_2$ from Lemma \ref{lemma-intermediate H2 est}.

In order to get an estimate of the term $\|V(s,\cdot)\|^2_{H^1(\mcl{D})}$, let us set $Y(t):=\int_{\Gam_t}|DV|^2\,d\rx'$. By adjusting the argument given in the above, we can directly derive from the integral expression $\displaystyle{\int_{\Om_t} \mfrak{L}^P_{\rm h}(V)\der_1 V\,d\rx=\int_{\Om_t}F_1\der_1 V\,d\rx}$ that
\begin{equation*}
\begin{split}
  Y(t)&\le Y(0)+ C\left(\int_0^t\kappa(s)Y(s)\,ds+\|F_1\|_{L^2(\Om_L)}^2\right)\\
  &=\int_{\mcl{D}}|g_1|^2\,d\rx'
  +C\left(\int_0^t\kappa(s)Y(s)\,ds+\|F_1\|_{L^2(\Om_L)}^2\right).
  \end{split}
\end{equation*}
By applying the Gr\"{o}nwall's inequality with using \eqref{est-kappa}, we obtain the estimate
\begin{equation*}
  \esssup_{0<x_1<L}\|DV(x_1,\cdot)\|_{L^2(\mcl{D})}\le C(\|g_1\|_{L^2(\mcl{D})}+\|F_1\|_{L^2(\Om_L)}).
\end{equation*}
Next, we apply the trace inequality to get
\begin{equation*}
   \esssup_{0<x_1<L}\|V(x_1, \cdot)\|_{L^2(\mcl{D})} \le C\|V\|_{H^1(\Om_L)}.
\end{equation*}
The previous two estimates combined with \eqref{equation for X} yield that
\begin{equation*}
 X(t)\le X(0)+ C\left(\int_0^t \kappa(s)X(s)\,ds
  +(\|V\|_{H^1(\Om_L)}+\|g_1\|_{L^2(\mcl{D})}+\|F_1\|_{H^1(\Om_L)})^2\right).
\end{equation*}
By rewriting the equation $\mfrak{L}^P_{\rm h}(V)=F_1$ as $\der_{11}V=F_1-\sum_{(i,j)\neq (1,1)} a_{ij}\der_{ij}V-\bar a_1\der_1V$ on $\Gam_0$, we directly estimate the term $X(0)$ as
\begin{equation*}
|X(0)|\le C\left(\|F_1\|_{L^2(\Gam_0)} +\sum_{k=0}^1\|D_{\rx'}^k g_1\|_{L^2(\Gam_0)}+ \|W\|_{H^1(\Om_L)} \right)^2
\end{equation*}
so we finally have
\begin{equation*}
  X(t)\le
  C\left(\int_0^t \kappa(s)X(s)\,ds
  +\left(\|V\|_{H^1(\Om_L)}+\sum_{k=0}^1\|D_{\rx'}^k g_1\|_{L^2(\Gam_0)} +\|F_1\|_{H^1(\Om_L)}\right)^2\right)
\end{equation*}
for all $0<t<L$. Then the Gr\"{o}nwall's inequality yields that
\begin{equation*}
  \esssup_{0<t<L}X(t)\le C \left(\|V\|_{H^1(\Om_L)}+\sum_{k=0}^1\|D_{\rx'}^k g_1\|_{L^2(\Gam_0)} +\|F_1\|_{H^1(\Om_L)}\right)^2.
\end{equation*}
By combining this estimate with \eqref{definition of F1}, \eqref{estimate-H2-W}, Proposition \ref{proposition-H1-new} and Lemma \ref{lemma-intermediate H2 est}, we obtain the following estimate:
\begin{equation}
\begin{split}
\label{gronwall-4}
&\esssup_{0<x_1<L}\|D^2V(x_1,\cdot)\|_{L^2(\mcl{D})}+\|V\|_{H^2(\Om_L)}\le C \left(\|f_1^P\|_{H^1(\Om_L)}+\|f_2^P\|_{L^2(\Om_L)}
+\|g_1\|_{C^1(\ol{\Gam_0})}\right).
\end{split}
\end{equation}

So the following lemma is obtained:
\begin{lemma}
\label{proposition-H2-final}
One can fix a constant $\eps_3\in(0,\eps_2]$ sufficiently small depending only on the data so that if the constant $\delta>0$ from \eqref{definition of iteration set} satisfies the inequality
    \begin{equation*}
      0<\delta\le \eps_3,
    \end{equation*}
    then we have the estimate
    \begin{equation*}
\|V\|_{\mcl{W}^{2,\infty}_{\mcl{D}}(0,L)}
+\|(V, W)\|_{H^2(\Om_L)}\le
 C\left(\|f_1^P\|_{H^1(\Om_L)}+\|f_2^P\|_{L^2(\Om_L)}
  +\|g_1\|_{C^1(\ol{\Gam_0})}\right).
\end{equation*}
\end{lemma}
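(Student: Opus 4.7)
The statement of Lemma \ref{proposition-H2-final} is essentially a consolidation of the estimates \eqref{estimate-H2-W} and \eqref{gronwall-4} derived in the preceding discussion, together with a careful unpacking of the $\mcl{W}^{2,\infty}_{\mcl{D}}(0,L)$-norm. The plan is therefore to gather the existing pieces and add only the embedding step needed to convert the $H^2$ control into control over $L^\infty$-in-$x_1$ norms of $L^2(\mcl{D})$-derivatives of $V$.

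First, I would fix $\eps_3:=\min\{\eps_2,\hat{\eps}_1\}$, where $\hat{\eps}_1$ is the constant from Proposition \ref{proposition-H1-new} and $\eps_2$ is the constant from Lemma \ref{lemma-intermediate H2 est}. This choice guarantees that, under the hypothesis $\delta\le\eps_3$, all the first-order estimates, the tangential Hessian absorption in Lemma \ref{lemma-intermediate H2 est}, and the Gr\"onwall-based hyperbolic energy argument leading to \eqref{gronwall-4} are simultaneously valid. With this in hand, the bound on $\|W\|_{H^2(\Om_L)}$ is exactly \eqref{estimate-H2-W}, and both $\|V\|_{H^2(\Om_L)}$ and $\esssup_{0<x_1<L}\|D^2V(x_1,\cdot)\|_{L^2(\mcl{D})}$ are exactly what \eqref{gronwall-4} delivers; summing them yields the $H^2$ part of the estimate.

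Next, I would unpack the $\mcl{W}^{2,\infty}_{\mcl{D}}(0,L)$-norm according to Definition \ref{definition:space involving time}(2):
\begin{equation*}
\|V\|_{\mcl{W}^{2,\infty}_{\mcl{D}}(0,L)}=\sum_{j=0}^{2}\|\der_1^j V\|_{L^\infty((0,L);H^{2-j}(\mcl{D}))}.
\end{equation*}
All second-order contributions (the term $\|\der_1^2V\|_{L^\infty((0,L);L^2(\mcl{D}))}$, the mixed-derivative part of $\|\der_1V\|_{L^\infty((0,L);H^1(\mcl{D}))}$, and the transverse-Hessian part of $\|V\|_{L^\infty((0,L);H^2(\mcl{D}))}$) are dominated by $\esssup_{0<x_1<L}\|D^2V(x_1,\cdot)\|_{L^2(\mcl{D})}$, which is controlled by \eqref{gronwall-4}. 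The remaining lower-order pieces are the $L^\infty$-in-$x_1$, $L^2(\mcl{D})$-norms of $V$ and of $DV$; these follow from the standard continuous embedding $H^1(\Om_L)\hookrightarrow L^\infty((0,L);L^2(\mcl{D}))$ applied to $V$ and to each component of $DV$ (the latter lying in $H^1(\Om_L)$ since $V\in H^2(\Om_L)$), and are thus bounded by $\|V\|_{H^2(\Om_L)}$.

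Combining these pieces yields the stated inequality with a constant $C$ depending only on the data. No step here is genuinely an obstacle: the heavy lifting — the $H^1$ estimate of Proposition \ref{proposition-H1-new}, the tangential elliptic estimate of Lemma \ref{lemma-intermediate H2 est}, and the Gr\"onwall argument \eqref{integral-H2}--\eqref{gronwall-4} with the weight $\kappa(s)$ handling the lost derivative of the coefficients near $\Gam_L$ — has already been carried out, and what remains is bookkeeping.
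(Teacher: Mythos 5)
Your proposal is correct and follows essentially the same route as the paper, which obtains this lemma directly as a consolidation of the estimates \eqref{estimate-H2-W} and \eqref{gronwall-4} established in \S\ref{subsubsection-H2 estimate}. Your explicit unpacking of the $\mcl{W}^{2,\infty}_{\mcl{D}}(0,L)$-norm and the trace inequality $\esssup_{0<x_1<L}\|u(x_1,\cdot)\|_{L^2(\mcl{D})}\le C\|u\|_{H^1(\Om_L)}$ for the lower-order pieces is exactly the bookkeeping the paper leaves implicit, and your choice $\eps_3=\min\{\eps_2,\hat{\eps}_1\}$ correctly guarantees that all the ingredients (Proposition \ref{proposition-H1-new}, Lemma \ref{lemma-intermediate H2 est}, and the Gr\"onwall argument) apply simultaneously.
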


\subsubsection{A priori estimates of $D^3(V, W)$}
\label{subsubsection-H3 estimate}

Back to the boundary value problem \eqref{bvp for W}, we can easily establish the estimate
\begin{equation}
\label{estimate-H3-W}
  \|W\|_{H^3(\Om_L)}
  \le C\left(\|f_1^P\|_{H^1(\Om_L)}+\|f_2^P\|_{H^1(\Om_L)}
+\|g_1\|_{C^1(\ol{\Gam_0})}\right)
\end{equation}
by using a local extension argument combined with Lemma \ref{lemma-intermediate H1 est} and \eqref{gronwall-4}.

Next, we estimate the term $\displaystyle{ \esssup_{0<x_1<L}\|D^3V(x_1, \cdot)\|_{L^2(\mcl{D})}}$ by adjusting the argument given in \S \ref{subsubsection-H2 estimate}. Since it contains many tedious computations which can be easily derived by minor adjustments in the argument given in \S \ref{subsubsection-H2 estimate}, we shall explain the whole process very briefly, and provide details only for the parts that needs a careful approach.
\smallskip

{\textbf{1.}} Let us set
\begin{equation*}
  \zeta:=\der_{11}V\quad\tx{in $\Om_L$},
\end{equation*}
and define a function $X:[0,L]\rightarrow \R$ by
\begin{equation*}
  X(t):=\int_{\mcl{D}}|\nabla \zeta(t, \rx')|^2\, d\rx'.
\end{equation*}
By differentiating the equation $\mfrak{L}^P_{\rm h}(V)=F_1$ with respect to $x_1$ twice, we get the following equation for $\zeta$:
\begin{equation*}
  \mfrak{L}^P_{\rm h}(\zeta)=\der_{11}F_1-(\der_{11}\mfrak{L}^P_{\rm h})(V)\quad\tx{in $\Om_L$}
\end{equation*}
for $\displaystyle{\der_{11}\mfrak{L}^P_{\rm h}:=\sum_{(i,j)\neq (1,1)} \der_{11}(a_{ij}\der_{ij}+\bar a_1\der_1)}$. Then, we can show that
\begin{equation}
\label{gronwall for H3}
\begin{split}
  X(t)\le X(0)&+C\left(\int_0^t \kappa(s)X(s)\,ds+\int_{\Om_L\cap\{x_1<t\}} |\der_{11}F_1|^2\,d\rx\right)\\
 & +\int_0^t\int_{\mcl{D}}|\der_{11}\mfrak{L}^P_{\rm h}(V)\der_{1}\zeta|\,d\rx'ds.
  \end{split}
\end{equation}
\smallskip

{\textbf{2.}} Let us set $\displaystyle{\mcl{S}:=\sum_{(i,j)\neq (1,1)}\der_1 a_{ij}\der_{1ij} V\der_1\zeta}$. We rewrite $\mcl{S}$ as
    \begin{equation*}
      \mcl{S}=\sum_{{\tx{$i$ or $j$=1}\atop {(i,j)\neq (1,1)}}} \der_1 a_{ij}\der_{ij1}V\der_{1}\zeta+\sum_{i,j=2}^3\der_1 a_{ij}\der_1(\der_{ij1}V)\der_{1}\zeta=:\mcl{S}_1+\mcl{S}_2.
    \end{equation*}
    By \eqref{estimate-coeff}, we have
    \begin{equation*}
    \begin{split}
      &\int_0^t\int_{\mcl{D}}|\mcl{S}_1|\,d\rx'ds\le C\int_0^t \kappa(s)X(s)\,ds.
      \end{split}
    \end{equation*}
    Next, we differentiate the equation \eqref{gronwall-7} with respect to $x_1$, and apply \cite[Theorem 8.12]{GilbargTrudinger}, Lemma \ref{lemma-properties of coeffs}(b), \eqref{gronwall-4}, \eqref{estimate-H3-W} and the trace inequality to get the following result:

    \begin{lemma} \label{lemma-V-H3-intermediate1}
    For the constant $\eps_3>0$ from Lemma \ref{proposition-H2-final}, one can fix a small constant $\eps_4\in(0, \eps_3]$ depending only on the data so that if the constant $\delta>0$ from \eqref{definition of iteration set} satisfies the inequality
    \begin{equation*}
      0<\delta\le \eps_4,
    \end{equation*}
    then we have the estimate
    \begin{equation*}
\begin{split}
  &\|D^2_{\rx'}\der_1V(s,\cdot)\|_{L^2(\mcl{D})}\\
  &\le C\left(\|f_1^P\|_{H^2(\Om_L)}+\|f_2^P\|_{H^1(\Om_L)}
  +\|g_1\|_{C^1(\ol{\Gam_0})}+\delta \|D_{\rx'}^3V(s,\cdot)\|_{L^2(\mcl{D})} \right)
  \end{split}
\end{equation*}
for all $0<s<L$.
\end{lemma}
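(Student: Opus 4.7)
The plan is to derive a two-dimensional elliptic equation on each cross-sectional slice $\{x_1=s\}$ for the unknown $u(s,\cdot):=\der_1V(s,\cdot)$, and then apply the Neumann $H^2$-regularity estimate. Differentiating \eqref{gronwall-7} in $x_1$ and using $a_{11}\equiv 1$ yields, with $q=\der_1V$,
\begin{equation*}
\sum_{i,j=2}^3 \bar a_{ij}\,\der_{ij}u + \bar a_1\,\der_1 u + \bar a_1'\,u \;=\; G \quad\text{in $\Om_L$},
\end{equation*}
where $G$ collects $\der_1F_1$, the term $-\der_{11}q$, the mixed terms $-2\sum_{j=2}^3 \der_1(a_{1j}\der_jq)$, the perturbation $\der_1(\sum_{i,j=2}^3 (\bar a_{ij}-a_{ij})\der_{ij}V)$, and the purely background piece $-\sum_{i,j=2}^3 \der_1 \bar a_{ij}\cdot \der_{ij}V-\bar a_1\der_{11}V$. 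On each slice I view this as a two-dimensional equation for $u(s,\cdot)$ on $\mcl D$; the Neumann condition $\der_{{\bf n}_w}u=0$ on $\der\mcl D$ is inherited from $\der_{{\bf n}_w}V=0$ by differentiating in $x_1$, since ${\bf n}_w$ depends only on $\rx'$. By Lemma \ref{lemma-properties of coeffs}(a),(c), the matrix $[\bar a_{ij}]_{i,j=2}^3$ is uniformly negative-definite with entries depending only on $x_1$, so the slice operator is uniformly elliptic with smooth coefficients, and \cite[Theorem 8.12]{GilbargTrudinger} gives
\begin{equation*}
\|D_{\rx'}^2 u(s,\cdot)\|_{L^2(\mcl D)} \;\le\; C\bigl(\|u(s,\cdot)\|_{H^1(\mcl D)} + \|G(s,\cdot)\|_{L^2(\mcl D)}\bigr).
\end{equation*}
The term $\|u(s,\cdot)\|_{H^1(\mcl D)}$ is dominated by $\|V\|_{\mcl W^{2,\infty}_{\mcl D}(0,L)}$, already controlled by Lemma \ref{proposition-H2-final}.

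Next, I would bound $\|G(s,\cdot)\|_{L^2(\mcl D)}$ term by term. Expanding $F_1=f_1^P-\bar b_1\der_1W-\bar b_2 W$ and applying the one-dimensional trace inequality $\|\phi(s,\cdot)\|_{L^2(\mcl D)}\le C\|\phi\|_{H^1(\Om_L)}$ gives $\|\der_1F_1(s,\cdot)\|_{L^2(\mcl D)}\le C(\|f_1^P\|_{H^2(\Om_L)}+\|W\|_{H^3(\Om_L)})$; invoking \eqref{estimate-H3-W} for $\|W\|_{H^3}$ produces exactly the three target upper-bound norms $\|f_1^P\|_{H^2}$, $\|f_2^P\|_{H^1}$, $\|g_1\|_{C^1}$. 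The background-coefficient terms $\der_1\bar a_{ij}\cdot\der_{ij}V$, $\bar a_1\der_{11}V$ and $\bar a_1'\der_1V$ have smooth uniformly bounded coefficients and are controlled slice-wise by $\|V\|_{\mcl W^{2,\infty}_{\mcl D}(0,L)}$ via Lemma \ref{proposition-H2-final}.

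The delicate contributions are the third-order-in-$V$ terms $\der_{11}q$, $\der_1(a_{1j}\der_jq)$, and $\der_1(\sum_{i,j=2}^3(\bar a_{ij}-a_{ij})\der_{ij}V)$. For the last, the product rule produces $-\der_1 a_{ij}\cdot\der_{ij}V$, bounded by $\|\der_1 a_{ij}\|_{L^\infty(\Om_L)}\|D_{\rx'}^2V(s,\cdot)\|_{L^2(\mcl D)}$ with $\|\der_1 a_{ij}\|_{L^\infty}\le C$ from Lemma \ref{lemma-properties of coeffs}(b) and the Sobolev embedding, plus $(\bar a_{ij}-a_{ij})\der_{ij}u$, bounded by $C\delta\|D_{\rx'}^2u(s,\cdot)\|_{L^2(\mcl D)}$ and absorbed into the left-hand side once $\delta$ is small. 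For $\der_{11}q=\der_{111}V$ and the mixed $\der_{11j}V$ pieces of $\der_1(a_{1j}\der_jq)$, I substitute the $x_1$-differentiated form of $\mfrak L^P_{\rm h}(V)=F_1$; splitting every $a_{kl}=\bar a_{kl}+(a_{kl}-\bar a_{kl})$ in the resulting expression, the $\bar a_{kl}$-pieces recombine with the slice-elliptic principal part while the $(a_{kl}-\bar a_{kl})$-pieces yield precisely the asserted $\delta\|D_{\rx'}^3V(s,\cdot)\|_{L^2(\mcl D)}$ term.

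The hardest part will be the bookkeeping in this last step: every third derivative generated by distributing $\der_1$ over the various terms of $\mcl L^P_{\rm h}$ must be sorted into exactly one of three categories — (i) slice-bounded by Lemma \ref{proposition-H2-final} through an $O(1)$ coefficient, (ii) multiplied by an $O(\delta)$ factor from $a_{ij}-\bar a_{ij}$ so that it joins the $\delta\|D_{\rx'}^3V\|_{L^2(\mcl D)}$ term, or (iii) of the form $\bar a_{kl}\der_{kl}u$ which cancels into the slice-elliptic principal part. Choosing $\eps_4\in(0,\eps_3]$ small enough to absorb the resulting $C\delta\|D_{\rx'}^2u(s,\cdot)\|_{L^2(\mcl D)}$ self-reference into the left-hand side then yields the claimed estimate.
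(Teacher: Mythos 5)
Your overall strategy coincides with the paper's: differentiate \eqref{gronwall-7} in $x_1$, regard the result as a two-dimensional Neumann problem for $\der_1V(s,\cdot)$ on each slice $\mcl{D}$ with the uniformly negative-definite principal part $[\bar a_{ij}]_{i,j=2}^3$, apply \cite[Theorem 8.12]{GilbargTrudinger}, control the right-hand side through \eqref{gronwall-4}, \eqref{estimate-H3-W} and the trace inequality, and absorb the $(a_{ij}-\bar a_{ij})$-perturbations for small $\delta$. The treatment of $\der_{111}V$ and $\der_{11j}V$ by substituting the differentiated equation and splitting $a_{kl}=\bar a_{kl}+(a_{kl}-\bar a_{kl})$ is also the intended bookkeeping (with the minor caveat that for $\der_{11j}V=\der_j(\der_{11}V)$ you need the $x_j$-differentiated form of the equation, not the $x_1$-differentiated one).

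There is, however, one step that fails as written: the bound of $\der_1 a_{ij}\cdot\der_{ij}V$ by $\|\der_1 a_{ij}\|_{L^{\infty}(\Om_L)}\|D^2_{\rx'}V(s,\cdot)\|_{L^2(\mcl{D})}$ with $\|\der_1 a_{ij}\|_{L^{\infty}}\le C$. Since $P=(\tpsi,\tPsi)$ lies only in $H^4_*(\Om_L)\cap\mcl{W}^{4,\infty}_{*,\mcl{D}}(0,L)$ and not in $H^4(\Om_L)$, the quantity $D(a_{ij}-\bar a_{ij})$ (which involves $D^2\tpsi$ and $D\tPsi$) is \emph{not} uniformly bounded up to $\Gam_L$; the available bound is exactly \eqref{estimate-coeff}, namely $C\delta(L-s)^{-1/2}$ for $s>\tfrac34 L$, so your constant would degenerate as $s\to L$ while the lemma asserts a uniform estimate for all $0<s<L$. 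This is precisely the difficulty the weighted norms were introduced to handle. The correct treatment of this term is a slice-wise $L^4\times L^4$ H\"older pairing: the unweighted two-derivative part of $\|a_{ij}-\bar a_{ij}\|_{\mcl{W}^{3,\infty}_{*,\mcl{D}}(0,L)}\le C\delta$ from Lemma \ref{lemma-properties of coeffs}(b) gives $\esssup_s\|\der_1(a_{ij}-\bar a_{ij})(s,\cdot)\|_{H^1(\mcl{D})}\le C\delta$, hence $\|\der_1(a_{ij}-\bar a_{ij})(s,\cdot)\|_{L^4(\mcl{D})}\le C\delta$ by the two-dimensional Sobolev embedding on $\mcl{D}$, while $\|\der_{ij}V(s,\cdot)\|_{L^4(\mcl{D})}\le C(\|D^2V(s,\cdot)\|_{L^2(\mcl{D})}+\|D^3_{\rx'}V(s,\cdot)\|_{L^2(\mcl{D})})$ for $i,j\in\{2,3\}$. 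This pairing is in fact a principal source of the term $\delta\|D^3_{\rx'}V(s,\cdot)\|_{L^2(\mcl{D})}$ appearing in the statement (which you attribute only to the substitution step); with this correction your argument closes and matches the paper's.
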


Then, by a lengthy computation with using \eqref{estimate-coeff}, \eqref{gronwall-4} and Lemma \ref{lemma-V-H3-intermediate1}, we can show that
    \begin{equation}
    \begin{split}
      &\int_0^t\int_{\mcl{D}}|\mcl{S}_2|\,d\rx'ds\\
&      \le C\int_0^t \kappa(s)\left(\|DF_1(s, \cdot)\|_{L^2(\mcl{D})}
  +\|g_1\|_{C^1(\ol{\Gam_0)}}+
  \delta\|D_{\rx'}^3V(s,\cdot)\|_{L^2(\mcl{D})}\right)
  \|\der_{1}\zeta(s,\cdot)\|_{L^2(\mcl{D})}\,ds.
  \end{split}
    \end{equation}
Next, we apply \cite[Theorem 8.13]{GilbargTrudinger} to the equation \eqref{gronwall-7} to estimate the term $\|D_{\rx'}^3V(s,\cdot)\|_{L^2(\mcl{D})}$ as
$$
 \|D_{\rx'}^3V(s,\cdot)\|_{L^2(\mcl{D})}\le C\left(\|V(s,\cdot)\|_{H^2(\mcl{D})}+\|\tx{RHS of \eqref{gronwall-7} at $x_1=s$}\|_{H^1(\mcl{D})}\right).
$$
Then we apply Lemma \ref{lemma-properties of coeffs}(b), \eqref{gronwall-4}, \eqref{estimate-H3-W} and the trace inequality to get
\begin{equation*}
\begin{split}
  &\|D_{\rx'}^3 V\|_{L^2(\mcl{D}_s)}\\
  &\le C
  \left(\|f_1^P\|_{H^2(\Om_L)}+\|f_2^P\|_{H^1(\Om_L)}
  +\|g_1\|_{C^1(\ol{\Gam_0})}+\delta \|D_{\rx'}^3V(s,\cdot)\|_{L^2(\mcl{D})} \right),
  \end{split}
\end{equation*}
which leads to the following lemma:
\begin{lemma} \label{lemma-V-H3-intermediate2}
    One can reduce the constant $\eps_4$ further from the one given in Lemma \ref{lemma-V-H3-intermediate1} with depending only on the data so that if the constant $\delta>0$ from \eqref{definition of iteration set} satisfies the inequality
    \begin{equation*}
      0<\delta\le \eps_4,
    \end{equation*}
    then we have the estimate
    \begin{equation}
\label{gronwall-13}
\|DD^2_{\rx'}V(s,\cdot)\|_{L^2(\mcl{D})} \le C\left(\|f_1^P\|_{H^2(\Om_L)}+\|f_2^P\|_{H^1(\Om_L)}
  +\|g_1\|_{C^1(\ol{\Gam_0})}\right)
\end{equation}
for all $0<s<L$.
\end{lemma}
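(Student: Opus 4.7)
The plan is to reduce the slice-wise estimate of $\|DD^2_{\rx'}V(s,\cdot)\|_{L^2(\mcl{D})}$ to two pieces: the mixed component $\|D^2_{\rx'}\der_1V(s,\cdot)\|_{L^2(\mcl{D})}$, which is already controlled modulo a $\delta\|D^3_{\rx'}V\|_{L^2(\mcl{D})}$ term by Lemma \ref{lemma-V-H3-intermediate1}, and the purely tangential component $\|D^3_{\rx'}V(s,\cdot)\|_{L^2(\mcl{D})}$, which I will bound by slice-wise elliptic regularity. The tangential piece is handled by viewing \eqref{gronwall-7} at fixed $x_1=s$ as a two-dimensional second-order elliptic boundary-value problem in $\mcl{D}$ for $V(s,\cdot)$: the principal part $\sum_{i,j=2}^3\bar a_{ij}\der_{ij}$ is uniformly elliptic by Lemma \ref{lemma-properties of coeffs}(a)(c), and the boundary condition on $\der\mcl{D}$ is the Neumann condition inherited from \eqref{simple-lbvp}. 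The whole argument closes once the $C\delta\|D^3_{\rx'}V(s,\cdot)\|_{L^2(\mcl{D})}$ terms that appear on the right are absorbed by shrinking $\eps_4$ further.

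First I would apply \cite[Theorem 8.13]{GilbargTrudinger} with $m=1$ to the slice problem to obtain
\[
\|D^3_{\rx'}V(s,\cdot)\|_{L^2(\mcl{D})}\le C\left(\|V(s,\cdot)\|_{H^2(\mcl{D})}+\|\mcl{R}(s,\cdot)\|_{H^1(\mcl{D})}\right),
\]
where $\mcl{R}$ denotes the right-hand side of \eqref{gronwall-7}. The $H^2(\mcl{D})$ norm of $V(s,\cdot)$ is uniformly bounded by \eqref{gronwall-4}, so it suffices to estimate $\|\mcl{R}(s,\cdot)\|_{H^1(\mcl{D})}$. This splits into four pieces: $F_1$, $a_{1j}\der_jq$, $(\bar a_{ij}-a_{ij})\der_{ij}V$, and $a_{11}\der_{11}V$. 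The $F_1$ piece is controlled by $\|f_1^P\|_{H^2(\Om_L)}$ together with \eqref{estimate-H3-W} and the slice-wise trace inequality. The $a_{1j}\der_jq$ piece is of order $\delta$ because $\bar a_{1j}=0$ for $j\neq 1$ by Lemma \ref{lemma-properties of coeffs}(a), and the $D^2_{\rx'}\der_1V$ term that arises from the Leibniz rule is absorbed via Lemma \ref{lemma-V-H3-intermediate1}. The $(\bar a_{ij}-a_{ij})\der_{ij}V$ piece contributes $C+C\delta\|D^3_{\rx'}V(s,\cdot)\|_{L^2(\mcl{D})}$ thanks to Lemma \ref{lemma-properties of coeffs}(b) and the pointwise coefficient estimate \eqref{estimate-coeff}.

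The main obstacle is the fourth piece, $a_{11}\der_{11}V=\der_{11}V$, whose $H^1(\mcl{D})$ norm involves $D_{\rx'}\der_{11}V(s,\cdot)$, a third derivative not directly bounded by any preceding slice-wise estimate. The idea is to substitute $\der_{11}V$ through the hyperbolic equation $\mfrak{L}^P_{\rm h}(V)=F_1$, which expresses it as a combination of $F_1$, $\bar a_1\der_1V$, $a_{1j}\der_{1j}V$, and $a_{ij}\der_{ij}V$ with $i,j\in\{2,3\}$, and then to apply $D_{\rx'}$. The Leibniz terms in which a coefficient is differentiated are harmless by \eqref{estimate-coeff}, and the products of the form $a_{1j}D^2_{\rx'}\der_1V$ are of size $O(\delta^2)\|D^3_{\rx'}V\|_{L^2(\mcl{D})}$ after invoking Lemma \ref{lemma-V-H3-intermediate1} combined with the smallness $a_{1j}=O(\delta)$. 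The seemingly dangerous contribution $\sum_{i,j=2}^3 a_{ij}D_{\rx'}\der_{ij}V$ is decomposed as $\sum_{i,j=2}^3\bar a_{ij}D_{\rx'}\der_{ij}V+\sum_{i,j=2}^3(a_{ij}-\bar a_{ij})D_{\rx'}\der_{ij}V$; the perturbation part is again $C\delta\|D^3_{\rx'}V\|_{L^2(\mcl{D})}$, while the leading part is recognized as the background elliptic operator applied to $\der_kV$ and is controlled by differentiating \eqref{gronwall-7} once in $\der_k$, producing only quantities already handled in the preceding steps.

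Collecting all of the above produces
\[
\|D^3_{\rx'}V(s,\cdot)\|_{L^2(\mcl{D})}\le C\left(\|f_1^P\|_{H^2(\Om_L)}+\|f_2^P\|_{H^1(\Om_L)}+\|g_1\|_{C^1(\ol{\Gam_0})}\right)+C\delta\|D^3_{\rx'}V(s,\cdot)\|_{L^2(\mcl{D})},
\]
which is the intermediate inequality already displayed in the excerpt. Choosing $\eps_4\in(0,\eps_3]$ small enough that $C\eps_4\le\tfrac12$ allows the last term to be absorbed into the left-hand side whenever $\delta\le\eps_4$. Combined with the bound for $\|D^2_{\rx'}\der_1V(s,\cdot)\|_{L^2(\mcl{D})}$ coming from Lemma \ref{lemma-V-H3-intermediate1} applied at the same $\eps_4$, this yields \eqref{gronwall-13}.
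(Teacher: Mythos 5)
Your skeleton is the same as the paper's: view \eqref{gronwall-7} at fixed $x_1=s$ as a two-dimensional elliptic problem for $V(s,\cdot)$ in $\mcl{D}$, apply \cite[Theorem 8.13]{GilbargTrudinger}, estimate the right-hand side in $H^1(\mcl{D})$ term by term, and absorb the $C\delta\|D^3_{\rx'}V(s,\cdot)\|_{L^2(\mcl{D})}$ contributions by shrinking $\eps_4$. You also correctly single out the one genuinely delicate term, $a_{11}\der_{11}V=\der_{11}V$, whose $H^1(\mcl{D})$ norm requires $\|D_{\rx'}\der_{11}V(s,\cdot)\|_{L^2(\mcl{D})}$ --- a third derivative carrying two $x_1$-derivatives, which is controlled neither by \eqref{gronwall-4} nor by Lemma \ref{lemma-V-H3-intermediate1}.

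However, your resolution of that term is circular. Equation \eqref{gronwall-7} \emph{is} the hyperbolic equation $\mfrak{L}^P_{\rm h}(V)=F_1$ with $\der_{11}V$ moved to the right, so substituting $\der_{11}V=F_1-2\sum_{j=2}^3a_{1j}\der_{1j}V-\sum_{i,j=2}^3a_{ij}\der_{ij}V-\bar a_1\der_1V$ back into the right-hand side of \eqref{gronwall-7} reproduces the left-hand side identically and yields no new information. Concretely, after the substitution and one application of $D_{\rx'}$, the dangerous contribution becomes $\sum_{i,j=2}^3\bar a_{ij}D_{\rx'}\der_{ij}V$, whose $L^2(\mcl{D})$ norm is comparable to $\|D^3_{\rx'}V(s,\cdot)\|_{L^2(\mcl{D})}$ with an $O(1)$ constant (the sub-matrix $[\bar a_{ij}]_{i,j=2}^3$ is bounded away from zero by Lemma \ref{lemma-properties of coeffs}(c)), so it cannot be absorbed by smallness of $\delta$; and your proposed control of this leading part ``by differentiating \eqref{gronwall-7} once in $\der_k$'' fails for the same reason, since $\der_k$ applied to the right-hand side of \eqref{gronwall-7} produces $\der_k(a_{11}\der_{11}V)=D_{\rx'}\der_{11}V$, the very quantity you set out to bound. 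The loop never closes. The way out is not to eliminate this term at this stage: the honest intermediate estimate is $\|DD^2_{\rx'}V(s,\cdot)\|_{L^2(\mcl{D})}\le C(\|f_1^P\|_{H^2(\Om_L)}+\|f_2^P\|_{H^1(\Om_L)}+\|g_1\|_{C^1(\ol{\Gam_0})})+C\|D\der_{11}V(s,\cdot)\|_{L^2(\mcl{D})}$, and the extra term is exactly $X(s)^{1/2}$ for the Gr\"{o}nwall quantity of \S\ref{subsubsection-H3 estimate}; feeding it into the Gr\"{o}nwall inequality for $X(t)$ (where it only generates admissible $\int_0^t\kappa X$ contributions) closes the argument, and the clean form \eqref{gronwall-13} then follows a posteriori once $\esssup_{0<t<L}X(t)$ is bounded. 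This is precisely how the paper handles the same difficulty one order higher in Lemma \ref{lemma-intermediate H4 est}, where the term $\|D\xi(s,\cdot)\|_{L^2(\mcl{D})}$ is retained on the right-hand side rather than eliminated.
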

Then we can finally estimate the term $\displaystyle{\int_0^t\int_{\mcl{D}}|\mcl{S}|\,d\rx'ds}$ as
\begin{equation}
\label{estimate for S-H3}
\begin{split}
  &\int_0^t\int_{\mcl{D}}|\mcl{S}|\,d\rx'ds\\
  &\le C\left(\int_0^t \kappa(s)X(s)\,ds
  +\left(\|f_1^P\|_{H^2(\Om_L)}+\|f_2^P\|_{H^1(\Om_L)}
  +\|g_1\|_{C^1(\ol{\Gam_0})} \right)^2\right).
  \end{split}
\end{equation}
\smallskip

{\textbf{3.}} Let us set
\begin{equation*}
\mcl{R}:=\der_{11}\mfrak{L}^P_{\rm h}(V)\der_{1}q-\sum_{(i,j)\neq (1,1)}\der_1 a_{ij}\der_{1ij}V\der_ 1 q.
\end{equation*}
A direct computation with applying the generalized H\"{o}lder inequality and Sobolev inequality yields the estimate
\begin{equation*}
\begin{split}
&\int_0^t\int_{\mcl{D}}  |\mcl{R}|\,d\rx'ds\\
\le&
C\Bigl(\int_0^t\int_{\mcl{D}}(\sum_{k=0}^2 |D^kV|^2
+|\der_1 q|^2)\,d\rx'ds\\
&\phantom{\le}+\int_0^t\sum_{(i,j)\neq (1,1)} \|\der_{11}a_{ij}(s,\cdot)\|_{L^4(\mcl{D})}
\left(\|D^2V(s,\cdot)\|_{L^2(\mcl{D})}+\|D_{\rx'}\der_{ij}V(s,\cdot)\|_{L^2(\mcl{D})}\right)\|\der_1q(s,\cdot)\|_{L^2(\mcl{D})}\,ds\Bigr).
\end{split}
\end{equation*}
By Lemma \ref{lemma-properties of coeffs}(b), Sobolev inequality and the definition of the function $\kappa(s)$ given by \eqref{definition of kappa}, we can estimate the term $\|\der_{11}a_{ij}(s,\cdot)\|_{L^4(\mcl{D})}$ as
\begin{equation*}
  \|\der_{11}a_{ij}(s,\cdot)\|_{L^4(\mcl{D})}\le C\kappa(s).
\end{equation*}
By combining this estimate with \eqref{gronwall-4} and \eqref{gronwall-13}, we can directly check that
\begin{equation}
\label{estimate for R-H3}
\begin{split}
  &\int_0^t\int_{\mcl{D}}  |\mcl{R}|\,d\rx'ds\\
 & \le C\left(\int_0^t \kappa(s)X(s)\,ds+\left(\|f_1^P\|_{H^2(\Om_L)}+\|f_2^P\|_{H^1(\Om_L)}
  +\|g_1\|_{C^1(\ol{\Gam_0})} \right)^2\right).
  \end{split}
\end{equation}
\smallskip

{\textbf{4.}} Now, we collect all the estimates of \eqref{gronwall for H3}, \eqref{estimate for S-H3} and \eqref{estimate for R-H3} together to show that
\begin{equation*}
  \esssup_{0<x_1<L}\|D\der_{11}V(x_1, \cdot)\|_{L^2(\mcl{D})}\le C\left(\|f_1^P\|_{H^2(\Om_L)}+\|f_2^P\|_{H^1(\Om_L)}
  +\|g_1\|_{C^2(\ol{\Gam_0})} \right).
\end{equation*}
Next, we combine the above estimate with \eqref{gronwall-13} to obtain the estimate
\begin{equation*}
  \esssup_{0<x_1<L}\|D^3V(x_1, \cdot)\|_{L^2(\mcl{D})}\le C\left(\|f_1^P\|_{H^2(\Om_L)}+\|f_2^P\|_{H^1(\Om_L)}
  +\|g_1\|_{C^2(\ol{\Gam_0})} \right).
\end{equation*}
Finally, we can easily prove the following lemma by using \eqref{gronwall-4} and the above estimate:
\begin{lemma}
\label{proposition-V-H3-final}
One can fix a constant $\eps_4>0$ sufficiently small depending only on the data so that if the constant $\delta>0$ from \eqref{definition of iteration set} satisfies the inequality
    \begin{equation*}
      0<\delta\le \eps_4,
    \end{equation*}
    then we have the estimate
    \begin{equation*}
    \begin{split}
&\|V\|_{\mcl{W}^{3,\infty}_{\mcl{D}}(0,L)}
+\|(V, W)\|_{H^3(\Om_L)}\\
&\le
 C\left(\|f_1^P\|_{H^2(\Om_L)}+\|f_2^P\|_{H^1(\Om_L)}
  +\|g_1\|_{C^2(\ol{\Gam_0})}\right).
  \end{split}
\end{equation*}
\end{lemma}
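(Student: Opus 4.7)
The plan is to assemble the final estimate by combining ingredients that have already been prepared in Section \ref{subsubsection-H3 estimate}: the pointwise-in-$x_1$ bound on $D\der_{11}V$ from step 4, the bound on $DD^2_{\rx'}V$ from Lemma \ref{lemma-V-H3-intermediate2}, the $\mcl{W}^{2,\infty}_{\mcl{D}}(0,L)$ and $H^2(\Om_L)$ estimates from Lemma \ref{proposition-H2-final}, and the $H^3$-estimate \eqref{estimate-H3-W} for $W$. Throughout I would keep the smallness constant $\eps_4$ equal to the one already fixed in Lemma \ref{lemma-V-H3-intermediate2}, so that every intermediate lemma invoked below is available under the single hypothesis $0<\delta\le\eps_4$.

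First I would treat the hyperbolic component. Step 4 already supplies
$$
\esssup_{0<x_1<L}\|D\der_{11}V(x_1,\cdot)\|_{L^2(\mcl{D})}\le C\left(\|f_1^P\|_{H^2(\Om_L)}+\|f_2^P\|_{H^1(\Om_L)}+\|g_1\|_{C^2(\ol{\Gam_0})}\right),
$$
while Lemma \ref{lemma-V-H3-intermediate2} supplies the analogous bound for $DD^2_{\rx'}V(x_1,\cdot)$. Since every third-order derivative of $V$ is either a first-order derivative of $\der_{11}V$ (controlled by the first bound) or a derivative of $D^2_{\rx'}V$ (controlled by Lemma \ref{lemma-V-H3-intermediate2}), adding these two estimates yields
$$
\esssup_{0<x_1<L}\|D^3V(x_1,\cdot)\|_{L^2(\mcl{D})}\le C\left(\|f_1^P\|_{H^2(\Om_L)}+\|f_2^P\|_{H^1(\Om_L)}+\|g_1\|_{C^2(\ol{\Gam_0})}\right).
$$
Combining the display above with the $\mcl{W}^{2,\infty}_{\mcl{D}}(0,L)$-estimate from Lemma \ref{proposition-H2-final} produces the $\mcl{W}^{3,\infty}_{\mcl{D}}(0,L)$-portion of the asserted bound.

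To recover $\|V\|_{H^3(\Om_L)}$ I would integrate the pointwise-in-$x_1$ bound via $\|D^3V\|_{L^2(\Om_L)}^2\le L\,\esssup_{0<x_1<L}\|D^3V(x_1,\cdot)\|_{L^2(\mcl{D})}^2$ and add the $H^2(\Om_L)$ bound from Lemma \ref{proposition-H2-final}. For $W$, the estimate \eqref{estimate-H3-W}, obtained by even reflection of the Neumann problem \eqref{bvp for W} across $\Gam_0$ and $\Gam_L$ together with \cite[Theorem 8.12]{GilbargTrudinger}, already provides the required $H^3$-bound. Summing the three contributions gives the inequality claimed by the lemma.

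No individual step here is itself an obstacle; the hard analytic work — in particular the Gr\"{o}nwall estimate against the weight $\kappa(s)$ that tolerates the blow-up of $\|D a_{ij}\|_{C^0}$ as $x_1\to L$, and the elliptic-type inequality for $D^2_{\rx'}V$ from \cite[Theorem 8.13]{GilbargTrudinger} — has been carried out in the preceding lemmas. The only real subtlety is bookkeeping: one must verify that the smallness thresholds from Lemmas \ref{proposition-H1-new}, \ref{proposition-H2-final}, \ref{lemma-V-H3-intermediate1}, \ref{lemma-V-H3-intermediate2} are mutually consistent, which is ensured by taking $\eps_4$ to be the minimum of those data-dependent thresholds, and to check that the constant $C$ arising after the final assembly continues to depend only on the data in the sense of Proposition \ref{lemma-wp of lbvp for potential}.
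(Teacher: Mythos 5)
Your proposal is correct and follows essentially the same route as the paper: the paper's own proof of this lemma is precisely the assembly in Step 4 of \S\ref{subsubsection-H3 estimate}, combining the $\esssup$ bound on $D\der_{11}V$ with Lemma \ref{lemma-V-H3-intermediate2} to control $\esssup_{0<x_1<L}\|D^3V(x_1,\cdot)\|_{L^2(\mcl{D})}$, then adding the $H^2$/$\mcl{W}^{2,\infty}_{\mcl{D}}$ bounds and the $H^3$-estimate \eqref{estimate-H3-W} for $W$. Your observation that every third-order derivative is either $\der_k(\der_{11}V)$ or a component of $DD^2_{\rx'}V$, the integration in $x_1$ to pass from the $\esssup$ bound to $\|D^3V\|_{L^2(\Om_L)}$, and the choice of $\eps_4$ as the minimum of the earlier thresholds are all exactly what the paper does.
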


\subsubsection{A priori estimates of the fourth order derivatives}
By extending the argument given in \S \ref{subsubsection-H3 estimate} to the fourth order derivatives of $(V, W)$, we shall prove the following lemma:
\begin{lemma}
\label{lemma-H4 estimate-final}
For the constant $\eps_4>0$ given in Lemma \ref{proposition-V-H3-final}, one can fix a constant $\eps_5\in(0,\eps_4]$ sufficiently small depending only on the data so that if the constant $\delta>0$ from \eqref{definition of iteration set} satisfies the inequality
\begin{equation*}
  0<\delta\le \eps_5,
\end{equation*}
then we have the estimate
\begin{equation*}
\begin{split}
  &\|(V, W)\|_{*}\le C\left(\|f_1^P\|_{H^3_*(\Om_L)}+\|f_2^P\|_{H^2(\Om_L)}
  +\|g_1\|_{C^3(\ol{\Gam_0})}\right).
  \end{split}
\end{equation*}
\end{lemma}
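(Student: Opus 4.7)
The plan is to extend the bootstrap strategy of Section \ref{subsubsection-H3 estimate} by one derivative. Lemma \ref{proposition-V-H3-final} already provides $H^3$ and $\mcl{W}^{3,\infty}_{\mcl{D}}$ control of $(V,W)$ up to $\Gam_L$; the missing piece is the fourth derivative, which generally blows up at the edge $\Gam_L\cap\Gamw$ and is therefore captured only in the weighted spaces $H^4_*$ and $\mcl{W}^{4,\infty}_{*,\mcl{D}}$ of Definitions \ref{definition-weighted Sobolev norm} and \ref{definition:space involving time}. The $d^{1/2}$ weight is precisely calibrated so that Gr\"{o}nwall's inequality can still be closed against the integrable singular weight $\kappa(s)=(\min\{L/4,L-s\})^{-1/2}$ from \eqref{definition of kappa}, whose integral is finite by \eqref{est-kappa}.

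For $W$, I would use the elliptic equation $\Delta W=\bar h_1 W+\bar h_2\der_1 V+f_2^P$ together with the Neumann conditions in \eqref{simple-lbvp}. The compatibility conditions $\der_1 f_2^P=0$ on $\Gam_0^{\bar{\epsilon}}$ from Lemma \ref{lemma-properties of coeffs}(f) and $\der_{{\bf n}_w}f_2^P=0$ on $\Gamw$ permit an even reflection across $\Gam_0$. Standard elliptic $H^4$ estimates (\cite[Theorem 8.13]{GilbargTrudinger}) on the reflected domain, up to $\Gamw$ (using smoothness of $\der\mcl{D}$), and on subdomains $\Om_{L-d}$, together with the $H^3$ control of Lemma \ref{proposition-V-H3-final}, yield
\begin{equation*}
\|W\|_{H^4_*(\Om_L)}\le C\Bigl(\|V\|_{H^3_*(\Om_L)}+\|f_2^P\|_{H^2(\Om_L)}+\|g_1\|_{C^2(\ol{\Gam_0})}\Bigr).
\end{equation*}

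For $V$, I would set $\zeta:=\der_1^3V$ and differentiate $\mfrak{L}^P_{\rm h}(V)=F_1$ three times in $x_1$ to get
\begin{equation*}
\mfrak{L}^P_{\rm h}(\zeta)=\der_1^3 F_1-[\der_1^3,\mfrak{L}^P_{\rm h}](V)\quad\tx{in $\Om_L$}.
\end{equation*}
For each $d\in(0,L)$, multiplying by $\der_1\zeta$, integrating over $\Om_{L-d}$, and symmetrizing via Lemma \ref{lemma-properties of coeffs}(e) exactly as in Section \ref{subsubsection-H2 estimate} produces, on the LHS, the favorable boundary term $\int_{\Gam_{L-d}}[(\der_1\zeta)^2-\sum_{i,j=2}^3 a_{ij}\der_i\zeta\der_j\zeta]\,d\rx'$ (correct sign by negative-definiteness of $[a_{ij}]_{i,j=2}^3$), together with bulk terms of the type $\int_0^{L-d}\kappa(s)\|\nabla\zeta(s,\cdot)\|_{L^2(\mcl{D})}^2\,ds$. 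Mixed tangential pieces $\|\der_1^k D_{\rx'}^{4-k}V(s,\cdot)\|_{L^2(\mcl{D})}$ for $k=0,1,2$ are obtained by applying \cite[Theorems 8.12--8.13]{GilbargTrudinger} to \eqref{gronwall-7} and its $\der_1^j$-differentiated versions, as in Lemmas \ref{lemma-V-H3-intermediate1} and \ref{lemma-V-H3-intermediate2}. Since $\int_0^L\kappa(s)\,ds<\infty$, Gr\"{o}nwall's inequality applied to $X(t):=\int_{\mcl{D}}|\nabla\zeta(t,\cdot)|^2\,d\rx'$ closes the estimate. Multiplying by $d^{1/2}$ and taking $\sup_d$ yields the $\mcl{W}^{4,\infty}_{*,\mcl{D}}$ part; integrating in $x_1$ over $(0,L-d)$ yields the $H^4_*$ part. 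Smallness of $\delta$ is used to absorb coefficient-perturbation terms of the form $C\delta\|D^4V(s,\cdot)\|_{L^2(\mcl{D})}^2$ back into the left-hand side, which fixes the final threshold $\eps_5\in(0,\eps_4]$.

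The main obstacle is the top-order commutator term $\der_1^3 a_{ij}\,\der_{ij}V\cdot \der_1\zeta$ inside $[\der_1^3,\mfrak{L}^P_{\rm h}](V)\der_1\zeta$. Since $a_{ij}\in \mcl{W}^{3,\infty}_{*,\mcl{D}}$ only, the factor $\|\der_1^3 a_{ij}(s,\cdot)\|_{L^2(\mcl{D})}$ can blow up like $\kappa(s)$ near $\Gam_L$, while $\der_{ij}V(s,\cdot)$ is only in $L^4(\mcl{D})$ via the $\mcl{W}^{3,\infty}_{\mcl{D}}$ bound of Lemma \ref{proposition-V-H3-final} and the 2D Sobolev embedding. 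The generalized H\"{o}lder inequality produces an error of the form $\int_0^t\kappa(s)X(s)^{1/2}\,ds$, which must be split via Young as
\begin{equation*}
\int_0^t\kappa(s)X(s)^{1/2}\,ds \le \tfrac{1}{2}\int_0^t\kappa(s)\,ds+\tfrac{1}{2}\int_0^t\kappa(s)X(s)\,ds;
\end{equation*}
the first integral is finite by \eqref{est-kappa} and the second is absorbed by Gr\"{o}nwall. This balance is precisely why both $H^4_*$ and $\mcl{W}^{4,\infty}_{*,\mcl{D}}$ norms appear together in $\|P\|_*$: the top-order mixed derivative must be simultaneously in $L^2$ with the fractional weight and in $L^\infty_*$, so that coefficient singularities at $\Gam_L$ remain integrable through the Gr\"{o}nwall step.
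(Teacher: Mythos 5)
Your overall architecture is the paper's: for $W$, even reflection about $\Gamma_0$ plus a cut-off/interior argument near $\Gamma_L$ producing the $d^{-1/2}$ rate; for $V$, the thrice-differentiated hyperbolic energy identity for $\xi=\der_1^3V$, two-dimensional elliptic sub-estimates in $\mcl{D}$ for the mixed derivatives $D_{\rx'}^{4-n}\der_1^nV$, and Gr\"onwall against the integrable weight $\kappa$. Two steps, however, are not justified as written.

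First, the reflection of $W$ about $\Gamma_0$ requires the \emph{full} right-hand side $F_2=\bar h_1W+\bar h_2\der_1V+f_2^P$ of $\Delta W=F_2$ to satisfy $\der_{x_1}F_2=0$ on $\Gamma_0^{\bar{\epsilon}}$ (otherwise the even extension of $F_2$ is not locally $H^2$ and the reflected solution is not $H^4$ across $\Gamma_0$). Lemma \ref{lemma-properties of coeffs}(f) only gives $\der_{x_1}f_2^P=0$ there; using $\der_{x_1}W=0$ and $\der_{x_1}^2V=0$ on $\Gamma_0^{\bar{\epsilon}}$ one is left with $\der_{x_1}F_2=\bar h_1'W+\bar h_2'\der_1V$, which vanishes only because $(\bar h_1',\bar h_2')$ is proportional to $\bar E$ and the hypothesis $E_0=0$ forces $\bar E=0$ at $x_1=0$. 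This is precisely where the condition \eqref{special condition} enters, and your write-up skips it.

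Second, your treatment of the top-order commutator $\der_1^3a_{ij}\,\der_{ij}V\,\der_1\xi$ does not close with the ingredients you invoke. You have $\|\der_1^3a_{ij}(s,\cdot)\|_{L^2(\mcl{D})}\lesssim\delta\kappa(s)$ and $\der_{ij}V(s,\cdot)\in L^4(\mcl{D})$, and $\der_1\xi(s,\cdot)\in L^2(\mcl{D})$; no H\"older pairing of exponents $(2,4,2)$ or $(2,4,4)$ or $(4,4,2)$ is admissible with these, since $\der_1\xi$ is not in $L^4(\mcl{D})$ and $\der_1^3a_{ij}$ is not in $L^4(\mcl{D})$. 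In two dimensions $H^1(\mcl{D})\not\hookrightarrow L^\infty(\mcl{D})$, so the claimed error $\int_0^t\kappa(s)X(s)^{1/2}\,ds$ and the subsequent Young splitting do not follow. The fix is the one you already set up one sentence earlier: the sub-lemma controlling $\sum_{n=0}^2\|D_{\rx'}^{4-n}\der_1^nV(s,\cdot)\|_{L^2(\mcl{D})}$ upgrades $\der_{ij}V(s,\cdot)$ to $H^2(\mcl{D})\hookrightarrow L^\infty(\mcl{D})$ at the cost of a term $\|D\xi(s,\cdot)\|_{L^2(\mcl{D})}=Z(s)^{1/2}$, so the commutator contribution is of the form
\begin{equation*}
\int_0^t\kappa(s)\bigl(C+Z(s)^{1/2}\bigr)Z(s)^{1/2}\,ds\le C\int_0^t\kappa(s)\,ds+C\int_0^t\kappa(s)Z(s)\,ds,
\end{equation*}
i.e.\ quadratic rather than linear in $Z^{1/2}$; Gr\"onwall still closes by \eqref{est-kappa}, but the bookkeeping must go through the elliptic sub-lemma, not through the $L^4$ embedding alone.
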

\begin{proof}
{\textbf{1.}} The first part of the proof is devoted to estimate the norm $\|W\|_{H^4_*(\Om_L)}$.
\smallskip

{\textbf{(1-1)}}
Back to the boundary value problem \eqref{bvp for W} for $W$, it follows from the compatibility condition \eqref{Comp-VW} and Lemma \ref{lemma-properties of coeffs}(f) that the function $F_2$, defined by \eqref{definition of F2}, satisfies the compatibility condition
\begin{equation*}
 \der_1 F_2=\bar h_1'W+\bar h_2'\der_1 V\quad\tx{on  $\Gamma_0^{\bar{\epsilon}}$.}
\end{equation*}
Note that \eqref{one-re-uE} and \eqref{def-bar-h} imply that
\begin{equation*}
  (\bar h_1', \bar h_2')
  =\frac{\bar{\rho}^{3-\gam}\bar E}{\gam S_0\left(\gam S_0\bar{\rho}^{\gam-1}-\frac{J_0^2}{\bar{\rho}^2}\right)}
  \left(2-\gam,\,\frac{J_0}{\bar{\rho}^2}-(2-\gam)\frac{1}{\bar{\rho}}\right)\quad\tx{in $\ol{\Om_L}$}.
\end{equation*}
Then the condition \eqref{special condition} yields that
\begin{equation}
\label{comp-cond-F2}
  \der_1 F_2=0\quad\tx{on $\Gamma_0^{\bar{\epsilon}}$.}
\end{equation}
Then we can apply a reflection argument (with using a local even extension of $W$ about $\Gam_0$) and Proposition \ref{proposition-V-H3-final} to obtain the estimate
\begin{equation}
\label{H4-estimate of W pre}
  \|W\|_{H^4(\Om_L\cap\{x<\frac 45 L\})}\le C(\|f_1^P\|_{H^2(\Om_L)}+\|f_2^P\|_{H^2(\Om_L)}+\|g_1\|_{C^2(\ol{\Gam_0})}).
\end{equation}
\smallskip

{\textbf{(1-2)}}
Fix a function $\chi\in C^{\infty}(\R)$ that satisfies the following properties:
\begin{equation*}
  \chi(x_1)=\begin{cases}1\quad\mbox{for $x_1\le 0$}\\
  0\quad\mbox{for $x_1\ge \frac 12$}\end{cases},\quad \chi'(x_1)\le 0\quad\tx{for all $x_1\in \R$}.
\end{equation*}
Next, we fix another function $\eta \in C^{\infty}(\R)$ that satisfies the following properties:
\begin{equation*}
  \eta(x_1)=\begin{cases}0\quad\mbox{for $x_1<\frac L8 $}\\
  1\quad\mbox{for $x_1\ge \frac L4$}\end{cases},\quad \eta'(x_1)\ge 0\quad\tx{for all $x_1\in \R$}.
\end{equation*}
For each constant $d\in(0, \frac L2)$, we define a scaled cut-off function $\chi_d:\R\to\mathbb{R}_+$  by
	\begin{equation*}
	\chi_d(x_1):=\eta(x_1)\chi\left(\frac{x_1-L+d}{d}\right).
	\end{equation*}
For each $d>0$, the scaled function $\chi_d$ satisfies the following properties:
\begin{itemize}
\item[-] $\displaystyle{{\rm spt}\,\chi_d\subset \left[\frac L8, L-\frac d2\right]}$;
\item[-] $\displaystyle{\chi_d(x_1)=1\,\,\tx{for $\frac L4\le x_1\le L-d$}}$;
\item[-] for each $k\in \mathbb{N}$, there exists a constant $C_k>0$ depending only on $k$ so that the estimate $\displaystyle{\left\|\frac{d^k\chi_d}{dx_1^k}\right\|_{C^0(\R)}\le \frac{C_k}{d^k}}$ holds.
\end{itemize}
For each $d\in (0, \frac L2)$, let us set $\xi_d(\rx):=\der_1^3W(\rx)\chi_d(x_1)$. Then, by a straight forward computation with using the estimate \eqref{H4-estimate of W pre} and the integral expression
\begin{equation}
\label{weak formulation for W2}
  \int_{\Om_L} \nabla (\der_1^3W)\cdot \nabla \xi_d\,d\rx=\int_{\Om_L} \der_{11}F_2\der_1\xi_d\,d\rx,
\end{equation}
we can show that
\begin{equation*}
  \|D\der_1^3W\|_{L^2(\Om_L\cap\{\frac L4 <x_1<L-d\})}\le \frac{C}{\sqrt{d}}
  \left(\|f_1^P\|_{H^2(\Om_L)}+\|f_2^P\|_{H^2(\Om_L)}+\|g_1\|_{C^2(\ol{\Gam_0})}\right).
\end{equation*}
More generally, by locally flattening the wall boundary $\Gam_{\rm w}$ and using the partition of unity, one can similarly show that
\begin{equation*}
  \|D^4_{\rx}W\|_{L^2(\Om_L\cap\{\frac L4 <x_1<L-d\})}\le \frac{C}{\sqrt{d}}\left(\|f_1^P\|_{H^2(\Om_L)}+\|f_2^P\|_{H^2(\Om_L)}
  +\|g_1\|_{C^2(\ol{\Gam_0})}\right).
\end{equation*}
By combining this estimate with \eqref{H4-estimate of W pre} and Proposition \ref{proposition-V-H3-final}, we obtain the estimate
\begin{equation}
\label{H4-estimate of W final}
 \|W\|_{H^4_*(\Om_L)}\le C\left(\|f_1^P\|_{H^2(\Om_L)}+\|f_2^P\|_{H^2(\Om_L)}
 +\|g_1\|_{C^2(\ol{\Gam_0})}\right).
\end{equation}

{\textbf{2.}}
To complete the proof, it remains to estimate the term $$\displaystyle{\sup_{d\in(0,L)}d^{1/2}\esssup_{0<x_1<L-d}\|D^4V(x_1, \cdot)\|_{L^2(\mcl{D})}}.$$
Overall, the estimate is given by adjusting the argument given in \S \ref{subsubsection-H3 estimate} so we shall provide an outline for the estimate, and add more details only for the parts that need a careful computation.
\smallskip

{\textbf{(2-1)}}
Let us set
\begin{equation*}
 \xi:= \der_1^3 V\quad\tx{in $\Om_L$},
\end{equation*}
and define a function $Z:[0,L]\rightarrow \R$ by
\begin{equation*}
  Z(t):=\int_{\mcl{D}}|\nabla \xi(t,\rx')|^2\,d\rx.
\end{equation*}
As an analogy of \eqref{gronwall for H3}, we can directly derive from the integral expression
\begin{equation}
\label{integral-expression for H4}
\int_0^t\int_{\mcl{D}}\mfrak{L}^P_{\rm h}(\der_1^3 V)\der_1\xi\,d\rx'ds
=\int_0^t\int_{\mcl{D}}(\der_1^3F_1-(\der_1^3\mfrak{L}^P_{\rm h})(V))\der_1\xi\,d\rx'ds
\end{equation}
that, for any $0<t<L$,
\begin{equation}
\label{gronwall-H4-part1}
\begin{split}
  Z(t)\le Z(0)
 & +C\left(\int_0^t \kappa(s)Z(s)\,ds+\int_{\Om_L\cap\{x_1<t\}}|\der_1^3F_1|^2\,d\rx\right)\\
 & +\left|\int_0^t\int_{\mcl{D}}(\der_1^3\mfrak{L}^P_{\rm h})(V)\der_1\xi\,d\rx'ds\right|
 \end{split}
\end{equation}
for $\displaystyle{\der_1^3\mfrak{L}^P_{\rm h}:=\sum_{(i,j)\neq (1,1)}\der_1^3(a_{ij}\der_{ij}+\bar a_1\der_1)}$.
\medskip

{\textbf{(2-2)}} For each $k=1,2,3$, let us set
\begin{equation*}
  T_k:= \int_0^t\int_{\mcl{D}} \left(\sum_{(i,j)\neq (1,1)}\der_1^k a_{ij}\der_1^{3-k} \der_{ij}V+\der_1^k\bar{a}_1\der_1^{3-k}\der_1 V\right)\der_1\xi\,d\rx'ds
\end{equation*}
so that we have
\begin{equation*}
\int_0^t\int_{\mcl{D}}(\der_1^3\mfrak{L}^P_{\rm h})(V)\der_1\xi\,d\rx'ds=T_1+T_2+T_3.
\end{equation*}
One can adjust the argument in Steps 2--4 in \S \ref{subsubsection-H3 estimate}, and  apply the generalized H\"{o}lder inequality, Sobolev inequality, Lemma \ref{lemma-properties of coeffs}(b) and the estimate \eqref{H4-estimate of W final} to show that
\begin{equation}
\label{app-h4-6}
  \begin{split}
  \sum_{k=1}^3|T_k|\le C  \int_0^t \kappa(s)\|D\xi(s,\cdot)\|_{L^2(\mcl{D})}\Bigl(
  \sum_{n=0}^2\|D_{\rx'}^{4-n}\der_1^n V(s,\cdot)\|_{L^2(\mcl{D})}+\|D^{n+1}V(s,\cdot)\|_{L^2(\mcl{D})}&\\
  +\|D\xi(s,\cdot)\|_{L^2(\mcl{D})}
  &\Bigr)\,ds.
  \end{split}
\end{equation}
Furthermore, by adjusting the argument given in \S \ref{subsubsection-H3 estimate}, we can prove the following lemma:
\begin{lemma}
\label{lemma-intermediate H4 est}
For the constant $\eps_4>0$ given in Lemma \ref{proposition-V-H3-final}, one can fix a constant $\eps_5\in(0,\eps_4]$ sufficiently small depending only on the data so that if the constant $\delta>0$ from \eqref{definition of iteration set} satisfies the inequality
\begin{equation*}
  0<\delta\le \eps_5,
\end{equation*}
then, for each $s\in(0, L)$, we have the estimate
\begin{equation*}
\begin{split}
&\sum_{n=0}^2\|D_{\rx'}^{4-n}\der_1^n V(s,\cdot)\|_{L^2(\mcl{D})}\\
&\le
 C\left(\|F_1(s,\cdot)\|_{H^2(\mcl{D})}
 +\sum_{m=0}^3\|D^{m}V(s,\cdot)\|_{L^2(\mcl{D})}+\|D\xi(s, \cdot)\|_{L^2(\mcl{D})}\right)
 \end{split}
\end{equation*}
for all $0<s<L$.
\end{lemma}
We combine this lemma with \eqref{app-h4-6} and \eqref{definition of F1} to obtain the estimate
\begin{equation}
\label{gronwall-H4-part2}
\begin{split}
&\sum_{k=1}^3|T_k|\\
&\le C\int_0^t \kappa(s) \left(Z(s)+\sum_{m=0}^3\|D^{m}V(s,\cdot)\|_{L^2(\mcl{D})}^2
   +\|f_1^P(s,\cdot)\|_{H^2(\mcl{D})}^2+\|\der_1 W(s,\cdot)\|_{H^2(\mcl{D})}^2\right)\,ds.
   \end{split}
\end{equation}

\medskip
{\textbf{(2-3)}} We combine \eqref{gronwall-H4-part1}--\eqref{gronwall-H4-part2} altogether, and apply Proposition \ref{proposition-V-H3-final} and \eqref{H4-estimate of W final} to show that
\begin{equation}
\label{gronwall for H4-final}
\begin{split}
 Z(t)\le Z(0)+& C\Bigl(\int_0^t \kappa(s)(Z(s)+\|f_1^P(s,\cdot)\|_{H^2(\mcl{D})}^2+\|\der_1 W(s,\cdot)\|_{H^2(\mcl{D})}^2)\,ds\\
 &\phantom{a}\quad+\left(\|f_1^P\|_{H^2(\Om_L)}+\|f_2^P\|_{H^2(\Om_L)}
 +\|g_1\|_{C^2(\ol{\Gam_0})}\right)^2
 +\int_{\Om_L\cap\{x_1<t\}}|\der_{1}^3F_1|^2 \,d\rx\Bigr).
 \end{split}
\end{equation}

Before proceeding further, it is useful to make the following observation:
\begin{itemize}
\item[(i)] 
By the trace inequality, one can fix a constant $\tilde{C}_L>0$ depending only on $L$ to satisfy the estimate
    \begin{equation*}
        \esssup_{0<s<L-d} \|f_1^P(s,\cdot)\|_{H^2(\mcl{D})} \le \tilde{C}_Ld^{-1/2} \|f_1^P\|_{H^3_{\ast}(\Omega_L)}
                         \end{equation*}
                         for any constant $d\in(0,L)$;
\item[(ii)] For a fixed constant $d\in(0, L)$, suppose that $0<t<L-d$. Then, there exists a constant $C>0$ depending only on the data to satisfy the estimate
    \begin{equation*}
      \int_{\Om_L\cap\{x_1<t\}} |\der_1^3 F_1|^2\,d\rx\le \frac Cd \left(\|f_1^P\|_{{H}^{3}_{*}(\Om_L)}+\|W\|_{H^4_*(\Om_L)}\right)^2;
    \end{equation*}

\item[(iii)] By the trace inequality, one can fix a constant ${C}_L>0$ depending only on $L$ to satisfy the estimate
    \begin{equation*}
 \esssup_{0<x_1<L-d} \|\der_1 W(x_1, \cdot)\|_{H^2(\mcl{D})} \le {C}_L  d^{-1/2}\|W\|_{H^4_*(\Om_L)}
    \end{equation*}
    for any constant $d\in(0,L)$.
\end{itemize}
By applying the Gronwall inequality with using the properties (i)-(iii) stated right in the above, we obtain the following lemma:
\begin{lemma}
\label{lemma-H4 estimate-pre}
For the constant $\eps_5>0$ fixed in Lemma \ref{lemma-intermediate H4 est}, if the constant $\delta>0$ from \eqref{definition of iteration set} satisfies the inequality
\begin{equation*}
  0<\delta\le \eps_5,
\end{equation*}
then there exists a constant $C>0$ depending only on the data so that it holds that
\begin{equation*}
\begin{split}
&\sup_{d>0} d^{1/2}\esssup_{0<x_1<L-d}\|D\der_1^3V(x_1, \cdot)\|_{L^2(\mcl{D})}\\
&\le C \left(\|f_1^P\|_{H^3_*(\Om_L)}+\|f_2^P\|_{H^2(\Om_L)}+\|g_1\|_{C^3(\ol{\Gam_0})}\right).
\end{split}
\end{equation*}
\end{lemma}

Finally, the proof of Lemma \ref{lemma-H4 estimate-final} can be completed by combining Lemma \ref{lemma-H4 estimate-pre} with the estimate \eqref{H4-estimate of W final} and Lemma \ref{lemma-intermediate H4 est}.

\end{proof}

\subsubsection{Proof of Proposition \ref{proposition-apriori estimate of smooth solution}} By fixing the constant $L^*$ as the one given from Lemma \ref{lemma-coercivity}, and the constant $\bar{\eps}$ as $\bar{\eps}=\eps_5$ for the constant $\eps_5$ from Lemma \ref{lemma-H4 estimate-final}, Proposition \ref{proposition-apriori estimate of smooth solution} directly follows from Lemma \ref{lemma-H4 estimate-final}.\qed

\subsection{Proof of Proposition \ref{lemma-wp of lbvp for potential}}\label{subsection:proof of main proposition}

Now we establish the well-posedness of the boundary value problem \eqref{simple-lbvp} associated with any $P\in \mcl{J}_{\delta}$ by using Proposition \ref{proposition-apriori estimate of smooth solution}. The main idea is to apply the method of Galerkin's approximations and a limiting argument with an aid of the following technical lemma:

\begin{lemma}[Partially smooth approximations of $P=(\tpsi, \tPsi)\in \mcl{J}_{\delta}$]
\label{lemma-main section-smooth approx}
For any given $P=(\tpsi, \tPsi)\in \mcl{J}_{\delta}$, one can take a sequence $\{P_m=(\tpsi_m, \tPsi_m)\}_{m\in \mathbb{N}}$ that satisfies the following properties:
\begin{itemize}
\item[(a)] There exists a constant $\mu_0>0$ depending only on $(\mcl{D}, L)$ so that every $P_m$ satisfies the following two estimates:
    \begin{align}
    \label{norm-bd-H4}
     &\|P_m\|_{H^{4}_{*}(\Om_L)}\le \mu_0\|P\|_{H^{4}_{*}(\Om_L)},\\
     \label{norm-bd-W4}
      &\|\tpsi_m\|_{\mcl{W}^{4,\infty}_{*, \mcl{D}}(0,L)}\le \mu_0\|\tpsi\|_{\mcl{W}^{4,\infty}_{*, \mcl{D}}(0,L)};
    \end{align}
\item[(b)] $\displaystyle{\lim_{m\to \infty} \|P_m-P\|_{H^3(\Om_L)}=0}$;
\item[(c)] $\displaystyle{\der_{{\bf n}_w}\tpsi_m=\der_{{\bf n}_w}\tPsi_m=0}$ on $\displaystyle{\Gamw}$;
\item[(d)] $\displaystyle{\der_{x_1}^{k-1}\tpsi_m=\der_{x_1}^k \tPsi_m=0}$ on $\displaystyle{\Gam_0^{\bar{\epsilon}/2}}$ for $k=1,3$;
\item[(e)] For each fixed $\rx'\in\mcl{D}$, $P_m(x_1, \rx')$ and $D_{\rx}P_m(x_1, \rx')$ are $C^{\infty}$ with respect to $x_1\in [0,L]$.
\end{itemize}
\end{lemma}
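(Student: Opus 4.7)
The plan is to approximate $P=(\tpsi,\tPsi)$ by mollifying only in the $x_1$-variable, combined with a partition of unity in $\rx'$ that preserves the Neumann condition on $\Gamw$ and with suitable reflections across $x_1=0$ that preserve the traces on $\Gam_0^{\bar{\epsilon}/2}$. The delicate point is that $P\notin H^4$ globally up to $\Gam_L$, so the mollification cannot simply extend $P$ past $x_1=L$. I resolve this by composing the standard $x_1$-mollification of a reflection of $P$ past $x_1=0$ with a smooth $x_1$-shift $T_m$ that is the identity on $[0,L/2]$ and equals $x_1-1/m$ on $[3L/4,L]$, so that the mollification never samples $\bar P$ beyond $\Gam_L$.

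First, fix a smooth partition of unity $\chi_1+\chi_2=1$ on $\ol{\mcl{D}}$ with $\chi_1\equiv 1$ on $\{\operatorname{dist}(\rx',\der\mcl{D})\le\bar{\epsilon}/2\}$ and $\operatorname{spt}\chi_1\subset\{\operatorname{dist}(\rx',\der\mcl{D})<3\bar{\epsilon}/4\}$, chosen so that $\chi_1\equiv 1$ in a relative neighborhood of $\der\mcl{D}$ in $\mcl{D}$ (hence $\der_{{\bf n}_w}\chi_1=0$ on $\Gamw$). Decompose $P=\chi_1 P+\chi_2 P=:P^A+P^B$. On $\operatorname{spt}\chi_1$ the compatibility conditions at $\Gam_0^{\bar{\epsilon}}$ from the definition \eqref{definition-potentials} of $\mcl{J}_\delta$ transfer to $P^A$, so extending $\tpsi^A$ by odd reflection and $\tPsi^A$ by even reflection across $x_1=0$ yields $\bar P^A\in H^4_*((-L,L)\times\mcl{D})$. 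The piece $P^B$ vanishes on $\Gam_0^{\bar{\epsilon}/2}$ by the cutoff, so any fourth-order Seeley-type reflection $\bar P^B$ across $x_1=0$, which samples $P^B$ only on $[0,c/m]$ for a fixed constant $c$, preserves the $H^4_*$-bound.

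Next, pick a smooth shift $\sigma_m\in C^\infty([0,L];[0,1/m])$ with $\sigma_m\equiv 0$ on $[0,L/2]$, $\sigma_m\equiv 1/m$ on $[3L/4,L]$, and $\|\sigma_m^{(k)}\|_{C^0}\le C_k/m$ for every $k$ (achievable by rescaling a fixed profile). Let $T_m(x_1):=x_1-\sigma_m(x_1)$ and let $Q_m:=\eta_{1/m}\ast_1\bar P$ be the standard $x_1$-mollification (with an even kernel $\eta_{1/m}$) of $\bar P:=\bar P^A+\bar P^B$, well-defined on $[-1/m,L-1/m]\times\mcl{D}$. Define
\begin{equation*}
P_m(x_1,\rx'):=Q_m(T_m(x_1),\rx'),\qquad (x_1,\rx')\in\Om_L.
\end{equation*}
This composition is $C^\infty$ in $x_1$, giving (e). Since $T_m\equiv\operatorname{id}$ on $[0,L/2]$, the standard symmetry argument (an even mollifier applied to an odd/even function is odd/even) yields $\tpsi_m(0,\cdot)=\der_{x_1}^2\tpsi_m(0,\cdot)=0$ and $\der_{x_1}\tPsi_m(0,\cdot)=\der_{x_1}^3\tPsi_m(0,\cdot)=0$ on $\Gam_0^{\bar{\epsilon}/2}$ (the $A$-part vanishes by parity, the $B$-part by the cutoff), which is (d). Property (c) holds because $x_1$-mollification commutes with $\der_{{\bf n}_w}$, the reflections preserve $\der_{{\bf n}_w}\bar P=0$ on $\Gamw$, and $T_m$ only acts on the $x_1$-variable.

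The main technical obstacle is the uniform bound (a). Since $T_m(L)+1/m=L$, the mollification $Q_m\circ T_m$ never samples $\bar P$ beyond $\Gam_L$. The chain-rule expansion of $D^4 P_m$ has leading term $(T_m')^4\der_1^4 Q_m(T_m)$ with $|T_m'-1|\le C/m$, and each remainder term is bounded pointwise by $C/m$ times a lower-order derivative of $Q_m$ that is uniformly controlled by $\|P\|_{H^3(\Om_L)}$. After the change of variable $y=T_m(x_1)$, Minkowski's integral inequality yields
\begin{equation*}
\|D^4 P_m\|_{L^2(\Om_{L-d})}\le C\|D^4\bar P\|_{L^2((-1/m,\,T_m(L-d)+1/m)\times\mcl{D})}+\tfrac{C}{m}\|P\|_{H^3(\Om_L)}.
\end{equation*}
The upper limit $T_m(L-d)+1/m$ equals $L-d$ for $d\le L/4$ (where $\sigma_m(L-d)=1/m$) and is $\le L-d/2$ for $d\ge 2/m$, so applying the weighted estimate $\|D^4 P\|_{L^2(\Om_{L-d'})}\le C(d')^{-1/2}\|P\|_{H^4_*}$ with $d'\sim d$ gives $d^{1/2}\|D^4 P_m\|_{L^2(\Om_{L-d})}\le\mu_0\|P\|_{H^4_*}$ uniformly in $m$. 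The identical argument with the essential supremum in $x_1$ replacing the $L^2$-integral in $x_1$ produces \eqref{norm-bd-W4}. Finally, (b) follows from $T_m\to\operatorname{id}$ uniformly in $C^0([0,L])$ together with the standard convergence $Q_m\to\bar P$ in $H^3$.
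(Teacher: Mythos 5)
Your proof is correct and follows essentially the same route as the paper's Appendix \ref{appendix-smooth approximation}: an odd (resp.\ even) reflection across $\Gam_0$ near the wall combined with a Seeley-type reflection in the interior, glued by a cutoff in $\rx'$ that is identically $1$ near $\der\mcl{D}$, followed by mollification in the $x_1$-variable only, and a small shift away from $\Gam_L$ whose effect is absorbed by the weight in $\|\cdot\|_{H^4_*}$. The only real difference is cosmetic: the paper interpolates between the unshifted and translated mollifications with a cutoff $\zeta(x_1)$ (so the error appears as commutator terms $\mcl{R}_\tau$ supported in $\{x_1<\tfrac23 L\}$), whereas you compose with a single smooth variable shift $T_m$ (so the error appears as Fa\`a di Bruno remainders of size $O(1/m)$ controlled by $\|P\|_{H^3}$); both yield the same sampling region $(-1/m,\,L-d)$ after the change of variables and hence the same uniform weighted bound.
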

One may be curious how Lemma \ref{lemma-main section-smooth approx} is used in this paper. The motivation and the usefulness of this lemma can be clearly understood in the proof of Proposition \ref{lemma-wp of lbvp for potential}.
We give a proof of Lemma \ref{lemma-main section-smooth approx} in Appendix \ref{appendix-smooth approximation}.
\medskip

For the small constant $\eps_0>0$ given in Lemma \ref{lemma-properties of coeffs}(d) and the constant $\mu_0>0$ from Lemma \ref{lemma-main section-smooth approx}(d), let us set
\begin{equation*}
  \delta_0:=\frac{\eps_0}{\mu_0}.
\end{equation*}
\begin{lemma}[Limiting argument]
\label{lemma-pre wp property}
Suppose that the inequality
\begin{equation*}
  \delta\le \delta_0
\end{equation*}
holds.
For a fixed $P\in \mcl{J}_{\delta}$, let $\{P_m\}$ be a sequence that satisfies all the properties stated in Lemma \ref{lemma-main section-smooth approx}.
Suppose that, for every $m$, the linear boundary value problem \eqref{simple-lbvp} associated with $P_m$ has a solution $Q_m=(V_m, W_m)$ that satisfies the estimate \eqref{VW-weight=est-smooth} with $P$ on the right-hand sides of the estimate being replaced by $P_m$ for some constant $C>0$. Then, the linear boundary value problem \eqref{simple-lbvp} associated with $P$ has a solution $Q=(V, W)$ that satisfies the estimates \eqref{VW-weight=est}.

\begin{proof}
{\textbf{1.}}
It easily follows from Lemma \ref{lemma-properties of coeffs}(d) and Lemma \ref{lemma-main section-smooth approx} that if the inequality
\begin{equation*}
  \delta\le \delta_0
\end{equation*}
holds, then
 we can fix a constant $C>0$ depending only on the data and $\mcl{D}$ so that, for every $P_m$, the following estimates hold:
\begin{equation*}
  \begin{split}
  &\|f_1^{P_m}\|_{H^3(\Om_L)}\le C\left(\|P\|_{*}^2
+\|E_{\rm en}\|_{C^3(\overline{\Gamma_0})}
+\|E_{\rm ex}-\bar{E}(L)\|_{C^3(\overline{\Gamma_L})}\right);\\
&\|f_2^{P_m}\|_{H^2(\Om_L)}
\le C\left(\|P\|_{*}^2+\|b-b_0\|_{C^2(\ol{\Om_L})}
+\|E_{\rm en}\|_{C^4(\overline{\Gamma_0})}
+\|E_{\rm ex}-\bar{E}(L)\|_{C^4(\overline{\Gamma_L})}\right).
  \end{split}
\end{equation*}
Since it is assumed that every solution $Q_m=(V_m, W_m)$ satisfies the estimate \eqref{VW-weight=est-smooth} with $P$ on the right-hand side of the estimate being replaced by $P_m$, the estimates of $(f_1^{P_m}, f_2^{P_m})$ stated in the right above directly yield the following result:
\begin{equation*}
  \|Q_m\|_*\le C^*\left(\|P\|_{*}^2+\sigma(b, u_{\rm en}, E_{\rm en}, E_{\rm ex})+\|g_1\|_{C^3(\ol{\Gam_0})}\right)
\end{equation*}
for the constant $\sigma(b, u_{\rm en}, E_{\rm en}, E_{\rm ex})$ defined by \eqref{definition of sigma} for some constant $C^*>0$ fixed depending only on the data and $\mcl{D}$. Therefore, the sequence of $\{V_m\}$ is bounded in $H^4_*(\Om_L)\cap \mcl{W}^{4,\infty}_{*, \mcl{D}}(0, L)$, and the sequence of $\{W_m\}$ is bounded in $H^4_*(\Om_L)$. Then it follows from Lemma \ref{lemma-weak star limit} that there exists a subsequence $\{Q_{m_k}=(V_{m_k}, W_{m_k})\}$ of $\{Q_m\}$, and an element $Q_{\infty}=(V_{\infty}, W_{\infty})\in [H^4_*(\Om_L)\cap \mcl{W}^{4,\infty}_{*, \mcl{D}}(0, L)]\times H^4_*(\Om_L)$ so that the following properties hold:
\begin{itemize}
\item[-] the sequence $\{Q_{m_k}\}$ converges to $Q_{\infty}$ in $C^1(\ol{\Om_L})\cap C^2(\ol{\Om_L}\setminus \Gam_L)$;
\item[-] the limit $Q_{\infty}$ satisfies the estimate
\begin{equation*}
\|Q_{\infty}\|_*\le C^*\left(\|P\|_{*}^2+\sigma(b, u_{\rm en}, E_{\rm en}, E_{\rm ex})+\|g_1\|_{C^3(\ol{\Gam_0})}\right).
\end{equation*}
\end{itemize}

\medskip

{\textbf{2.}} {\emph{Claim: The limit $Q_{\infty}=(V_{\infty}, W_{\infty})$ solves the linear boundary value problem \eqref{simple-lbvp} associated with the originally fixed $P\in \mcl{J}_{\delta}$.}}

For each $P_m$, let us set
\begin{equation*}
  \mathbb{A}^{P_m}:=[a_{ij}^{P_m}]_{i,j=1}^3
\end{equation*}
for $a_{ij}^{P_m}$ given by \eqref{definition of coeff}.
From Lemma \ref{lemma-main section-smooth approx} and Lemma \ref{lemma-properties of coeffs}(b) and (g), it follows that the sequences $\{\mathbb{A}^{P_m}\}$ and $\{(f_1^{P_m}, f_2^{P_m})\}$ converge to $\mathbb{A}^{P}$ and $(f_1^P, f_2^P)$ in $C^0(\ol{\Om_L})$, respectively, and this verifies the claim.
\end{proof}
\end{lemma}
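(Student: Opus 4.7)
The plan is to carry out a compactness-and-limiting argument: push the sequence $\{Q_m\}$ through uniform bounds, extract a weak-$*$ limit via Lemma \ref{lemma-weak star limit}, and pass to the limit in the quasilinear pair \eqref{simple-lbvp}.

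First I would establish a uniform bound on $\|Q_m\|_*$. Lemma \ref{lemma-main section-smooth approx}(a) gives $\|P_m\|_{H^4_*(\Om_L)}+\|\tpsi_m\|_{\mcl{W}^{4,\infty}_{*,\mcl{D}}(0,L)}\le \mu_0\|P\|_{*}$, and the threshold $\delta_0=\eps_0/\mu_0$ guarantees every $P_m$ lies in $\mcl{J}_{\eps_0}$, so Lemma \ref{lemma-properties of coeffs}(d) controls $\|f_1^{P_m}\|_{H^3_*(\Om_L)}+\|f_1^{P_m}\|_{\mcl{W}^{3,\infty}_{*,\mcl{D}}(0,L)}$ and $\|f_2^{P_m}\|_{H^2(\Om_L)}$ uniformly in $m$ by a constant multiple of $\|P\|_*^2+\sigma(b,u_{\rm en},E_{\rm en},E_{\rm ex})$. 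Feeding these into the assumed estimate \eqref{VW-weight=est-smooth} for each $Q_m$ yields
\begin{equation*}
\|Q_m\|_*\le C^*\bigl(\|P\|_*^2+\sigma(b,u_{\rm en},E_{\rm en},E_{\rm ex})+\|g_1\|_{C^3(\ol{\Gam_0})}\bigr)
\end{equation*}
independently of $m$.

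Next, applying Lemma \ref{lemma-weak star limit} componentwise extracts a subsequence $\{Q_{m_k}\}$ converging to a limit $Q_\infty=(V_\infty,W_\infty)\in [H^4_*(\Om_L)\cap \mcl{W}^{4,\infty}_{*,\mcl{D}}(0,L)]\times H^4_*(\Om_L)$ strongly in $C^1(\ol{\Om_L})\cap C^2(\ol{\Om_L}\setminus \Gam_L)$, with $\|Q_\infty\|_*$ inheriting the same bound by weak and weak-$*$ lower semicontinuity; the boundary conditions in \eqref{simple-lbvp} pass through the $C^1$ convergence up to the boundary. To verify that $Q_\infty$ solves the interior equations associated with the originally fixed $P$, observe that Lemma \ref{lemma-main section-smooth approx}(b) gives $P_m\to P$ in $H^3(\Om_L)$, so Sobolev embedding in three dimensions forces $P_m\to P$ in $C^1(\ol{\Om_L})$; smoothness of the coefficient map $(z,{\bf q})\mapsto a_{ij}(z,{\bf q})$ from Definition \ref{definition:iteration}, with the supersonic condition \eqref{sign-aii} keeping us away from the sonic singularity, then yields uniform convergence of the coefficient matrices $[a_{ij}^{P_m}]\to[a_{ij}^P]$ and $(f_1^{P_m},f_2^{P_m})\to(f_1^P,f_2^P)$ on $\ol{\Om_L}$. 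Combined with the locally $C^2$ convergence of $Q_{m_k}$, this allows pointwise passage to the limit in both equations of \eqref{simple-lbvp} throughout $\Om_L\setminus\Gam_L$.

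The main obstacle is reconciling the weighted nature of the norm $\|\cdot\|_*$ with the fact that standard $H^4$ compactness is available only away from $\Gam_L$, where $D^4$ need not even be bounded in $L^2$ pointwise in $x_1$. Lemma \ref{lemma-weak star limit}, built on the weak-$*$ compactness of $L^\infty((0,L);H)$ and a diagonal argument in the distance $d$ to $\Gam_L$, is exactly what bridges this gap: it transfers both the weighted $H^4_*$ bound and the $\mcl{W}^{4,\infty}_{*,\mcl{D}}$ bound to $Q_\infty$ via lower semicontinuity, so the full estimate \eqref{VW-weight=est} survives the limit.
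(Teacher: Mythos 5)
Your proposal is correct and follows essentially the same route as the paper: uniform bounds on $\|Q_m\|_*$ from Lemma \ref{lemma-main section-smooth approx}(a) and Lemma \ref{lemma-properties of coeffs}(d) fed into \eqref{VW-weight=est-smooth}, extraction of a limit via Lemma \ref{lemma-weak star limit}, and passage to the limit in the equations using uniform convergence of the coefficients and non-homogeneous terms. The only cosmetic difference is that you derive the $C^0$ convergence of $[a_{ij}^{P_m}]$ and $(f_1^{P_m},f_2^{P_m})$ from $P_m\to P$ in $H^3$ plus Sobolev embedding and smoothness of the coefficient maps, whereas the paper cites Lemma \ref{lemma-properties of coeffs}(b) and (g) directly; these amount to the same thing.
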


Finally, we are ready to prove the well-posedness of the problem \eqref{simple-lbvp} associated with any $P\in \mcl{J}_{\delta}$ provided that $\delta$ is fixed appropriately.

\begin{proof}
[{\textbf{Proof of Proposition \ref{lemma-wp of lbvp for potential}}}]
{\textbf{1.}}\emph{(The existence of a solution)} {\textbf{(1-1)}} For the constant $\delta_0>0$ from Lemma \ref{lemma-pre wp property}, suppose that the inequality
\begin{equation*}
  0<\delta\le \delta_0
\end{equation*}
holds, and fix $P=(\tpsi, \tPsi)\in \mcl{J}_{\delta}$. And, take a sequence $\{P_m=(\tpsi_m, \tPsi_m)\}_{m\in \mathbb{N}}$ of {\emph{partially smooth approximations}} of $P$ that satisfy all the properties (a)--(e) stated in Lemma \ref{lemma-main section-smooth approx}.

Next, we fix $m\in \mathbb{N}$, and consider the linear boundary value problem:
\begin{equation}
\label{lbvp-Pm}
\left\{\begin{split}
   \mcl{L}^{P_m}(V,W)=f_1^{P_m},\quad
   \Delta W-\bar h_1W-\bar h_2\der_1V=f_2^{P_m}\quad&\tx{in\quad$\Om_L$},\\
   V=0,\quad\partial_{x_1}V=g_1,\quad
	\partial_{x_1}{W}=0\quad&\mbox{on}\quad\Gamma_0,\\
\partial_{{\bf n}_w}V=0,\quad\partial_{{\bf n}_w}W=0\quad&\mbox{on}\quad\Gamma_{{\rm w}},\\
\der_{x_1}{W}=0\quad&\mbox{on}\quad\Gamma_L.
  \end{split}\right.
\end{equation}

{\emph{Claim: For the constant $L^*$ from Lemma \ref{lemma-coercivity}, suppose that the inequality
\begin{equation*}
  0<L\le L^*
\end{equation*}
holds. Then, one can take a small constant $\delta_1\in(0, \delta_0]$ depending only on the data and $L$ so that if the inequality
\begin{equation*}
  0<\delta\le \delta_1
\end{equation*}
holds, then the linear boundary value problem \eqref{lbvp-Pm} has a solution $(V, W)$ that satisfies the estimate
\begin{equation}
          \label{VW-weight=est-smooth-Pm}
          \begin{split}
  &\|(V, W)\|_{*}\le C\left(\|f_1^{P_m}\|_{H^3_*(\Om_L)}+\|f_2^{P_m}\|_{H^2(\Om_L)}
  +\|g_1\|_{C^3(\ol{\Gam_0})}\right)
  \end{split}
         \end{equation}
for a constant $C>0$ fixed depending only on the data.          }}

Once this claim is verified, the existence of a solution to the linear boundary value problem \eqref{simple-lbvp} associated with $P\in \mcl{J}_{\delta}$ directly follows from Lemma \ref{lemma-pre wp property}.
\smallskip

{\textbf{(1-2)}} We shall verify the claim stated in the above by applying the method of Galerkin's approximations. Since this is a widely used approach, and there are many references(e.g. see \cite{bae2021structural} and \cite{bae2021three}) from which all the details can be derived, we shall only list out crucial ingredients required to adjust the arguments given in \cite{bae2021structural, bae2021three} to our case.
\begin{itemize}
\item[($G_1$)] Let us consider a two dimensional eigenvalue problem in $\mcl{D}$:
    \begin{equation*}
      -\Delta_{\rx'}\eta=\om\eta\quad\tx{in $\mcl{D}$},\quad \der_{{\bf n}_w}\eta=0\quad\tx{on $\der\mcl{D}$}.
    \end{equation*}
    Let $\{\om_k:k=0,1,2,\cdots\}$ be the set of all the eigenvalues with satisfying the inequality $\om_k\le \om_{k+1}$ for $k=0,1,2,\cdots$. Then, we have $\om_0=0$ and $\om_k>0$ for any $k>0$;

\item[($G_2$)] For each $k\in\mathbb{Z}_+$, let $\eta_k:\ol{\mcl{D}}\rightarrow \R$ be an eigenfunction corresponding to $\om_k$. We can take a sequence of eigenfunctions $\{\eta_k:k=0,1,2,\cdots\}$ so that it forms an orthonormal basis in $L^2(\mcl{D})$ with respect to the standard inner product $\langle \cdot, \cdot \rangle$ defined by
    \begin{equation*}
      \langle \xi, \eta \rangle :=\int_{\mcl{D}} \xi(\rx')\eta(\rx')\,d\rx'.
    \end{equation*}
Then the sequence $\{\frac{D_{\rx'}\eta_k}{\sqrt{\om_k}}:k=0,1,2,\cdots\}$ forms an orthonormal bases of $L^2(\mcl{D};\R^2)$ with respect to the inner product $\langle \cdot, \cdot \rangle_{\R^2}$ defined by
\begin{equation*}
      \langle {\bf u}, {\bf v} \rangle_{\R^2} :=\int {\bf u}(\rx')\cdot {\bf v}(\rx')\,d\rx'.
    \end{equation*}

\item[($G_3$)] Due to the compatibility condition for the function $u_{\rm en}$ given in \eqref{Thm-com-Irr}, the function $g_1$ given by \eqref{def-Wbd} satisfies the compatibility condition $\der_{{\bf n}_w} g_1=0$ on $\der \mcl{D}$.
\end{itemize}

{\textbf{Finite dimensional approximation of the boundary value problem \eqref{lbvp-Pm}:}} For a fixed $n\in \mathbb{N}$, let us find $(V_n, W_n)$ in the form of
\begin{equation*}
  (V_n, W_n)(x_1, \rx')=(\sum_{l=0}^n\theta_l(x_1)\eta_l(\rx'),
  \sum_{l=0}^n\Theta_l(x_1)\eta_l(\rx') )
\end{equation*}
to solve the following boundary value problem for $\{(\theta_l ,\Theta_l)(x_1):l=0, \cdots, n\}$:
\begin{equation}\label{Vm-eq}
\begin{split}
&\left\{\begin{split}
 	&\langle\mcl{L}^{P_m}(V_n,W_n)(x_1, \cdot),\eta_k\rangle=\langle f_1^{P_m}(x_1, \cdot),\eta_k\rangle\\
   	&\langle(\Delta W_n-\bar h_1W_n-\bar h_2\der_1V_n)(x_1, \cdot),\eta_k\rangle=\langle f_2^{P_m}(x_1, \cdot),\eta_k\rangle
	\end{split}\right.\quad\mbox{for $0<x_1<L$},\\
&\left\{\begin{split}
	&\langle V_n, \eta_k \rangle=0,\quad\langle\partial_{x_1}V_n, \eta_k \rangle=\langle g_1,\eta_k\rangle\\
&\langle\partial_{x_1}W_n,\eta_k \rangle =0 \end{split}\right. \quad\mbox{at $x_1=0$},\\
&\left\langle\der_{x_1}W_n, \eta_k \rangle=0 \right. \quad\mbox{at $x_1=L$}
	\end{split}
\end{equation}
for all $k=0,1,\cdots,n$.
\smallskip

Here is an easy but significant observation.
By a direct computation with using \eqref{definition of L operator}, we can check that
\begin{equation*}
\begin{split}
  &\langle\mcl{L}^{P_m}(V_n,W_n)(x_1, \cdot),\eta_k\rangle\\
  &=\theta_k''(x_1)+\bar a_1(x_1) \theta_k'+\bar b_1(x_1)\Theta_k'(x_1)+\bar b_2\Theta_k(x_1)+\sum_{l=0}^n \alp_l(x_1) \theta_l'(x_1)+\beta_l(x_1)\theta_l'(x_1)
  \end{split}
\end{equation*}
for
\begin{equation*}
  \begin{split}
  \alp_l(x_1):=2\sum_{j=2}^3\langle a_{1j}^{P_m}(x_1, \cdot)\der_j\eta_l, \eta_k\rangle,\quad
  \beta_l(x_1):=\sum_{i,j=2}^3 \langle a_{ij}^{P_m}(x_1,\cdot)\der_{ij}\eta_l, \eta_k\rangle.
  \end{split}
\end{equation*}
By the definition of $a_{ij}^P$ given by \eqref{definition of coeff}, it follows from the statement (e) of Lemma \ref{lemma-main section-smooth approx} that $\alp_l$ and $\beta_l$ are smooth with respect to $x_1\in[0,L]$. {\emph{(This is why we only seek for a partially smooth approximation of $P$ that is smooth with respect to $x_1$. And, this simple but very useful idea enables us to establish the existence of three dimensional supersonic solutions of potential flow models in a cylinder with an arbitrary cross-section.)}} Therefore, \eqref{Vm-eq} becomes a boundary value problem of a second order linear ODE system with smooth coefficients.
By applying Lemma \ref{proposition-H2-final} with some adjustments in the framework of Galerkin's approximations, we can show that if the inequality
\begin{equation*}
\delta\le \frac{\eps_3}{\mu_0}
\end{equation*}
holds for the constants $\mu_0$ and $\eps_3$ from Lemma \ref{lemma-main section-smooth approx} and Lemma \ref{proposition-H2-final}, respectively, then it follows from the Fredholm alternative theorem that the problem \eqref{Vm-eq} has a unique solution $\{(\theta_j ,\Theta_j)(x_1):j=0, \cdots, n\}$, that is smooth for $0\le x_1\le L$. Furthermore, lengthy but straight forward computations with using the properties ($G_1$)--($G_3$) stated in the above show that Proposition \ref{proposition-apriori estimate of smooth solution} applies to the solution $(V_n, W_n)$ of the problem \eqref{Vm-eq}. More precisely, we conclude that if the inequality
\begin{equation*}
  \delta\le \frac{\bar{\eps}}{\mu_0}
\end{equation*}
holds for the constant $\bar{\eps}$ from Proposition \ref{proposition-apriori estimate of smooth solution}, then there exists a constant $C>0$ depending only on the data and $L$ so that every $(V_n, W_n)$ satisfies the estimate:
\begin{equation*}
          \begin{split}
  &\|(V_n, W_n)\|_{*}\le C\left(\|f_1^{P_m}\|_{H^3_*(\Om_L)}+\|f_2^{P_m}\|_{H^2(\Om_L)}
  +\|g_1\|_{C^3(\ol{\Gam_0})}\right).
  \end{split}
         \end{equation*}
For further details on how to obtain the above estimate from Proposition \ref{proposition-apriori estimate of smooth solution}, one can refer to \cite[Appendix A]{bae2021structural}.

Then the claim is verified by taking a subsequence of $\{(V_n, W_n)\}$ that converges in $C^1(\ol{\Om_L})\cap C^2(\ol{\Om_L}\setminus \Gam_L)$ along with choosing the constant $\delta_1$ as
\begin{equation*}
  \delta_1=\min\left\{\delta_0, \frac{\bar{\eps}}{\mu_0}\right\}.
\end{equation*}

\medskip

{\textbf{2.}}(The uniqueness) Let $Q^{(1)}=(V^{(1)}, W^{(1)})$ and $Q^{(2)}=(V^{(2)}, W^{(2)})$ be two solutions to the linear boundary value problem \eqref{simple-lbvp} associated with a fixed $P\in \mcl{J}_{\delta}$, and suppose that they satisfy the estimate \eqref{VW-weight=est}. Next, let us set $\displaystyle{(v,w):=(V^{(1)}-V^{(2)}, W^{(1)}-W^{(2)})}$. Then $(v,w)$ solves the problem:
\begin{equation*}
\left\{\begin{split}
   \mcl{L}^P(v,w)=0,\quad
   \Delta w-\bar h_1w-\bar h_2\der_1v=0\quad&\tx{in\quad$\Om_L$},\\
   v=0,\quad\partial_{x_1}v=0,\quad
	\partial_{x_1}{w}=0\quad&\mbox{on}\quad\Gamma_0,\\
\partial_{{\bf n}_w}v=0,\quad\partial_{{\bf n}_w}w=0\quad&\mbox{on}\quad\Gamma_{{\rm w}},\\
\der_{x_1}{w}=0\quad&\mbox{on}\quad\Gamma_L.
  \end{split}\right.
\end{equation*}
By repeating the proof of Proposition \ref{proposition-H1-new}, we can show that $\|(v,w)\|_{H^1(\Om_L)}=0$, thus $v=w\equiv 0$ in $\Om_L$.
\medskip

{\textbf{3.}}(The compatibility conditions \eqref{Comp-VW})
Due to the boundary conditions stated in \eqref{simple-lbvp}, $V$ and $W$ clearly satisfy the compatibility conditions $V=0$ and $\der_{x_1} W=0$ on $\Gamma_0^{\bar{\epsilon}}$.
\medskip

By solving the equation $\mcl{L}^P(V, W)=f_1^P$ for $V$, we have
\begin{equation*}
\partial_{x_1}^2V=-\sum_{k=2}^32a_{1k}^P\partial_{1k}V-\sum_{i,j=2}^3a_{ij}^P\partial_{ij}V-\bar{a}_1\partial_1 V-\bar{b}_1\partial_1W-\bar{b}_2W+f_1^P \quad\tx{in $\Om_L$}.
\end{equation*}
Due to the $H^4$-estimate \eqref{VW-weight=est} of $(V, W)$ established away from $\Gam_L$, the representation given in the right above is valid pointwisely up to the entrance boundary $\Gam_0$. Then, by substituting the boundary conditions $V=\der_{x_1}W=0$ and $\der_{x_1} V=g_1$ on $\Gam_0$ into the above expression, we get
\begin{equation*}
  \partial_{x_1}^2V=-\sum_{k=2}^32a_{1k}^P\partial_{k}g_1
  -\bar{a}_1\partial_1 g_1-\bar{b}_2W+f_1^P\quad\tx{on $\Gam_0$}.
\end{equation*}
The assumption \eqref{special condition} combined with Definition \ref{definition:iteration}(ii)  implies that $\bar a_1=\bar b_2=0$ on $\Gam_0$. And, due to Lemma \ref{lemma-properties of coeffs}(f), we have $f_1^P=0$ on  $\Gamma_0^{\bar{\epsilon}}$. Finally, Definition \ref{definition:iteration}(i) combined with the compatibility condition $\til{\psi}=0$ on $\Gamma_0^{\bar{\epsilon}}$ prescribed for $P=(\tpsi, \tPsi)\in \mcl{J}_{\delta}$ implies that $a_{1k}^P=0$ on $\Gamma_0^{\bar{\epsilon}}$ for $k=2$ and $3$. Therefore we conclude that $V$ satisfies the compatibility condition $\der_{x_1}^2V=0$ on $\Gamma_0^{\bar{\epsilon}}$.
\medskip

Let us rewrite the second order equation for $W$ given in \eqref{simple-lbvp} as
\begin{equation}
\label{new repr for W}
  \der_{x_1}^2 W=-\Delta_{\rx'}W+\bar h_1 W+\bar h_2 \der_1 V+f_2^{P}\quad\tx{in $\Om_L$}.
\end{equation}
Note that the function $F^*_2:=\bar h_1 W+\bar h_2 \der_1 V+f_2^{P}$ is in $H^3(\Om_{4L/5})$. Due to the compatibility condition \eqref{comp-cond-F2} for $F_2$ on $\Gam_0^{\bar{\epsilon}}$, a local even extension of $F_2$ about $\Gam_0$ is in $H^3$, therefore we can easily check that $W$ is in $H^5(\Om_{2L/3})$ by applying a standard elliptic theory. Therefore, the representation \eqref{new repr for W} is valid pointwisely in $\ol{\Om_{L/2}}$. Next, we differentiate \eqref{new repr for W} with respect to $x_1$ to get
\begin{equation*}
  \der_{x_1}^3 W=-\Delta_{\rx'}\der_{x_1}W+\der_{x_1}F_2^*\quad\tx{on $\Gam_0$}.
\end{equation*}
Then it directly follows from the boundary condition $\der_{x_1}W=0$ on $\Gam_0$, Lemma \ref{lemma-properties of coeffs}(f) and  the compatibility condition \eqref{comp-cond-F2} that $W$ satisfies the compatibility condition $\der_{x_1}^3W=0$ on $\Gam_0^{\bar{\epsilon}}$.

\end{proof}

\subsection{Proof of Theorem \ref{Irr-MainThm}}
\label{subsection-proof of theorem-potential}
Now, we are ready to prove Theorem \ref{Irr-MainThm}.
\medskip


{\textbf{1.}} For $\eps_1$ from Proposition \ref{lemma-wp of lbvp for potential}, suppose that the constant $\delta$ in the definition \eqref{definition of iteration set} satisfies the condition
\begin{equation*}
  \delta\le \eps_1.
\end{equation*}
For the constant $L^*$ from Proposition \ref{lemma-wp of lbvp for potential}, let us fix $L\in(0,L^*]$.
By Proposition \ref{lemma-wp of lbvp for potential}, for each $P=(\tilde{\psi},\tilde{\Psi})\in \mcl{J}_{\delta}$, the boundary value problem \eqref{simple-lbvp} has a unique solution $(V,W)\in [C^1(\ol{\Om_L})\cap C^2(\Om_L)]^2$ that satisfies the estimate \eqref{VW-weight=est}. For the function $w_{\rm bd}$ given by \eqref{def-Wbd}, let us set
\begin{equation*}
  (v,w):=(V,W)+(0, w_{\rm bd})\quad\tx{in $\Om_L$}.
\end{equation*}
It is clear that $(v,w)$ solves the boundary value problem \eqref{lbvp associated with P}, and satisfies the estimate
\begin{equation}
\label{estimate-vw-Irr}
\|(v,w)\|_{*}
\le C_{\star}\left(\|P\|_*^2+\sigma(b,u_{\rm en},E_{\rm en},E_{\rm ex})\right)
\end{equation}
for a constant $C_{\star}>0$ depending only on the data, $L$ and $\bar{\epsilon}$.
In addition, it directly follows from \eqref{Comp-VW} and the compatibility condition \eqref{Thm-com-Irr} stated in Problem \ref{Irr-EP-Prob} that
\begin{equation*}
  \der_{x_1}^{k-1}v=0\quad\tx{and}\quad
  \der_{x_1}^k w=0\quad\tx{on $\Gamma_0^{\bar{\epsilon}}$}\quad\tx{for $k=1,3$.}
\end{equation*}
Let us define a map $\mathcal{I}$ by
\begin{equation}
\label{definition:iteration mapping}
\mathcal{I}(P):=(v,w).
\end{equation}
It is clear from \eqref{estimate-vw-Irr} that $\mcl{I}$ maps the iteration set $\mcl{J}_{\delta}$ into itself if the inequality
\begin{equation}
\label{condition-iteration 1-Irr}
  C_{\star}\left(\delta^2+\sigma(b,u_{\rm en},E_{\rm en},E_{\rm ex})\right)\le \delta
\end{equation}
holds.
\medskip

{\textbf{2.}} (The existence of a solution)
{\emph{Claim 1: One can fix a constant $\sigma_1>0$ sufficiently small depending only on the data and $L$ so that if the inequality
\begin{equation*}
 \sigma(b,u_{\rm en},E_{\rm en},E_{\rm ex})\le \sigma_1
\end{equation*}
holds, then Problem \ref{Irr-EP-Prob} has at least one solution $(\vphi, \Phi)$.
}}
\smallskip

We shall verify {\emph{claim 1}} by applying the Schauder fixed point theorem.
\medskip

Let us define a linear space $\mcl{B}$ by
\begin{equation}
\label{definition of B set}
  \mcl{B}:=(H^2(\Om_L)\cap C^{1,1/4}(\ol{\Om_L}))\times (H^2(\Om_L)\cap C^{1,1/4}(\ol{\Om_L})).
\end{equation}
Clearly, $\mcl{B}$ is a Banach space, and the set $\mcl{J}_{\delta}$ is compact in $\mcl{B}$ (Lemma \ref{lemma-general property of iter set-Irr}(b)).

Suppose that a sequence $\{P_k=(\tpsi_k ,\tPsi_k)\}_{k\in \mathbb{N}}$ in $ \mcl{J}_{\delta}$ converges to $P_{\infty}=(\tpsi_{\infty} ,\tPsi_{\infty})$ in the Banach space $\mcl{B}$.  For each $k\in\mathbb{N}\cup \{\infty\}$, let us set $(v_k, w_k):=\mcl{I}(P_k)$. Assuming that the condition \eqref{condition-iteration 1-Irr} holds, we have $(v_k, w_k)\in\mcl{J}_{\delta}$ for all $k\in \mathbb{N}\cup \{\infty\}$. Then it follows from Lemmas \ref{lemma-general property of iter set-Irr} and \ref{lemma-weak star limit} that the sequence $\{(v_k, w_k)\}_{k\in \mathbb{N}}$ has a subsequence $\{(v_{k_j}, w_{k_j})\}_{j\in \mathbb{N}}$ that converges to some $(v_{\infty}, w_{\infty})\in \mcl{J}_{\delta}$ in the Banach space $\mcl{B}$.
Furthermore, it is clear that $(v_{\infty}, w_{\infty})$ solves the boundary value problem \eqref{lbvp associated with P} associated with $P_{\infty}$, that is, we have $\mcl{I}(P_{\infty})=(v_{\infty}, w_{\infty})$. This shows that the iteration map $\mcl{I}:\mcl{J}_{\delta}\rightarrow \mcl{J}_{\delta}$ is continuous in $\mcl{B}$  due to the uniqueness of a solution to the linear boundary value problem \eqref{lbvp associated with P}. Therefore we can apply the Schauder fixed point theorem to get a fixed point $P_*=(\psi_*, \Psi_*)$ in $\mcl{J}_{\delta}$ provided that $\delta$ and $\sigma(b,u_{\rm en},E_{\rm en},E_{\rm ex})$ are fixed to  satisfy the condition \eqref{condition-iteration 1-Irr}. This proves the existence of a solution to the nonlinear boundary value problem \eqref{system for potential perturbation}--\eqref{bcs for potential perturbation}, thus proves the existence of a solution to Problem \ref{Irr-EP-Prob}.
\medskip

{\textbf{3.}}(Verification of the estimate \eqref{up-est}) For a fixed point $P_*=(\psi_*, \Psi_*)\in \mcl{J}_{\delta}$ of the iteration mapping $\mcl{I}$, we use the estimate \eqref{estimate-vw-Irr} to get
\begin{equation}
\label{F-est-phi}
  \|P_*\|_{*}\le C_{\star}(\delta \|P_*\|_{*}+\sigma(b,u_{\rm en},E_{\rm en},E_{\rm ex})).
\end{equation}
So if $\delta$ satisfies the condition
\begin{equation}
\label{condition-iteration 2-Irr}
C_{\star}\delta\le \frac 12,
\end{equation}
 then we obtain the estimate
\begin{equation*}
  \|P_*\|_{*}\le 2C_{\star} \sigma(b,u_{\rm en},E_{\rm en},E_{\rm ex}).
\end{equation*}
This verifies the estimate \eqref{up-est}.
\medskip

{\textbf{4.}}(The uniqueness)
Suppose that $(\varphi^{(1)},\Phi^{(1)})$ and $(\varphi^{(2)},\Phi^{(2)})$ are two solutions of Problem \ref{Irr-EP-Prob}. And, suppose that they satisfy the estimate \eqref{up-est}. Let us define
\begin{equation*}
(\psi^{(k)}, \Psi^{(k)}):=(\varphi^{(k)}-\varphi_0,\Phi^{(k)}-\Phi_0)  \quad\tx{for $k=1,2$}.
\end{equation*}
Finally, let us set $P_k:=(\psi^{(k)}, \Psi^{(k)})$ for each $k=1,2$, and
\begin{equation*}
  (\xi, \eta):=P_1-P_2\quad\tx{in $\Om_L$}.
\end{equation*}

For each $P_k$, let $(a_{ij}^{P_k}, \til f_1^{P_k}, \til f_2^{P_k})$ be given by \eqref{definition of coeff}. By Lemma \ref{lemma-properties of coeffs}, $(a_{ij}^{P_k}, \til f_1^{P_k}, \til f_2^{P_k})$ are well defined as long as the condition
\begin{equation}
\label{condition-contraction1}
  \|P_k\|_{*}\le \eps_0
\end{equation}
holds for $k=1$ and $2$, where $\eps_0$ is the small constant given from Lemma \ref{lemma-properties of coeffs}. Therefore, if the constant $\sigma(b,u_{\rm en},E_{\rm en},E_{\rm ex})$ satisfies the condition
\begin{equation}
\label{condition-sigma 1-Irr}
  C \sigma(b,u_{\rm en},E_{\rm en},E_{\rm ex})\le \eps_0
\end{equation}
for the constant $C>0$ from the estimate \eqref{up-est}, then we can easily derive that
\begin{equation*}
\left\{\begin{split}
   \mcl{L}^{P_1}(\xi,\eta)=F_1,\quad
   \Delta \eta-\bar h_1\eta-\bar h_2\der_1\xi=F_2\quad&\tx{in\quad$\Om_L$},\\
   \xi=0,\quad\partial_{x_1}\xi=0,\quad
	\partial_{x_1}{\eta}=0\quad&\mbox{on}\quad\Gamma_0,\\
\partial_{{\bf n}_w}\xi=0,\quad\partial_{{\bf n}_w}\eta=0\quad&\mbox{on}\quad\Gamma_{{\rm w}},\\
\der_{x_1}{\eta}=0\quad&\mbox{on}\quad\Gamma_L
  \end{split}\right.
\end{equation*}
for $F_1$ and $F_2$ given by
\begin{equation*}
  \begin{split}
  F_1&:=\til f_1^{P_1}-\til f_1^{P_2}+\sum_{i,j=1}^3 (a_{ij}^{P_1}-a_{ij}^{P_2})\der_{ij}\psi^{(2)},\\
  F_2&:=\til f_2^{P_1}-\til f_2^{P_2}.
  \end{split}
\end{equation*}
For the constant $\displaystyle{\hat{\eps}_1}$ from Proposition \ref{proposition-H1-new}, if it holds that
\begin{equation}
\label{condition-contraction2}
  \|P_1\|_*\le \hat{\eps}_1,
\end{equation}
then we can repeat the proof of Proposition \ref{proposition-H1-new}, and apply Lemma \ref{lemma-properties of coeffs}(h) to get the estimate
\begin{equation}
\label{estimate-contraction1}
\begin{split}
  &\|(\xi, \eta)\|_{H^1(\Om_L)}+\|\xi\|_{\mcl{W}^{1,\infty}_{\mcl{D}}(0,L)}\\
&\le C\left(\sigma(b,u_{\rm en},E_{\rm en},E_{\rm ex})\|(\xi, \eta)\|_{H^1(\Om_L)}+\|\sum_{i,j=1}^3 (a_{ij}^{P_1}-a_{ij}^{P_2})\der_{ij}\psi^{(2)}\|_{L^2(\Om_L)}\right).
\end{split}
\end{equation}
Similarly to the estimate \eqref{estimate-coeff}, we can directly check that, for any $s\in(0,L)$, the norm $\|D^2\psi^{(2)}\|_{C^0([0, s]\times \ol{\mcl{D}})}$ satisfies the estimate
\begin{equation*}
  \|D^2\psi^{(2)}\|_{C^0([0, s]\times \ol{\mcl{D}})}\le C\kappa(s)\|\psi^{(2)}\|_{H^4_*(\Om_L)}
\end{equation*}
for the function $\kappa:(0,L)\rightarrow \R$ given by \eqref{definition of kappa}. By combining this estimate with Lemma \ref{lemma-properties of coeffs}(g), we obtain that
\begin{equation*}
  \begin{split}
  \|\sum_{i,j=1}^3 (a_{ij}^{P_1}-a_{ij}^{P_2})\der_{ij}\psi^{(2)}\|_{L^2(\Om_L)}\le
  C\sigma(b,u_{\rm en},E_{\rm en},E_{\rm ex})\left(\|\xi\|_{\mcl{W}^{1,\infty}_{\mcl{D}}(0,L)}
  +\|\eta\|_{H^1(\Om_L)}\right).
  \end{split}
\end{equation*}
By substituting the above estimate into \eqref{estimate-contraction1}, we obtain the following inequality:
\begin{equation*}
\begin{split}
  &\|(\xi, \eta)\|_{H^1(\Om_L)}+\|\xi\|_{\mcl{W}^{1,\infty}_{\mcl{D}}(0,L)}\\
&\le C^{\natural}\sigma(b,u_{\rm en},E_{\rm en},E_{\rm ex})
\left(\|(\xi, \eta)\|_{H^1(\Om_L)}+\|\xi\|_{\mcl{W}^{1,\infty}_{\mcl{D}}(0,L)}\right)
\end{split}
\end{equation*}
for some constant $C^{\natural}>0$ depending only on the data, $L$ and $\bar{\epsilon}$. Therefore if the inequality
\begin{equation}
\label{condition-sigma 2-Irr}
  C^{\natural}\sigma(b,u_{\rm en},E_{\rm en},E_{\rm ex})<1
\end{equation}
holds, then we get $(\xi, \eta)\equiv (0,0)$ in $\Om_L$. Finally, the proof is completed by fixing a constant $\sigma_p>0$ sufficiently small depending only on the data, $L$ and $\bar{\epsilon}$ so that if the inequality $\sigma(b,u_{\rm en},E_{\rm en},E_{\rm ex})\le \sigma_p$ holds, then the conditions \eqref{condition-contraction1}--\eqref{condition-contraction2} and \eqref{condition-sigma 2-Irr} hold.
  \qed

\section{Nonzero vorticity flows (Proof of Theorem \ref{MainThm})}\label{Sec-HD}
Throughout Section \ref{Sec-HD}, we assume that $\Om_L$ is given by \eqref{def-Omega} with
\begin{equation*}
  \mcl{D}:=\{\rx'=(x_2, x_3)\in\R^2:|\rx'|<1\}.
\end{equation*}

\subsection{Reformulation of Problem \ref{EP-Prob}}
\label{subsection-reformulation via HD}
For each point $\rx\in \Om_L(=(0,L)\times \mcl{D})$, let us write as $\rx=(x_1,\rx')$ for $x_1\in (0,L)$ and $\rx'\in \mcl{D}$. Note that $\displaystyle{|\rx'|=r}$, $\frac{\rx'}{|\rx'|}={\bf e}_r$ and that $(\rx')^{\perp}=r{\bf e}_{\theta}$. We shall point out that these trivial equalities come in handy in various computations following in the below.

Suppose that $({\bf u}, \rho, p,\Phi)$ is an axisymmetric solution to Problem \ref{EP-Prob}. In the spirit of the Helmholtz decomposition, let us represent ${\bf u}$ as
\begin{equation*}
\label{HD for u}
{\bf u}=\nabla\varphi+\nabla\times {\bf V}\quad\mbox{in}\quad\Omega_L
\end{equation*}
for an axisymmetric function $\vphi$ and a vector field ${\bf V}$. And we further represent ${\bf V}$ as
\begin{equation}
\label{final choice for V}
{\bf V}:=h{\bf e}_r+\phi{\bf e}_{\theta}
\end{equation}
with two axisymmetric functions $(h,\phi)$. Finally, we define an axisymmetric function $\Lambda$ by
\begin{equation*}
  \Lambda:=r\der_{x_1}h.
\end{equation*}
Then the velocity field ${\bf u}$ can be represented as
\begin{equation}
\label{definition:vel-HD}
\begin{split}
  {\bf u}(\vphi, \phi{\bf e}_{\theta}, \Lambda)
  &=\nabla\vphi+\nabla\times (\phi {\bf e}_{\theta})+ \frac{\Lambda}{ r}{\bf e}_{\theta}\\
  &=\nabla\vphi+\nabla\times (\phi {\bf e}_{\theta})+\frac{\Lambda}{r^2}(\rx')^{\perp},
  \end{split}
\end{equation}
where $(\rx')^{\perp}$ represents the rotation of the vector $\rx'$ by the angle $\frac{\pi}{2}$ counterclockwise on the plane $\mcl{D}\subset \R^2$.
\medskip

Now we rewrite Problem \ref{EP-Prob} in terms of $(\vphi, \Phi, \phi, S, \Lambda)$.
\begin{problem}\label{EP-Prob-HD}
Fix an axisymmetric function $b\in C^2(\overline{\Omega_L})$. And, fix axisymmetric functions $u_{\rm en}\in C^3(\overline{\Gamma_0})$, $v_{\rm en}, w_{\rm en}, S_{\rm en}, E_{\rm en}\in C^4(\overline{\Gamma_0})$, and a function $E_{\rm ex}\in  C^4(\overline{\Gamma_L})$. Suppose that these functions satisfy all the compatibility conditions stated in Problem \ref{EP-Prob}.
And, solve the following nonlinear boundary value problem for $(\vphi, \Phi, \phi, S, \Lambda)$:
\begin{align}
\label{group1}
&\begin{cases}
{\rm div}\, \rho{\bf u} =0\\
\Delta \Phi=\rho-b\\
-\Delta(\phi{\bf e}_{\theta})=\frac{1}{{\bf u}\cdot {\bf e}_{x_1}}\left(\frac{\rho^{\gam-1}}{\gam-1}\der_rS+\frac{\Lambda}{r^2}\der_r\Lambda\right){\bf e}_{\theta}
\end{cases}\quad&\tx{in $\Om_L$},\\
\label{group2}
&\begin{cases}
{\bf m}\cdot \nabla S=0\\
{\bf m}\cdot \nabla \Lambda=0
\end{cases}\quad&\tx{in $\Om_L$},
\end{align}
\begin{equation}\label{w-ex-bd}
\left\{\begin{split}
	\varphi(0,r)=-\int_r^1 v_{\rm en}(t)dt,\,\,\partial_{x_1}\varphi=u_{\rm en}-[\nabla\times(\phi{\bf e}_{\theta})]\cdot{\bf e}_{x_1}\,\,&\mbox{on}\,\,\Gamma_0,\\
		\partial_{x_1}\Phi=E_{\rm en},\,\,\partial_{x_1}(\phi{\bf e}_{\theta})={\bf 0}\,\,&\mbox{on}\,\,\Gamma_0,\\
\partial_{r}\varphi=0,\,\,\partial_{r}\Phi=0,\,\, \phi{\bf e}_{\theta}={\bf 0}\,\,&\mbox{on}\,\,\Gamma_{{\rm w}},\\
\der_{x_1}\Phi=E_{\rm{ex}},\,\, \partial_{x_1}(\phi{\bf e}_{\theta})={\bf 0}\,\,&\mbox{on}\,\,\Gamma_L,\\
(S,\Lambda)=(S_{\rm en},|\rx'|w_{\rm en})\,\,&\mbox{on}\,\,\Gamma_0
\end{split}\right.
\end{equation}
for the density function $\rho$ and the pseudo-momentum density field ${\bf m}$ defined as follows:
\begin{equation}
\label{definition:density-HD}
  \begin{split}
  \varrho(\vphi, \Phi, \phi{\bf e}_{\theta}, \Lambda):=&
\left(\Phi-\frac 12\lvert\nabla\vphi+\nabla\times (\phi {\bf e}_{\theta})+\frac{\Lambda}{r^2}(\rx')^{\perp}\rvert^2\right)^{\frac{1}{\gam-1}},\\
  \rho(\vphi, \Phi, \phi{\bf e}_{\theta}, S, \Lambda):=&\left(\frac{\gam-1}{\gam S}\right)^{\frac{1}{\gam-1}}\varrho(\vphi, \Phi,  \phi{\bf e}, \Lambda),\\
  {\bf m}(\vphi, \Phi, \phi{\bf e}_{\theta}, \Lambda):=&\varrho(\vphi, \Phi, \phi{\bf e}_{\theta}, \Lambda){\bf u}(\vphi, \phi{\bf e}_{\theta}, \Lambda).
  \end{split}
\end{equation}

\end{problem}
One can refer to \cite{bae20183} for a detailed derivation of Problem \ref{EP-Prob-HD}.
Before we state the main proposition, which is essential to prove Theorem \ref{MainThm}, let us define norms that we shall use hereafter.
\begin{definition}
For each $k\in \mathbb{N}$, let a norm $\|\cdot\|_{\mcl{M}^k_{*}((0,L)\times \mcl{D})}$ be defined by
\begin{equation*}
  \|\cdot\|_{\mcl{M}^k_{*}((0,L)\times \mcl{D})}:=\|\cdot\|_{H^{k}_*(\Om_L)}+\|\cdot\|_{\mcl{W}^{k,\infty}_{*, \mcl{D}}(0,L)}.
\end{equation*}
More precisely, for a function $f:\Om_L\rightarrow \R$, we define
\begin{equation*}
  \|f\|_{\mcl{M}^k_{*}((0,L)\times \mcl{D})}:=\|f\|_{H^{k}_*(\Om_L)}+\|f\|_{\mcl{W}^{k,\infty}_{*, \mcl{D}}(0,L)}.
\end{equation*}
For a vector field ${\bf v}=(v_1, \cdots, v_n):\Om_L\rightarrow \R^n$, we define
\begin{equation*}
 \|{\bf v}\|_{\mcl{M}^k_{*}((0,L)\times \mcl{D})}:= \sum_{j=1}^n \|v_j\|_{\mcl{M}^k_{*}((0,L)\times \mcl{D})}.
\end{equation*}
Finally, we define
\begin{equation*}
\begin{split}
  &\mcl{M}^k_*((0,L)\times \mcl{D}):=H^k_*(\Om_L)\cap \mcl{W}^{k,\infty}_{*, \mcl{D}}(0,L),\\
  &H^k_*(\Om_L;\R^n):=\left\{{\bf v}=(v_1, \cdots, v_n): \Om_L\rightarrow \R^n\; \left\vert\; \sum_{j=1}^n\|v_j\|_{H^k_*(\Om_L)}<\infty\right.\right\},\\
  &\mcl{M}^k_*((0,L)\times \mcl{D};\R^n):=\left\{{\bf v}=(v_1, \cdots, v_n): \Om_L\rightarrow \R^n\;\left\vert\; \sum_{j=1}^n\|v_j\|_{\mcl{M}^k_*((0,L)\times \mcl{D}}<\infty\right.\right\}.
  \end{split}
\end{equation*}
Clearly, $\mcl{M}^k_*((0,L)\times \mcl{D})$ is a normed vector space thus so is $\mcl{M}^k_*((0,L)\times \mcl{D};\R^n)$.
\end{definition}

\begin{proposition}\label{Thm-HD}
Suppose that all the assumptions stated in Theorem \ref{MainThm} hold. Then, there exists a constant $L^{\ast}\in(0,\bar{L}]$ depending only on the data so that the following properties hold:
For any given $L<L^{\ast}$, one can fix a small constant $\sigma_2>0$ depending on the data and $(L,\bar{\epsilon})$ so that if the inequality
\begin{equation}
\label{condition for sigma}
\sigma(b,u_{\rm en},v_{\rm en},w_{\rm en},S_{\rm en},E_{\rm en},E_{\rm ex})\le \sigma_2
\end{equation}
holds, then Problem \ref{EP-Prob-HD} has a unique axisymmetric solution $(\vphi, \Phi, \phi, S, \Lambda)$ that satisfies the following properties:\\
\quad\\
(a) There exists a constant $C>0$ fixed depending only on the data and $(L,\bar{\epsilon})$ that satisfies the estimate
\begin{equation}\label{Thm-HD-est}
\begin{split}
&\|(\vphi-\bvphi, S-S_0, \Lambda )\|_{\mcl{M}^{4}_*((0,L)\times \mcl{D})}+\|\Phi-\bPhi\|_{H^4_*(\Om_L)}+\|\phi {\bf e}_{\theta}\|_{H^5_*(\Om_L)}\\
&\le C\sigma(b,u_{\rm en},v_{\rm en},w_{\rm en},S_{\rm en},E_{\rm en},E_{\rm ex});
\end{split}
\end{equation}
\smallskip
\quad\\
(b) Furthermore, one can adjust the estimate constant $C>0$ depending only on the data and $(L,\bar{\epsilon})$ so that the velocity field ${\bf u}$ given by \eqref{definition:vel-HD} satisfies the estimate
\begin{equation}
\label{Thm-HD-est2}
\|{\bf u}-\bar u {\bf e}_{1}\|_{\mcl{M}^{3}_*((0,L)\times \mcl{D})}\le C\sigma(b,u_{\rm en},v_{\rm en},w_{\rm en},S_{\rm en},E_{\rm en},E_{\rm ex}),
\end{equation}
where $\bar u$ is the velocity component of the background solution $(\bar{\rho}, \bar u, S_0, \bPhi)$ associated with $(\rho_0, E_0)$ with $E_0=0$ in the sense of Definition \ref{definition:background sol};
\smallskip
\quad\\
(c) There exists a positive constant $\om_0>0$ fixed depending only on the data and $(L,\bar{\epsilon})$ so that the following inequalities hold:
    \begin{equation}
    \label{HD-sol-lower bd}
   \min \left\{\rho(\vphi, \Phi, \phi{\bf e}_{\theta}, S, \Lambda),\,\, {\bf u}\cdot {\bf e}_{x_1},\,\, \frac{|{\bf u}|}{\sqrt{\gam S\rho^{\gam-1}(\vphi, \Phi, \phi{\bf e}_{\theta}, S, \Lambda)}}-1\right\}\ge \om_0\quad\tx{in $\ol{\Om_L}$}.
    \end{equation}

\end{proposition}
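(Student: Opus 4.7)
The plan is to construct a solution via a nonlinear iteration that exploits the decoupling afforded by the Helmholtz decomposition. The system \eqref{group1}--\eqref{group2} splits naturally into a second-order hyperbolic--elliptic coupled block for the potential pair $(\vphi,\Phi)$, two transport equations for the entropy $S$ and the angular-momentum density $\Lambda$, and a vector Poisson equation for $\phi{\bf e}_\theta$. The iteration fixes rotational data $(\tilde\phi,\tilde S,\tilde\Lambda)$, solves the first block with these quantities frozen in the coefficients and source, updates $(S,\Lambda)$ along the streamlines of the resulting pseudo-momentum ${\bf m}$, and finally updates $\phi$ via elliptic regularity for the vector Poisson equation. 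The central input from Section \ref{Sec-Irr} is Theorem \ref{Irr-MainThm}, which handles the first block as a small perturbation of the pure potential case $(\tilde\phi,\tilde S-S_0,\tilde\Lambda)\equiv{\bf 0}$.

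Concretely, I would fix a small parameter $\delta>0$ and take as iteration set the axisymmetric triples $(\tilde\phi{\bf e}_\theta,\tilde S,\tilde\Lambda)$ satisfying $\|\tilde\phi{\bf e}_\theta\|_{H^5_*(\Om_L)}+\|(\tilde S-S_0,\tilde\Lambda)\|_{\mcl{M}^4_*((0,L)\times\mcl{D})}\le\delta$, together with the boundary and $\Gamma_0^{\bar{\epsilon}}$-compatibility conditions inherited from \eqref{w-ex-bd} and Problem \ref{EP-Prob}, and the structural vanishings $\tilde\phi=O(r)$, $\tilde\Lambda=O(r^2)$ at the axis that render $\nabla\times(\tilde\phi{\bf e}_\theta)$ and $\tilde\Lambda/r^2$ smooth in Euclidean coordinates. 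Given such a triple, the first two equations of \eqref{group1} form a quasilinear hyperbolic--elliptic coupled system for $(\vphi,\Phi)$ in which the rotational data contribute only small inhomogeneous and lower-order terms; after verifying that the compatibility conditions \eqref{Thm-com-Irr} are preserved when the entrance datum is adjusted to $u_{\rm en}^\natural := u_{\rm en}-[\nabla\times(\tilde\phi{\bf e}_\theta)]\cdot{\bf e}_{x_1}|_{\Gamma_0}$, Theorem \ref{Irr-MainThm} (or, more precisely, the iteration framework underlying its proof in Section \ref{Sec-Irr}) produces $(\vphi,\Phi)$ with $\|\vphi-\bvphi\|_{\mcl{M}^4_*}+\|\Phi-\bPhi\|_{H^4_*}\le C(\sigma+\delta^2)$.

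With ${\bf u}$ assembled from \eqref{definition:vel-HD} and $\rho$ from \eqref{definition:density-HD}, the streamlines of ${\bf m}$ are small $C^3$ perturbations of the axial lines since $\bar u>0$ on $[0,\bar L]$, so \eqref{group2} integrates along characteristics with entrance data $(S_{\rm en},|\rx'|w_{\rm en})$ to produce axisymmetric $(S,\Lambda)\in\mcl{M}^4_*$. The axis vanishings of $w_{\rm en}$ prescribed in Problem \ref{EP-Prob} ensure $\Lambda=O(r^2)$, while the $\Gamma_0^{\bar\epsilon}$ compatibilities propagate along streamlines to yield the corresponding vanishings of $S-S_0$ and $\Lambda$ in a neighborhood of $\Gamma_{\rm w}$. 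The remaining vector Poisson equation for $\phi{\bf e}_\theta$ reduces, under axisymmetry, to the scalar problem $-\Delta\phi+r^{-2}\phi=F$ on $\Om_L$ with the boundary conditions from \eqref{w-ex-bd}; the source $F$ inherits the compatibility $F=\der_{x_1}^2 F=0$ on $\Gamma_0^{\bar\epsilon}$ from the corresponding properties of $S$ and $\Lambda$, legitimizing an even reflection across $\Gamma_0$, and standard elliptic theory in the axis-weighted framework delivers $\phi{\bf e}_\theta\in H^5_*(\Om_L)$ with the desired quadratic estimate. The map $\mcl{T}:(\tilde\phi,\tilde S,\tilde\Lambda)\mapsto(\phi,S,\Lambda)$ then sends the iteration set into itself when $\delta\simeq\sigma\le\sigma_2$ for $\sigma_2$ sufficiently small, and a contraction estimate in the lower-regularity topology $[H^2(\Om_L)]^3$---obtained by subtracting two iterations and retracing the three sub-steps, each of which loses a factor $O(\delta+\sigma)$---yields the unique fixed point. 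The high-regularity bounds then pass to the limit via the weak-$\ast$ compactness of Lemma \ref{lemma-weak star limit}, giving \eqref{Thm-HD-est}; from there, \eqref{Thm-HD-est2} is read off from \eqref{definition:vel-HD}, and \eqref{HD-sol-lower bd} follows as a smallness perturbation of the corresponding property of the background solution $(\bar\rho,\bar u,S_0,\bPhi)$.

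The principal obstacle I anticipate is the careful treatment of the axis singularity at $r=0$. The formally singular expressions $\Lambda/r^2$ and $\nabla\times(\phi{\bf e}_\theta)$ are smooth in Euclidean coordinates only when $\Lambda$ and $\phi$ exhibit specific vanishing orders at the axis, and these orders must be preserved through each of the three sub-steps---most delicately in the transport step, where the characteristic structure near the axis requires odd/even parity identities for axisymmetric scalar and vector fields, and in the elliptic step, where the effective potential $r^{-2}$ in the reduced equation for $\phi$ forces the use of an axis-weighted Sobolev framework and dictates the minimal vanishing $\phi=O(r)$. Preserving alongside this the $\Gamma_0^{\bar\epsilon}$ compatibility conditions throughout the iteration, so that Theorem \ref{Irr-MainThm} remains applicable in the first sub-step and the reflection remains legitimate in the third, is the companion bookkeeping challenge that governs the precise choice of iteration set.
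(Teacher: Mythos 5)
Your proposal is correct and follows essentially the same route as the paper: Helmholtz decomposition, the potential-block machinery of Theorem \ref{Irr-MainThm} with the rotational data frozen in the coefficients and entrance datum, streamline transport for $(S,\Lambda)$, elliptic regularity for the reduced scalar equation for $\phi$, and a fixed-point argument closed by the weak-$*$ compactness of Lemma \ref{lemma-weak star limit}, with the axis vanishings $\phi=O(r)$, $\Lambda=O(r^2)$ built into the iteration set exactly as in the paper's $\mathcal{T}_{\eps}\times\mathcal{U}_{\eps}$ and $\mathfrak{W}_{M\eps}$. The only differences are organizational (you run a single loop over $(\phi,S,\Lambda)$ and invoke contraction for existence, whereas the paper nests a Schauder fixed point for $(\vphi,\Phi,{\bf W})$ inside an outer Schauder iteration on $(S,\Lambda)$ alone and reserves the contraction estimate for uniqueness), together with one small slip: the condition on $\Gamma_0$ for $\phi{\bf e}_\theta$ is Neumann, so the reflection there is even and the required source compatibility is $\partial_{x_1}F=0$ on $\Gamma_0^{\bar{\epsilon}}$, not $F=\partial_{x_1}^2F=0$.
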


By the method of iterations with using Theorem \ref{Irr-MainThm}, we shall prove Proposition \ref{Thm-HD} in \S \ref{subsec-wp-HD}. Since the same approach is already introduced and extensively studied in the works \cite{bae2021structural, bae2014subsonic, bae20183}, we shall provide details only for the issues caused by the weighted Sobolev norms newly introduced in this paper. In \S \ref{Sec-Last}, we show that Theorem \ref{MainThm} follows from Proposition \ref{Thm-HD}.

\subsection{Proof of Proposition \ref{Thm-HD}}\label{subsec-wp-HD}

For a constant $\eps>0$ to be determined later, let us define two sets $\mcl{T}_{\eps}$ and $\mcl{U}_{\eps}$ by
\begin{equation}\label{iterset-T}
\begin{split}
&\mcl{T}_{\eps}:=\left\{
  S\in \mcl{M}^4_{*}((0,L)\times \mcl{D})\;\middle\vert\;
  \begin{aligned}
  &\tx{A function $S$ is axisymmetric in ${\Om_L}$},\\
  & \|S-S_0\|_{\mcl{M}^4_*((0,L)\times \mcl{D})}\le\eps,\\
  &\left\|\frac{\der_r S}{|\rx'|}\right\|_{\mcl{M}^3_*((0,L)\times \mcl{D})}\le \eps,\\
&S=S_{\rm en}\quad\tx{on $\Gam_0$},\\
 &(S,\der_{x_1} S)=(S_0,0)\,\,\tx{on }\Gamma_0^{\bar{\epsilon}},\\
 &S=S_0\,\,\tx{on $\left\{(x_1, \rx')\in \ol{\Om_L}:|\rx'|\ge1-\frac{\bar{\epsilon}}{2}\right\}$}\\
  \end{aligned}
\right\},\\
&\mcl{U}_{\eps}:=\left\{
  \Lambda\in \mcl{M}^4_{*}((0,L)\times \mcl{D})\;\middle\vert\;
  \begin{aligned}
  &\tx{A function $\Lambda$ is axisymmetric in ${\Om_L}$},\\
  & \|\Lambda\|_{\mcl{M}^4_*((0,L)\times \mcl{D})}\le\eps,\\
  &\sum_{k=0,1}\left\|\frac{\der_r^k\Lambda}{|\rx'|^{2-k}}\right\|_{\mcl{M}^3_*((0,L)\times \mcl{D})}\le \eps,\\
 &\Lambda(0, \rx')=|\rx'|w_{\rm en}(\rx')\,\,\tx{on $\Gam_0$},\\
 &(\Lambda,\der_{x_1} \Lambda)=(0,0)\,\,\tx{on }\Gamma_0^{\bar{\epsilon}},\\
 &\Lambda=0\,\,\tx{on $\left\{(x_1, \rx')\in \ol{\Om_L}:|\rx'|\ge1-\frac{\bar{\epsilon}}{2}\right\}$}\\
  \end{aligned}
\right\}.
\end{split}
\end{equation}
And, we define an iteration set $\mcl{J}_{\eps}$ by
\begin{equation}
  \label{J1}
  \mcl{J}_{\eps}:=\mcl{T}_{\eps}\times \mcl{U}_{\eps}.
\end{equation}
Note that, for any $(S, \Lambda)\in \mcl{J}_{\eps}$, it holds that
\begin{equation*}
  \der_r S=0\quad\tx{and}\quad \der_r \Lambda=0\quad\tx{on $\Gamw$}.
\end{equation*}

\subsubsection{Step 1: Second order nonlinear PDE system for $(\vphi, \Phi, {\bf W})$ associated with $(S, \Lambda)\in \mcl{J}_{\eps}$}

\begin{definition}
\label{definition-nl-operators}
Let $\vphi$ and $\Phi$ be two functions given in $\Om_L$. And, let ${\bf W}: \Om_L\rightarrow \R^3$ be a vector field.
\begin{itemize}
\item[(i)] For the density function $\rho$ and the velocity field ${\bf u}$ given in \eqref{definition:density-HD} and \eqref{definition:vel-HD}, respectively, let us define two nonlinear differential operators $\mcl{N}_1$ and $\mcl{N}_2$ associated with $(S, \Lambda, {\bf W})$ by
    \begin{equation*}
    \begin{split}
     &\mcl{N}_1^{(S, \Lambda, {\bf W}}(\vphi,\Phi):= {\rm div}\left(\rho(\vphi, \Phi,{\bf W}, S, \Lambda){\bf u}(\vphi, {\bf W}, \Lambda)\right),\\
     &\mcl{N}_2^{(S, \Lambda, {\bf W})}(\vphi,\Phi):=\Delta\Phi-\rho(\vphi, \Phi, {\bf W}, S, \Lambda)+b.
     \end{split}
    \end{equation*}

\item[(ii)] And, let us define a nonlinear vector field ${\bf F}^{(S, \Lambda, {\bf W}}(\vphi,\Phi)$ by
    \begin{equation}
    \label{definition-F-vec-field}
      {\bf F}^{(S, \Lambda, {\bf W})}(\vphi,\Phi):=\frac{\frac{\rho^{\gam-1}(\vphi, \Phi, {\bf W}, S, \Lambda)}{\gam-1}(\der_rS{\bf e}_{\theta})+\frac{\Lambda}{r^2}(\der_r\Lambda{\bf e}_{\theta})}{{\bf u}(\vphi, {\bf W}, \Lambda)\cdot {\bf e}_{x_1}}.
    \end{equation}

\end{itemize}
\end{definition}

\begin{problem}\label{nlbvp:potentials}
For fixed $(S_*, \Lambda_*)\in \mcl{J}_{\eps}$, solve the following nonlinear boundary value problem for $(\vphi, \Phi, {\bf W})$:
\begin{equation}
\label{bvp for potentials}
\begin{cases}
\mcl{N}_1^{(S_*, \Lambda_*, {\bf W})}(\vphi,\Phi)=0\\
\mcl{N}_2^{(S_*, \Lambda_*, {\bf W})}(\vphi,\Phi)=0\\
-\Delta{\bf W}={\bf F}^{(S_*, \Lambda_*, {\bf W})}(\vphi,\Phi)
\end{cases}\mbox{in}\,\,\Omega_L,
\end{equation}
\begin{equation}
\label{bc for potentials}
\left\{\begin{split}
	\varphi(0,r)=-\int_r^1 v_{\rm en}(t)dt,\,\,\partial_{x_1}\varphi+(\nabla\times{\bf W})\cdot{\bf e}_{x_1}=u_{\rm en}\,\,&\mbox{on}\,\,\Gamma_0,\\
		\partial_{x_1}\Phi=E_{\rm en},\,\,\partial_{x_1}{\bf W}={\bf 0}\,\,&\mbox{on}\,\,\Gamma_0,\\
\partial_{r}\varphi=0,\,\,\partial_{r}\Phi=0,\,\, {\bf W}={\bf 0}\,\,&\mbox{on}\,\,\Gamma_{{\rm w}},\\
\der_{x_1}\Phi=E_{\rm{ex}},\,\, \partial_{x_1}{\bf W}={\bf 0}\,\,&\mbox{on}\,\,\Gamma_L.
\end{split}\right.
\end{equation}
\end{problem}

\begin{lemma}\label{pro-1}
Fix a constant $\bar{\delta}>0$, and let $\bar{L}$ be given from Lemma \ref{Lem1}. Under the same assumptions of Theorem \ref{Thm-HD}, one can fix a constant $L^{\ast}\in(0,\bar{L}]$ depending only on the data  so that the following statement holds:
If the nozzle length $L$ of the domain $\Om_L$ satisfies the inequality $L\le L^{\ast}$, then one can fix a small constant $\bar{\eps}>0$ depending only on the data and $(L,\bar{\epsilon})$ so that whenever it holds that
\begin{equation}
\label{condition-for-eps-sig}
 \eps+\sigma(b,u_{\rm en},v_{\rm en},0,S_0 ,E_{\rm en},E_{\rm ex})\le \bar{\eps},
 \end{equation}
then, for any $(S_*,\Lambda_*)\in\mathcal{J}_{\eps}$,
Problem \ref{nlbvp:potentials} associated with $(S_*,\Lambda_*)$
has a unique axisymmetric solution $(\vphi,\Phi,{\bf W})$ that satisfies the estimate
\begin{equation}\label{pphi-est}
\begin{split}
&\|\vphi-\bvphi\|_{\mcl{M}^4_*((0,L)\times \mcl{D})}+\|\Phi-\bPhi\|_{H^4_*(\Omega_L)}+\|{\bf W}\|_{H^5_*(\Omega_L;\R^3)}\\
&\le C\left(\eps+\sigma(b,u_{\rm en},v_{\rm en},0,S_0 ,E_{\rm en},E_{\rm ex})\right)
\end{split}
\end{equation}
for a constant $C>0$ fixed depending only on the data and $(L,\bar{\epsilon})$. Furthermore, the following properties hold:
\begin{itemize}
\item[(a)] the velocity potential function $\vphi$ satisfies the compatibility condition
    \begin{equation*}
      \der_{11}\vphi=0\quad\tx{on $\Gam_0^{\bar{\epsilon}}$};
    \end{equation*}
\item[(b)] the vector field ${\bf W}$ can be represented as
\begin{equation*}
  {\bf W}=\phi{\bf e}_{\theta}
\end{equation*}
for an axisymmetric function $\phi:\ol{\Om_L}\rightarrow \R$.
\end{itemize}

\end{lemma}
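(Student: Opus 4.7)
The plan is to solve Problem \ref{nlbvp:potentials} by a nested iteration that decouples the hyperbolic-elliptic subsystem for $(\vphi,\Phi)$ from the vector Poisson equation for ${\bf W}$, treating $(S_*,\Lambda_*)\in\mcl{J}_\eps$ as frozen parameters throughout. For a small constant $\delta_*>0$ to be chosen, I would introduce an iteration class $\mcl{K}_{\delta_*}$ consisting of axisymmetric vector fields of the form $\widetilde{\bf W}=\widetilde{\phi}\,{\bf e}_\theta\in H^5_*(\Om_L;\R^3)$ satisfying $\|\widetilde{\bf W}\|_{H^5_*(\Om_L;\R^3)}\le\delta_*$, together with $\widetilde{\bf W}={\bf 0}$ on $\Gamw$ and $\der_{x_1}\widetilde{\bf W}={\bf 0}$ on $\Gam_0\cup\Gam_L$, and construct a map $\mcl{T}:\widetilde{\bf W}\mapsto{\bf W}$ in two steps.

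Step 1. Substituting $\widetilde{\bf W}$ into the first two equations of \eqref{bvp for potentials}, the resulting system for $(\vphi,\Phi)$ shares its principal part with the potential flow model \eqref{back-HD}, while the contributions coming from $\widetilde{\bf W}$, $S_*-S_0$, and $\Lambda_*$ enter as lower-order terms of size $O(\eps+\delta_*)$ in the relevant norms. The compatibility conditions on $\Gam_0^{\bar{\epsilon}}$ and $\Gamw$ required by Theorem \ref{Irr-MainThm} are inherited from the conditions imposed in Problem \ref{EP-Prob}, from the definition of $\mcl{J}_\eps$, and from $\mcl{K}_{\delta_*}$ (axisymmetry plus the vanishing of $\widetilde{\bf W}$ on $\Gamw$). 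A direct application of Theorem \ref{Irr-MainThm}, with its background function $b$ and boundary data suitably modified to absorb the perturbations, then yields a unique axisymmetric solution $(\vphi,\Phi)$ with
\[
\|\vphi-\bvphi\|_{\mcl{M}^4_*((0,L)\times\mcl{D})}+\|\Phi-\bPhi\|_{H^4_*(\Om_L)}\le C(\eps+\delta_*+\sigma),
\]
and with the compatibility condition $\der_{11}\vphi=0$ on $\Gam_0^{\bar{\epsilon}}$ inherited from Proposition \ref{lemma-wp of lbvp for potential}.

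Step 2. Feeding $(\vphi,\Phi)$ into the vector Poisson equation $-\Delta{\bf W}={\bf F}^{(S_*,\Lambda_*,\widetilde{\bf W})}(\vphi,\Phi)$ with the boundary conditions in \eqref{bc for potentials}, the expression \eqref{definition-F-vec-field} shows that the source has the form $F^{(\theta)}\,{\bf e}_\theta$ for an axisymmetric scalar $F^{(\theta)}$. Consequently the ansatz ${\bf W}=\phi\,{\bf e}_\theta$ is consistent, and the vector problem reduces to the scalar elliptic equation $-\Delta\phi+\phi/r^2=F^{(\theta)}$ in $\Om_L$ with $\phi=0$ on $\Gamw$ and $\der_{x_1}\phi=0$ on $\Gam_0\cup\Gam_L$. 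The compatibility of $F^{(\theta)}$ at $\Gam_0$, which uses $E_0=0$ together with $\der_{x_1}S_*=\der_{x_1}\Lambda_*=0$ on $\Gam_0^{\bar{\epsilon}}$, permits a local even reflection of the problem across $\Gam_0$; combined with the weighted-Sobolev-norm machinery of Section \ref{Sec-Irr} to control the corner $\ol{\Gam_L}\cap\ol{\Gamw}$, standard elliptic theory yields a unique $\phi\in H^5_*(\Om_L)$ satisfying $\|{\bf W}\|_{H^5_*(\Om_L;\R^3)}\le C(\eps+\delta_*+\sigma)$. Choosing $\delta_*$ proportional to $\eps+\sigma$ sufficiently small then makes $\mcl{T}$ map $\mcl{K}_{\delta_*}$ into itself, and a contraction estimate in the lower-order norm $H^3_*$, together with the lower-semicontinuity of the $H^5_*$ norm under weak-$*$ limits (as in Lemma \ref{lemma-weak star limit}), produces a unique fixed point ${\bf W}$ and closes both existence and uniqueness.

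The main obstacle is controlling the source ${\bf F}^{(S_*,\Lambda_*,\widetilde{\bf W})}$ in the $H^3_*$ norm near the axis $\{r=0\}$, where the factors $\Lambda/r^2$ and $\der_r\Lambda/r$, as well as the swirl term $\Lambda/r$ appearing in ${\bf u}$, create apparent singularities. The weighted bounds $\|\der_rS/|\rx'|\|_{\mcl{M}^3_*}\le\eps$ and $\|\der_r^k\Lambda/|\rx'|^{2-k}\|_{\mcl{M}^3_*}\le\eps$ built into $\mcl{T}_\eps$ and $\mcl{U}_\eps$ are precisely tailored to tame these singularities, and the axisymmetry-induced vanishing of $\Lambda$, $\der_r\Lambda$, and $\der_r S$ on the axis is what allows the $H^3_*$ estimate of ${\bf F}$ to close without loss of regularity. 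A secondary delicate point is to rigorously verify that the ${\bf e}_{x_1}$- and ${\bf e}_r$-components of ${\bf W}$ vanish identically, which follows from applying the scalar Poisson uniqueness (with zero source and zero boundary data) to those cylindrical components.
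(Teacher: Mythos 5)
Your overall architecture coincides with the paper's: freeze $(S_*,\Lambda_*)$, run an outer iteration on the vector field (the paper's set $\mfrak{W}_{M\eps}$, your $\mcl{K}_{\delta_*}$), solve the hyperbolic--elliptic subsystem for $(\vphi,\Phi)$ with the Section \ref{Sec-Irr} machinery, then solve the Poisson problem for ${\bf W}$ and close with a fixed point. Two of your deviations are harmless or even clean: reducing the vector Poisson problem to the scalar equation $-\Delta\phi+\phi/r^2=F^{(\theta)}$ and killing the ${\bf e}_{x_1}$- and ${\bf e}_r$-components by uniqueness is a legitimate substitute for the paper's citation of the representation result in \cite{bae20183} (for axisymmetric fields the cylindrical components of $\Delta$ do decouple), and a contraction in low norms in place of Schauder is fine.

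There are, however, two genuine gaps. First, the $(\vphi,\Phi)$-subsystem is \emph{not} a direct application of Theorem \ref{Irr-MainThm} with modified $b$ and boundary data: the terms generated by ${\bf q}_*=\nabla\times\widetilde{\bf W}+\frac{\Lambda_*}{|\rx'|^2}(\rx')^{\perp}$ and by $\nabla S_*$ enter the continuity equation as source terms depending on the unknown itself, e.g.\ $[\til{\rho}_{\bf u}]^T(D{\bf q}_*)(\nabla\vphi+{\bf q}_*)$ and $\til{\rho}_S\,\nabla S_*\cdot(\nabla\vphi+{\bf q}_*)$, so they cannot be absorbed into fixed data; one must re-run the linearization and fixed-point argument for the modified nonlinear system (this is precisely Lemma \ref{lemma:wp of lbvp for potential}-type machinery redone in Lemma \ref{lemma:wp of nlbvp for potentials}). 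Second, and more seriously, the high-order estimates of Section \ref{Sec-Irr} require the nonhomogeneous terms to satisfy $\der_r f_1=\der_r f_2=0$ on $\Gamw$ (the analogue of Lemma \ref{lemma-properties of coeffs}(e)). Verifying this for the new source $F_1^{(S_*,\Lambda_*,{\bf W}_*)}$ is a substantial computation and forces the extra compatibility conditions $\der_r(\nabla\times\widetilde{\bf W})\cdot{\bf e}_{x_1}=\der_r(\nabla\times\widetilde{\bf W})\cdot{\bf e}_{\theta}=0$ on $\Gamw$ to be built into the iteration class; for $\widetilde{\bf W}=\widetilde{\phi}{\bf e}_{\theta}$ the first of these, $\der_{rr}\widetilde{\phi}+\der_r(\widetilde{\phi}/r)=0$ on $\Gamw$, is not automatic from $\widetilde{\phi}=0$ on $\Gamw$, and one must also prove that the \emph{output} $\phi$ of your Step 2 satisfies it (the paper does this using the equation for $\phi$ together with $\mfrak{g}=0$ on $\Gamw$, which in turn uses $\der_rS_*=\der_r\Lambda_*=0$ there). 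Without this condition in $\mcl{K}_{\delta_*}$ and without verifying it propagates, the $H^4_*\cap\mcl{W}^{4,\infty}_{*,\mcl{D}}$ estimate for $\vphi$ up to the wall does not close.
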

In order to prove Lemma \ref{pro-1} by the method of iteration, we shall introduce an iteration set. For $\eps>0$ from \eqref{iterset-T}, let us define another iteration set
\begin{equation}\label{J3}
\begin{split}
&\mfrak{W}_{M\eps}:=\left\{{\bf W}\in H^5_*(\Omega_{L};\R^3)\;\middle\vert\;
	\begin{split}
&\tx{{\bf W} is axisymmetric in $\Om_L$},\\
	&\|{\bf W}\|_{H^5_*(\Omega_{L};\R^3)}\le M\eps,\\
	&{\bf W}={\bf 0}\mbox{ on }\Gamma_{\rm w},\,\partial_{x_1}{\bf W}={\bf 0}\mbox{ on }\Gamma_0,\\
	&\partial_r(\nabla\times{\bf W})\cdot{\bf e}_{x_1}=\partial_r(\nabla\times{\bf W})\cdot{\bf e}_{\theta}=0\mbox{ on }\Gamma_{\rm w}
	\end{split}\right\}
\end{split}
\end{equation}
for a constant $M>0$ to be fixed later.
And, for fixed ${\bf W}_*\in \mfrak{W}_{M\eps}$, let us consider a nonlinear boundary value problem for $(\vphi, \Phi)$
\begin{equation}
\label{bvp for potentials-iters}
\begin{cases}
\mcl{N}_1^{(S_*, \Lambda_*, {\bf W}_*)}(\vphi,\Phi)=0\\
\mcl{N}_2^{(S_*, \Lambda_*, {\bf W}_*)}(\vphi,\Phi)=0
\end{cases}\mbox{in}\,\,\Omega_L,
\end{equation}
\begin{equation}
\label{bc for potentials-iters}
\left\{\begin{split}
	\varphi(0,r)=-\int_r^1 v_{\rm en}(t)dt,\,\,\partial_{x_1}\varphi=u_{\rm en}-(\nabla\times{\bf W}_*)\cdot{\bf e}_{x_1}\,\,&\mbox{on}\,\,\Gamma_0,\\
		\partial_{x_1}\Phi=E_{\rm en}\,\,&\mbox{on}\,\,\Gamma_0,\\
\partial_{r}\varphi=0,\,\,\partial_{r}\Phi=0\,\,&\mbox{on}\,\,\Gamma_{{\rm w}},\\
\der_{x_1}\Phi=E_{\rm{ex}}\,\,&\mbox{on}\,\,\Gamma_L.
\end{split}\right.
\end{equation}

\begin{lemma}\label{lemma:wp of nlbvp for potentials}
 One can fix $L_*^{\rm v}\in(0,\bar{L}]$ depending on the data, and fix $\eps_1^{\rm v}>0$ sufficiently small depending only on the data and $(L,\bar{\epsilon})$ so that if $L\le L_*^{\rm v}$, and if
\begin{equation}
\label{condition-eps}
  (1+M)\eps+\sigma(b,u_{\rm en},v_{\rm en},0,S_0 ,E_{\rm en},E_{\rm ex})\le \eps_1^{\rm v},
\end{equation}
then, for every $(S_*, \Lambda_*, {\bf W}_*)\in \mcl{T}_{\eps}\times \mcl{U}_{\eps}\times \mfrak{W}_{M\eps}$, the boundary value problem \eqref{bvp for potentials-iters}--\eqref{bc for potentials-iters} acquires the unique solution $(\vphi, \Phi)$ that satisfies
\begin{equation}\label{pphi-est-1}
\begin{split}
&\|\vphi-\bvphi\|_{\mcl{M}^4_*((0,L)\times \mcl{D})}+\|\Phi-\bPhi\|_{H^4_*(\Omega_L)}\\
&\le C\left((1+M)\eps+\sigma(b,u_{\rm en},v_{\rm en},0,S_0 ,E_{\rm en},E_{\rm ex})\right)
\end{split}
\end{equation}
for $C>0$ fixed depending only on the data and $(L,\bar{\epsilon})$.
\medskip

Furthermore, the solution satisfies the following properties:
\begin{itemize}
\item[(a)] $(\vphi, \Phi)$ are axisymmetric in $\Om_L$;
\item[(b)] $\displaystyle{\der_{11}(\vphi-\bar{\vphi})=0\tx{ and } \der_{111}(\Phi-\bar{\Phi})=0}$ on $\Gam_0^{\bar{\epsilon}}$.
\end{itemize}

\begin{proof} {\textbf{1.}}
First of all, we shall rewrite \eqref{bvp for potentials-iters} similarly to the potential flow model of the Euler-Poisson system (see \eqref{back-HD}).
\begin{itemize}
\item[-]
For $z\in \R$, ${\bf u}\in \R^3$ and $S\in \R$, let us set
\begin{equation}
\label{new-def-density}
  \til{\rho}(S, z, {\bf u}):=\left(\frac{\gam-1}{\gam S}\right)^{\frac{1}{\gam-1}}\left(z-\frac 12 |{\bf u}|^2\right)^{\frac{1}{\gam-1}}.
\end{equation}
\item[-] Define a vector field ${\bf q}_*$ associated with $(\Lambda_*, {\bf W}_*)$ by
    \begin{equation*}
      {\bf q_*}:=\nabla \times {\bf W}_*+\frac{\Lambda_*}{|\rx'|^2}(\rx')^{\perp}.
    \end{equation*}
\item[-] Define two differential operators for $(\vphi, \Phi)$ by
    \begin{equation*}
\begin{split}
 \widetilde{\mcl{N}}_1^{(S_*, \Lambda_*, {\bf W}_*)}(\vphi, \Phi)
 :=&\,\til{\rho}(S_*, \Phi, \nabla\vphi+ {\bf q}_*)\Delta\vphi\\
 &+[\til{\rho}_{\bf u}(S_*, \Phi, \nabla\vphi+ {\bf q}_*)]^TD^2\vphi\cdot(\nabla\vphi+{\bf q}_*)\\
 &+\til{\rho}_z(S_*, \Phi, \nabla\vphi, {\bf q}_*)\nabla\Phi\cdot \nabla\vphi,
 \end{split}
\end{equation*}
\begin{equation*}
\begin{split}
 \widetilde{\mcl{N}}_2^{(S_*, \Lambda_*, {\bf W}_*)}(\vphi, \Phi)
 :=\Delta \Phi-\til{\rho}(S_*, \Phi, \nabla\vphi)+b.
 \end{split}
\end{equation*}
\item[-] Define two functions associated with $(S_*, \Lambda_*, {\bf W}_*)$ and $(\vphi, \Phi)$ by
\begin{equation*}
\begin{split}
  F_1^{(S_*, \Lambda_*, {\bf W}_*)}(\vphi, \Phi)
  :=&-[\til{\rho}_{\bf u}(S_*, \Phi, \nabla\vphi+ {\bf q}_*)]^T(D{\bf q}_*)(\nabla\vphi+{\bf q}_*)\\
  &-\til{\rho}_{z}(S_*, \Phi, \nabla\vphi+ {\bf q}_*)\nabla\Phi\cdot {\bf q}_*\\
  &-\til{\rho}_{S}(S_*, \Phi, \nabla\vphi+ {\bf q}_*)\nabla S_*\cdot (\nabla\vphi+{\bf q}_*),
  \end{split}
\end{equation*}
\begin{equation}
\label{definition of F2-HD}
\begin{split}
  F_2^{(S_*, \Lambda_*, {\bf W}_*)}(\vphi, \Phi)
  :=&\til{\rho}(S_*, \Phi, \nabla\vphi+ {\bf q}_*)-\til{\rho}(S_*, \Phi, \nabla\vphi).
  \end{split}
\end{equation}

\end{itemize}
Finally, we rewrite \eqref{bvp for potentials-iters} as
\begin{equation}
\label{bvp for potentials-iters-new}
\begin{cases}
\widetilde{\mcl{N}}_1^{(S_*, \Lambda_*, {\bf W}_*)}(\vphi,\Phi)=F_1^{(S_*, \Lambda_*, {\bf W}_*)}(\vphi, \Phi)\\
\widetilde{\mcl{N}}_2^{(S_*, \Lambda_*, {\bf W}_*)}(\vphi,\Phi)=F_2^{(S_*, \Lambda_*, {\bf W}_*)}(\vphi, \Phi)
\end{cases}\mbox{in}\,\,\Omega_L.
\end{equation}

{\textbf{2.}} For a small constant $\delta>0$ to be determined later, let $\mcl{H}_{\delta}^P$ and $\mcl{I}_{\delta}^E$ be defined as in \eqref{definition-potentials}. And, we define an iteration set $\mcl{K}_{\delta}^{A}$ by
\begin{equation}\label{iterset-potentials-full case}
 \mcl{K}_{\delta}^A:=\left\{
(\vphi, \Phi)\in \mcl{M}^4_{*}((0,L)\times \mcl{D})\times H^4_*(\Om_L)\;\middle\vert\; \begin{split}
&(\vphi-\bar{\vphi}, \Phi-\bar{\Phi})\in
\mcl{H}_{\delta}^P\times \mcl{I}_{\delta}^E\\
&\tx{$(\vphi, \Phi)$ are axisymmetric in $\Om_L$}\end{split}\right\}
\end{equation}
The set $\mcl{K}_{\delta}^{A}$ is nonempty because $(\bar{\vphi}, \bar{\Phi})\in \mcl{K}_{\delta}^A$.
\smallskip

Note that $(S_*, \Lambda_*,{\bf W}_*,v_{\rm en}, w_{\rm en})$ satisfy the following compatibility conditions:
\begin{itemize}
\item[(i)] On $\Gamw$:
\begin{itemize}
\item[-] $\displaystyle{(S_*,\der_{r}S_*)}=(S_0,0);$
\item[-] $\displaystyle{{\bf W}_*={\bf 0},\,\, \der_r \left((\nabla \times{\bf W}_*)\cdot{\bf e}_{x_1}\right)=0.}$
\end{itemize}
\item[(ii)] On $\Gam_0^{\bar{\epsilon}}$:
\begin{itemize}
\item[-] $\displaystyle{(S_*,\der_{x_1}S_*)}=(S_0,0);$
\item[-] $\displaystyle{(\Lambda_*,\der_{x_1}\Lambda_*)=(0,0)};$
\item[-] $\displaystyle{\der_{x_1}{\bf W}_*=0};$
\item[-] $\displaystyle{v_{\rm en}=w_{\rm en}=0}.$
\end{itemize}
\end{itemize}
Then one can directly check that if $\delta>0$ is fixed sufficiently small, and if $(\tilde{\vphi}, \til{\Phi})\in \mcl{K}_{\delta}^A$,
then, for $(F_1, F_2):=(F_1^{(S_*, \Lambda_*, {\bf W}_*)}, F_2^{(S_*, \Lambda_*, {\bf W}_*)})(\tilde{\vphi}, \til{\Phi})$, the following compatibility conditions hold:
\begin{align}
\label{comp for F-1}
\der_rF_1=\der_r F_2=0\quad&\tx{on $\Gamw$},\\
\label{comp for F-2}
  F_1=\der_{x_1}F_2=0\quad&\tx{on $\Gam_0^{\bar{\epsilon}}$},
\end{align}
which corresponds to the statement (e) and (f) of Lemma \ref{lemma-properties of coeffs}.

In \eqref{comp for F-1}, the condition $\der_r F_2=0$ on $\Gamw$ can be checked by a direct computation with using \eqref{new-def-density}, the compatibility conditions for $(S_*,\Lambda_*,{\bf W}_*)$ on $\Gamw$(see \eqref{iterset-T} and \eqref{J3}), and the compatibility conditions of $(\tilde{\vphi}, \til{\Phi})$(see \eqref{definition-potentials}). In particular, one of essential ingredients used to verify $\der_rF_2=0$ on $\Gamw$ is \begin{equation*}
\begin{split}
  &(\nabla \til{\vphi}+ {\bf q}_*)\cdot \der_r (\nabla \til{\vphi}+ {\bf q}_*)\\
  &=(\der_{x_1} \til{\vphi}+ {\bf q}_*\cdot {\bf e}_1)\underset{(=0\,\,\tx{on $\Gamw$})}{\underbrace{\der_r(\nabla\times {\bf W}_*\cdot {\bf e}_1)}}
  +({\bf q}_*\cdot {\bf e}_{\theta})\underset{(=0\,\,\tx{on $\Gamw$})}{\underbrace{\der_r(\nabla\times {\bf W}_*\cdot {\bf e}_{\theta})}}=0\quad\tx{on $\Gamw$.}
  \end{split}
\end{equation*}

The compatibility condition $\der_rF_1=0$ on $\Gamw$ can be directly checked if we show that
\begin{equation}
\label{nontrivial term in F1}
  \der_r G=0\quad\tx{on $\Gamw$}
\end{equation}
for
\begin{equation*}
G:=[\til{\rho}_{\bf u}(S_*, \til{\Phi}, \nabla\til{\vphi}+ {\bf q}_*)]^T(D{\bf q}_*)(\nabla\til{\vphi}+{\bf q}_*).
\end{equation*}
Set ${\bf V}:=(S_*, \til{\Phi}, \nabla\til{\vphi}+ {\bf q}_*)$. In order to compute $\der_r G$ on $\Gamw$, we use that, for any $\rx\in \left\{(x_1, \rx')\in \ol{\Om_L}:\rx'\in \mcl{D},|\rx'|\ge1-\frac{\bar{\epsilon}}{2}\right\}$
\begin{equation}
\label{representation of G}
\begin{split}
  G=&\left(\til{\rho}_{\bf u}({\bf V})\cdot \der_1(\nabla\times{\bf W}_*)\right)(\nabla\til{\vphi}+\nabla\times {\bf W}_*)\cdot {\bf e}_1\\
  &+\left(\til{\rho}_{\bf u}({\bf V})\cdot \der_r(\nabla\times{\bf W}_*)\right)(\nabla\til{\vphi}+\nabla\times {\bf W}_*)\cdot {\bf e}_r\\
  &+\left(\til{\rho}_{\bf u}({\bf V})\cdot \der_{\theta}(\nabla\times{\bf W}_*)\right)(\nabla\til{\vphi}+\nabla\times {\bf W}_*)\cdot {\bf e}_{\theta}=:g_1+g_r+g_{\theta}.
  \end{split}
\end{equation}

First of all, we shall explain how to show that $\der_r g_1=0$ on $\Gamw$. It directly follows from \eqref{new-def-density} that
\begin{equation*}
  g_1=-\frac{1}{\gam S_*\til{\rho}^{\gam-2}({\bf V})}\underset{(=:g_{1,1})}{\underbrace{\left((\nabla\til{\vphi}+\nabla\times{\bf W}_*)\cdot\der_1(\nabla\times {\bf W}_*)\right)}}\underset{(=:g_{1,2})}{\underbrace{(\nabla\til{\vphi}+\nabla\times {\bf W}_*)\cdot {\bf e}_1}}.
\end{equation*}
Due to the axisymmetry of $\til{\vphi}$, the slip boundary condition $\der_r\til{\vphi}=0$ on $\Gamw$ and the compatibility conditions of ${\bf W}_*$ on $\Gamw$(see \eqref{J3}),
\begin{equation*}
  \der_r (\nabla\til{\vphi}+\nabla\times{\bf W}_*)=(\der_{rr}\til{\vphi}+\der_r\left((\nabla\times {\bf W}_*)\cdot{\bf e}_r\right)){\bf e}_r\quad\tx{on $\Gamw$}.
\end{equation*}
By using the axisymmetry of ${\bf W}_*$ and the boundary condition ${\bf W}_*=0$ on $\Gamw$, we get
\begin{equation*}
\nabla\times{\bf W}_*=\der_r ({\bf W}_*\cdot {\bf e}_1){\bf e}_{\theta}  \quad\tx{on $\Gamw$},
\end{equation*}
and this yields
\begin{equation*}
  \der_r (\nabla\til{\vphi}+\nabla\times{\bf W}_*) \cdot \der_1(\nabla\times{\bf W}_*)=0\quad\tx{on $\Gamw$}.
\end{equation*}
Due to the compatibility conditions $\partial_r(\nabla\times{\bf W}_*)\cdot{\bf e}_{x_1}=\partial_r(\nabla\times{\bf W}_*)\cdot{\bf e}_{\theta}=0$ on $\Gamw$,
\begin{equation*}
 \der_{r1}(\nabla\times{\bf W}_*)=\der_{1r}((\nabla\times {\bf W}_*)\cdot {\bf e}_r){\bf e}_r\quad\tx{on $\Gamw$},
\end{equation*}
from which it directly follows that
$(\nabla\til{\vphi}+\nabla\times{\bf W}_*) \cdot \der_{r1}(\nabla\times{\bf W}_*)=0$ on $\Gamw$, thus we obtain that
\begin{equation*}
  \der_rg_{1,1}=0\quad\tx{on $\Gamw$}.
\end{equation*}
By using the slip boundary condition $\der_r\til{\vphi}=0$ on $\Gamw$, and the compatibility condition $\partial_r(\nabla\times{\bf W}_*)\cdot{\bf e}_{x_1}=0$ on $\Gamw$, we easily get $\der_{r} g_{1,2}=0$ on $\Gamw$. Therefore we conclude that
\begin{equation*}
  \der_r g_1=0\quad\tx{on $\Gamw$}.
\end{equation*}
With more careful but tedious computations, one can similarly check that $\der_r g_r=\der_r g_{\theta}=0$ on $\Gamw$, then \eqref{nontrivial term in F1} is obtained. This verifies \eqref{comp for F-1} completely.
\medskip

It is even easier to check the compatibility conditions stated in \eqref{comp for F-2} by using the compatibility conditions of $(\tilde{\vphi}, \til{\Phi})$(see \eqref{definition-potentials}) and the compatibility conditions for $(S_*, \Lambda_*, {\bf W}_*)$ on $\Gam_0^{\bar{\epsilon}}$($\Gam_0$ for ${\bf W_*}$) stated in \eqref{iterset-T} and \eqref{J3}.
\medskip

{\textbf{3.}} For the rest of the proof, $(S_*, \Lambda_*, {\bf W}_*)$ remain to be fixed.

{\textbf{(3-1)}} Now, we introduce a linear boundary value problem associated with each $(\til{\vphi}, \til{\Phi})\in \mcl{K}_{\delta}^A$ so that we prove Lemma \ref{lemma:wp of nlbvp for potentials} by iterations. Given ${\bm\xi}=(\til{\vphi}, \til{\Phi})\in \mcl{K}_{\delta}^A$, let us define two linear differential operators as follows: For
\begin{equation*}
{\bf Q}:=(S_*, \til{\Phi}, \nabla\til{\vphi}+{\bf q}_*)\quad\tx{and}\quad
{\bf R}_t:=t(\til{\Phi}, \nabla\til{\vphi})+(1-t)(\bar{\Phi}, \nabla\bar{\vphi}),
\end{equation*}
let us define $\mcl{L}_1^{\bm\xi}$ and $\mcl{L}_2^{\bm\xi}$ by
\begin{equation*}
  \begin{split}
  \mcl{L}_1^{\bm\xi}(\psi, \Psi):=&\til{\rho}({\bf Q})\Delta \psi+[\til{\rho}_{\bf u}({\bf Q})]^T [D^2\psi] (\nabla \til{\vphi}+{\bf q}_*)+\left({\bf e}_{x_1}\cdot\til{\rho}_{\bf u}(S_0, \bar{\Phi}, \nabla\bar{\vphi})\right)\bar u'\der_1 \psi\\
  &+\bar u'(\nabla\til{\vphi}+{\bf q}_*)\cdot {\bf e}_{x_1}
  \biggl\{\nabla \psi \cdot\int_0^1 \der_{\bf u}\left({\bf e}_{x_1}\cdot\til{\rho}_{\bf u}\right)(S_0,{\bf R}_t)\,dt\\
  &\qquad\qquad\qquad\qquad\qquad+\Psi\int_0^1 \der_z\left({\bf e}_{x_1}\cdot\til{\rho}_{\bf u}\right)(S_0,{\bf R}_t)\,dt\biggr\},\\
 \mcl{L}_2^{\bm\xi}(\psi,\Psi):=&\Delta \Psi -(\Psi, \nabla \psi)\cdot \int_0^1 (\der_z, \der_{\bf u})\til{\rho}(S_0,{\bf R}_t)\,dt.
 \end{split}
\end{equation*}
Next, we define two functions $f_1^{\bm\xi}$ and $f_2^{\bm\xi}$ by
\begin{equation*}
  \begin{split}
  f_1^{\bm\xi}:=&F_1^{(S_*, \Lambda_*, {\bf W}_*)}(\til{\vphi}, \til{\Phi})-\left[{\bf e}_{x_1}\cdot \til{\rho}_{\bf u}(S_0, \bar{\Phi}, \nabla\bar{\vphi})\right](\bar{u}'{\bf q}_*\cdot{\bf e}_{x_1})\\
  &-\left[{\bf e}_{x_1}\cdot\left(\til{\rho}_{\bf u}(S_*, \til{\Phi}, \nabla\til{\vphi}+{\bf q}_*)-\til{\rho}_{\bf u}(S_0, \til{\Phi}, \nabla\til{\vphi})\right)\right] [\bar{u}'(\nabla\bar{\vphi}+{\bf q}_*)\cdot {\bf e}_{x_1}],\\
  f_2^{\bm\xi}:=&F_2^{(S_*, \Lambda_*, {\bf W}_*)}(\til{\vphi}, \til{\Phi})-(b-b_0)+\left(\til{\rho}(S_*, \til{\Phi}, \nabla\til{\vphi})-\til{\rho}(S_0, \til{\Phi}, \nabla\til{\vphi})\right).
  \end{split}
\end{equation*}
Finally, we introduce a linear boundary value problem for $(\psi,\Psi)$ associated with ${\bm\xi}\in \mcl{K}_{\delta}^A$:
\begin{equation}
\label{lbvp-potential part-full system}
\begin{split}
  &\begin{cases}
    \mcl{L}_1^{\bm\xi}(\psi,\Psi)=f_1^{\bm\xi}\\
    \mcl{L}_2^{\bm\xi}(\psi,\Psi)=f_2^{\bm\xi}
  \end{cases}\quad\tx{in $\Om_L$},\\
  &\begin{cases}
	\psi(0,r)=-\int_r^1 v_{\rm en}(t)dt,\,\,\partial_{x_1}\psi=u_{\rm en}-u_0-(\nabla\times{\bf W}_*)\cdot{\bf e}_{x_1}\,\,&\mbox{on}\,\,\Gamma_0,\\
		\partial_{x_1}\Psi=E_{\rm en}\,\,&\mbox{on}\,\,\Gamma_0,\\
\partial_{r}\psi=0,\,\,\partial_{r}\Psi=0\,\,&\mbox{on}\,\,\Gamma_{{\rm w}},\\
\der_{x_1}\Psi=E_{\rm{ex}}-\bar{E}(L)\,\,&\mbox{on}\,\,\Gamma_L.
\end{cases}
\end{split}
\end{equation}
\smallskip

{\textbf{(3-2)}} By repeating the proof of Proposition \ref{lemma-wp of lbvp for potential} with minor adjustments, we can fix a constant $L_*^{\rm v}\in(0, \bar L]$ depending on the data, and two constants $\eps_0^{\rm v}>0$ and $C_{\rm v}>0$ depending on the data and $(L,\bar{\epsilon})$ so that if $L\le L_*^{\rm v}$, and if
\begin{equation*}
  \max\{\delta, (1+M)\eps\}\le \eps_0^{\rm v},
\end{equation*}
then, for each ${\bm\xi}\in \mcl{K}_{\delta}^A$, the linear boundary value problem \eqref{lbvp-potential part-full system} has a unique solution $(\psi,\Psi)$ that satisfies the estimate
\begin{equation}
\label{estimate-lbvp-pot-sol}
\|\psi\|_{\mcl{M}^4_*((0,L)\times \mcl{D})}+\|\Psi\|_{H^4_*(\Omega_L)}\le C_{\rm v}\left((1+M)\eps+\sigma(b,u_{\rm en},v_{\rm en},0,S_0 ,E_{\rm en},E_{\rm ex})\right),
\end{equation}
and the compatibility conditions
\begin{equation*}
 \der_{x_1}^{k-1}\psi=0\,\,\tx{and}\,\, \der^k_{x_1}\Psi=0\quad\tx{on $\Gamma_0^{\bar{\epsilon}}$ in the trace sense for $k=1,3$.}
\end{equation*}

\smallskip

{\textbf{(3-3)}} {\emph{Claim: The solution $(\psi, \Psi)$ is axisymmetric.}}

Fix $\beta\in(0, 2\pi)$. Given a point $\rx=(x_1, \rx')\in \Om_L(=(0,L)\times \mcl{D})$, let $\rx'_{\beta}\in \R^2$ denote the vector obtained by rotating $\rx'$ on $\R^2$ by $\beta$ counterclockwise. And we shall denote $(x_1, \rx'_{\beta})$ by $\rx_{\beta}$. For a function $g:\ol{\Om_L}\rightarrow \R$ or a vector field ${\bf V}:\ol{\Om_L}\rightarrow \R^3$, let us set
\begin{equation*}
  g^{\beta}(\rx):=g(\rx_{\beta}),\quad
  {\bf V}^{\beta}(\rx):={\bf V}(\rx_{\beta}).
\end{equation*}

From the definitions of the iteration sets stated in \eqref{iterset-T}, \eqref{J3} and \eqref{iterset-potentials-full case}, it follows that
\begin{equation*}
  (f_1^{\bm\xi}, f_2^{\bm\xi})^{\beta}=(f_1^{\bm\xi}, f_2^{\bm\xi})\quad\tx{in $\Om_L$}.
\end{equation*}
And, one can directly check that
\begin{equation*}
  \left(\mcl{L}_2^{\bm\xi}(\psi, \Psi)\right)^{\beta}=\mcl{L}_2^{\bm\xi}(\psi^{\beta}, \Psi^{\beta})\quad\tx{in $\Om_L$}.
\end{equation*}
Also, it is clear that
\begin{equation*}
\begin{split}
  &\left(\mcl{L}_1^{\bm\xi}(\psi, \Psi)-[\til{\rho}_{\bf u}({\bf Q})]^T [D^2\psi] (\nabla \til{\vphi}+{\bf q}_*)\right)^{\beta}\\
  &=\mcl{L}_1^{\bm\xi}(\psi^{\beta}, \Psi^{\beta})-[\til{\rho}_{\bf u}({\bf Q})]^T [D^2\psi^{\beta}] (\nabla \til{\vphi}+{\bf q}_*)\qquad\tx{in $\Om_L$}.
  \end{split}
\end{equation*}
Similarly to \eqref{representation of G}, observe that
\begin{equation*}
  \begin{split}
  &\left([\til{\rho}_{\bf u}({\bf Q})]^T [D^2\psi] (\nabla \til{\vphi}+{\bf q}_*)\right)^{\beta}=\sum_{j=1}^3\til{\rho}_{\bf u}({\bf Q^{\beta}})\cdot \der_{j}(\nabla\psi^{\beta}) [(\nabla \til{\vphi}^{\beta}+({\bf q}_*)^{\beta})\cdot {\bm\zeta}_j]
  \end{split}
\end{equation*}
for $(\der_1, \der_2, \der_3):=(\der_{x_1}, \der_r, \der_{\theta})$ and $({\bm\zeta}_1, {\bm\zeta}_2, {\bm\zeta}_3 ):=({\bf e}_1, {\bf e}_r, {\bf e}_{\theta})$. Due to the axisymmetry of the vector fields ${\bf Q}$ and $\nabla\til{\vphi}+{\bf q}_*$, the representation given in the right above directly yields that
\begin{equation*}
  \left([\til{\rho}_{\bf u}({\bf Q})]^T [D^2\psi] (\nabla \til{\vphi}+{\bf q}_*)\right)^{\beta}=[\til{\rho}_{\bf u}({\bf Q})]^T[D^2\psi^{\beta}](\nabla \til{\vphi}+{\bf q}_*)\quad\tx{in $\Om_L$}.
\end{equation*}
Therefore, we obtain that
\begin{equation*}
   \left(\mcl{L}_1^{\bm\xi}(\psi, \Psi)\right)^{\beta}=\mcl{L}_1^{\bm\xi}(\psi^{\beta}, \Psi^{\beta})\quad\tx{in $\Om_L$}.
\end{equation*}

Note that all the boundary data are axisymmetric, thus $(\psi^{\beta}, \Psi^{\beta})$ satisfy all the boundary conditions stated in \eqref{lbvp-potential part-full system}. Based on the observations made so far, we conclude that $(\psi^{\beta}, \Psi^{\beta})$ solve the linear boundary value problem \eqref{lbvp-potential part-full system}, and this implies that
\begin{equation*}
  (\psi, \Psi)=(\psi^{\beta}, \Psi^{\beta})\quad\tx{in $\ol{\Om_L}$}
\end{equation*}
for all $\beta\in(0, 2\pi)$, due to the uniqueness of a solution. This verifies the claim.
\medskip

{\textbf{4.}}
Now we fix the constant $\delta$ in \eqref{iterset-potentials-full case} as
\begin{equation*}
  \delta:=2C_{\rm v}\left((1+M)\eps+\sigma(b,u_{\rm en},v_{\rm en},0,S_0 ,E_{\rm en},E_{\rm ex})\right).
\end{equation*}
If the constants $M$, $\eps$ and $\sigma(b,u_{\rm en},v_{\rm en},0,S_0 ,E_{\rm en},E_{\rm ex})$ satisfy the condition
\begin{equation*}
  \max\{2C_{\rm v}\left((1+M)\eps+\sigma(b,u_{\rm en},v_{\rm en},0,S_0 ,E_{\rm en},E_{\rm ex})\right), (1+M)\eps\}\le \eps_0^{\rm v},
\end{equation*}
then, for each ${\bm\xi}\in \mcl{K}_{\delta}^{A}$, the associated linear boundary value problem \eqref{lbvp-potential part-full system} has a unique solution $(\psi, \Psi)^{({\bm\xi})}$ that is axisymmetric, and that satisfies the estimate \eqref{estimate-lbvp-pot-sol}. Then we can define an iteration map $\mathfrak{I}_{\rm v}:\mcl{K}_{\delta}^{A}\rightarrow \mcl{K}_{\delta}^{A}$ by
\begin{equation*}
\mathfrak{I}_{\rm v}:{\bm\xi}\in \mcl{K}_{\delta}^{A}\mapsto
(\psi, \Psi)^{({\bm\xi})}+(\bar{\vphi}, \bar{\Phi}).
\end{equation*}
By repeating the arguments of steps 1--4 given in the proof of Theorem \ref{Irr-MainThm} with minor adjustments (see \S \ref{subsection-proof of theorem-potential}), we can fix a constant $\eps_1^{\rm v}\in(0, \eps_0^{\rm v}]$ depending only on the data and $(L,\bar{\epsilon})$ so that if the condition \eqref{condition-eps} holds, then, for each $(S_*, \Lambda_*, {\bf W}_*)\in \mcl{T}_{\eps}\times \mcl{U}_{\eps}\times \mfrak{W}_{M\eps}$, the associated nonlinear boundary value problem \eqref{bvp for potentials-iters}--\eqref{bc for potentials-iters} has a unique solution that satisfies the estimate \eqref{pphi-est-1}, and that is axisymmetric in $\Om_L$.

\end{proof}
\end{lemma}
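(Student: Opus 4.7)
The plan is to follow the strategy used for Theorem \ref{Irr-MainThm}, treating the frozen triple $(S_*,\Lambda_*,{\bf W}_*)$ as a small perturbation that enters \eqref{bvp for potentials-iters} only through nonhomogeneous source terms and through a modified ``swirl-corrected'' velocity. First I would rewrite the first equation of \eqref{bvp for potentials-iters} by carrying out the divergence using the expression \eqref{new-def-density} for the density, to obtain a quasilinear second-order equation of the form
\begin{equation*}
\widetilde{\mcl{N}}_1^{(S_*,\Lambda_*,{\bf W}_*)}(\vphi,\Phi)=F_1^{(S_*,\Lambda_*,{\bf W}_*)}(\vphi,\Phi),
\end{equation*}
where the left-hand side coincides with the principal part of the potential flow operator of Section \ref{Sec-Irr} when ${\bf q}_*:=\nabla\times{\bf W}_*+\frac{\Lambda_*}{|\rx'|^2}(\rx')^\perp\equiv{\bf 0}$ and $S_*\equiv S_0$, and the right-hand side collects every contribution of $(S_*,\Lambda_*,{\bf W}_*)$. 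The second equation is rewritten analogously as $\Delta\Phi-\til{\rho}(S_*,\Phi,\nabla\vphi)+b=F_2^{(S_*,\Lambda_*,{\bf W}_*)}(\vphi,\Phi)$.

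With this reformulation I would mimic the iteration scheme of \S\ref{subsection-proof of theorem-potential}: define a closed convex iteration set $\mcl{K}_\delta^A$ of axisymmetric pairs with $(\vphi-\bvphi,\Phi-\bPhi)\in\mcl{H}_\delta^P\times\mcl{I}_\delta^E$, and for each $\bm{\xi}=(\til\vphi,\til\Phi)\in\mcl{K}_\delta^A$ associate a linear boundary value problem obtained by freezing the principal coefficients at $\bm{\xi}$ while linearizing the lower-order terms about $(\bvphi,\bPhi)$. The crucial observation is that this linear problem is precisely of the form \eqref{simple-lbvp} studied in Proposition \ref{lemma-wp of lbvp for potential}; the bound $\|{\bf q}_*\|+\|S_*-S_0\|\lesssim(1+M)\eps$ supplies a nonhomogeneous source of size $(1+M)\eps+\sigma$ in the norms required by that proposition. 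Once linear well-posedness is available, a Schauder fixed-point argument for existence plus a contraction estimate in a weaker norm (following Steps 2 and 4 of \S\ref{subsection-proof of theorem-potential}) yields the unique solution $(\vphi,\Phi)$ together with the estimate \eqref{pphi-est-1}.

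Axisymmetry would then be derived from uniqueness: since every parameter and every boundary datum is axisymmetric, the rotation of a solution by an arbitrary angle $\beta\in[0,2\pi)$ is again a solution of the same boundary value problem and must therefore coincide with the original. The compatibility conditions $\partial_{11}(\vphi-\bvphi)=0$ and $\partial_{111}(\Phi-\bPhi)=0$ on $\Gamma_0^{\bar\epsilon}$ would be read off from the equations evaluated on $\Gamma_0$, using $E_0=0$ (which, via Definition \ref{definition:iteration}(ii), kills $\bar a_1$ and $\bar b_2$ on $\Gamma_0$), together with $F_1^{\bm\xi}=0$ and $\partial_{x_1}F_2^{\bm\xi}=0$ on $\Gamma_0^{\bar\epsilon}$ and the elliptic regularity gain $W\in H^5$ near $\Gamma_0$ coming from the even reflection argument.

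The main obstacle will be verifying the wall-compatibility $\partial_r F_1^{\bm\xi}=\partial_r F_2^{\bm\xi}=0$ on $\Gamw$ for every $\bm\xi\in\mcl{K}_\delta^A$, which is required to invoke Proposition \ref{lemma-wp of lbvp for potential}. The term $F_1^{\bm\xi}$ contains in particular $[\til{\rho}_{\bf u}]^T(D{\bf q}_*)(\nabla\til\vphi+{\bf q}_*)$, which, when expanded in the cylindrical frame $\{{\bf e}_{x_1},{\bf e}_r,{\bf e}_\theta\}$, generates a collection of mixed derivatives whose cancellation on $\Gamw$ depends delicately on the slip conditions built into $\mfrak{W}_{M\eps}$, namely ${\bf W}_*={\bf 0}$, $\partial_r(\nabla\times{\bf W}_*)\cdot{\bf e}_{x_1}=0$ and $\partial_r(\nabla\times{\bf W}_*)\cdot{\bf e}_\theta=0$ on $\Gamw$, together with the vanishing of the swirl contribution $\tfrac{\Lambda_*}{|\rx'|^2}(\rx')^\perp$ there. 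Checking these cancellations term by term, and the analogous ones for $F_2^{\bm\xi}$ involving $(\nabla\til\vphi+{\bf q}_*)\cdot\partial_r(\nabla\til\vphi+{\bf q}_*)$, is the most delicate technical point of the argument.
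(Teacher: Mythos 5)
Your proposal is correct and follows essentially the same route as the paper: the same reformulation via $\widetilde{\mcl{N}}_1,\widetilde{\mcl{N}}_2$ and the source terms collecting $(S_*,\Lambda_*,{\bf W}_*)$ through ${\bf q}_*$, the same iteration set $\mcl{K}_{\delta}^A$ with linear well-posedness imported from Proposition \ref{lemma-wp of lbvp for potential}, a Schauder fixed point plus contraction for existence and uniqueness, axisymmetry from uniqueness under rotation, and the boundary compatibility conditions read off from the equations on $\Gamma_0$. You also correctly single out the term-by-term verification of $\partial_r F_1^{\bm\xi}=\partial_r F_2^{\bm\xi}=0$ on $\Gamw$ as the delicate point and identify exactly the slip and vanishing conditions in $\mfrak{W}_{M\eps}$, $\mcl{T}_{\eps}$ and $\mcl{U}_{\eps}$ on which the cancellations rest, which is precisely what the paper's proof carries out in detail.
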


\begin{proof}[Proof of Lemma \ref{pro-1}]
In this proof, we keep assuming that the constants $M$, $\eps$ and $\sigma(b,u_{\rm en},v_{\rm en},0,S_0 ,E_{\rm en},E_{\rm ex})$ satisfy the condition \eqref{condition-eps}. Throughout the proof, $(S_*, \Lambda_*)$ remains to be fixed in $\mcl{T}_{\eps}\times \mcl{U}_{\eps}$.
\smallskip

{\textbf{1.}} For a fixed $\widehat{\bf W}\in \mfrak{W}_{M\eps}$, let $(\hat{\vphi}, \hat{\Phi})$ be the solution to the nonlinear boundary value problem \eqref{bvp for potentials-iters}--\eqref{bc for potentials-iters} associated with $(S_*, \Lambda_*, \widehat{\bf W})$. The unique existence of a solution $(\hat{\vphi}, \hat{\Phi})$ directly follows from Lemma \ref{lemma:wp of nlbvp for potentials}. Moreover, the solution is axisymmetric in $\Om_L$.

For the vector field ${\bf F}^{(S_*, \Lambda_*,\widehat{{\bf W}})}(\hat{\vphi},\hat{\Phi})$ given by \eqref{definition-F-vec-field}, we now consider a linear boundary value problem for ${\bf W}$:
\begin{equation}
\label{lbvp for W}
\begin{cases}
-\Delta{\bf W}={\bf F}^{(S_*, \Lambda_*,\widehat{{\bf W}})}(\hat{\vphi},\hat{\Phi})\quad&\mbox{in $\Om_L$},\\
\partial_{x_1}{\bf W}={\bf 0}\quad&\mbox{on $\Gamma_0$},\\
{\bf W}={\bf 0}\quad&\mbox{on $\Gamma_{{\rm w}}$},\\
 \partial_{x_1}{\bf W}={\bf 0}\quad&\mbox{on $\Gamma_L$}.
\end{cases}
\end{equation}

By using \eqref{iterset-T}, \eqref{definition-F-vec-field}, \eqref{J3} and \eqref{pphi-est-1}, we can directly check that ${\bf F}^{(S_*, \Lambda_*,\widehat{{\bf W}})}(\hat{\vphi},\hat{\Phi})$ satisfies the following properties:
\begin{itemize}
\item[(i)] There exists a constant $k_0>0$ depending only on the data and $(L, \bar{\epsilon})$ so that
    \begin{equation*}
    \begin{split}
      \|{\bf F}^{(S_*, \Lambda_*,\widehat{{\bf W}})}(\hat{\vphi},\hat{\Phi})\|_{H^3_*(\Om_L)}&\le k_0\left(\left\|\frac{\der_rS}{|\rx'|}\right\|_{\mcl{M}^3_*(0,L)\times \mcl{D}}+\left\|\frac{\der_r\Lambda}{|\rx'|}\right\|_{\mcl{M}^3_*(0,L)\times \mcl{D}}\right)\\
      &\le 2k_0\eps;
      \end{split}
    \end{equation*}
\item[(ii)] On $(\Gam_0\cup\Gam_L)\cap\{|\rx'|>1-\frac{\bar{\epsilon}}{2}\}$, the compatibility condition
    \begin{equation*}
    \der_{x_1}{\bf F}^{(S_*, \Lambda_*,\widehat{{\bf W}})}(\hat{\vphi},\hat{\Phi})=0
    \end{equation*}
   holds.
\end{itemize}
Then we apply the method of reflection and standard regularity theorems for a linear elliptic boundary value problem (e.g. see \cite[Theorems 8.10 and 8.13]{GilbargTrudinger} ), and the uniqueness of a weak solution, then adjust the argument of Step 1 in the proof of Lemma \ref{lemma-H4 estimate-final} to show that the linear elliptic boundary value problem \eqref{lbvp for W} acquires a unique solution ${\bf W}\in H^4(\Om_L)\cap H^5_{\rm{loc}}(\Om_L)$, and that there exists a constant $C_{\rm v}$ fixed depending only on the data and $(L,\bar{\epsilon})$ so that the following estimate holds:
\begin{equation}
\label{estimate of W}
  \|{\bf W}\|_{H^5_{*}(\Om_L)}\le C_{\rm v}k_0\eps.
\end{equation}
\smallskip

{\textbf{2.}} {\emph{Claim: The solution ${\bf W}$ to \eqref{lbvp for W} is axisymmetric, and satisfies the compatibility conditions:}}
\begin{equation*}
  \der_r(\nabla\times {\bf W})\cdot {\bf e}_{x_1}=0\quad\tx{and}\quad \der_r(\nabla\times{\bf W})\cdot{\bf e}_{\theta}=0\quad\tx{on $\Gamw$}.
\end{equation*}

Note that ${\bf F}^{(S_*, \Lambda_*,\widehat{{\bf W}})}(\hat{\vphi},\hat{\Phi})=\mfrak{g}{\bf e}_{\theta}$ for
\begin{equation*}
  \mfrak{g}:=\frac{\frac{\rho^{\gam-1}(\hat{\vphi},\hat{\Phi}, \widehat{\bf W}, S_*, \Lambda_*)}{\gam-1}\der_rS_*+\frac{\Lambda_*}{|\rx'|^2}\der_r\Lambda_*}{{\bf u}(\hat{\vphi}, \widehat{\bf W}, \Lambda_*)\cdot {\bf e}_{x_1}}.
\end{equation*}
According to \cite[Proposition 3.3]{bae20183}, if ${\bf F}^{(S_*, \Lambda_*,\widehat{{\bf W}})}(\hat{\vphi},\hat{\Phi})=\mfrak{g}{\bf e}_{\theta}\in C^{\alp}(\ol{\Om_L};\R^3)$ for some $\alp\in(0,1)$, then the solution ${\bf W}$ to \eqref{lbvp for W} can be represented as
\begin{equation}
\label{representation of W}
  {\bf W}=\phi{\bf e}_{\theta}\quad\tx{in $\ol{\Om_L}$}
\end{equation}
for the function $\phi:\ol{\Om_L}\rightarrow \R$ satisfying the following properties:
\begin{itemize}
\item[(i)] The function $\phi$ is axisymmetric in $\ol{\Om_L}$;
\item[(ii)] As a function of $(x_1, r)\in [0,L]\times [0,1]$, $\phi\in C^{2,\alp}([0,L]\times [0,1])$ solves
    \begin{equation}
    \label{lbvp for phi}
      \begin{cases}
      -\left(\partial_{x_1x_1}+\frac{1}{r}\partial_r(r\partial_r)-\frac{1}{r^2}\right)\phi=\mathfrak{g}\quad&\mbox{in $R_L:=(0,L)\times (0,1)$},\\
      \phi=0\quad&\mbox{on $\der R_L\cap \{r=0,1\}$},\\
      \der_{x_1}\phi=0\quad&\mbox{on $\der R_L\cap\{x_1=0,L\}$}.
      \end{cases}
    \end{equation}
\end{itemize}

Clearly, it follows from ${\bf F}^{(S_*, \Lambda_*,\widehat{{\bf W}})}(\hat{\vphi},\hat{\Phi})\in H^3_*(\Om_L)(\subset H^2(\Om_L)\subset W^{1,6}(\Om_L))$ that ${\bf F}^{(S_*, \Lambda_*,\widehat{{\bf W}})}(\hat{\vphi},\hat{\Phi})\in C^{1/2}(\ol{\Om_L})$, thus ${\bf W}$ can be represented as \eqref{representation of W}. This directly implies that ${\bf W}$ is axisymmetric in $\Om_L$.

A straightforward computation yields
\begin{equation}
\label{curl of W}
  \nabla\times {\bf W}=\nabla\times (\phi {\bf e}_{\theta})=\left(\der_r\phi+\frac{\phi}{r}\right){\bf e}_{x_1}-\der_1\phi\,{\bf e}_r\quad\tx{in $\ol{\Om_L}$}.
\end{equation}
Then we immediately obtain that
\begin{equation*}
  \der_r(\nabla \times {\bf W})\cdot {\bf e}_{\theta}=0\quad\tx{on $\Gamw$}.
\end{equation*}
Now we shall compute $\der_r(\nabla\times {\bf W})\cdot{\bf e}_{x_1}=\der_{rr}\phi+\der_r\left(\frac{\phi}{r}\right)$ on $\Gamw$. Since $\phi$, as a function of $(x_1, r)$, is $C^2$ up to $\der R_L$, it follows from the equation $\left(\partial_{x_1x_1}+\frac{1}{r}\partial_r(r\partial_r)
-\frac{1}{r^2}\right)\phi=-\mathfrak{g}$ in $\ol{\Om_L}$ that
\begin{equation*}
  \der_r(\nabla\times {\bf W})\cdot{\bf e}_{x_1}=-\mfrak g-\der_{x_1x_1}\phi\quad\tx{on $\Gamw$}.
\end{equation*}
Due to the boundary condition $\phi=0$ on $r=1(\Gam_w)$, it clearly holds that $\der_{x_1x_1}\phi=0$ on $\Gamw$. In addition, the compatibility conditions $\der_rS_*=\der_r\Lambda_*=0$ on $\Gamw$, prescribed in \eqref{iterset-T}, directly imply that $\mfrak{g}=0$ on $\Gamw$, therefore we conclude that the compatibility condition $\der_r(\nabla\times {\bf W})\cdot{\bf e}_{x_1}=0$ holds on $\Gamw$. The claim is verified.
\smallskip

{\textbf{3.}} Now we define an iteration map $\mcl{V}^{(S_*, \Lambda_*)}: \mfrak{W}_{M\eps}\rightarrow H^5_*(\Om_L;\R^3)$ by
\begin{equation*}
  \mcl{V}^{(S_*, \Lambda_*)} (\widehat{{\bf W}}):={\bf W}
\end{equation*}
for the solution ${\bf W}$ to \eqref{lbvp for W}. Then $\mcl{V}^{(S_*, \Lambda_*)}$ maps the iteration set $\mfrak{W}_{M\eps}$ into itself if we fix the constant $M$ given in \eqref{J3} as
\begin{equation}
\label{choice of M}
  M=2C_{\rm v}k_0.
\end{equation}
Now, suppose that $\eps$ and $\sigma(b,u_{\rm en},v_{\rm en},0,S_0 ,E_{\rm en},E_{\rm ex})$ are fixed sufficiently small to satisfy the inequality
\begin{equation*}
  (1+2C_{\rm v}k_0)\eps+\sigma(b,u_{\rm en},v_{\rm en},0,S_0 ,E_{\rm en},E_{\rm ex})\le \eps_1^{\rm v}
\end{equation*}
so that the condition \eqref{condition-eps} holds. By the Sobolev embedding theorem and the Arzel\`a-Ascoli theorem, the iteration set $\mfrak{W}_{M\eps}$ is compact in $C^{2, 1/4}(\ol{\Om_L})$. And, we can easily adjust Step 2 of the proof of Theorem \ref{Irr-MainThm}(see \S \ref{subsection-proof of theorem-potential}) to show that  the map $ \mcl{V}^{(S_*, \Lambda_*)}$ is continuous in $C^{2, 1/4}(\ol{\Om_L})$. Then the Schauder fixed point theorem combined with Lemma \ref{lemma:wp of nlbvp for potentials} implies the existence of a solution $(\vphi, \Phi, {\bf W})$ to Problem \ref{nlbvp:potentials} associated with $(S_*, \Lambda_*)\in \mcl{J}_{\eps}$.
\smallskip

{\textbf{4.}} Let $(\vphi^{(1)}, \Phi^{(1)}, {\bf W}^{(1)})$ and  $(\vphi^{(2)}, \Phi^{(2)}, {\bf W}^{(2)})$ be two solutions of Problem \ref{nlbvp:potentials} associated with $(S_*, \Lambda_*)\in \mcl{J}_{\eps}$. And, suppose that they satisfy the estimate \eqref{pphi-est-1}. Next, let us set
\begin{equation*}
\eta:=\|(\vphi^{(1)}, \Phi^{(1)})-(\vphi^{(2)}, \Phi^{(2)})\|_{H^1(\Om_L)}+\|{\bf W}^{(1)}-{\bf W}^{(2)}\|_{H^2(\Om_L)}.
\end{equation*}
Then, by adjusting the proof of Proposition \ref{proposition-H1-new}, we can show that
\begin{equation}
\label{contraction of potentials-HD}
  \|(\vphi^{(1)}, \Phi^{(1)})-(\vphi^{(2)}, \Phi^{(2)})\|_{H^1(\Om_L)}\le C_{\flat}\left(\eps+\sigma(b,u_{\rm en},v_{\rm en},0,S_0 ,E_{\rm en},E_{\rm ex})\right)\eta
\end{equation}
for some constant $C_{\flat}>0$ fixed depending only on the data. And, by using \eqref{definition-F-vec-field}, one can easily check that
\begin{equation*}
  \|{\bf W}^{(1)}-{\bf W}^{(2)}\|_{H^2(\Om_L)}\le C_{\sharp}\left(\eps+\sigma(b,u_{\rm en},v_{\rm en},0,S_0 ,E_{\rm en},E_{\rm ex})\right)\eta
\end{equation*}
for some constant $C_{\sharp}>0$ fixed depending only on the data. Finally, we add the previous two estimates to get
\begin{equation*}
  \eta\le (C_{\flat}+C_{\sharp})\left(\eps+\sigma(b,u_{\rm en},v_{\rm en},0,S_0 ,E_{\rm en},E_{\rm ex})\right)\eta.
\end{equation*}
Therefore, we can fix a constant $\bar{\eps}>0$ sufficiently small so that if the inequality
\begin{equation*}
  \eps+\sigma(b,u_{\rm en},v_{\rm en},0,S_0 ,E_{\rm en},E_{\rm ex})\le \bar{\eps},
\end{equation*}
then all the arguments up to Step 3 hold, and $\eta=0$ holds. Moreover, such a constant $\bar{\eps}$ can be fixed depending only on the data and $(L,\bar{\epsilon})$. This completes the proof.

\end{proof}

\subsubsection{Step 2: Initial value problem for $(S, \Lambda)$}
Assume that the condition \eqref{condition-for-eps-sig} holds. Then, for each $(S_*,\Lambda_*)\in \mcl{J}_{\eps}$, Lemma \ref{pro-1} yields a unique solution $(\vphi, \Phi, {\bf W})$ (with ${\bf W}=\phi {\bf e}_{\theta}$) to Problem \ref{nlbvp:potentials} associated with $(S_*,\Lambda_*)$ so that the solution satisfies the estimate \eqref{pphi-est}. We define a vector field ${\bf m}_*:\ol{\Om_L}\rightarrow \R^3$ by
\begin{equation}
\label{definition of m field}
  {\bf m}_*={\rho}(\vphi, \Phi, \phi{\bf e}_{\theta},S_*,\Lambda_*)\left(\nabla\vphi+ \nabla \times (\phi{\bf e}_{\theta})+\frac{\Lambda_*}{|\rx'|^2}(\rx')^{\perp}\right)
\end{equation}
for ${\rho}(\vphi, \Phi, \phi{\bf e}_{\theta},S_*,\Lambda_*)$ given by \eqref{definition:density-HD}.
Note that ${\bf m}_*$ is axisymmetric in the sense of Definition \ref{definition-axixymmetry}.
By Definition \ref{definition-nl-operators}, the equation $\mcl{N}_1^{(S_*, \Lambda_*, {\bf W}_*)}(\vphi,\Phi)=0$ in $\Om_L$ immediately yields that
\begin{equation}
\label{div-free of m field}
  \nabla\cdot {\bf m}_*=0\quad\tx{in $\Om_L$}.
\end{equation}

\begin{problem}
\label{problem-ivp for S and Lambda}
Given radial functions $w_{\rm en},\,S_{\rm en}\in C^4(\Gam_0)$ that satisfy the compatibility conditions:
\begin{itemize}
\item[-] $\displaystyle{w_{\rm en}=0\quad\tx{on $ \Gam_0^{\bar{\epsilon}} $},\quad \der_r^k w_{\rm en}({\bf 0})=0\quad\tx{for $k=0,1,2,3$}},$
    \smallskip
   \item[-] $\displaystyle{S_{\rm en}=S_0\quad\tx{on $ \Gam_0^{\bar{\epsilon}} $},\quad \der_r^kS_{\rm en}({\bf 0})=0\quad\tx{for $k=1,2,3$}}$,
\end{itemize}
find an axisymmetric solution $(S, \Lambda)$ to the linear initial value problem:
\begin{equation}\label{INI-S}
\begin{cases}
{\bf m}_{\ast}\cdot \nabla S=0\\
{\bf m}_{\ast}\cdot \nabla \Lambda=0
\end{cases}\,\,\mbox{in}\,\,\Omega_L,\quad
\begin{cases}
S(0, \rx')=S_{\rm en}(\rx')\\
\Lambda(0, \rx')=|\rx'|w_{\rm en}(\rx')
\end{cases}\,\,\mbox{on}\,\,\Gamma_0.
\end{equation}

\end{problem}

\begin{proposition}\label{pro-tt}
For the constant $\bar{\eps}>0$ from Lemma \ref{pro-1}, one can fix a constant $\eps^*\in(0, \bar{\eps}]$ depending only on the data and $(L,\bar{\epsilon})$ so that if
\begin{equation}
\label{condition for transp eqn}
\eps+\sigma(b,u_{\rm en},v_{\rm en},0,S_{0},E_{\rm en},E_{\rm ex})\le\eps^{\ast},
\end{equation}
then Problem \ref{problem-ivp for S and Lambda} acquires a unique solution $(S,\Lambda)$ that satisfies the following properties:
\begin{itemize}
\item[(a)] there exists a constant $C_{\natural}>0$ depending only on the data and $L$ to satisfy
    \begin{equation}
    \label{tran-est}
    \begin{split}
    &\|S-S_0\|_{\mcl{M}^4_*((0,L)\times \mcl{D})}+\left\|\frac{\der_rS}{|\rx'|}\right\|_{\mcl{M}^3_*((0,L)\times \mcl{D})}\le C_{\natural}\|S_{\rm en}-S_0\|_{C^4(\ol{\Gam_0})},\\
    &\|\Lambda\|_{\mcl{M}^4_*((0,L)\times \mcl{D})}+\sum_{k=0,1}\left\|\frac{\der_r^k\Lambda}{|\rx'|^{2-k}}\right\|_{\mcl{M}^3_*((0,L)\times \mcl{D})}\le C_{\natural}\|w_{\rm en}\|_{C^4(\ol{\Gam_0})};
    \end{split}
    \end{equation}
 \item[(b)] $\displaystyle{S=S_0\,\,\tx{and}\,\, \Lambda=0\,\,\tx{hold on}\,\, \left\{(x_1, \rx')\in \ol{\Om_L}:|\rx'|\ge1-\frac{\bar{\epsilon}}{2}\right\}}$;
 \item[(c)] $\displaystyle{\der_r S=0\,\,\tx{and}\,\, \der_r\Lambda=0\,\,\tx{hold on }\,\,\Gamw}$;
 \item[(d)] $S$ and $\Lambda$ are axisymmetric in $\Om_L$.

\end{itemize}

\begin{proof} {\textbf{1.}} In this proof, we shall discuss in details on the initial value problem for $S$ only because one can repeat the same argument for $\Lambda$ with minor adjustments.

Since we seek for an axisymmetric solution to
\begin{equation}
\label{ivp-S}
  \begin{cases}
  {\bf m}_{\ast}\cdot \nabla S=0\quad&\mbox{in $\Om_L$}\\
  S(0, \rx')=S_{\rm en}(\rx')\quad&\mbox{on $\Gam_0$}
  \end{cases},
\end{equation}
we set $\mcl{S}(x_1, r):=S(x_1, \rx')$ and $\mcl{S}_{\rm en}(r):=S_{\rm en}(\rx')$ for $r=|\rx'|$, and it suffices to solve the following initial value problem for $\mcl{S}$:
\begin{equation}
\label{ivp-S-axisym}
  \begin{cases}
  (r{\bf m}_{\ast}\cdot {\bf e}_{x_1})\der_{x_1}\mcl{S} +(r{\bf m}_{\ast}\cdot {\bf e}_{r})\der_r\mcl{S}=0\quad&\mbox{in $R_L:=\left\{(x_1, r)\in \R^2\left|\, \begin{split}&0<x_1<L,\\
  			&0<r<1\end{split}\right.\right\}$},\\
  \mcl{S}(0, r)=\mcl{S}_{\rm en}(r)\quad&\mbox{on $\Sigma_0:=\der R_L\cap\{x_1=0\}$}.
  \end{cases}
\end{equation}

An explicit computation with using \eqref{div-free of m field} and the axisymmetry of ${\bf m}_*$ shows that
\begin{equation}
\label{div-free-in cyl-coord}
  \der_{1}(r{\bf m}_{\ast}\cdot {\bf e}_{x_1})+\der_r(r{\bf m}_{\ast}\cdot {\bf e}_{r})=0\quad\tx{for all $(x_1, r)\in R_L$}.
\end{equation}
Define a function $w:[0,L]\times[0,1]\to\mathbb{R}$ by
\begin{equation*}
w(x_1,r):=\int_0^r \tau{\bf m}_{\ast}\cdot{\bf e}_{x_1}(x_1,\tau)d\tau.
\end{equation*}
Then it easily follows from \eqref{div-free-in cyl-coord} that
\begin{equation}
\label{property of w}
(\partial_{x_1} w, \der_rw)(x_1,r)=r(-{\bf m}_{\ast}(x_1, \rx')\cdot{\bf e}_r,{\bf m}_{\ast}(x_1, \rx')\cdot{\bf e}_{x_1})\quad\mbox{for}\quad(x_1,r)\in R_L.
\end{equation}
By using \eqref{representation of W}, the boundary condition $\phi=0$ on $\der R_L\cap\{r=0,1\}$ given in \eqref{lbvp for phi}, \eqref{curl of W}, \eqref{definition of m field}, and the boundary condition $\der_r\vphi=0$ on $\Gamw$ given in \eqref{bc for potentials-iters}, it can be directly checked that
\begin{equation}\label{px-w}
\partial_{x_1} w=0\quad\mbox{on $\der R_L\cap\{r=0,1\}$}.
\end{equation}

Let us set
\begin{equation*}
\overline{\bf m}:=\bar{\rho}\bar{u}{\bf e}_{x_1}=J_0{\bf e}_{x_1}.
\end{equation*}
By using the estimate \eqref{pphi-est}, the definition of ${\bf m}_*$ stated in \eqref{definition of m field} and \eqref{choice of M}, we can directly show that
\begin{equation}
\label{estimate of m field}
  \|{\bf m}_*-\overline{\bf m}\|_{\mcl{M}^3_*((0, L)\times \mcl{D};\R^3)}\le C\left(\eps+\sigma(b,u_{\rm en},v_{\rm en},0,S_{0},E_{\rm en},E_{\rm ex})\right)
\end{equation}
for some constant $C>0$ depending only on the data and $(L, \bar{\epsilon})$. Therefore, one can choose a small constant $\eps^{\ast}\in(0,\bar{\eps}]$ depending only on the data and $(L, \bar{\epsilon})$ so that if
\begin{equation*}
\eps+\sigma(b,u_{\rm en},v_{\rm en},0,S_{0},E_{\rm en},E_{\rm ex})\le\eps^{\ast},
\end{equation*}
then
\begin{equation}\label{pr-w}
\frac{2}{5}J_0\le \min_{\ol{\Om_L}}{\bf m}_{\ast}\cdot{\bf e}_{x_1}\le \max_{\ol{\Om_L}}{\bf m}_{\ast}\cdot{\bf e}_{x_1} \le  \frac{8}{5}J_0.
\end{equation}
\medskip

For the rest of the proof, the constant $C$ that appears in various estimates may be fixed differently but it is regarded to be fixed depending only on the data and $(L, \bar{\epsilon})$ unless otherwise specified.
\medskip

Let us define a function $\mcl{G}:[0,1]\rightarrow \R$ by
\begin{equation*}
\mathcal{G}(r):=w(0,r).
\end{equation*}
The properties \eqref{px-w} and \eqref{pr-w} imply that
\begin{itemize}
\item[-] $\displaystyle{w(x_1,0)=w(0,0),\,\, w(x_1, 1)=w(0,1)}$ for $0\le x_1\le L$;
\item[-] $\displaystyle{\der_r w(x_1, r)>0}$ for $(x_1,r)\in [0,L]\times (0,1]$.
\end{itemize}
Therefore, $\mcl{G}:[0,1]\rightarrow [w(0,0), w(0,1)]$ is invertible and $\mcl{G}^{-1}$ is differentiable. Let us define a function $\mcl{T}:\ol{R}_L\rightarrow [0,1](=\der R_L\cap\{x_1=0\})$ by
\begin{equation}
\label{definition of T map}
\mcl{T}:=\mcl{G}^{-1}\circ w.
\end{equation}
Let us set $\mcl{S}: \ol{R_L}\rightarrow \R $ as
\begin{equation}
\label{sol-S}
  \mcl{S}(x_1, r):=\mcl{S}_{\rm en}\circ \mcl{T}(x_1, r).
\end{equation}
Then it can be directly checked by using \eqref{property of w} that the function $\mcl{S}$ solves \eqref{ivp-S-axisym}. This proves that the initial value problem \eqref{ivp-S} has at least one solution
\begin{equation}
\label{sol-S-final}
S(x_1, \rx')=\mcl{S}(x_1, |\rx'|).
\end{equation}

\medskip

{\textbf{2.}}{\textbf{(2-1)}} We differentiate $\mcl{G}\circ \mcl{T}(x_1, r)=w(x_1, r)$ and use \eqref{property of w} to get
\begin{equation}
\label{derivative of T map}
  D_{(x_1, r)}\mcl{T}(x_1, r)=\frac{r}{\mcl{T}(x_1, r)}\cdot
  \frac{(-{\bf e}_r\cdot{\bf m}_*, {\bf e}_{x_1}\cdot {\bf m}_*)(x_1, r)}{{\bf e}_{x_1}\cdot {\bf m}_*(0, \mcl{T}(x_1, r))}\quad \tx{in $R_L$}.
\end{equation}

From the definition of $\mcl{T}$ and the property $w(x_1,0)=w(0,0)$ for $x_1\in[0,L]$, it directly follows that
\begin{equation*}
  w(x_1, r)-w(x_1, 0)=w(0, \mcl{T}(x_1, r))-w(0,0)\quad\tx{for all $(x_1,r)\in\ol{R}_L$}.
\end{equation*}
We differentiate the above equation with respect to $r$, and use \eqref{property of w} to get
\begin{equation*}
  \frac{r}{\mcl{T}(x_1,r)}=\sqrt{\frac{\int_0^1 (t{\bf e}_1\cdot {\bf m}_*)(0,t\mcl{T}(x_1,r) )\,dt}{\int_0^1 (t{\bf e}_1\cdot {\bf m}_*)(x_1, tr )\, dt}},
\end{equation*}
which combined with \eqref{pr-w} yields that
\begin{equation}
\label{r vs T}
 \frac{1}{2}\le \frac{r}{\mcl{T}(x_1, r)}\le 2\quad\tx{in $R_L$.}
\end{equation}
By using \eqref{estimate of m field}, \eqref{pr-w} and \eqref{r vs T}, we easily obtain from \eqref{derivative of T map} that
\begin{equation}
\label{C1 estimate of T map}
  \|D_{\rx}\mcl{T}\|_{C^0(\ol{\Om_L})}\le C.
\end{equation}

\smallskip

{\textbf{(2-2)}} Due to the conditions $(S_*, \Lambda_*)(0, \rx')=(S_{\rm en}(\rx'), |\rx'|w_{\rm en}(\rx'))$ on $\Gam_0$ stated in \eqref{iterset-T}, and the boundary conditions for $\vphi$ prescribed on $\Gam_0$ (see \eqref{bc for potentials}), we have
\begin{equation}\label{mC3}
{\bf e}_{x_1}\cdot{\bf m}_{\ast}=\left[\frac{\gamma-1}{\gamma S_{\rm en}}\left(\Phi-\frac{1}{2}(u_{\rm en}^2+v_{\rm en}^2+w_{\rm en}^2)\right)\right]^{1/(\gamma-1)}u_{\rm en}\quad\tx{on $\Gam_0$}.
\end{equation}

Let us rewrite the equation $\mcl{N}_2^{(S_*, \Lambda_*, {\bf W})}(\vphi, \Phi)=0$ given in Problem \ref{nlbvp:potentials} as
\begin{equation*}
  \Delta \Phi=\rho(\vphi, \Phi, {\bf W}, S_*, \Lambda_*)-b=:f_2\quad\tx{in $\Om_L$}.
\end{equation*}
It follows from a direct computation with using Lemma \ref{pro-1} that $f_2\in H^3(\Om_L\cap\{x_1< \frac L2\})$ and $\der_{x_1}f_2=0$ on $\Gam_0^{\bar{\epsilon}}$. Therefore, we can apply the method of (even) reflection about with respect to $x_1$-variable in a small neighborhood of $\ol{\Gam_0}\cap \ol{\Gamw}$ with using standard regularity theorems for a linear elliptic boundary value problem (e.g. see \cite[Theorems 8.10 and 8.13]{GilbargTrudinger} ) to conclude that there exists a constant $C>0$ such that
\begin{equation}\label{Psi-C3}
\|\Phi\|_{C^3(\overline{\Omega_L\cap\{x\le \frac{L}{4}\}})}\le C,
\end{equation}
and this finally yields that
\begin{equation}
\label{estimate-m field-ent}
  \|{\bf e}_{x_1}\cdot{\bf m}_{\ast}\|_{C^3(\ol{\Gam_0})}\le C.
\end{equation}
By a lengthy but straightforward computation with using \eqref{derivative of T map}, \eqref{r vs T} and \eqref{estimate-m field-ent}, it can be directly checked that, for $k=1,2,3,4$,
\begin{equation}
\label{final estimate of T map}
  |D^k_{\rx}\mcl{T}(x_1, |\rx'|)|\le \frac{C}{|\rx'|^{k-1}}\sum_{j=0}^k |D^j_{\rx}({\bf e}_1\cdot {\bf m}_*)(x_1, \rx')|\quad\tx{for $(x_1, \rx')\in\ol{\Om_L}$}.
\end{equation}

{\textbf{3.}} Now, we shall briefly discuss how to establish the first estimate in \eqref{tran-est}. First of all, one can easily check from \eqref{sol-S-final} and \eqref{C1 estimate of T map} that
\begin{equation*}
  \|S-S_0\|_{C^1(\ol{\Om_L})}\le C\|S_{\rm en}-S_0\|_{C^1(\ol{\Gam_0})}.
\end{equation*}
Next, we observe that
\begin{equation*}
  \begin{split}
  |D^2_{\rx}S(x_1, \rx')|&=|D^2_{\rx}(\mcl{S}_{\rm{en}}\circ  \mcl{T})(x_1, |\rx'|)|\\
  &\le |D_{\rx}\mcl{T}(x_1, |\rx'|)|^2|\mcl{S}_{\rm{en}}^{''}\circ \mcl{T}(x_1, |\rx'|)|+|D^2_{\rx}\mcl{T}(x_1, |\rx'|)||\mcl{S}_{\rm en}^{'}\circ \mcl{T}(x_1, |\rx'|)|\\
  &\le \|S_{\rm en}-S_0\|_{C^2(\ol{\Gam_0})}
  \left(|D_{\rx}\mcl{T}(x_1, |\rx'|)|^2+\mcl{T}(x_1, \rx')|D^2_{\rx}\mcl{T}(x_1, |\rx'|)|\right)\\
  &\le C\|S_{\rm en}-S_0\|_{C^2(\ol{\Gam_0})}\left(1+\underset{(\le 2\,\,\tx{by \eqref{r vs T}})}{\underbrace{\frac{\mcl{T}(x_1, |\rx'|)}{|\rx'|}}}\sum_{j=0,1} |D^j_{\rx}({\bf e}_1\cdot {\bf m}_*)(x_1, \rx')|\right),
  \end{split}
\end{equation*}
where we use the compatibility condition $\mcl{S}'_{\rm en}(0)=\der_r S_{\rm en}({\bf 0})=0$ stated in Problem \ref{EP-Prob} to obtain the last inequality.
This estimate naturally yields
\begin{equation*}
  \|S-S_0\|_{H^2(\Om_L)}
  +\|S-S_0\|_{\mcl{W}^{2,\infty}_{\mcl{D}}(0,L)}\le
  C\|S_{\rm en}-S_0\|_{C^2(\ol{\Gam_0})}.
\end{equation*}
The rest of the part for the estimate $S-S_0$ stated in \eqref{tran-est} can be checked similarly with more tedious computations.
\medskip

{\textbf{4.}} Suppose that $S^{(1)}$ and $S^{(2)}$ are two (not necessarily axisymmetric) solutions to
\begin{equation}
\label{bvp for S}
\begin{cases}
  {\bf m}_*\cdot \nabla S=0\quad\mbox{in $\Om_L$}\\
   S=S_{\rm en}\quad\mbox{on $\Gam_0$}
  \end{cases}.
\end{equation}
And, suppose that both solutions satisfy the first estimate stated in \eqref{tran-est}. Let us set
\begin{equation*}
  Z(\rx):=(S^{(1)}-S^{(2)})(\rx)\quad\tx{in $\Om_L$}.
\end{equation*}
Owing to \eqref{pr-w}, the vector field $\til{\bf m}_*$ given by
\begin{equation*}
  \til{\bf m}_*:=\frac{-{\bf m}_*}{{\bf m}_*\cdot {\bf e}_{x_1}}\quad\tx{in $\ol{\Om_L}$}
\end{equation*}
is well defined. Furthermore, it follows from Remark \ref{remark-embedding} that $Z\in C^1(\ol{\Om_L})$ and $\til{\bf m}_*\in C^0(\ol{\Om_L})\cap C^1(\ol{\Om_L\cap\{x_1<L-d\}})$ for any $d\in(0,L)$.
Also, $Z$ clearly solves the problem
\begin{equation*}
\begin{cases}
  \til{\bf m}_*\cdot \nabla Z=0\quad\mbox{in $\Om_L$}\\
   Z=0\quad\mbox{on $\Gam_0$}
  \end{cases}.
\end{equation*}

For a fixed point $\rx_0=(a, {\rx}'_a)\in \ol{\Om_L}\cap \{x_1\le L-d\}$ for some $d\in(0,L)$, let us consider the initial value problem for ${\bf X}:[0,a]\rightarrow \ol{\Om_L}$:
\begin{equation*}
  \begin{cases}
  {\bf X}'(t)=\til{\bf m}_*({\bf X}(t))\quad\tx{for $0<t\le a$}\\
  {\bf X}(0)=\rx_0.
  \end{cases}
\end{equation*}
Since $\til{\bf m}_*\in C^{1}(\ol{\Om_L\cap\{x_1<L-d\}})$, it easily follows from the unique existence theorem of ODEs that there exists a unique $C^1$-solution ${\bf X}$. Furthermore, we have
${\bf X}(a)\in\Gam_0$ because $\til{\bf m}\cdot{\bf e}_{x_1}\equiv -1$ for all $t\in[0,a]$. Then $Z$ satisfies
\begin{equation*}
  \frac{d}{dt}Z({\bf X})(t)={\bf X}'\cdot \nabla Z=\til{\bf m}_*\cdot \nabla Z=0\quad\tx{for all $t\in[0,a]$},
\end{equation*}
thus we obtain that $Z(\rx_0)=Z({\bf X}(a))=0$ due to the boundary condition $Z=0$ on $\Gam_0$. This combined with the continuity of $Z$ up to $\Gam_L$ implies that $Z\equiv 0$ in $\ol{\Om_L}$. This proves the uniqueness the solution to \eqref{bvp for S}.

\medskip
{\textbf{5.}} Finally, we briefly mention that the solution $\Lambda$ to the initial value problem
\begin{equation*}
  \begin{cases}
  {\bf m}_{\ast}\cdot \nabla \Lambda=0\quad&\mbox{in $\Om_L$},\\
  \Lambda(0, \rx')=|\rx'|w_{\rm en}(\rx')\quad&\mbox{on $\Gam_0$}
  \end{cases}
\end{equation*}
is given by
\begin{equation}
\label{representation of Lambda}
  \Lambda(x_1, \rx')=\mcl{T}(x_1, \rx')[w_{\rm en}\circ \mcl{T}](x_1, \rx')\quad\tx{in $\Om_L$}.
\end{equation}
The proof of Proposition \ref{pro-tt} is completed.
\end{proof}
\end{proposition}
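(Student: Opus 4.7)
The plan is to solve Problem \ref{problem-ivp for S and Lambda} by the method of characteristics adapted to the axisymmetric setting, using the divergence-free property of ${\bf m}_*$ established in \eqref{div-free of m field} to build a ``stream-function-like'' coordinate that straightens characteristics back to $\Gamma_0$. Since $(S_*,\Lambda_*,\vphi,\Phi,\phi)$ are all axisymmetric, I can write $S(x_1,\rx')=\mcl{S}(x_1,r)$ and reduce \eqref{INI-S} to a first-order linear PDE on the half-strip $R_L=(0,L)\times(0,1)$ with coefficient vector $(r{\bf m}_*\cdot {\bf e}_{x_1},\,r{\bf m}_*\cdot {\bf e}_r)$; the factor $r$ is exactly what makes this vector field divergence-free in the $(x_1,r)$-plane thanks to \eqref{div-free of m field} rewritten in cylindrical coordinates.

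Having this, I would introduce $w(x_1,r):=\int_0^r\tau\,({\bf m}_*\cdot{\bf e}_{x_1})(x_1,\tau)\,d\tau$, whose gradient is rotated by $\pi/2$ from the coefficient vector (see \eqref{property of w}). Using the boundary conditions from Lemma \ref{pro-1}, namely $\phi=0$ on $\partial R_L\cap\{r=0,1\}$, $\partial_r\vphi=0$ on $\Gam_{\rm w}$, and the explicit form \eqref{definition of m field} for ${\bf m}_*$, I can verify $\partial_{x_1}w=0$ on $\partial R_L\cap\{r=0,1\}$. Smallness of $\eps+\sigma$ then yields the two-sided bound \eqref{pr-w}, so $\mathcal{G}(r):=w(0,r)$ is strictly increasing, and the map $\mathcal{T}:=\mathcal{G}^{-1}\circ w:\ol{R_L}\to[0,1]$ is well-defined and constant along characteristics of ${\bf m}_*$. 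The candidate solutions are then $S(x_1,\rx')=S_{\rm en}(\mathcal{T}(x_1,|\rx'|))$ and, via the analogous formula, $\Lambda(x_1,\rx')=\mathcal{T}(x_1,|\rx'|)\,w_{\rm en}(\mathcal{T}(x_1,|\rx'|))$; these clearly satisfy (b), (c), (d) because of the compatibility conditions imposed on $(S_{\rm en},w_{\rm en})$ at $r=0,1$ and the axisymmetry of $w$.

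The core of the proof, and the main obstacle, lies in establishing the weighted estimates \eqref{tran-est}, in particular the bounds on $\partial_r S/|\rx'|$ and $\partial_r^k\Lambda/|\rx'|^{2-k}$ in $\mcl{M}^3_*$. The key computational input will be the identity \eqref{derivative of T map} for $D\mathcal{T}$ together with the ratio bound $\tfrac12\le r/\mathcal{T}(x_1,r)\le 2$ from \eqref{r vs T}, which comes from the constancy of $w$ along $r=0,1$ combined with \eqref{pr-w}. For higher derivatives I will need \eqref{final estimate of T map}, bounding $|D^k_{\rx}\mathcal{T}|$ by $|\rx'|^{-(k-1)}\sum_{j\le k}|D^j({\bf e}_{x_1}\cdot{\bf m}_*)|$, and these $D^j({\bf e}_{x_1}\cdot{\bf m}_*)$ are controlled up to $\Gam_0$ by \eqref{Psi-C3} (where the even reflection across $\Gam_0$ uses the compatibility condition $\partial_{x_1}f_2=0$ on $\Gam_0^{\bar{\epsilon}}$, itself a consequence of $E_0=0$). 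The weighted norm on $\partial_r S$ then requires using $\mcl{S}'_{\rm en}(0)=0$: writing $\partial_r S=\mcl{S}'_{\rm en}(\mathcal{T})\,\partial_r\mathcal{T}$ and factoring one extra $\mathcal{T}$ out of $\mcl{S}'_{\rm en}(\mathcal{T})$ via Taylor expansion produces the needed $|\rx'|$ in the denominator; for $\Lambda$ the vanishing conditions $\partial_r^kw_{\rm en}(0)=0$ for $k=0,\dots,3$ give two extra powers of $\mathcal{T}$ to absorb. The careful bookkeeping of these Taylor cancellations, combined with Moser-type product estimates in the spaces $\mcl{M}^k_*$ and the behavior of $\kappa(s)=(\min\{L/4,L-s\})^{-1/2}$ near $\Gam_L$, is the hard part.

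For uniqueness, given two solutions $S^{(1)},S^{(2)}$ of \eqref{bvp for S} satisfying \eqref{tran-est}, I would set $Z:=S^{(1)}-S^{(2)}$ and consider the rescaled field $\tilde{\bf m}_*:=-{\bf m}_*/({\bf m}_*\cdot{\bf e}_{x_1})$, which is well-defined and $C^1_{\rm loc}$ in $\Om_L$ by \eqref{pr-w} and Remark \ref{remark-embedding}. For each $\rx_0=(a,\rx_a')\in\ol{\Om_L}$ away from $\Gam_L$, the ODE ${\bf X}'(t)=\tilde{\bf m}_*({\bf X}(t))$, ${\bf X}(0)=\rx_0$, has a unique $C^1$ solution reaching $\Gam_0$ at $t=a$ (since $\tilde{\bf m}_*\cdot{\bf e}_{x_1}\equiv -1$), along which $\frac{d}{dt}Z\circ {\bf X}=0$; the boundary condition $Z=0$ on $\Gam_0$ then forces $Z(\rx_0)=0$, and continuity up to $\Gam_L$ extends this to all of $\ol{\Om_L}$. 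The analogous argument for $\Lambda$ is identical, completing the proposition.
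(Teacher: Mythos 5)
Your proposal is correct and follows essentially the same route as the paper: the stream function $w$, the characteristic-straightening map $\mcl{T}=\mcl{G}^{-1}\circ w$, the explicit formulas $S=S_{\rm en}\circ\mcl{T}$ and $\Lambda=\mcl{T}\,(w_{\rm en}\circ\mcl{T})$, the weighted estimates via \eqref{derivative of T map}, \eqref{r vs T}, \eqref{final estimate of T map} together with the vanishing conditions on $(S_{\rm en},w_{\rm en})$ at $r=0$, and uniqueness by integrating along characteristics of $-{\bf m}_*/({\bf m}_*\cdot{\bf e}_{x_1})$. No substantive differences from the paper's argument.
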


\subsubsection{Step 3: The finalization of the proof}
\label{subsubsection: finalization of the proof}
{\textbf{1.}}
Under the condition of \eqref{condition for transp eqn}, let us define an iteration map $\mcl{I}_T:\mcl{J}_{\eps}\rightarrow [H^4_*(\Om_L)\cap \mcl{W}^{4,\infty}_{*, \mcl{D}}(0,L)]^2$ by
\begin{equation*}
  \mcl{I}_T(S_*, \Lambda_*):=(S, \Lambda)
\end{equation*}
for the solution $(S, \Lambda)$ to the initial value problem \eqref{INI-S} associated with $(S_*, \Lambda_*)\in \mcl{J}_{\eps}$. Owing to Proposition \ref{pro-tt}, the map $\mcl{I}_T$ is well defined. If we fix the constant $\eps$ as
\begin{equation}
\label{choice of eps}
  \eps=2C_{\natural}(\|S_{\rm en}-S_0\|_{C^4(\ol{\Gam_0})}+\|w_{\rm en}\|_{C^4(\ol{\Gam_0})})
\end{equation}
for the constant $C_{\natural}>0$ from \eqref{tran-est}, then we have
\begin{equation}
\label{estimate for S and Lambda}
\begin{split}
  &\|S-S_0\|_{\mcl{M}^4_*((0,L)\times \mcl{D})}+\left\|\frac{\der_rS}{|\rx'|}\right\|_{\mcl{M}^3_*((0,L)\times \mcl{D})}\le \frac{\eps}{2},\\
  &\|\Lambda\|_{\mcl{M}^4_*((0,L)\times \mcl{D})}+\sum_{k=0,1}\left\|\frac{\der_r^k\Lambda}{|\rx'|^{2-k}}\right\|_{\mcl{M}^3_*((0,L)\times \mcl{D})}\le \frac{\eps}{2}.
\end{split}
\end{equation}
Given a point $\rx=(x_1, \rx')\in \Om_L$, if $|\rx'|\le \frac{\bar{\epsilon}_0}{2}$, then \eqref{r vs T} implies that $\mcl{T}(x_1, |\rx'|)\le \bar{\epsilon}_0$. Note that $S_{\rm{en}}=S_0$ and $w_{\rm{en}}=0$ on $\Gam_0^{\bar{\epsilon}}$ according to the compatibility conditions stated in Problem \ref{EP-Prob}. Therefore we derive from \eqref{sol-S} and \eqref{representation of Lambda} that
\begin{itemize}
\item[-] $\displaystyle{\der_{x_1} S=0,\,\,\tx{and}\,\,\der_{x_1}\Lambda=0\,\,\tx{on $\Gam_0^{\bar{\epsilon}_0}$}}$;
\item[-] $\displaystyle{S(\rx)=S_0\,\,\tx{and}\,\,\Lambda(\rx)=0\,\,\tx{in $\{\rx=(x_1, \rx')\in \ol{\Om_L}: |\rx'|\ge 1-\frac{\bar{\epsilon}_0}{2}\}$}.}$
\end{itemize}
So we conclude that if $\sigma(b,u_{\rm en},v_{\rm en},w_{\rm en},S_{\rm en},E_{\rm en},E_{\rm ex})$ (see Theorem \ref{MainThm} for the definition of $\sigma$) satisfies
\begin{equation}
\label{sigma condition 1}
  (2C_{\natural}+1)\sigma(b,u_{\rm en},v_{\rm en},w_{\rm en},S_{\rm en},E_{\rm en},E_{\rm ex})\le \eps^*
\end{equation}
(so that  the condition \eqref{condition for transp eqn} holds), then the iteration map $\mcl{I}_T$ maps $\mcl{J}_{\eps}$ into itself.
\medskip

{\textbf{2.}} Clearly, we get to prove the existence of a solution to Problem \ref{EP-Prob-HD} if we show that the iteration map $\mcl{I}_T$ has a fixed point in $\mcl{J}_{\eps}$. In order to prove the existence of a fixed point of $\mcl{I}_{T}$, we apply the Schauder fixed point theorem. As in the proof of Theorem \ref{Irr-MainThm} in \S\ref{subsection-proof of theorem-potential}, the iteration set $\mcl{J}_{\eps}$ is a compact subset of the Banach space $\mcl{B}$ given by \eqref{definition of B set}. Given a sequence $\{(S_k, \Lambda_k)\}_{k\in \mathbb{N}}\subset \mcl{J}_{\eps}$, that converges to $(S_{\infty}, \Lambda_{\infty})\in \mcl{J}_{\eps}$ in $[H^2(\Om_L)\cap C^{1,1/4}(\ol{\Om_L})]^2$, let ${\bf m}_*^{(k)}$ and ${\bf m}_*^{(\infty)}$ be given by \eqref{definition of m field} associated with $(S_k, \Lambda_k)$ and $(S_{\infty}, \Lambda_{\infty})$, respectively. For each $j\in \mathbb{N}\cup\{\infty\}$, let $(S^{(j)}, \Lambda^{(j)})$ be the solution to Problem \ref{problem-ivp for S and Lambda} for ${\bf m}_*={\bf m}_*^{(j)}$. Then the following properties can be easily checked:
\begin{itemize}
\item[(i)] The sequence $\{{\bf m}_*^{(k)}\}_{k\in \mathbb{N}}$ is bounded in the norm $\|\cdot\|_{\mcl{M}^3_*((0,L)\times \mcl{D})}$, thus it has a subsequence $\{{\bf m}_*^{(k_l)}\}_{l\in\mathbb{N}}$ that converges to ${\bf m}^{(\infty)}$ in $C^0(\ol{\Om_L})$;
\item[(ii)] By using \eqref{estimate for S and Lambda} and the compactness of the set $\mcl{J}_{\eps}$ in the Banach space $\mcl{B}$, we can conclude from the statement (i) that the sequence $\displaystyle{\{(S^{(k_l)}, \Lambda^{(k_l)})\}(=\{\mcl{I}_T(S_{k_l}, \Lambda_{k_l})\})}$ has a subsequence that converges to $\displaystyle{(S^{(\infty)}, \Lambda^{(\infty)})(=\mcl{I}_T(S_{\infty}, \Lambda_{\infty}))}$ in $\mcl{B}$;
\item[(iii)] Due to the compactness of $\mcl{J}_{\eps}$ in $\mcl{B}$, any subsequence of $\{(S^{(k)}, \Lambda^{(k)})\}_{k\in \mathbb{N}}$ has its subsequence that converges in $\mcl{B}$. Furthermore, the uniqueness of a solution to Problem \ref{problem-ivp for S and Lambda} implies that the limit of the convergent subsequence is $(S^{(\infty)}, \Lambda^{(\infty)})$.
\end{itemize}
Therefore, we conclude that $\mcl{I}_T:\mcl{J}_{\eps}\rightarrow \mcl{J}_{\eps}$ is continuous in $\mcl{B}$. Finally, it follows from the Schauder fixed point theorem that the map $\mcl{I}_T$ has a fixed point in $\mcl{J}_{\eps}$, and this combined with \eqref{pphi-est}, \eqref{representation of W}, \eqref{tran-est} and \eqref{choice of eps} implies that Problem \ref{EP-Prob-HD} has at least one solution $(\vphi, \Phi, \phi, S, \Lambda)$ that satisfies the estimates \eqref{Thm-HD-est} and \eqref{Thm-HD-est2}. Furthermore, one can fix a constant $\sigma_2>0$ sufficiently small depending only on the data and $(L,\bar{\epsilon})$ so that if the inequality \eqref{condition for sigma} holds, then Lemma \ref{Lem1} combined with \eqref{pphi-est} and Morrey's inequality yields the estimate \eqref{HD-sol-lower bd} for the constant $\om_0$ given by
\begin{equation*}
  \om_0:=\frac 12 \min\left\{\bar{\delta},\hat{\delta},\frac{J_0}{\rhos-\bar{\delta}}\right\}
\end{equation*}
for the constants $\bar{\delta}$ and $\hat{\delta}$ from Lemma \ref{Lem1}.
This finally proves that Problem \ref{EP-Prob-HD} has at least one solution that satisfies all the properties stated in the statements (a)--(c) of Proposition \ref{Thm-HD} provided that $\sigma(b,u_{\rm en},v_{\rm en},w_{\rm en},S_{\rm en},E_{\rm en},E_{\rm ex})$ is fixed sufficiently small.
\medskip

{\textbf{3.}} To complete the proof of Proposition \ref{Thm-HD}, it remains to prove the uniqueness of a solution. Overall procedure is similar to \cite[Proof of Theorem 1.7, Step 3]{bae2021structural} with minor differences. Suppose that Problem \ref{EP-Prob-HD} has two solutions $(S^{(1)},\Lambda^{(1)},\varphi^{(1)},\Phi^{(1)},\phi^{(1)})$ and $(S^{(2)},\Lambda^{(2)},\varphi^{(2)},\Phi^{(2)},\phi^{(2)})$ that satisfy the estimate \eqref{pphi-est}. And, let us set
\begin{equation*}
\begin{split}
&(\breve{S}, \breve{\Lambda}):=(S^{(1)}-S^{(2)}, \Lambda^{(1)}-\Lambda^{(2)}),\\
&(\breve{\vphi},\breve{\Phi},\breve{\bf W}):=(\varphi^{(1)}-\varphi^{(2)},\Phi^{(1)}-\Phi^{(2)},{ \phi}^{(1)}{\bf e}_{\theta}-{\phi}^{(2)}{\bf e}_{\theta}),\\
&E_1:=\|\breve{S}\|_{H^1(\Omega_L)}+\left\|\frac{\breve{\Lambda}}{|\rx'|^2}\right\|_{H^1(\Omega_L)}
+\left\|\frac{\der_r \breve{\Lambda}}{|\rx'|}\right\|_{L^2(\Om_L)},
\\
&E_2:=\|\breve{\phi}\|_{H^1(\Omega_L)}+\|\breve{\Phi}\|_{H^1(\Omega_L)}+\|\breve{\bf W}\|_{H^2(\Omega_L)}.
\end{split}
\end{equation*}
{\textbf{(3-1)}}
For each $k=1,2$, let a map $\mcl{T}^{(k)}$ be given by \eqref{definition of T map} associated with $(S^{(k)},\Lambda^{(k)},\varphi^{(k)},\Phi^{(k)},\phi^{(k)})$. Then we use \eqref{sol-S} and \eqref{representation of Lambda} to get
\begin{equation*}
\begin{split}
  &(S^{(1)}-S^{(2)})(x_1, \rx')=(S_{\rm en}\circ \mcl{T}^{(1)}-S_{\rm en}\circ \mcl{T}^{(2)})(x_1, |\rx'|),\\
  &(\Lambda^{(1)}-\Lambda^{(2)})(x_1, \rx')=(\mcl{T}^{(1)}[w_{\rm en}\circ \mcl{T}^{(1)}]-\mcl{T}^{(2)}[w_{\rm en}\circ \mcl{T}^{(2)}])(x_1, |\rx'|).
  \end{split}
\end{equation*}
By lengthy but straightforward computations with using \eqref{property of w}, \eqref{derivative of T map}, \eqref{r vs T}, and the compatibility conditions (see Problem \ref{EP-Prob}):
\begin{equation*}
  \begin{split}
  &\der_r^k w_{\rm en}({\bf 0})=0\quad\mbox{for $k=0,1,2,3$},\quad
  \der_r^{\til{k}} S_{\rm en}({\bf 0})=0\quad\mbox{for $\til{k}=1,2,3$},
  \end{split}
\end{equation*}
it can be shown that there exists a small constant $\bar{\sigma}_T>0$ and a constant $C_{T}>0$ so that if $\sigma:=\sigma(b,u_{\rm en},v_{\rm en},w_{\rm en},S_{\rm en},E_{\rm en},E_{\rm ex})$ satisfies the inequality
\begin{equation}
\label{sigma condition 2}
  \sigma\le \bar{\sigma}_T,
\end{equation}
then it holds that
\begin{equation}
\label{estimate for E1}
  E_1\le C_T\sigma E_2.
\end{equation}
\smallskip

{\textbf{(3-2)}} For each $k=1,2$, $(\vphi^{(k)}, \Phi^{(k)})$ solves
\begin{equation*}
\begin{split}
&\begin{cases}
\mcl{N}_1^{(S^{(k)}, \Lambda^{(k)}, {\phi^{(k)}{\bf e}_{\theta}})}(\vphi^{(k)},\Phi^{(k)})=0\\
\mcl{N}_2^{(S^{(k)}, \Lambda^{(k)}, {\phi^{(k)}{\bf e}_{\theta}})}(\vphi^{(k)},\Phi^{(k)})=0
\end{cases}\mbox{in}\,\,\Omega_L,\\
&\left\{\begin{split}
	\varphi^{(k)}(0,r)=-\int_r^1 v_{\rm en}(t)dt,\,\,\partial_{x_1}\varphi^{(k)}=u_{\rm en}-(\nabla\times (\phi^{(k)}{\bf e}_{\theta}))\cdot{\bf e}_{x_1}\,\,&\mbox{on}\,\,\Gamma_0,\\
		\partial_{x_1}\Phi^{(k)}=E_{\rm en}\,\,&\mbox{on}\,\,\Gamma_0,\\
\partial_{r}\varphi^{(k)}=0,\,\,\partial_{r}\Phi^{(k)}=0\,\,&\mbox{on}\,\,\Gamma_{{\rm w}},\\
\der_{x_1}\Phi^{(k)}=E_{\rm{ex}}\,\,&\mbox{on}\,\,\Gamma_L
\end{split}\right.
\end{split}
\end{equation*}
for the nonlinear differential operators $\mcl{N}_1$ and $\mcl{N}_2$ given by Definition \ref{definition-nl-operators}. By subtracting the problem for $(\vphi^{(2)}, \Phi^{(2)})$ from the one for $(\vphi^{(1)}, \Phi^{(1)})$, we can derive a linear boundary value problem for $(\breve{\vphi},\breve{\Phi})$ similar to \eqref{lbvp associated with P} with homogeneous boundary conditions. So we repeat the argument given in the proof of Proposition \ref{proposition-H1-new} to show that
\begin{equation}
\label{intermediate contraction}
  \|(\breve{\vphi},\breve{\Phi})\|_{H^1(\Om_L)}\le C_{P}
  \left(\|\breve{\bf W}\|_{H^2(\Om_L)}+E_1\right)
\end{equation}
for some constant $C_P>0$.
\smallskip

{\textbf{(3-3)}} For each $k=1,2$, ${\bf W}^{(k)}:=\phi^{(k)}{\bf e}_{\theta}$ solves
\begin{equation*}
\begin{cases}
-\Delta{\bf W}^{(k)}={\bf F}^{(S^{(k)}, \Lambda^{(k)},{\bf W}^{(k)})}({\vphi}^{(k)},{\Phi}^{(k)})(=:{\bf G}^{(k)})\quad&\mbox{in $\Om_L$},\\
\partial_{x_1}{\bf W}^{(k)}={\bf 0}\quad&\mbox{on $\Gamma_0$},\\
{\bf W}^{(k)}={\bf 0}\quad&\mbox{on $\Gamma_{{\rm w}}$},\\
 \partial_{x_1}{\bf W}^{(k)}={\bf 0}\quad&\mbox{on $\Gamma_L$}
\end{cases}
\end{equation*}
for the vector field ${\bf F}^{(S^{(k)}, \Lambda^{(k)},{\bf W}^{(k)})}({\vphi}^{(k)},{\Phi}^{(k)})$ given by \eqref{definition-F-vec-field}. So $\breve{\bf W}$ satisfies the estimate
\begin{equation*}
  \|\breve{\bf W}\|_{H^2(\Om_L)}\le C\|{\bf G}^{(1)}-{\bf G}^{(2)}\|_{L^2(\Om_L)}
\end{equation*}
for some constant $C>0$. By using \eqref{definition:vel-HD}, \eqref{definition:density-HD}, \eqref{pphi-est}, \eqref{estimate for E1} and \eqref{intermediate contraction}, we can directly show from the above estimate that there exists a constant $C_{V}>0$ to satisfy
\begin{equation*}
   \|\breve{\bf W}\|_{H^2(\Om_L)}\le C_V\sigma E_2.
\end{equation*}
By combining this inequality with \eqref{estimate for E1} and \eqref{intermediate contraction}, we finally obtain
\begin{equation}
\label{contraction of E2}
  E_2\le C_P(C_V+C_T)\sigma E_2.
\end{equation}
Therefore, if the inequality
\begin{equation*}
  \sigma\le \min\left\{\frac{3}{4C_P(C_V+C_T)},\,\,\bar{\sigma}_T,
  \,\,\frac{\eps^*}{(2C_{\natural}+1)}\right\}
\end{equation*}
holds, then it immediately follows from \eqref{contraction of E2} that $E_2=0$. And, this combined with \eqref{estimate for E1} yields $E_1=0$ as well. Thus we conclude that
\begin{equation*}
(S^{(1)},\Lambda^{(1)},\varphi^{(1)},\Phi^{(1)},\phi^{(1)})=
(S^{(2)},\Lambda^{(2)},\varphi^{(2)},\Phi^{(2)},\phi^{(2)})\quad\tx{in $\ol{\Om_L}$}.
\end{equation*}
\smallskip

The proof of Proposition \ref{Thm-HD} is completed by choosing $L^*$ and $\sigma_2$ as
\begin{equation*}
  L^*:=L_*^{\rm v},\quad \sigma_2:=\min\left\{\frac{3}{4C_P(C_V+C_T)},\,\,\bar{\sigma}_T,
  \,\,\frac{\eps^*}{(2C_{\natural}+1)}\right\},
\end{equation*}
respectively, for the constant $L_*^{\rm v}\in(0, \bar{L}]$ from Lemma \ref{lemma:wp of nlbvp for potentials}.

\qed

\subsection{Proof of Theorem \ref{MainThm}}\label{Sec-Last}

\begin{proof}

Let $L^*$ be fixed to be same with the one from Proposition \ref{Thm-HD}.
\smallskip

{\textbf{1.}} {\emph{(The existence)}} Given boundary data $(b,u_{\rm en},v_{\rm en},w_{\rm en},S_{\rm en},E_{\rm en},E_{\rm ex})$, suppose that the inequality
\begin{equation*}
\sigma(b,u_{\rm en},v_{\rm en},w_{\rm en},S_{\rm en},E_{\rm en},E_{\rm ex})\le \sigma_2
\end{equation*}
holds. Then, Proposition \ref{Thm-HD} yields an axisymmetric solution $(\vphi, \Phi, \phi, S, \Lambda)$ to Problem \ref{EP-Prob-HD}. For such a solution, let $\rho$ and ${\bf u}$ be given by \eqref{definition:density-HD} and \eqref{definition:vel-HD}, respectively. And, let us set the pressure $p$ as
\begin{equation*}
  p:=S\rho^{\gam}.
\end{equation*}
Then ${\bf U}=({\bf u}, \rho, p, \Phi)$ is an axisymmetric solution that solves the full Euler-Poisson system \eqref{E-B}. Also, it satisfies all the boundary conditions stated in Problem \ref{EP-Prob}(see \eqref{Phy-bd}). Furthermore, one can directly check from the properties (a)--(c) stated in Proposition \ref{Thm-HD} that the solution ${\bf U}$ satisfies the estimate \eqref{up-est-vol} and the conditions (ii) and (iii) stated in Problem \ref{EP-Prob}.
\medskip

{\textbf{2.}}{\emph{(The uniqueness)}}
Suppose that  Problem \ref{EP-Prob} has two solutions $({\bf u}^{(1)}, \rho^{(1)}, p^{(1)},\Phi^{(1)})$ and $({\bf u}^{(2)}, \rho^{(2)}, p^{(2)},\Phi^{(2)})$ that satisfy the estimate \eqref{up-est-vol}.

For each $k=1,2$, let us define functions $(S^{(k)},\Lambda^{(k)},\varphi^{(k)},\Phi^{(k)},\phi^{(k)}{\bf e}_{\theta})$ as follows:
\begin{itemize}
\item[(i)] $\displaystyle{S^{(k)}:=\frac{p^{(k)}}{(\rho^{(k)})^{\gamma}}}$;
\item[(ii)] $\displaystyle{\Lambda^{(k)}(x_1, \rx'):=|\rx'|[{\bf e}_{\theta}\cdot{\bf u}^{(k)}](x_1,|\rx'|)}$;
\item[(iii)] the linear boundary value problem
\begin{equation*}
\left\{\begin{split}
&-\Delta {\bf W}=(\partial_{x_1}({\bf u}^{(k)}\cdot{\bf e}_r)-\partial_r({\bf u}^{(k)}\cdot{\bf e}_{x_1})){\bf e}_{\theta}\quad\mbox{in}\,\,\Omega_L,\\
&\partial_{x_1}{\bf W}={\bf 0}\quad\mbox{on}\,\,\Gamma_0,\quad{\bf W}={\bf 0}\quad\mbox{on}\,\,\partial\Omega_L\backslash\Gamma_0
\end{split}\right.
\end{equation*}
has a unique axisymmetric solution ${\bf W}^{(k)}=\phi^{(k)}{\bf e}_{\theta}\in H^5(\Omega_L;\R^3)$;
\item[(iv)] $\displaystyle{\varphi^{(k)}(x_1, \rx'):=\int_0^{x_1}\left[{\bf u}^{(k)}\cdot{\bf e}_{x_1}-\frac{1}{|\rx'|}\partial_r(|\rx'|\phi^{(k)})\right](y,\rx')dy}$.
\end{itemize}
A straightforward computation with using the estimate \eqref{up-est-vol} shows that each $(S^{(k)},\Lambda^{(k)},\varphi^{(k)},\Phi^{(k)},\phi^{(k)}{\bf e}_{\theta})$ satisfies the estimate \eqref{Thm-HD-est} for some constant $C>0$. Furthermore, by using the equation $-\Delta {\bf W}=(\partial_{x_1}({\bf u}^{(k)}\cdot{\bf e}_r)-\partial_r({\bf u}^{(k)}\cdot{\bf e}_x)){\bf e}_{\theta}$ in $\Om_L$, one can directly check from the definition of $\vphi^{(k)}$ that
\begin{equation*}
\nabla\vphi^{(k)}={\bf u}^{(k)}-[{\bf e}_{\theta}\cdot{\bf u}^{(k)}]{\bf e}_{\theta}-\nabla\times(\phi^{(k)}{\bf e}_{\theta})\quad\tx{in $\ol{\Om_L}$}.
\end{equation*}
By combining this equation with the definition of $\Lambda^{(k)}$ given in (ii), we obtain that
\begin{equation*}
  {\bf u}^{(k)}=\nabla\vphi^{(k)}+\nabla\times (h^{(k)}{\bf e}_r+\phi^{(k)} {\bf e}_{\theta})\quad\tx{for $h^{(k)}(x_1, \rx'):=\int \frac{\Lambda^{(k)}(x_1, \rx')}{|\rx'|}\,d x_1$}.
\end{equation*}
This implies that $(S^{(k)},\Lambda^{(k)},\varphi^{(k)},\Phi^{(k)},\phi^{(k)}{\bf e}_{\theta})$ is a solution to Problem \ref{EP-Prob-HD}. Therefore, we can fix a constant $\sigma_1\in(0, \sigma_2]$ depending only on the data and $(L, \bar{\epsilon})$ so that if the inequality
\begin{equation*}
\sigma(b,u_{\rm en},v_{\rm en},w_{\rm en},S_{\rm en},E_{\rm en},E_{\rm ex})\le \sigma_1
\end{equation*}
holds, then the argument given in \S\ref{subsubsection: finalization of the proof}(see Step {\textbf{3}}) yields
\begin{equation*}
 (S^{(1)},\Lambda^{(1)},\varphi^{(1)},\Phi^{(1)},\phi^{(1)}{\bf e}_{\theta})=(S^{(2)},\Lambda^{(2)},\varphi^{(2)},\Phi^{(2)},\phi^{(2)}{\bf e}_{\theta}) \quad\tx{in $\ol{\Om_L}$},
\end{equation*}
from which it directly follows that
\begin{equation*}
  ({\bf u}^{(1)}, \rho^{(1)}, p^{(1)},\Phi^{(1)})=({\bf u}^{(2)}, \rho^{(2)}, p^{(2)},\Phi^{(2)})\quad\tx{in $\ol{\Om_L}$}.
\end{equation*}
This finishes the proof of Theorem \ref{MainThm}.

\end{proof}

\appendix
\section{Proof of Lemma \ref{lemma-main section-smooth approx}}
\label{appendix-smooth approximation}
\begin{proof}
In $\R^2$, fix an open, connected and bounded domain $\mcl{D}$ with a smooth boundary $\der \mcl{D}$.
For a constant $L\in(0,\bar{L}]$, define a three dimensional cylinder $\Omega_L$ by
\begin{equation*}
\Omega_L:=\left\{\rx=(x_1,\rx')\in\mathbb{R}^3:\mbox{ }  0<x_1<L,\, \rx'=(x_2,x_3)\in\mathcal{D}\right\}.
\end{equation*}
And, fix a function $u:\ol{\Om_L}\rightarrow \R$.
\medskip

{\textbf{1.}}(Extension about $\Gam_0$) Suppose that $u$ satisfies the compatibility condition
    \begin{equation}
   \label{comp-cond-u-2}
      \der_{x_1}^{k-1}u=0\quad\tx{on $\Gam_0^{\bar{\epsilon}}$ for $k=1,3$.}
    \end{equation}

     First of all, we define an extension of $u$ by
    \begin{equation*}
      \mcl{R}u(x_1, \rx'):=\begin{cases}
      u(x_1, \rx')\quad&\mbox{for $0\le x_1<L$},\\
      -u(-x_1, \rx')\quad&\mbox{for $-\frac L2<x_1<0$}.
      \end{cases}
    \end{equation*}
    Next, we define another extension of $u$ by
    \begin{equation*}
      \mcl{S}u(x_1, \rx'):=\begin{cases}
     u(x_1, \rx')\quad&\mbox{for $0\le x_1<L$},\\
      \sum_{j=0}^4 c_ju(-\frac{x_1}{2^j}, \rx')\quad&\mbox{for $-\frac L2<x_1<0$}
      \end{cases}
    \end{equation*}
    for $(c_0, c_1, \cdots, c_4)$ solving the following linear system:
    \begin{equation*}
      \sum_{j=0}^4 \left(-\frac{1}{2^j}\right)^kc_j=1\quad\tx{for }k=0, 1, 2, 3, 4.
    \end{equation*}

    Since $\der \mcl{D}$ is smooth, we can fix a smooth function $\xi:\ol{\mcl{D}}\rightarrow \R_+ $ to satisfy the following properties:
    \begin{equation*}
      \xi(\rx')=\begin{cases}
      1\quad&\mbox{if ${\rm dist}(\rx', \der\mcl{D})\le\frac{3\bar{\epsilon}}{4}$}\\
      0\quad&\mbox{if ${\rm dist}(\rx', \der\mcl{D})\ge \frac{4\bar{\epsilon}}{5}$}
      \end{cases}\quad\tx{and}\quad 0\le \xi\le 1\,\,\tx{in $\ol{\mcl{D}}$}.
    \end{equation*}
Finally, we define an extension $\mcl{E}u$ of $u$ onto $(-\frac L2,L)\times \mcl{D}_*$ by
\begin{equation*}
 \mcl{E}u(x_1, \rx'):=\begin{cases}
u(x_1, \rx')\quad&\mbox{for $\rx=(x_1, \rx')\in\ol{\Om_L}$},\\
\xi(\rx')\mcl{R}u(x_1, \rx')
+(1-\xi(\rx'))\mcl{S}u\quad&\mbox{for $\rx=(x_1, \rx')\in (\ol{\Om_L})^c$}.
  \end{cases}
\end{equation*}

The linear extension operator $\mcl{E}:H^4_*(\Om_L)\cap \mcl{W}^{4,\infty}_{*, \mcl{D}}(0,L)\rightarrow H^4_*((-\frac L2, L)\times \mcl{D})\cap \mcl{W}^{4,\infty}_{*, \mcl{D}}(-\frac L2,L)$, given in the above, satisfies the following properties provided that a function $u$ satisfies \eqref{comp-cond-u-2}:
\begin{itemize}
\item[(i)] $\displaystyle{\mcl{E} u=u\quad\tx{in $\Om_L$}}$;
\item[(ii)] $\mcl{E}u$ is odd with respect to $x_1$ about $x_1=0$ for $\rx=(x_1, \rx')$ with $|x_1|<\frac L4$ and ${\rm dist}(\rx', \der\mcl{D})\le \frac{3\bar{\epsilon}}{4}$;
\item[(iii)] there exists a constant $\mu_1>0$ depending only on $(\mcl{D}, L)$ so that
    \begin{equation}
    \label{estimate-extension1}
    \begin{split}
      &\|\mcl{E}u\|_{H^4_*((-\frac L2, L)\times \mcl{D})}\le \mu_1\|u\|_{H^4_*(\Om_L)},\\
      &\|\mcl{E}u\|_{\mcl{W}^{4,\infty}_{*,\mcl{D}}(-\frac L2,L)}\le \mu_1\|u\|_{\mcl{W}^{4,\infty}_{*,\mcl{D}}(0,L)}.
      \end{split}
    \end{equation}
\end{itemize}
\medskip

{\textbf{2.}} Let $\chi:\R\rightarrow \R$ be a smooth function that satisfies the following conditions:
\begin{itemize}
\item[-] $\chi(x_1)\ge 0$ for all $x_1\in \R$;
\item[-] $\chi(x_1)=\chi_1(-x_1)$ for all $x_1\in \R$;
\item[-] ${\rm spt}\,\chi\subset (-1,1)$;
\item[-] $\int_{\R}\chi(x_1)\,dx_1=1$.
\end{itemize}
For a constant $\tau>0$, we define a function $\chi^{(\tau)}:\R\rightarrow \R$ by
\begin{equation*}
  \chi^{(\tau)}(x_1):=\frac{1}{\tau}\chi\left(\frac{x_1}{\tau}\right).
\end{equation*}

We define {\emph{a partially smooth approximation}} of $u$ away from $\Gam_L$ by
\begin{equation}
\label{definition-u3}
  u_1^{(\tau)}(x_1, \rx'):= \int_{\R} \mcl{E}u(x_1-y_1, \rx')\chi^{(\tau)}(y_1)\,dy_1.
\end{equation}

\begin{lemma}
\label{lemma-approx type 1}
For every constant $\tau>0$ satisfying the inequality $0<\tau<\frac{1}{10}\min\{1, \frac L2\}$, the function $u_1^{(\tau)}$ given by \eqref{definition-u3} satisfies the following properties:
\begin{itemize}
\item[(a)] for each fixed $\rx'\in\mcl{D}$, $u_1^{(\tau)}(\cdot, \rx')$ is $C^{\infty}$ with respect to $x_1\in [0, \frac{9}{10}L]$;
\item[(b)] the compatibility condition $\displaystyle{\der_{{\bf n}_w}u_1^{(\tau)}=0}$ holds on $\Gamw$;
\item[(c)] the compatibility condition $\displaystyle{\der_{x_1}^{k-1}u_1^{(\tau)}=0}$ holds on $\Gam_0^{\bar{\epsilon}/2}$ for $k=1,3$;
\item[(d)] there exists a constant $\til{\mu}>0$ depending only on $(\mcl{D}, L)$ to satisfy the following estimates:
    \begin{align}
    \label{estimate-approximation-1-for u3}
      &\|u_1^{(\tau)}\|_{H^4(\Om_{9L/10})}\le \til{\mu} \|u\|_{H^4(\Om_{9L/10})},\\
       \label{estimate-approximation-2-for u3}
      &\|u_1^{(\tau)}\|_{\mcl{W}^{4,\infty}_{\mcl{D}}(0,9L/10)}\le \til{\mu} \|u\|_{\mcl{W}^{4,\infty}_{\mcl{D}}(0,9L/10)};
      \end{align}
    \item[(e)] $\displaystyle{\lim_{\tau\to 0+} \|u_1^{(\tau)}-u\|_{H^4(\Om_{9L/10})}=0}$.
\end{itemize}
\end{lemma}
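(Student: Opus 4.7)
The plan is to verify each of the five properties directly from the defining convolution formula \eqref{definition-u3} together with the structural features of the extension $\mcl{E}u$ recorded in Step 1 of this appendix. The key observation is that the convolution in \eqref{definition-u3} is a purely one-dimensional mollification in the $x_1$-variable, so every transverse derivative $\der_{\rx'}$ commutes with the convolution operator while every $x_1$-derivative lands on the smooth mollifier $\chi^{(\tau)}$. This single fact drives statements (a), (b), and (c).

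Property (a) is immediate: for each fixed $\rx'\in\mcl{D}$ one has $\mcl{E}u(\cdot,\rx')\in L^1_{\rm loc}(-L/2,L)$, and convolution with $\chi^{(\tau)}\in C^\infty_c(\R)$ yields a function that is $C^\infty$ on any interval whose closed $\tau$-neighborhood lies inside $(-L/2,L)$; the restriction $\tau<\tfrac{1}{10}\min\{1,L/2\}$ guarantees this on $[0,9L/10]$. For (b) I would use the fact that $\mcl{E}u$ coincides with the pure $x_1$-reflection $\mcl{R}u$ on a neighborhood of $\Gamw$ (because $\xi\equiv 1$ and hence $\der_{{\bf n}_w}\xi=0$ there), so the inherited condition $\der_{{\bf n}_w}\mcl{E}u\equiv 0$ on $\Gamw$ is preserved under mollification in the orthogonal variable: $\der_{{\bf n}_w}u_1^{(\tau)}(x_1,\rx')=\int \der_{{\bf n}_w}\mcl{E}u(x_1-y_1,\rx')\chi^{(\tau)}(y_1)\,dy_1=0$.

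The main substance lies in property (c), and this is the delicate step. Property (ii) of Step 1 tells us that $\mcl{E}u(\cdot,\rx')$ is \emph{odd} in $x_1$ about the point $x_1=0$ on the entire region $\{|x_1|<L/4,\,{\rm dist}(\rx',\der\mcl{D})\le 3\bar{\epsilon}/4\}$, and $\chi^{(\tau)}$ is even and supported in $(-\tau,\tau)$ with $\tau\ll L/4$. Changing variables $y_1\mapsto -y_1$ in \eqref{definition-u3} then shows directly that $u_1^{(\tau)}(\cdot,\rx')$ is itself odd about $x_1=0$ for every $\rx'$ with ${\rm dist}(\rx',\der\mcl{D})\le \bar{\epsilon}/2<3\bar{\epsilon}/4$. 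Since any smooth odd function in $x_1$ has all its even-order $x_1$-derivatives vanishing at the origin, this yields $u_1^{(\tau)}=\der_{x_1}^2 u_1^{(\tau)}=0$ on $\Gam_0^{\bar{\epsilon}/2}$, which is exactly (c).

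Finally, (d) and (e) are routine consequences of standard mollifier theory. For (d) I would apply Young's convolution inequality in the $x_1$-variable, uniformly in $\rx'$, to each derivative $D^k u_1^{(\tau)}=(D^k\mcl{E}u)\ast\chi^{(\tau)}$, obtaining $\|D^k u_1^{(\tau)}\|_{L^2}\le \|\chi^{(\tau)}\|_{L^1(\R)}\|D^k\mcl{E}u\|_{L^2(S)}$ on the slab $S$ of points used in the convolution; because the extension recipe on $x_1<0$ only pulls back values of $u$ from the $x_1>0$ side, $\|\mcl{E}u\|_{H^4(S)}$ is controlled by $\|u\|_{H^4(\Om_{9L/10})}$ via a minor adaptation of \eqref{estimate-extension1}, and the analogous $\mcl{W}^{4,\infty}_{\mcl{D}}$ estimate follows in the same way. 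Property (e) is the classical statement that one-dimensional mollification converges in $H^4$ on any open set whose $\tau$-neighborhood lies inside the domain of definition; since $\mcl{E}u\in H^4((-L/2,L)\times\mcl{D})$ and $\mcl{E}u=u$ on $\Om_L$, this yields $\|u_1^{(\tau)}-u\|_{H^4(\Om_{9L/10})}\to 0$ as $\tau\to 0^+$.
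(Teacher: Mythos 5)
Your argument is correct and is evidently the intended one: the paper states Lemma \ref{lemma-approx type 1} without proof, and the three ingredients you isolate — that \eqref{definition-u3} is a one-dimensional mollification in $x_1$ only (so $x_1$-derivatives fall on $\chi^{(\tau)}$ while $\rx'$-derivatives, including $\der_{{\bf n}_w}$, commute with the convolution), that $\mcl{E}u$ reduces to the odd reflection $\mcl{R}u$ where $\xi\equiv 1$ so that $u_1^{(\tau)}(\cdot,\rx')$ is odd about $x_1=0$ for ${\rm dist}(\rx',\der\mcl{D})\le\bar{\epsilon}/2$ and hence has vanishing even-order $x_1$-derivatives there, and Young's inequality in $x_1$ for (d)--(e) — are exactly what the construction in Step 1 is designed to deliver. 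One small caveat in (d): since $u_1^{(\tau)}(x_1,\cdot)$ for $x_1$ near $\frac{9}{10}L$ samples $\mcl{E}u$ on $x_1<\frac{9}{10}L+\tau$, Young's inequality controls $\|u_1^{(\tau)}\|_{H^4(\Om_{9L/10})}$ by $\|u\|_{H^4(\Om_{9L/10+\tau})}$ rather than by $\|u\|_{H^4(\Om_{9L/10})}$ as literally written; this imprecision is inherited from the lemma's statement, is harmless because $\tau<\frac{L}{20}$ keeps the enlarged cylinder away from $\Gam_L$ (so the bound by $\|u\|_{H^4_*(\Om_L)}$ needed downstream is unaffected), but you should state the estimate over the slightly larger cylinder to make the Young's-inequality step airtight.
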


{\textbf{3.}}
To define an extension of the function $u$ up to the boundary $\Gam_L$, we shall use a translation operator.
For a constant $\tau$ satisfying the inequality $0<\tau<\frac{1}{10}\min\{1,  \frac L2\}$,
let us define a translation operator $T_{\tau}$ by
    \begin{equation*}
      T_{\tau}u(x_1, \rx'):=u(x_1-2\tau, \rx').
    \end{equation*}
    And, we define a function $u_2^{(\tau)}$ by
    \begin{equation*}
      u_2^{(\tau)}(x_1, \rx'):=(T_{\tau}u_1^{(\tau)})(x_1, \rx').
    \end{equation*}
    Then the function $u_2^{(\tau)}$ satisfies the following properties:
    \begin{itemize}
\item[(i)] for each fixed $\rx'\in\mcl{D}$, $u_2^{(\tau)}(\cdot, \rx')$ is $C^{\infty}$ with respect to $x_1\in [\frac{L}{2}, L]$;

\item[(ii)] $\displaystyle{\lim_{\tau\to 0+} \|u_2^{(\tau)}-u\|_{H^3(\Om_L\cap\{x_1> \frac L2\})}=0};$
\item[(iii)] the compatibility condition
    $\displaystyle{\der_{{\bf n}_w}u_2^{(\tau)}=0}$ holds on $\Gamw\cap\left\{x_1>\frac 45 L\right\}$.
\end{itemize}

{\textbf{4.}} Let $\zeta$ be a cut-off function that satisfies the following properties:
    \begin{itemize}
    \item[(i)] $\displaystyle{\zeta\in C^{\infty}(\R)};$
    \item[(ii)] $\displaystyle{\zeta(x_1)=\begin{cases}1&\quad\mbox{for $x_1<\frac L2$},\\
        0&\quad\mbox{for $x_1>\frac 23 L$};\end{cases}}$
    \item[(iii)] $\displaystyle{0\le \zeta\le 1\quad\tx{and}\quad \zeta'\le 0\quad\tx{on $\R$}}$.
    \end{itemize}

Finally, we define a partially smooth global approximation of $u$ by
    \begin{equation}
    \label{definition-smooth approximation}
      u^{(\tau)}(\rx):=\zeta(x_1)u_1^{(\tau)}(\rx)+(1-\zeta(x_1))u_2^{(\tau)}(\rx)
    \end{equation}
for $\rx=(x_1, \rx')\in \Om_L$

\begin{lemma}
\label{lemma-smooth approximation}
Suppose that a function $u\in H^4_*(\Om_L)\cap \mcl{W}^{4,\infty}_{*,\mcl{D}}(0,L)$ satisfies the compatibility condition \eqref{comp-cond-u-2}.
Then, there exists a constant $\bar{\tau}>0$ sufficiently small depending only on $(\mcl{D}, L, \bar{\epsilon})$, so that, for any $\tau\in(0,\bar{\tau}]$,  the function $u^{(\tau)}$ given by \eqref{definition-smooth approximation} satisfies the following properties:
\begin{itemize}
\item[(a)] for each fixed $\rx'\in\mcl{D}$, $u^{(\tau)}(\cdot, \rx')$ is $C^{\infty}$ with respect to $x_1\in [0, L]$;
\item[(b)] the compatibility condition $\displaystyle{\der_{{\bf n}_w}u^{(\tau)}}=0$ holds on $\Gamw$;
\item[(c)] the compatibility condition $\displaystyle{\der_{x_1}^{k-1}u^{(\tau)}=0}$ holds on $\Gam_0^{\bar{\epsilon}/2}$ for $k=1,3$;
\item[(d)] $\displaystyle{\lim_{\tau \to 0+} \|u^{(\tau)}-u\|_{H^3(\Om_L)}=0}$;
\item[(e)] there exists a constant $\mu>0$ depending only on $(\mcl{D}, L)$ so that, for any given $\tau$ satisfying the inequality $0<\tau<\frac{1}{10}\min\{1, \frac L2\}$, the function $u^{(\tau)}$ satisfies the following estimates:
    \begin{align}
    \label{estimate-approximation-1}
      &\|u^{(\tau)}\|_{H^4_*(\Om_L)}\le \mu \|u\|_{H^4_*(\Om_L)},\\
       \label{estimate-approximation-2}
      &\|u^{(\tau)}\|_{\mcl{W}^{4,\infty}_{*,\mcl{D}}(0,L)}\le \mu \|u\|_{\mcl{W}^{4,\infty}_{*,\mcl{D}}(0,L)}.
    \end{align}
\end{itemize}

\begin{proof}
The statements given in (a)--(d) can be directly checked from the definition \eqref{definition-smooth approximation}. So we only need to prove the statement (e).
\medskip

For $k=0,1,\cdots, 4$, we have
\begin{equation}
\label{derivative of Du-tau}
\begin{split}
  D^ku^{(\tau)}(\rx)&=\zeta(x_1)D^ku_1^{(\tau)}(\rx)
  +(1-\zeta(x_1))T_{\tau}(D^ku_1^{(\tau)})(\rx)\\
  &+\sum_{j=1}^k D^j\zeta(x_1)\left(D^{k-j}u_1^{(\tau)}
  -T_{\tau}(D^{k-j}u_1^{(\tau)})\right)(\rx).
\end{split}
\end{equation}
By using the expression given in the right above, one can directly check that
\begin{equation*}
      \begin{split}
      &\|u^{(\tau)}\|_{H^3(\Om_L)}\le \mu \|u\|_{H^3(\Om_L)}\quad
   \tx{and}\quad   \|u^{(\tau)}\|_{\mcl{W}^{3,\infty}_{\mcl{D}}(0,L)}\le \mu \|u\|_{\mcl{W}^{3,\infty}_{\mcl{D}}(0,L)}
      \end{split}
    \end{equation*}
    for some constant $\mu>0$ depending only on $(\mcl{D}, L)$.

By taking $k=4$ in \eqref{derivative of Du-tau}, we can write $D^4u^{(\tau)}$ as
\begin{equation*}
  D^4u^{(\tau)}=\mcl{M}_{\tau}+\mcl{R}_{\tau}
\end{equation*}
for
\begin{equation*}
  \begin{split}
  &\mcl{M}_{\tau}:=\zeta(x_1)D^4u_1^{(\tau)}+(1-\zeta(x_1))D^4u_2^{(\tau)},\\
  &\mcl{R}_{\tau}:=\sum_{m=1}^4D^m\zeta(x_1)\left(D^{4-m}u_1^{(\tau)}
  -D^{4-m}u_2^{(\tau)}\right).
  \end{split}
\end{equation*}
Since $|D^m\zeta|=0$ for $x_1\ge \frac 23 L$, a direct computation shows that there exists a constant $\lambda_0>0$ depending only on $(\mcl{D}, L)$ so that, for any given $\tau$ satisfying the inequality $0<\tau<\frac{1}{10}\min\{1, \frac L2\}$, we have the estimate
\begin{equation*}
\|\mcl{R}_{\tau}\|_{L^2(\Om_L)}\le \lambda_0\|u\|_{H^4(\Om_L\cap \{x_1<\frac 56 L\})}.
\end{equation*}
For $d\ge \frac L4$, one can directly check that if $\tau<\frac{L}{10}$, then $\mcl{M}_{\tau}$ satisfies the estimate
\begin{equation*}
\|\mcl{M}_{\tau}\|_{L^2(\Om_L\cap\{x_1<L-d\})}\le \lambda_1\|u\|_{H^4(\Om_L)\cap\{x_1< \frac{9}{10}L\}}
\end{equation*}
for some constant $\lambda_1>0$ fixed depending only on $(\mcl{D}, L)$.

Next, let us fix $d\in(0, \frac L4)$. Since $\zeta(x_1)=0$ for $x_1\ge \frac 34 L$, we have the estimate
\begin{equation*}
  \|\zeta D^4 u_1^{(\tau)}\|_{L^2(\Om_{L}\cap\{x_1<L-d\})}\le \lambda_2 \|u\|_{H^4(\Om_L\cap \{x_1< \frac 34 L\})}
\end{equation*}
for some constant $\lambda_2>0$ fixed depending only on $(\mcl{D}, L)$.

Now we shall estimate $\|(1-\zeta)D^4 u_2^{(\tau)}\|_{L^2(\Om_L\cap\{x_1<L-d\})}$. First of all, we can directly check that
\begin{equation*}
  \begin{split}
  &\int_{\Om_L\cap\{x_1<L-d\}} |(1-\zeta(x_1))D^4 u_2^{(\tau)}|^2\,d\rx\\
  &\le \int_{\Om_L\cap\{ \frac L2<x_1<L-d\}}|D^4T_{\tau}u_1^{(\tau)}(\rx)|^2\,d\rx\\
  &\le \int_{\Om_L\cap\{ \frac L2<x_1<L-d\}}\int_{(-\tau,\tau)}|D^4\mcl{E}u(x_1-2\tau-y_1, \rx')|^2\chi_1^{(\tau)}(y_1)\,dy_1 d\rx\\
  &\le  \int_{\Om_L\cap\{ \frac L4<x_1<L-d-\tau\}}|D^4u(\rx)|^2\,d\rx\\
  &\le (d+\tau)^{-1}\|u\|_{H^4_*(\Om_L)}^2\\
  &\le d^{-1} \|u\|_{H^4_*(\Om_L)}.
  \end{split}
\end{equation*}
So we can prove the estimate \eqref{estimate-approximation-1}. Since the estimate \eqref{estimate-approximation-2} can be verified similarly, we skip its proof.
\end{proof}
\end{lemma}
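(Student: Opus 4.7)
The plan is to construct $u^{(\tau)}$ by convolving an $x_1$-only extension of $u$ with a standard mollifier in $x_1$, then handle the exit boundary $\Gam_L$ with a small leftward translation. The one-directional choice is forced by property (b): any smoothing in $\rx'$ would break the Neumann condition on the curved wall $\Gamw$, whereas convolution in $x_1$ commutes with $\der_{{\bf n}_w}$ and preserves the wall condition for free.

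I would first extend $u$ past $\Gam_0$ to $(-L/2, L) \times \mcl{D}$ as a convex combination (via an $\rx'$-cutoff $\xi$ supported near $\partial \mcl{D}$) of an odd reflection and a higher-order mirror reflection in $x_1$. The odd reflection is the right choice near $\partial \mcl{D}$ because the compatibility condition \eqref{comp-cond-u-2} forces $u = \der_{x_1}^2 u = 0$ on $\Gam_0^{\bar{\epsilon}}$, making the odd reflection genuinely $H^4$; away from the wall, where no vanishing is available, the mirror reflection $u(-x_1) = \sum_{j=0}^{4} c_j u(x_1/2^j)$ with $(c_j)$ solving a Vandermonde system to match $C^4$-data at $x_1 = 0$ provides a bounded $H^4$-extension. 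Convolving this extension $\mcl{E}u$ with the even mollifier $\chi^{(\tau)}(x_1)$ gives $u_1^{(\tau)}$, which is $C^\infty$ in $x_1$ on $[0, L-\tau)$. Properties (a) (smoothness in $x_1$), (b) (wall condition), and (c) (compatibility on $\Gam_0^{\bar{\epsilon}/2}$, by evenness of $\chi$ and oddness of $\mcl{E}u$ in the region where $\xi = 1$) would follow directly for $u_1^{(\tau)}$. Since $u_1^{(\tau)}$ is not smooth up to $\Gam_L$, I would set $u_2^{(\tau)}(x_1, \rx') := u_1^{(\tau)}(x_1 - 2\tau, \rx')$, which is $C^\infty$ in $x_1$ on a neighborhood of $\Gam_L$, and glue the two via an $x_1$-cutoff $\zeta$ equal to $1$ on $[0, L/2]$ and vanishing on $[2L/3, L]$: $u^{(\tau)} := \zeta u_1^{(\tau)} + (1-\zeta) u_2^{(\tau)}$. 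Property (d), convergence in $H^3$, is then routine from the standard mollifier estimate combined with continuity of translation in $L^p$.

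The main obstacle is the uniform weighted bound (e). A Leibniz decomposition
\[
D^4 u^{(\tau)} = \zeta D^4 u_1^{(\tau)} + (1-\zeta) D^4 u_2^{(\tau)} + \sum_{m=1}^{4} D^m\zeta\,\bigl(D^{4-m}u_1^{(\tau)} - D^{4-m}u_2^{(\tau)}\bigr)
\]
separates the estimate into three groups. The first term and the cross-sum are supported in $x_1 \le 2L/3$, so each $L^2$-factor is controlled by $\|u\|_{H^4(\Om_L\cap\{x_1 < 5L/6\})} \le \|u\|_{H^4_*(\Om_L)}$ via a standard interior bound for mollifiers. The dangerous contribution is $(1-\zeta)D^4 u_2^{(\tau)}$ near $\Gam_L$, and this is precisely what the translation was introduced to handle: for $d \in (0, L/4)$, unwinding the convolution and the $2\tau$-shift gives
\[
\|D^4 u_2^{(\tau)}\|_{L^2(\Om_L\cap\{x_1 < L-d\})}^2 \le C\,\|D^4 u\|_{L^2(\Om_L\cap\{x_1 < L - d - \tau\})}^2 \le C\,(d+\tau)^{-1}\|u\|_{H^4_*(\Om_L)}^2,
\]
where the second step is the definition of $\|\cdot\|_{H^4_*(\Om_L)}$. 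Multiplying by $d$ and using $(d+\tau)^{-1} \le d^{-1}$ yields a bound on $d^{1/2}\|D^4 u^{(\tau)}\|_{L^2(\Om_L\cap\{x_1 < L-d\})}$ uniform in $\tau$, and taking a supremum in $d$ delivers \eqref{estimate-approximation-1}. The identical translation trick, with $\esssup_{x_1 \in (0, L-d)}$ in place of the $L^2$-integral in $x_1$ (and Minkowski's inequality in $\rx'$ to absorb the mollifier into the interior $L^\infty_*$-norm of $u$), handles \eqref{estimate-approximation-2}.
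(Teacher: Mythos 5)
Your proposal reproduces the paper's construction and proof essentially verbatim: the same odd/mirror hybrid extension past $\Gam_0$, the same $x_1$-only mollification, the same $2\tau$-translation to regain smoothness near $\Gam_L$, the same Leibniz splitting of $D^4u^{(\tau)}$ into an interior-supported part plus the translated piece, and the same unwinding of the convolution and shift to obtain the $(d+\tau)^{-1}\le d^{-1}$ bound that yields the uniform $H^4_*$ estimate. The argument is correct and no further comparison is needed.
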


With a minor adjustment in Step 1 (simply define $\mcl{R}$ as an even extension (about $x_1=0$) operator), we easily obtain the following lemma:
\begin{lemma}
\label{lemma-smooth approximation-type2}
Suppose that a function $w\in H^4_*(\Om_L)$ satisfies the following two compatibility conditions:
\begin{equation*}
 \begin{split}
&\der_{{\bf n}_w} w=0\quad\tx{on $\Gamw$},\\
&\der_{x_1}^{k}w=0\quad\tx{on $\Gam_0^{\bar{\epsilon}}$ for $k=1,3$.}
\end{split}
\end{equation*}
Then one can fix a constant $\bar{\tau}>0$ sufficiently small depending only on $(\mcl{D}, L, \bar{\epsilon})$, so that, for any $\tau\in(0,\bar{\tau}]$,  one can define an approximation $w^{(\tau)}$ of $w$ so that the following properties are satisfied:
\begin{itemize}
\item[(a)] for each fixed $\rx'\in\mcl{D}$, $w^{(\tau)}(\cdot, \rx')$ is $C^{\infty}$ with respect to $x_1\in [0, L]$;
\item[(b)] the compatibility condition $\displaystyle{\der_{{\bf n}_w}w^{(\tau)}}=0$ holds on $\Gamw$;
\item[(c)] the compatibility condition $\displaystyle{\der_{x_1}^{k}w^{(\tau)}=0}$ holds on $\Gam_0^{\bar{\epsilon}/2}$ for $k=1,3$;
\item[(d)] $\displaystyle{\lim_{\tau \to 0+} \|w^{(\tau)}-w\|_{H^3(\Om_L)}=0}$;
\item[(e)] there exists a constant $\mu>0$ depending only on $(\mcl{D}, L)$ so that, for any given $\tau$ satisfying the inequality $0<\tau<\frac{1}{10}\min\{1, \frac L2\}$, the function $w^{(\tau)}$ satisfies the following estimates:
    \begin{align*}
    \label{estimate-approximation-1-type2}
      &\|w^{(\tau)}\|_{H^4_*(\Om_L)}\le \mu \|w\|_{H^4_*(\Om_L)}.
    \end{align*}
\end{itemize}
\end{lemma}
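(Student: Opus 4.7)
My plan is to adapt the proof of Lemma \ref{lemma-smooth approximation} verbatim, with only one structural modification: the reflection operator $\mcl{R}$ must be redefined as an even extension about $x_1=0$ rather than an odd one. The motivation is immediate from the hypothesis: now the compatibility condition on $\Gam_0^{\bar{\epsilon}}$ reads $\der_{x_1} w = \der_{x_1}^3 w = 0$ (odd normal derivatives vanish), which is precisely the condition that makes an even extension of $w$ across $\Gam_0$ land in $H^4$ locally, rather than making an odd extension land in $H^4$ as in the earlier lemma.

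Concretely, I would first replace $\mcl{R} u(x_1,\rx')=-u(-x_1,\rx')$ by $\mcl{R}w(x_1,\rx'):=w(|x_1|,\rx')$ for $-L/2<x_1<L$. On the cylindrical shell where the cutoff $\xi$ equals $1$, namely $\{\mathrm{dist}(\rx',\der\mcl{D})\le 3\bar{\epsilon}/4\}$, the hypothesis on $\Gam_0^{\bar{\epsilon}}$ forces $\der_{x_1} w = \der_{x_1}^3 w = 0$ at $x_1=0$, so the odd $x_1$-derivatives of $\mcl{R}w$ match from both sides (all being zero) and the even derivatives match trivially; this gives $\mcl{R}w\in H^4$ through $\Gam_0$ on that shell. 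The polynomial-combination operator $\mcl{S}$ is kept unchanged since the conditions $\sum c_j(-1/2^j)^k=1$ for $k=0,\dots,4$ already produce $C^4$-matching at $x_1=0$ for arbitrary $H^4$ functions. The extension $\mcl{E}w=\xi\mcl{R}w+(1-\xi)\mcl{S}w$ then satisfies the analog of \eqref{estimate-extension1} with identical constants.

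The mollification, translation, and cutoff steps go through unchanged. A key observation is that, thanks to the evenness of $\mcl{R}w$ about $x_1=0$ on the shell near $\Gamw$ and the symmetry of $\chi$, the partial smoothing $w_1^{(\tau)}(x_1,\rx'):=\int_{\R}\mcl{E}w(x_1-y_1,\rx')\chi^{(\tau)}(y_1)\,dy_1$ is itself even in $x_1$ about $0$ on $\{\mathrm{dist}(\rx',\der\mcl{D})\le 3\bar{\epsilon}/4\}$ for $\tau$ small; hence $\der_{x_1} w_1^{(\tau)} = \der_{x_1}^3 w_1^{(\tau)} = 0$ on $\Gam_0^{\bar{\epsilon}/2}$, which is exactly property (c). The compatibility $\der_{\mathbf{n}_w}w^{(\tau)}=0$ on $\Gamw$ follows from the fact that the mollifier in $x_1$ is tangential to $\Gamw$, so it commutes with the normal derivative, and the $x_1$-translation preserves this as well. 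The translate-plus-cutoff construction $w^{(\tau)}=\zeta w_1^{(\tau)}+(1-\zeta)w_2^{(\tau)}$ with $w_2^{(\tau)}(\rx):=w_1^{(\tau)}(x_1-2\tau,\rx')$ then yields the global $H^4_*$ estimate by the same splitting of $D^4 w^{(\tau)}=\mcl{M}_{\tau}+\mcl{R}_{\tau}$: the remainder $\mcl{R}_\tau$ is supported in $\{x_1<5L/6\}$ and controlled by $\|w\|_{H^4(\Omega_{5L/6})}$, while the main part on $\{x_1<L-d\}$ benefits from the translation by $2\tau$ so that the mollification samples $w$ only on $\{x_1<L-d-\tau\}$, giving the $d^{-1/2}$-weighted bound in the definition of $\|\cdot\|_{H^4_*(\Om_L)}$.

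The only place that genuinely requires verification beyond copying is the $H^4$-regularity of $\mcl{R}w$ across $\Gam_0$ on the shell. Since we only have $\der_{x_1} w = \der_{x_1}^3 w = 0$ in the trace sense on $\Gam_0^{\bar{\epsilon}}$ rather than pointwise, the matching of odd normal derivatives should be phrased as a distributional identity: for any test function $\varphi\in C^\infty_c((-L/2,L)\times\mcl{D})$ supported where $\xi=1$, the jumps $[\![\der_{x_1}^k\mcl{R}w]\!]_{x_1=0}$ for odd $k\le 3$ integrate to zero against $\varphi$, so weak fourth derivatives of $\mcl{R}w$ exist as $L^2$ functions. This is the only nonroutine piece; the rest of the proof is a transcription of the argument given for Lemma \ref{lemma-smooth approximation} with $u$ replaced by $w$ and $\mcl{R}$ replaced by its even counterpart.
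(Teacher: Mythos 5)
Your proposal is correct and matches the paper's own proof, which consists precisely of the remark that Lemma \ref{lemma-smooth approximation} carries over after redefining $\mcl{R}$ as an even extension about $x_1=0$; your parity check (the hypotheses $\der_{x_1}w=\der_{x_1}^3w=0$ on $\Gam_0^{\bar{\epsilon}}$ are exactly what make the even extension land in $H^4$ across $\Gam_0$ on the shell where $\xi=1$, and evenness of the mollified function then yields property (c)) is the right verification.
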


{\textbf{5.}} For each $m\in \mathbb{N}$, let us set
\begin{equation*}
  \tau_m:=\frac{1}{10 m}\min\left\{1, \frac{L}{2}\right\}.
\end{equation*}
Finally, the proof of Lemma \ref{lemma-main section-smooth approx} is complete if we define $\tpsi_m$ by
\begin{equation*}
  \tpsi_m:=\tpsi^{(\tau_m)}
\end{equation*}
for $\tpsi^{(\tau)}$ given by applying Lemma \ref{lemma-smooth approximation}, and if we define $\tPsi_m$ by
\begin{equation*}
  \tPsi_m:=\tPsi^{(\tau_m)}
\end{equation*}
for $\tPsi^{(\tau)}$ given by applying Lemma \ref{lemma-smooth approximation-type2}.
\end{proof}

\newpage

\section{A comment on the proof of Lemma \ref{lemma-weak star limit} }
\label{appendix-B}
\begin{proposition}
\label{proposition-pre BA}
Given a Hilbert space $H$, we have
\begin{equation*}
  (L^1(0,L;H))^*=L^{\infty}(0,L;H).
\end{equation*}
\end{proposition}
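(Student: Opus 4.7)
The plan is to identify $L^{\infty}(0,L;H)$ with $(L^1(0,L;H))^*$ via the canonical pairing
\begin{equation*}
\Phi : L^{\infty}(0,L;H) \to (L^1(0,L;H))^*, \qquad \Phi(g)(f) := \int_0^L \langle g(t), f(t)\rangle_H \, dt.
\end{equation*}
First I would verify that $\Phi$ is a well-defined isometric embedding. The pointwise Cauchy--Schwarz inequality $|\langle g(t), f(t)\rangle_H| \le \|g\|_{L^\infty(0,L;H)} \|f(t)\|_H$ gives immediately that $\Phi(g) \in (L^1(0,L;H))^*$ with $\|\Phi(g)\| \le \|g\|_{L^\infty(0,L;H)}$. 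For the reverse inequality, on any Borel set $E \subset (0,L)$ where $\|g(t)\|_H$ is close to $\|g\|_{L^\infty(0,L;H)}$, I would test $\Phi(g)$ against the unit-norm element $f(t) := \lambda(E)^{-1}\chi_E(t) g(t) \|g(t)\|_H^{-1}$ of $L^1(0,L;H)$, which is routine once one accepts that $g$ is Bochner-measurable.

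The heart of the argument is surjectivity, and my plan is to invoke the Radon--Nikodym property, which is available since Hilbert spaces are reflexive (hence satisfy the RNP by the Dunford--Pettis theorem). Given $T \in (L^1(0,L;H))^*$, for each Borel set $E \subset (0,L)$ the scalar map $h \mapsto T(\chi_E \otimes h)$ (where $\chi_E \otimes h$ denotes the Bochner-simple function $t \mapsto \chi_E(t) h$) is linear and bounded on $H$ with norm at most $\|T\| \lambda(E)$, so the Riesz representation theorem produces a unique element $\mu(E) \in H$ with
\begin{equation*}
\langle \mu(E), h\rangle_H = T(\chi_E \otimes h), \qquad h \in H,
\end{equation*}
satisfying $\|\mu(E)\|_H \le \|T\| \lambda(E)$. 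Countable additivity of $\mu$ in the norm topology of $H$ follows from applying dominated convergence to each scalar measure $\langle \mu(\cdot), h\rangle_H$ and upgrading weak convergence to norm convergence by the uniform total-variation bound. Absolute continuity $\mu \ll \lambda$ is immediate. Applying the vector-valued Radon--Nikodym theorem then yields $g \in L^1(0,L;H)$ with $\mu(E) = \int_E g(t)\, dt$ for all Borel $E$, and the bound $\|\mu(E)\|_H \le \|T\| \lambda(E)$ transfers through Lebesgue differentiation in $H$ to the pointwise estimate $\|g(t)\|_H \le \|T\|$ a.e., giving $g \in L^{\infty}(0,L;H)$. By construction $\Phi(g)$ agrees with $T$ on all Bochner-simple functions, so density of simple functions in $L^1(0,L;H)$ together with continuity of both sides yields $\Phi(g) = T$.

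The main obstacle I anticipate is the justification of the vector-valued Radon--Nikodym step in full generality, since standard formulations require either separability of $H$ or some form of the Pettis measurability theorem. I would handle this by first restricting to the closed separable subspace $H_0 \subset H$ generated by the countable family $\{\mu(E_k)\}$ for a countable generating algebra of Borel sets; since $\mu$ has $\sigma$-finite variation and takes values in $H_0$ after completion, the classical separable Radon--Nikodym theorem applies, and the resulting density $g$ automatically has values in $H_0 \subset H$. A secondary but purely technical point is verifying the Lebesgue differentiation step in the Bochner setting, which I would cite directly. Once these functional-analytic facts are in hand, the identity $\Phi(g) = T$ is a one-line check on simple functions, and the isometry property established in the first step promotes the algebraic identification to an isometric isomorphism.
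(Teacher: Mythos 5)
Your argument is correct and rests on exactly the same key fact as the paper, namely that Hilbert spaces possess the Radon--Nikod\'{y}m property: the paper simply cites the general duality theorem $L^p(\mu,X)^*=L^q(\mu,X^*)$ from Diestel--Uhl together with von Neumann's result that Hilbert spaces have the RNP (plus $H^*=H$), whereas you reprove the relevant implication of that theorem from scratch (vector measure via the Riesz representation, norm countable additivity from the total-variation bound, the RNP with a separable reduction, Lebesgue differentiation for the $L^\infty$ bound, and density of simple functions). The only cosmetic difference is the attribution of the RNP of $H$ --- you route it through reflexivity and Dunford--Pettis, the paper cites von Neumann directly --- which does not affect correctness.
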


In order to verify Proposition \ref{proposition-pre BA}, we shall apply the following lemma:
\begin{lemma}
\label{lemma-VM thm1}\cite[Chapter IV, Theorem 1]{Diestel}
Let $(\Om, \Sigma, \mu)$ be a finite measure space, $1\le p<\infty$, and $X$ be a Banach space. Then $L_p(\mu, X)^*=L_q(\mu, X^*)$ where $\displaystyle{\frac 1p+\frac 1q=1}$, if and only if $X^*$ has the Radon-Nikod\'{y}m property with respect to $\mu$.
\end{lemma}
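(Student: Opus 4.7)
The plan is to prove the stated equivalence by first isolating a direction that requires no hypothesis on $X^*$: the canonical map $\iota : L_q(\mu, X^*) \to L_p(\mu, X)^*$ given by $\iota(g)(f) := \int_{\Omega} \langle g(t), f(t) \rangle_{X^*, X} \, d\mu(t)$ is a well-defined isometric embedding for every Banach space $X$ and all conjugate exponents $1 \le p < \infty$, $1 < q \le \infty$. The isometric character follows from H\"{o}lder's inequality (giving $\|\iota(g)\| \le \|g\|_{L_q(\mu, X^*)}$) and a Hahn--Banach argument on simple functions (giving the reverse inequality). Consequently, the lemma reduces to proving that $\iota$ is surjective if and only if $X^*$ has the Radon--Nikod\'{y}m property (RNP) relative to $\mu$.

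For the direction RNP $\Rightarrow$ surjectivity, I would start with an arbitrary $F \in L_p(\mu, X)^*$ and associate a vector measure $G : \Sigma \to X^*$ by declaring $\langle G(E), x \rangle := F(\chi_E \otimes x)$ for $E \in \Sigma$, $x \in X$. Linearity and boundedness of $F$ show $G(E) \in X^*$ with $\|G(E)\|_{X^*} \le \|F\| \cdot \mu(E)^{1/p}$; countable additivity of $G$ in the weak-$*$ topology follows from dominated convergence applied through $F$, and Pettis's theorem upgrades this to norm countable additivity. One then checks that $G$ is $\mu$-continuous and has bounded $q$-semivariation controlled by $\|F\|$. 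Applying RNP yields a measurable $g : \Omega \to X^*$ with $G(E) = \int_E g \, d\mu$ for all $E$, and a Hahn-type decomposition argument (partitioning $\Omega$ according to the sign of $\|g\|$ against test vectors) upgrades the $L_1$ integrability of $g$ to $\|g\|_{L_q(\mu, X^*)} \le \|F\|$. Finally, $F = \iota(g)$ is verified first on simple tensors $\chi_E \otimes x$, then extended by density of simple functions in $L_p(\mu, X)$.

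For the converse direction (surjectivity $\Rightarrow$ RNP), I would take an arbitrary $\mu$-continuous vector measure $m : \Sigma \to X^*$ of bounded variation and define a linear functional on simple $X$-valued functions by $F\bigl(\sum_i \chi_{E_i} \otimes x_i\bigr) := \sum_i \langle m(E_i), x_i \rangle$. The key estimate is to show $|F(\phi)| \le C \|\phi\|_{L_p(\mu, X)}$, which one obtains by refining partitions and exploiting H\"{o}lder duality together with the total variation bound $|m|(\Omega) < \infty$ and a normalization using $d|m|/d\mu \in L_1(\mu)$. Having extended $F$ to an element of $L_p(\mu, X)^*$, the surjectivity hypothesis produces $g \in L_q(\mu, X^*)$ with $F = \iota(g)$; testing against $\chi_E \otimes x$ recovers $m(E) = \int_E g \, d\mu$, which is exactly the RNP for $X^*$ relative to $\mu$.

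The main obstacle will be the careful measure-theoretic bookkeeping in both directions: in the forward direction, upgrading the Radon--Nikod\'{y}m derivative from mere $L_1$-integrability to the sharper $L_q$-estimate $\|g\|_q \le \|F\|$ requires a vector-valued analogue of the Hahn decomposition and is the step where the exponent $q$ truly enters; in the reverse direction, the bounded-variation hypothesis on $m$ gives only an $L_\infty$-type control, and extracting the $L_p$-boundedness of $F$ on simple functions demands a delicate partition/duality argument. Once these two estimates are in place, the remaining work is standard density and measurability verifications.
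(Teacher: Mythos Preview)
The paper does not supply a proof of this lemma at all: it is quoted verbatim as \cite[Chapter IV, Theorem 1]{Diestel} and used as a black box to deduce Proposition~\ref{proposition-pre BA}. So there is no ``paper's own proof'' to compare against; you are reconstructing the classical Diestel--Uhl argument, and your outline is essentially the standard one found there.

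One point deserves tightening. In the converse direction (surjectivity of $\iota$ $\Rightarrow$ RNP), you propose to show directly that the functional $F(\sum_i \chi_{E_i}\otimes x_i)=\sum_i\langle m(E_i),x_i\rangle$ extends boundedly to $L_p(\mu,X)$. The obvious estimate gives $|F(\phi)|\le \int_{\Omega} h\,\|\phi(\cdot)\|_X\,d\mu$ with $h:=d|m|/d\mu\in L_1(\mu)$, and by H\"older this would require $h\in L_q(\mu)$, which is not available in general. The standard remedy is a truncation: set $A_n:=\{h\le n\}$ and $m_n(E):=m(E\cap A_n)$, so that $d|m_n|/d\mu\le n\in L_q$; apply the duality hypothesis to obtain densities $g_n\in L_q(\mu,X^*)$ for $m_n$; check $\|g_n\|_{X^*}=h\chi_{A_n}$ a.e.\ so that the $g_n$ are consistent and patch to a single $g$ with $\|g\|_{X^*}=h\in L_1$; and conclude $m(E)=\int_E g\,d\mu$ by dominated convergence. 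Your phrase ``a delicate partition/duality argument'' may be gesturing at this, but as written the sketch suggests a direct bound that is not actually there.
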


In applying Lemma \ref{lemma-VM thm1} to prove Proposition \ref{proposition-pre BA}, we take $X=H$, thus $X^*=H$ by the Riesz representation theorem. And, we take
\begin{equation*}
  \Om=(0, L),\quad \mu=m(\tx{Lebesgue measure}).
\end{equation*}
So the main question to investigate is the following:
\begin{question}
Does a Hilbert space $H$ satisfy the Radon-Nikod\'{y}m property with respect to the Lebesgue measure $m$ on $(0,L)$?
\end{question}

\begin{definition}
\label{definition-RN property}
A Banach space $X$ has the {\emph{Radon-Nikod\'{y}m property}} with respect to $(\Om, \Sigma, \mu)$ if for each $\mu$-continuous vector measure $G:\Sigma\rightarrow X$ of bounded variation, there exists $g\in L_1(\mu, X)$ such that
$$
G(E)=\int_{E}g\,d\mu
$$
for all $E\in \Sigma$.
\end{definition}

\begin{lemma}[{von Neumann, \cite[Chapter IV, Corollary 4]{Diestel}}]
\label{lemma-von Neumann-RN property}
Hilbert spaces have the {\emph{Radon-Nikod\'{y}m property}}.
\end{lemma}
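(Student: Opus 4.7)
The plan is to prove that every Hilbert space $H$ has the Radon--Nikod\'ym property with respect to the Lebesgue measure $m$ on $(0,L)$ (and more generally any finite measure space $(\Omega,\Sigma,\mu)$) by an explicit construction that exploits the Hilbert space structure. Throughout, let $G:\Sigma\to H$ be a $\mu$-continuous vector measure of bounded variation $|G|$; I want to produce $g\in L^1(\mu,H)$ with $G(E)=\int_E g\,d\mu$ for every $E\in\Sigma$.

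First I would reduce to the separable case. Because $|G|(\Omega)<\infty$, the range $\{G(E):E\in\Sigma\}$ is bounded and, by the Bartle--Dunford--Schwartz theorem, even relatively weakly compact; in particular the closed linear span $H_0\subset H$ of this range is separable. Any representing function will take values in $H_0$, so it suffices to work in the separable Hilbert space $H_0$.

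Next, fix an orthonormal basis $\{e_n\}_{n\in\mathbb N}$ of $H_0$ and define the scalar measures $\lambda_n(E):=\langle G(E),e_n\rangle$. Each $\lambda_n$ is countably additive, $\mu$-continuous, and satisfies $|\lambda_n|(E)\le \|e_n\|\,|G|(E)=|G|(E)$. The classical scalar Radon--Nikod\'ym theorem yields $g_n\in L^1(\mu)$ with $\lambda_n(E)=\int_E g_n\,d\mu$ for all $E\in\Sigma$. The candidate density is the formal sum $g(\omega):=\sum_{n}g_n(\omega)e_n$.

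The core step is to show $g$ is well-defined $\mu$-a.e.\ and lies in $L^1(\mu,H)$. For each $N\in\mathbb N$, let $\varphi_N(\omega):=\bigl(\sum_{n=1}^N g_n(\omega)^2\bigr)^{1/2}$. I would verify the variation bound
\begin{equation*}
\int_E \varphi_N\,d\mu \;\le\; |G|(E)\qquad\text{for every }E\in\Sigma
\end{equation*}
by identifying $\varphi_N$ as the Radon--Nikod\'ym density of the total variation of the $\mathbb R^N$-valued measure $\Lambda_N:=(\lambda_1,\ldots,\lambda_N)$, and by observing that $|\Lambda_N|(E)\le |G|(E)$ because the projection $H\to\mathrm{span}\{e_1,\ldots,e_N\}$ has norm one. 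Since $\varphi_N$ is nondecreasing in $N$, the monotone convergence theorem produces $\varphi\in L^1(\mu)$ with $\varphi_N\uparrow\varphi$ and $\int_\Omega\varphi\,d\mu\le |G|(\Omega)<\infty$. In particular $\varphi(\omega)<\infty$ for $\mu$-a.e.\ $\omega$, so $g(\omega)=\sum_n g_n(\omega)e_n$ converges in $H_0$ with $\|g(\omega)\|_H=\varphi(\omega)$, giving $g\in L^1(\mu,H)$.

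Finally, to verify $G(E)=\int_E g\,d\mu$, I would test against each basis vector: Bochner integrability of $g$ allows interchange of the inner product with the integral, so
\begin{equation*}
\Bigl\langle \int_E g\,d\mu,\, e_n\Bigr\rangle \;=\; \int_E g_n\,d\mu \;=\; \lambda_n(E) \;=\; \langle G(E),e_n\rangle,
\end{equation*}
and since both $G(E)$ and $\int_E g\,d\mu$ lie in $H_0$, they must be equal. By Definition \ref{definition-RN property}, $H$ has the Radon--Nikod\'ym property. The main obstacle I expect is the variation inequality $\int_E\varphi_N\,d\mu\le |G|(E)$: one must argue that the vector-valued variation $|\Lambda_N|$ agrees with $\int\!|(g_1,\ldots,g_N)|_{\mathbb R^N}\,d\mu$ (a standard but non-trivial identification obtained by taking the supremum of $\sum_i|\Lambda_N(E_i)|$ over finite partitions and realizing it as the $L^1$-norm of the Euclidean density), and that this is dominated by $|G|$ via the contractivity of the coordinate projection.
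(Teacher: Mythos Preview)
The paper does not prove this lemma at all; it is quoted verbatim from Diestel--Uhl as a black box, with no argument supplied. So there is nothing to compare your approach against --- you are doing strictly more than the paper does.

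Your argument via an orthonormal basis and coordinate-wise scalar Radon--Nikod\'ym is the standard direct proof and is sound once you are in a \emph{separable} Hilbert space. The variation inequality $\int_E\varphi_N\,d\mu\le |G|(E)$ is exactly as you describe: identify $|\Lambda_N|$ with $\int|(g_1,\dots,g_N)|_{\mathbb R^N}\,d\mu$ (the finite-dimensional case of the theorem you are proving, which is elementary), and dominate it by $|G|$ because the orthogonal projection onto $\mathrm{span}\{e_1,\dots,e_N\}$ is a contraction. Monotone convergence then closes the argument cleanly.

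The one genuine gap is your separability reduction. Relative weak compactness of the range (Bartle--Dunford--Schwartz) does \emph{not} force the closed linear span to be separable: the closed unit ball of any non-separable Hilbert space is weakly compact but spans the whole space. What actually makes the reduction work in the setting the paper cares about --- Lebesgue measure on $(0,L)$ --- is that the Borel $\sigma$-algebra is countably generated, so the measure algebra $(\Sigma/{\sim},\,d(E,F)=|G|(E\triangle F))$ is a separable metric space, and $E\mapsto G(E)$ is Lipschitz from this space into $H$; hence the range is norm-separable. For a completely general finite measure space you would need an extra step (e.g.\ pass to a countably generated sub-$\sigma$-algebra on which $|G|$ attains its full variation, or invoke the standard fact that RNP is separably determined), but for the application in Appendix~\ref{appendix-B} your proof, with the corrected justification above, is complete.
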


\begin{proof}[Proof of Proposition \ref{proposition-pre BA}]
Now, Proposition \ref{proposition-pre BA} easily follows from Lemmas \ref{lemma-VM thm1} and \ref{lemma-von Neumann-RN property}.
\end{proof}

\newpage
\section{A comment on Lemma \ref{lemma-coercivity}}
\label{appendix-C}
The upper bound $L^*$ of the nozzle length $L$ is given for the sole purpose of achieving Lemma \ref{lemma-coercivity}, the essential ingredient to establish an a priori $H^1$-estimate (see Proposition \ref{proposition-H1-new}) for a solution to a linear system consisting of a second order hyperbolic differential equation and a second order elliptic equation weakly coupled together.
As we have mentioned before, the proof of Lemma \ref{lemma-coercivity} can be verified by following the arguments in \cite{bae2021structural, bae2021three}, in which one introduces an energy weight function, and use it to derive an a prior $H^1$-estimate.
In this appendix, we give an energy weight function different from the one used in \cite{bae2021structural, bae2021three} so that one can fix the upper bound $L^*$ differently under some additional assumptions. The idea is from the work \cite{bae2023two} that investigates the existence of multi- dimensional $C^1$-accelerating transonic flows of Euler-Poisson system. In this work, we use a translation of the background Mach number as an energy weight function. This particular choice is possible due to the condition \eqref{special condition}. This condition is given to overcome a technical difficulty arising because the wall boundary $\Gam_w$ of $\Om_L$ is non-flat. Interestingly, the condition \eqref{special condition} implies that a background solution has a deceleration near the entrance of the nozzle, and this observation is a key for Lemma \ref{App-lemma}.
\medskip

\begin{psfrags}
\begin{figure}[htp]
\centering
\psfrag{a}[cc][][0.8][0]{${\bf 0}$}
\psfrag{b}[cc][][0.8][0]{$\bar u$}
\psfrag{c}[cc][][0.8][0]{$\bar E$}
\psfrag{d}[cc][][0.8][0]{$\frac{b_0}{J_0}$}
\psfrag{e}[cc][][0.8][0]{$u_s\phantom{aa}$}
\psfrag{h}[cc][][0.7][0]{(i)}
\psfrag{g}[cc][][0.7][0]{(ii)}
\psfrag{f}[cc][][0.7][0]{(iii)}
\includegraphics[scale=0.7]{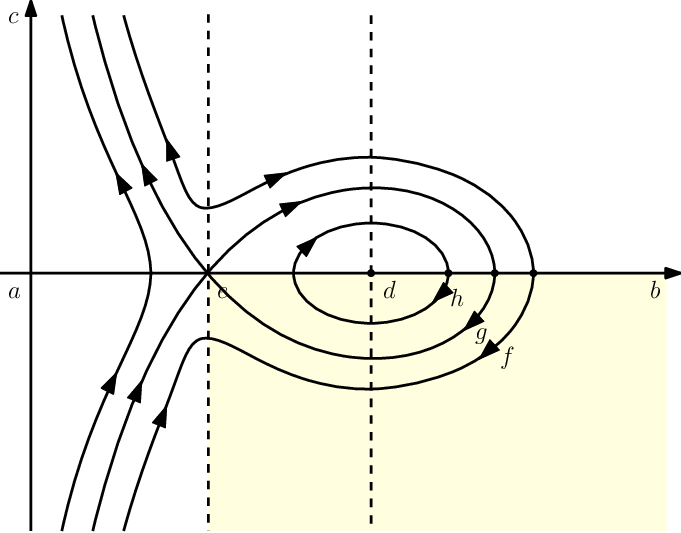}
\caption{The $\bar u$-$\bar E$ phase plane: In (i), the background solution is period, therefore we have $L^*<\bar{L}$. In (ii) and (iii), we can fix $L^*$ as $L^*=\bar L$.}
\label{figurec}
\end{figure}
\end{psfrags}
\medskip

Given a constant $\rho_0\in(0, \rhos)$, let $(\bar{\rho}, {\bar u}, S_0, \Phi_0)$ be the background solution to \eqref{E-B} associated with $(\rho_0, 0)$ in the sense of Definition \ref{definition:background sol}. For a fixed constant $\bar{\delta}\in(0, \rhos)$, let $\bar{L}$ be the constant given as in Lemma \ref{Lem1}. Then, one can fix a constant $L^*\in(0, \bar{L}]$ so that the background solution satisfies the following properties(see Figure \ref{figurec}):
\begin{equation*}
  \bar{\rho}'(x_1)>0(\Leftrightarrow \bar u'(x_1)<0),\quad
  \bar{u}(x_1)>u_s(:=\frac{J_0}{\rhos}),
  \quad
  \bar{E}(x_1)<0\quad
  \mbox{ for }x_1\in(0,{L}^{\ast}].
\end{equation*}
\medskip

\begin{lemma}\label{App-lemma} Fix a constant $\mfrak{a}\in(0, L^*)$, and let $\eps_0$ be the small constant given in Lemma \ref{lemma-properties of coeffs}. Then, there exists a small constant $d\in(0,1)$ depending only on the data and $\mfrak{a}$ so that if
\begin{itemize}
\item[-] $L$ satisfies $L\le L^{*}-\mathfrak{a}$,
\item[-] $\delta$ from the definition \eqref{definition of iteration set} of $\mcl{J}_{\delta}$ satisfies $\delta\le \eps_0$,
\item[-] for $x_1\in[\mathfrak{a},L+\mathfrak{a}]$, it holds that
\begin{equation}\label{range of u}
\frac{\bar{u}(\mathfrak{a})}{u_s}-d\le\frac{\bar{u}(x_1)}{u_s}\le\frac{\bar{u}(\mathfrak{a})}{u_s},
\end{equation}
\end{itemize}
then we have the inequality \eqref{estimate-I main} for some positive constants $\mu_0$ and $\mu_1$ fixed depending only on the data and $\mfrak{a}$.
\end{lemma}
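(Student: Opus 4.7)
I plan to revisit the proof of Lemma \ref{lemma-coercivity} with an alternative choice of energy weight function $\mfrak{M}$, modeled on \cite{bae2023two}, built from a shift of the background velocity. Concretely, I propose
\[
\mfrak{M}(x_1) := \frac{\bar u(x_1+\mfrak{a})-\bar u(\mfrak{a})}{u_s} + 2d, \qquad x_1\in[0,L].
\]
Under \eqref{range of u} we have $\mfrak{M}(x_1)\in[d,2d]$, so $\mfrak{M}>0$ on $[0,L]$. Since $L^*$ has been chosen so that $\bar E<0$ on $(0,L^*]$ and $x_1+\mfrak{a}\in[\mfrak{a},L+\mfrak{a}]\subset(0,L^*]$, the identity $\bar u'=\bar u\bar E/(\bar u^2-c^2)$ combined with $\bar u^2>c^2$ gives $\bar u'(x_1+\mfrak{a})<0$ there, with $|\bar u'(x_1+\mfrak{a})|\geq\kappa_0$ for a constant $\kappa_0>0$ depending only on the data and $\mfrak{a}$. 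Hence $\mfrak{M}'\leq-\kappa_0/u_s<0$ throughout $[0,L]$, a nondegenerate negative slope independent of $L$, in contrast with the exponential weight of \cite{bae2021structural,bae2021three} whose effective lower bound $\mfrak{M}\geq e^{-\kappa L}$ degenerates in $L$.

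\textbf{Coercivity of the main quadratic form.} I would then recompute \eqref{I-main} with this $\mfrak{M}$ and verify the two pointwise inequalities
\[
\bar a_1\mfrak{M}-\tfrac12\mfrak{M}'\;\geq\; \tfrac{\kappa_0}{2u_s}-C_1d, \qquad \der_1(\bar a_{ii}\mfrak{M})\;\geq\;C_2\kappa_0-C_3d\quad(i=2,3),
\]
where the first comes from $-\tfrac12\mfrak{M}'\geq\kappa_0/(2u_s)$ together with $|\bar a_1\mfrak{M}|\leq 2d\,\|\bar a_1\|_{C^0([0,L])}$, and the second from $\der_1(\bar a_{ii}\mfrak{M})=\der_1\bar a_{ii}\cdot\mfrak{M}+\bar a_{ii}\mfrak{M}'$ together with the formulas $\bar a_{ii}=-1/(M^2-1)$, $M^2=\bar u^2/c^2$, the lower bound $M^2-1\geq\hat\delta$ from Lemma \ref{Lem1}, and boundedness of $\der_1 M^2$ on $[0,L]$. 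All constants $C_j>0$ depend only on the data and $\mfrak{a}$, so for $d$ small in the same sense both right-hand sides are bounded below by some $c_*>0$. The boundary term on $\Gam_L$ in \eqref{I-main} is nonnegative since $\mfrak{M}(L)=d>0$ and $[a_{ij}]_{i,j=2}^3$ is negative definite by Lemma \ref{lemma-properties of coeffs}(c).

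\textbf{Absorption and conclusion.} The cross couplings $(\bar b_1\der_1 W+\bar b_2 W)\mfrak{M}\der_1 V$ and $\bar h_2 W\der_1 V$ in \eqref{I-main} are handled by Young's inequality: estimating $|(\bar b_1\der_1 W+\bar b_2 W)\mfrak{M}\der_1 V|\leq\eta(\der_1 V)^2+C\mfrak{M}^2\eta^{-1}((\der_1 W)^2+W^2)$ with $\eta=c_*/2$, the $\der_1 V$-piece is absorbed into the coercivity from the previous step and the $W$-piece is absorbed into $|\nabla W|^2+\bar h_1 W^2$ as long as $Cd^2/c_*<1/2$, which is guaranteed by further shrinking $d$. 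The perturbative integrand $\mcl{I}_{\rm wgt}$ is controlled by Lemma \ref{lemma-properties of coeffs}(b) under $\delta\leq\eps_0$, exactly as in the main text. Collecting these estimates delivers \eqref{estimate-I main} with $\mu_0,\mu_1>0$ depending only on the data and $\mfrak{a}$.

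\textbf{Main obstacle.} The delicate point is that $d$ plays a double role: the weight must simultaneously be \emph{small}, of size $d$, so that the $W$-cross terms do not swamp the $W$-Lagrangian after Cauchy-Schwarz, and produce a \emph{large} logarithmic derivative $|\mfrak{M}'/\mfrak{M}|\sim1/d$, so that $-\tfrac12\mfrak{M}'$ overcomes the negative contribution $\bar a_1\mfrak{M}<0$. The window condition \eqref{range of u} with $d$ small provides precisely this balance, and the strict deceleration $\bar u'<0$ on $[\mfrak{a},L+\mfrak{a}]$, a consequence of the normalization $E_0=0$ and the phase portrait in Figure \ref{figurec}, supplies the uniform lower bound on $|\mfrak{M}'|$ that is the crucial replacement for the smallness-of-$L$ requirement in \cite{bae2021structural,bae2021three}.
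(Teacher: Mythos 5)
There is a genuine gap in the absorption step, and it sits exactly at the cross term $\bar h_2 W\der_1 V$ in \eqref{I-main} --- the one coupling term that carries no factor of $\mfrak{M}$. By \eqref{def-bar-h} we have $\bar h_2^2/\bar h_1=\bar u^2\bar h_1=\frac{\bar u^2\bar{\rho}^{2-\gam}}{\gam S_0}$, an $O(1)$ quantity bounded below in terms of the data alone. Any Young inequality $|\bar h_2 W\der_1 V|\le \eta_Y(\der_1V)^2+\frac{\bar h_2^2}{4\eta_Y}W^2$ whose $W^2$ piece can be absorbed into $\bar h_1 W^2$ (and this is the only available sink: $W$ obeys pure Neumann conditions, so there is no Poincar\'e improvement from $|\nabla W|^2$) forces $\eta_Y\gtrsim \bar h_2^2/\bar h_1$, and the resulting $\eta_Y(\der_1V)^2$ must be paid out of the coefficient $\bar a_1\mfrak{M}-\tfrac12\mfrak{M}'$. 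For your weight that coefficient is at most $-\tfrac12\mfrak{M}'=\frac{|\bar u'(x_1+\mfrak{a})|}{2u_s}=\frac{\bar u\,|\bar E|}{2u_s(\bar u^2-\gam S_0\bar{\rho}^{\gam-1})}$ evaluated at $x_1+\mfrak{a}$: a number fixed by the background solution, proportional to $|\bar E|$ on $[\mfrak{a},L+\mfrak{a}]$. Nothing in the hypotheses makes this exceed the threshold $\sim\bar h_2^2/\bar h_1$; taking $\rho_0$ close to $b_0$ makes $\bar E'=\bar{\rho}-b_0$, hence $\bar E$ and $\bar u'$, as small as one likes while $\bar h_2^2/\bar h_1$ stays bounded below. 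Your stated smallness condition ``$Cd^2/c_*<1/2$'' only covers the $\mfrak{M}$-weighted coupling $(\bar b_1\der_1W+\bar b_2W)\mfrak{M}\der_1V$; for $\bar h_2W\der_1V$ the relevant ratio is $\bar h_2^2/(c_*\bar h_1)$ with no power of $d$ in sight, and it need not be less than one. Your ``main obstacle'' paragraph also identifies the wrong competitor: $\bar a_1\mfrak{M}$ is indeed $O(d)$ and harmless, but enlarging the logarithmic derivative $|\mfrak{M}'/\mfrak{M}|\sim 1/d$ does not help here --- what is needed is that $|\mfrak{M}'|$ itself be large in absolute terms.

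This is precisely why the paper's proof takes $\mfrak{M}=(M(x_1+\mfrak{a})/K)^{\eta}$ with a free large exponent: there $-\tfrac12\mfrak{M}'=\tfrac{\eta}{2}\,\mcl{R}_{\mfrak{a}}^{\eta}\,\frac{|\mcl{R}_{\mfrak{a}}'|}{\mcl{R}_{\mfrak{a}}}$ grows linearly in $\eta$, the harmful constant term $N_4=-2\bar h_2^2/\bar h_1$ in \eqref{NandP} is beaten by choosing $\eta$ large first, and the window condition \eqref{range of u} (with $d$ chosen \emph{after} $\eta$) serves only to keep $\mcl{R}_{\mfrak{a}}^{\eta}$ bounded below so that the gain is not lost; the remaining negative terms are damped by $\mcl{R}_{\mfrak{a}}^{\eta}$ or $\mcl{R}_{\mfrak{a}}^{2\eta}\le1$. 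Your construction can be repaired in the same spirit by inserting an amplification parameter, e.g. $\mfrak{M}=\lambda\bigl(\frac{\bar u(x_1+\mfrak{a})-\bar u(\mfrak{a})}{u_s}+2d\bigr)$ with $\lambda$ fixed first so that $\lambda\kappa_0/(2u_s)$ dominates $\bar h_2^2/\bar h_1$ plus the remaining errors, and $d$ then chosen so small that $\lambda d$ is small; but as written the argument does not yield \eqref{estimate-I main}.
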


\begin{proof}
The Mach number function $M(x_1)$ of a background solution $(\bar{\rho}, \bar u, S_0, \bar{\Phi}(=\int \bar E\,dx_1))$(in the sense of Definition \ref{definition:background sol}) is given by
\begin{equation*}
  M(x_1):=\frac{\bar u(x_1)}{\sqrt{\gam S_0 \bar{\rho}^{\gam-1}(x_1)}}.
\end{equation*}
For two constants $\mfrak a\in(0, L^*)$ and $\eta>0$ to be determined, let us set a constant $K$ as
\begin{equation}
\label{definition of K}
  K:=M(\mfrak a),
\end{equation}
and
define a function $\mathfrak{M}:[0,L]\to\mathbb{R}$ by
\begin{equation}\label{definition-Ma}
\mathfrak{M}(x_1):=
\left(\frac{M(x_1+\mfrak a)}{K}\right)^{\eta}=:(\mcl{R}_{\mathfrak{a}})^{\eta}(x_1).
\end{equation}
\newcommand \Ra{\mcl{R}_{\mfrak a}}
Then $\mathfrak{M}$ satisfies the following properties:
\begin{equation*}
\begin{split}
\mathfrak{M}'(x_1)(=\eta \Ra^{\eta-1}\Ra')<0\quad\tx{and}\quad
0<\mfrak{M}(L)<\mathfrak{M}(x_1)<1\quad\tx{for $0<x_1<L$}.
\end{split}
\end{equation*}
We rewrite the term $\mathcal{I}_{\rm main}$ given in \eqref{I-main} as follows:
\begin{equation*}
\mathcal{I}_{\rm main}=\mathcal{I}_{\rm bd}+\mathcal{I}_{\rm coer}+\mathcal{I}_{\rm mix}
\end{equation*}
for $\mathcal{I}_{\rm bd}$, $\mathcal{I}_{\rm coer}$, and $\mathcal{I}_{\rm mix}$ defined by
\begin{eqnarray*}
\mathcal{I}_{\rm bd}&:=&\int_{\Gamma_L}((\partial_1V)^2-\sum_{i,j=2}^3{a}_{ij}\partial_i V\partial_j V)\frac{\mathfrak{M}}{2}d{\bf x'}-\frac{1}{2}\int_{\Gamma_0}g_1^2\mathfrak{M}d{\bf x}',\\
\mathcal{I}_{\rm coer}&:=&\int_{\Om_L}(\bar{a}_1\mathfrak{M}-\frac{1}{2}\mathfrak{M}')(\partial_1V)^2+\frac{1}{2}\sum_{i,j=2}^3\partial_1(\bar{a}_{ij}\mathfrak{M})\partial_i V\partial_j V d{\bf x}\\
	&\nonumber&+\int_{\Om_L}|\nabla W|^2+\bar{h}_1W^2 d{\bf x},\\
\mathcal{I}_{\rm mix}&:=&\int_{\Om_L}(\bar{b}_1\partial_1W+\bar{b}_2W)\mathfrak{M}\partial_1V+\bar{h}_2W\partial_1V d{\bf x}.
\end{eqnarray*}

Since $\mathfrak{M}>0$ and $[a_{ij}]_{i,j=2}^3$ is negative definite(see Lemma \ref{lemma-properties of coeffs}(c)), there exist positive constants $\nu_0$ and $\nu_1$ such that
\begin{equation*}
\mathcal{I}_{\rm bd}\ge \nu_0\int_{\Gamma_L}|\nabla V|^2 d{\bf x}'-\nu_1\int_{\Gamma_0}g_1^2 d{\bf x}' .
\end{equation*}
By using the Cauchy-Schwartz inequality, one can check that
\begin{equation*}
\begin{split}
\mathcal{I}_{\rm coer}+\mathcal{I}_{\rm mix}
\ge& \int_{\Om_L}\alpha|\partial_1V|^2 +\frac{1}{2}\sum_{i,j=2}^3\partial_1(\bar{a}_{ij}\mathfrak{M})\partial_i V\partial_j V d{\bf x}
+\int_{\Om_L}\frac{3}{4}|\nabla W|^2+\frac{3}{4}\bar{h}_1W^2 d{\bf x}
\end{split}
\end{equation*}
for $\alpha$ defined by
\begin{equation*}
\alpha:=(\bar{a}_1\mathfrak{M}-\frac{1}{2}\mathfrak{M}')-2(\bar{b}_1\mathfrak{M})^2-\frac{2}{\bar{h}_1}(\bar{b}_2\mathfrak{M}+\bar{h}_2)^2.
\end{equation*}
The inequality \eqref{estimate-I main} is achieved if we find a constant $\mu>0$ satisfying that
\begin{equation}\label{condition-end}
\alpha\ge \mu\quad\mbox{and}\quad [\partial_1(\bar{a}_{ij}\mathfrak{M})]_{i,j=2}^3\ge \mu \mathbb{I}_2\quad\tx{in $\ol{\Om_L}$}.
\end{equation}
Since $\bar{a}_{23}=\bar{a}_{32}=0$ and $\bar{a}_{22}=\bar{a}_{33}$, \eqref{condition-end} is equivalent to
\begin{equation}\label{condition-end-2}
\alpha(x_1)\ge \mu\quad\mbox{and}\quad\partial_1(\bar{a}_{22}\mathfrak{M})(x_1)\ge \mu\quad\tx{in $\ol{\Om_L}$}.
\end{equation}

One can easily check $\partial_1(\bar{a}_{22}\mathfrak{M})\ge M_{\mathfrak{a}}^{\eta}$ in $\ol{\Om_L}$ if and only if $\eta$ satisfies
\begin{equation}\label{eta-1-con}
\eta\ge\frac{(\Ra)^{\eta}(x_1)-\partial_1\bar{a}_{22}{(\Ra)}^{\eta}(x_1)}
{\bar{a}_{22} {(\Ra)}^{\eta-1}(x_1)\Ra'(x_1)}\quad\tx{for $\le x_1\le L$}.
\end{equation}
So we shall choose $\mu\in(0,1]$ and $\eta$ satisfying
\begin{equation}\label{mathfrakl0}
0<\mu\le\min_{x_1\in[0,L]}({\Ra})^{\eta}(x_1)=({\Ra})^{\eta}(L)\quad\mbox{and}\quad
\eta\ge\mathfrak{l}_0:=\max_{x_1\in[0,L]}\left(\frac{(1-\partial_1\bar{a}_{22}(x_1))\Ra(x_1)}{\bar{a}_{22} (x_1) \Ra'(x_1)}\right)
\end{equation}
so that $\partial_1(\bar{a}_{22}\mathfrak{M})(x_1)\ge\mu$ holds in $\ol{\Om_L}$.

By a direct computation, $\alpha$ can be expressed as
\begin{equation*}
\begin{split}
\alpha(x_1)
&=\sum_{l=1}^2P_l(x_1)+\sum_{k=1}^4N_k(x_1)
\end{split}
\end{equation*}
for $N_k$ ($k=1,2,3,4)$ and $P_l$ ($l=1,2$) defined by
\begin{equation}\label{NandP}
\begin{split}
&N_1:=-\frac{(-\bar{E})(\gamma \bar{u}^2+\gamma S_0\bar{\rho}^{\gamma-1})}{(\gamma S_0\bar{\rho}^{\gamma-1}-\bar{u}^2)^2}\Ra^{\eta}\\
&N_2:=-\frac{2\bar{u}^2}{(\bar{u}^2-\gamma S_0\bar{\rho}^{\gamma-1})^2}\Ra^{2\eta}\\
&N_3:=-2\frac{\gamma S_0\bar{\rho}^{\gamma-1}}{\bar{\rho}}\left(\frac{-(\gamma-1)\bar{u}}{\gamma S_0\bar{\rho}^{\gamma-1}-\bar{u}^2}\frac{\bar{\rho}'}{\bar{\rho}}\right)^2\Ra^{2\eta}\\
&N_4:=-2\frac{\bar{u} J_0}{\gamma S_0\bar{\rho}^{\gamma-1}}\\
&P_1:=-\frac{1}{2}\eta \Ra^{\eta-1}\Ra'\\
&P_2:=4\Ra^{\eta}\frac{(\gamma-1)\bar{u}^2}{\bar{u}^2-\gamma S_0\bar{\rho}^{\gamma-1}}\frac{\bar{\rho}'}{\bar{\rho}}.
\end{split}
\end{equation}
Since $\mfrak{a}>0$, it directly follows from the conditions stated in Lemma \ref{App-lemma} that
\begin{equation*}
  N_k(x_1)<0 \quad\tx{and} \quad P_l(x_1)>0\quad\tx{for $0< x_1\le L$}.
\end{equation*}
Next, we shall  rewrite \eqref{NandP} in terms of ($J_0$, $h_0$, $\kappa$) for $J_0$ given in \eqref{one-re},
\begin{equation*}
h_0:=(\gamma S_0)^{\frac{1}{\gamma+1}}\mbox{ and } \kappa(x_1):=\frac{\bar{u}(x_1)}{u_s}\mbox{  for }{u}_s:=h_0 J_0^{\frac{\gamma-1}{\gamma+1}}.
\end{equation*}
For that purpose, we first rewrite essential functions by a direct computation as follows:
\begin{equation}\label{re-functions}
\left.\begin{split}
&\bar{u}=u_s\kappa=h_0J^{\frac{\gamma-1}{\gamma+1}}_0\kappa,\quad\bar{\rho}=\frac{J_0}{\bar{u}}=h_0^{-1}J_0^{\frac{2}{\gamma+1}}\kappa^{-1},\\
&\gamma S_0\bar{\rho}^{\gamma-1}=h_0^{2}J_0^{\frac{2(\gamma-1)}{\gamma+1}}\kappa^{-\gamma+1},\quad
\Ra
=\frac{1}{K}\kappa_{\mathfrak{a}}^{\frac{\gamma+1}{2}}\quad\mbox{for }\kappa_{\mathfrak{a}}(x_1):=\kappa(x_1+\mathfrak{a}).
\end{split}\right.
\end{equation}
It is well known that the solution $(\bar{\rho},\bar{E})$ to \eqref{one-re-uE} satisfies $\frac{1}{2}\bar{E}^2-H(\bar{\rho})=\frac 12 E_0^2-H(\rho_0)(=:k_0)$, where $H(\bar{\rho})$ is defined by
$$H(\bar{\rho}):=\int_{\rho_s}^{\bar{\rho}}\frac{(t-b_0)}{t}\left(\gamma S_0 t^{\gamma-1}-\frac{J_0^2}{t^2}\right)dt\quad\tx{for $\rho_s:=\frac{J_0}{u_s}$}.$$
Owing to the assumption of $E_0=0$ as given in \eqref{special condition} and the condition $\bar E(x_1)<0$ for $x_1\in(0,L^*]$, we can express $-\bar E$ as
\begin{equation}\label{-E-rewritten}
-\bar{E}=\sqrt{2h_0J_0^{\frac{2\gamma}{\gamma+1}}\mathcal{F}(\kappa)+2k_0}
\end{equation}
for $\mathcal{F}$ defined by
\begin{equation*}
\mathcal{F}(\kappa):=\int_1^{\kappa}\left(1-\frac{t}{\zeta_0}\right)\left(1-\frac{1}{t^{\gamma+1}}\right)dt,\quad \zeta_0:=\frac{J_0}{b_0u_s}.
\end{equation*}
By using \eqref{re-functions} and \eqref{-E-rewritten}, we have
\begin{equation}\label{rho-rho}
\frac{\bar{\rho}'}{\bar{\rho}}=\frac{\sqrt{2h_0J_0^{\frac{2\gamma}{\gamma+1}}\mathcal{F}(\kappa)+2k_0}}{h_0^2J_0^{\frac{2(\gamma-1)}{\gamma+1}}\left(\kappa^{2}-\kappa^{-\gamma+1}\right)}.
\end{equation}
And, for $\bar{\rho}_{\mathfrak{a}}(x_1):=\bar{\rho}(x_1+\mathfrak{a})$, we have
\begin{equation}\label{diff-M}
\begin{split}
\Ra'
=-\frac{\gamma+1}{2}\frac{\sqrt{2h_0J_0^{\frac{2\gamma}{\gamma+1}}\mathcal{F}(\kappa_{\mathfrak{a}})+2k_0}}{h_0^2J_0^{\frac{2(\gamma-1)}{\gamma+1}}\left(\kappa_{\mathfrak{a}}^{2}-\kappa_{\mathfrak{a}}^{-\gamma+1}\right)}\frac{1}{K}\kappa_{\mathfrak{a}}^{\frac{\gamma+1}{2}}.
\end{split}
\end{equation}
Then, by using \eqref{re-functions}-\eqref{diff-M}, we get
\begingroup
\allowdisplaybreaks
\begin{align*}
&N_1=-\frac{\sqrt{2h_0J_0^{\frac{2\gamma}{\gamma+1}}\mathcal{F}(\kappa)+2k_0}\left(\gamma \kappa^2+\kappa^{-\gamma+1}\right)}{h_0^2J_0^{\frac{2(\gamma-1)}{\gamma+1}}\left(\kappa^{2}-\kappa^{-\gamma+1}\right)^2}\frac{1}{K^{\eta}}\kappa_{\mathfrak{a}}^{\frac{(\gamma+1)\eta}{2}}\\
&N_2=-\frac{2\kappa^2}{h_0^2J_0^{\frac{2(\gamma-1)}{\gamma+1}}\left(\kappa^{2}-\kappa^{-\gamma+1}\right)^2}\frac{1}{K^{2\eta}}\kappa_{\mathfrak{a}}^{(\gamma+1)\eta}\\
&N_3=-2{\kappa^{-\gamma+2}}\frac{\left[(\gamma-1)\kappa\sqrt{2h_0J_0^{\frac{2\gamma}{\gamma+1}}\mathcal{F}(\kappa)+2k_0}\right]^2}{h_0^3J_0^{\frac{4\gamma-2}{\gamma+1}}\left(\kappa^{2}-\kappa^{-\gamma+1}\right)^4}\frac{1}{K^{2\eta}}\kappa_{\mathfrak{a}}^{(\gamma+1)\eta}\\
&N_4=-2h_0^{-1}J_0^{\frac{2}{\gamma+1}}\kappa^{\gamma}\\
&P_1=\eta\frac{\gamma+1}{4} \frac{\sqrt{2h_0J_0^{\frac{2\gamma}{\gamma+1}}\mathcal{F}(\kappa_{\mathfrak{a}})+2k_0}}{h_0^2J_0^{\frac{2(\gamma-1)}{\gamma+1}}\left(\kappa_{\mathfrak{a}}^{2}-\kappa_{\mathfrak{a}}^{-\gamma+1}\right)}\frac{1}{K^{\eta}}\kappa_{\mathfrak{a}}^{\frac{(\gamma+1)\eta}{2}}\\
&P_2=\frac{4(\gamma-1)\kappa^2\sqrt{2h_0J_0^{\frac{2\gamma}{\gamma+1}}\mathcal{F}(\kappa)+2k_0}}{h_0^2J_0^{\frac{2(\gamma-1)}{\gamma+1}}\left(\kappa^{2}-\kappa^{-\gamma+1}\right)^2}\frac{1}{K^{\eta}}\kappa_{\mathfrak{a}}^{\frac{(\gamma+1)\eta}{2}}.\\
\end{align*}
\endgroup

By using the fact that $P_1$ is strictly positive, we shall fix positive constants $\mu$, $\eta$, $d$ and $\mfrak{a}$ to satisfy
\begin{equation*}
\frac{P_1}{4}+N_k\ge\frac{\mu}{4}  \quad\tx{for $k=1,2,3,4$},
\end{equation*}
from which it directly follows that
\begin{equation*}
\alp(x_1)\ge P_1+\sum_{k=1}^4 N_k\ge \mu\quad\tx{for $0\le x_1\le L$}.
\end{equation*}

For each $k=1,2,3,4$, we rewrite the term $\frac{P_1}{4}+N_k$ as
\begin{equation*}\label{po-est}
\left\{\begin{split}
\frac{P_1(x_1)}{4}+N_k(x_1)
=&\mfrak{r}(x_1)(\eta\mfrak{b}(x_1)-\beta_k(x_1))\quad\mbox{for }k=1,2,3,\\
\frac{P_1(x_1)}{4}+N_4(x_1)
=&\eta\mfrak{c}(x_1)-\beta_4(x_1)
\end{split}\right.
\end{equation*}
for
\begingroup
\allowdisplaybreaks
\begin{align*}
\mfrak{r}(x_1):=&\,\frac{1}{h_0^2J_0^{\frac{2(\gamma-1)}{\gamma+1}}}\frac{1}{K^{\eta}}\kappa_{\mathfrak{a}}^{\frac{(\gamma+1)\eta}{2}}(x_1),\\
\mfrak{b}(x_1):=&\,\frac{\gamma+1}{16} \frac{\sqrt{2h_0J_0^{\frac{2\gamma}{\gamma+1}}\mathcal{F}(\kappa_{\mathfrak{a}}(x_1))+2k_0}}{\left(\kappa_{\mathfrak{a}}^{2}(x_1)-\kappa_{\mathfrak{a}}^{-\gamma+1}(x_1)\right)}\\
\mfrak{c}(x_1):=&\,\frac{\gamma+1}{16} \frac{\sqrt{2h_0J_0^{\frac{2\gamma}{\gamma+1}}\mathcal{F}(\kappa_{\mathfrak{a}}(x_1))+2k_0}}
{h_0^2J_0^{\frac{2(\gamma-1)}{\gamma+1}}\left(\kappa_{\mathfrak{a}}^{2}(x_1)-\kappa_{\mathfrak{a}}^{-\gamma+1}(x_1)\right)}\frac{1}{K^{\eta}}\kappa_{\mathfrak{a}}^{\frac{(\gamma+1)\eta}{2}}(x_1),\\
\beta_1(x_1):=&\,\frac{\sqrt{2h_0J_0^{\frac{2\gamma}{\gamma+1}}\mathcal{F}(\kappa(x_1))+2k_0}\left(\gamma \kappa^2(x_1)+\kappa^{-\gamma+1}(x_1)\right)}{\left(\kappa^{2}(x_1)-\kappa^{-\gamma+1}(x_1)\right)^2}\\
\beta_2(x_1):=&\,\frac{2\kappa^2(x_1)}{\left(\kappa^{2}(x_1)-\kappa^{-\gamma+1}(x_1)\right)^2}\frac{1}{K^{\eta}}\kappa_{\mathfrak{a}}^{\frac{(\gamma+1)\eta}{2}}(x_1)\\
\beta_3(x_1):=&\,2{\kappa^{-\gamma+2}}\frac{\left[(\gamma-1)\kappa(x_1)\sqrt{2h_0J_0^{\frac{2\gamma}{\gamma+1}}\mathcal{F}(\kappa(x_1))+2k_0}\right]^2}{h_0J_0^{\frac{2\gamma}{\gamma+1}}\left(\kappa^{2}(x_1)-\kappa^{-\gamma+1}(x_1)\right)^4}\frac{1}{K^{\eta}}\kappa_{\mathfrak{a}}^{\frac{(\gamma+1)\eta}{2}}(x_1)\\
\beta_4(x_1):=&\,2h_0^{-1}J_0^{\frac{2}{\gamma+1}}\kappa^{\gamma}(x_1).
\end{align*}
\endgroup

It follows from the monotonicity of $\bar{u}(x_1)$ and the fact of $L^*\le \bar{L}$ that there exists a constant $\delta_0>0$ satisfying that
\begin{equation*}
1+\delta_0\le \kappa_{\mathfrak{a}}(x_1)<\kappa(x_1)\le \frac{u_0}{u_s}\quad\tx{for $x_1\in[0,L]$}.
\end{equation*}
For the constant $K$ given by \eqref{definition of K}, it holds that $K=\kappa^{\frac{\gamma+1}{2}}(\mathfrak{a})=\kappa^{\frac{\gamma+1}{2}}_\mathfrak{a}(0)$ so we have
\begin{equation}
\label{monotonicity of kappa_a}
0<\frac{\kappa_{\mathfrak{a}}^{\frac{\gamma+1}{2}}(L)}{K}\le \frac{\kappa_{\mathfrak{a}}^{\frac{\gamma+1}{2}}(x_1)}{K}\le 1\mbox{ for }x_1\in[0,L].
\end{equation}

For simplicity, let us set $$\lambda_0:=h_0^2J_0^{\frac{2(\gamma-1)}{\gamma+1}}.$$
Obviously, for each $k=1,2,3$, we have
\begin{equation}
\label{mt-1}
\mfrak{r}(\eta\mfrak{b}-\beta_k)(x_1)
\ge \frac 14\Ra^{\eta}(x_1)\quad\tx{for $0\le x_1\le L$}
\end{equation}
if and only if
\begin{equation*}
\eta\ge \max_{x_1\in[0,1]}\frac{1}{\mfrak{b}}\left(\frac{\lambda_0}{4}+\beta_k\right)\quad\tx{for each $k=1,2,3$}.
\end{equation*}

Due to \eqref{monotonicity of kappa_a}, we can achieve \eqref{mt-1} if $\eta$ and $L$ are fixed to satisfy
\begin{equation*}
  \eta\ge
  \max_{x_1\in[0,L]}\left\{\frac{1}{\mfrak{b}}\left(\frac{\lambda_0}{4}+\beta_1\right),
  \frac{1}{\mfrak{b}}\left(\frac{\lambda_0}{4}+\beta_2^*\right),
  \frac{1}{\mfrak{b}}\left(\frac{\lambda_0}{4}+\beta_3^*\right)\right\}
\end{equation*}  for $\beta_2^*$ and $\beta_3^*$ given by
\begin{equation*}
\begin{split}
&{\beta_2}^{\ast}:=\frac{2\kappa^2}{\left(\kappa^{2}-\kappa^{-\gamma+1}\right)^2}\\
&{\beta_3}^{\ast}:=\frac{2(\gamma-1)^2\kappa^{-\gam+4}\left({2h_0J_0^{\frac{2\gamma}{\gamma+1}}\mathcal{F}(\kappa)+2k_0}\right)}{h_0J_0^{\frac{2\gamma}{\gamma+1}}\left(\kappa^{2}-\kappa^{-\gamma+1}\right)^4}.
\end{split}
\end{equation*}

Next, let us set
\begin{equation*}
{\beta}_4^*:=2h_0^{-1}J_0^{\frac{2}{\gamma+1}}\kappa^{\gamma}(0).
\end{equation*}
It follows from the monotonicity of $\bar u$ for $0<x_1<L$ that
\begin{equation}\label{pre-mt2}
\eta\mfrak{c}(x_1)-\beta_4(x_1)\ge \eta\mfrak{c}(x_1)-{\beta_4}^*\quad\tx{for $0\le x_1\le L$}.
\end{equation}
Note that the value of the function $\mfrak{c}(x_1)$ solely depends on the value of $\kappa_{\mfrak a}(x_1)$. Furthermore, $\mfrak{c}$ has a continuous dependence on the value of $\kappa_{\mfrak{a}}(x_1)$ for all $x_1\in[0,L]$. At $x_1=0$ where $\kappa_{\mfrak{a}}(0)=\frac{\bar u(\mfrak{a})}{u_s}$, we have
\begin{equation*}
\mfrak{c}(0)=\frac{\gamma+1}{16} \frac{\sqrt{2h_0J_0^{\frac{2\gamma}{\gamma+1}}\mathcal{F}(\kappa_{\mathfrak{a}}(0))+2k_0}}{h_0^2J_0^{\frac{2(\gamma-1)}{\gamma+1}}\left(\kappa_{\mathfrak{a}}^{2}(0)-\kappa_{\mathfrak{a}}^{-\gamma+1}(0)\right)}=:\mfrak{c}_0>0.
\end{equation*}
Now we fix a constant $\eta$ to satisfy
\begin{equation*}
\eta\ge
  \max_{x_1\in[0,L]}\left\{\mathfrak{l}_0,\frac{1}{\mfrak{b}}\left(\frac{\lambda_0}{4}+\beta_1\right),
  \frac{1}{\mfrak{b}}\left(\frac{\lambda_0}{4}+\beta_2^*\right),
  \frac{1}{\mfrak{b}}\left(\frac{\lambda_0}{4}+\beta_3^*\right), \frac{1}{\mfrak{c}_0}\left(\frac{1}{4}+\beta_4^{\ast}\right)\right\}
\end{equation*}
for $\mathfrak{l}_0$ defined by \eqref{mathfrakl0}. Note that the choice of $\eta$ depends only on the data and $\mfrak{a}$. By choosing the constant $\eta$ as in the above, we have achieved the estimate \eqref{mt-1},
\begin{equation*}
\der_1(\bar a_{22}\mfrak{M})\ge \Ra^{\eta}\quad\tx{and}\quad \eta \mfrak{c}(0)-\beta_4^*\ge \frac 14.
\end{equation*}
Finally, we can fix a constant $d\in(0,1)$ sufficiently small depending only on the data, $\mfrak{a}$ and $\eta$ so that if the condition \eqref{range of u} holds, then we derive from \eqref{pre-mt2} and the continuous dependence of $\mfrak{c}$ on $\kappa_{\mfrak{a}}$ that
\begin{equation*}
  \eta\mfrak{c}(x_1)-\beta_4(x_1)\ge \frac 18\quad\tx{for $0\le x_1\le L$}.
\end{equation*}
Note that the choice of $\eta$ depends only on the data and $\mfrak{a}$, therefore we conclude that the choice of the small constant $d$ depends only on the data and $\mfrak{a}$ eventually.

We have chosen the constant $d$ so that if all the conditions stated in Lemma \ref{App-lemma} are satisfied, then we establish the estimate \eqref{condition-end-2} for the constant $\mu$ given by
\begin{equation*}
  \mu=\min_{x_1\in[0, L]}\left\{\Ra^{\eta}(x_1), \frac 34 \Ra^{\eta}(x_1)+\frac 18\right\}=\min\left\{\Ra^{\eta}(L), \frac 34 \Ra^{\eta}(L)+\frac 18\right\}>0.
\end{equation*}

\end{proof}


\vspace{.25in}
\noindent
{\bf Acknowledgements:}
The research of Myoungjean Bae was supported in part by  Samsung Science and Technology Foundation under Project Number SSTF-BA1502-51.
The research of Hyangdong Park was supported in part by the POSCO Science Fellowship of POSCO TJ Park Foundation and a KIAS Individual Grant (MG086701) at Korea Institute for Advanced Study.

\bigskip
\bibliographystyle{siam}
\bibliography{EP-super-references}

\end{document}